\tikzset{
block/.style={
  draw, 
  rectangle, 
  minimum height=1.5cm, 
  minimum width=2.5cm, align=center,
  fill=blue!20
  }, 
line/.style={->,>=latex'}
}
\tikzset{negated/.style={
      decoration={markings,
           mark= at position 0.5 with {
               \node[transform shape] (tempnode) {$\backslash\!\!\backslash$};
            }
       },
       postaction={decorate}
    }
}
\newcommand{\R}{\mathbb R}                              
\newcommand{\C}{\mathbb C}                              
\newcommand{\N}{\mathbb N}                              
\newcommand{\bmat}[1]{\begin{bmatrix}#1\end{bmatrix}}   
\newcommand{\ct}[1]{{#1}^{\mathrm{H}}}                  
\newcommand{\ict}[1]{{#1}^{-\mathrm{H}}}                
\newcommand{\cc}[1]{\overline{#1}}                      
\newcommand{\pd}[2]{\frac{\partial #1}{\partial #2}}    
\newcommand{\dd}[2]{\frac{\mathrm{d}#1}{\mathrm{d}#2}}  
\newcommand{\wt}{\widetilde}                            
\newcommand{\wh}{\widehat}                              
\newcommand{\timeInt}{\mathbb T}                        
\newcommand{\posR}{\mathbb R_+}                         
\newcommand{\td}{\,\mathrm{d}}                          
\newcommand{\heavi}{\boldsymbol{1}}                     
\newcommand{\avSt}{V_a}                                 
\newcommand{\stm}{\Phi}                                 
\newcommand{\LAMBDA}{\boldsymbol{\Lambda}}              
\newcommand{\loc}{\mathrm{loc}}                         
\newcommand{\imagUnit}{\mathrm{i}}                      
\newcommand{\wildcard}{\square}
\newcommand{\partInt}[1]{                               
    \mathfrak{P}\ifstrempty{#1}{}{_{#1}}}             
\newcommand{\partit}{\mathfrak p}                       
\newcommand{\quadSt}[1]{V_{#1}}
\newcommand{\Cantor}{\mathfrak c}                       
\newcommand{\CantorReverse}{\breve{\Cantor}}            
\NewDocumentCommand{\rqSuArg}{sO{t_0}O{t_{-1}}O{x_0}O{0}O{x}O{u}O{y}}{\IfBooleanTF{#1}{}{\substack{(#3,#7) \,:\, #3\leq#2, \\ #6(#2)=#4 ,\, #6(#3)=#5}}}
\NewDocumentCommand{\avStArg}{sO{t_0}O{t_1}O{x_0}O{x}O{u}O{y}}{\IfBooleanTF{#1}{#6\,:\,#5(#2)=#4}{\substack{(#3,#6)\,:\,#3\geq#2, \\ #5(#2)=#4}}}
\NewDocumentCommand{\totVar}{oo}{
    {\operatorname{Var}}\IfNoValueTF{#1}{}{_{#1,#2}}}                      
\NewDocumentCommand{\HerMat}{o}{                        
    \operatorname{\mathbf{S}}\IfValueTF{#1}{^{#1}}{}(\C)}
\NewDocumentCommand{\posSD}{o}{                         
    \operatorname{\mathbf{S}}\IfValueTF{#1}{^{#1}}{}_+(\C)}
\NewDocumentCommand{\posDef}{o}{                        
    \operatorname{\mathbf{S}}\IfValueTF{#1}{^{#1}}{}_{++}(\C)}
\NewDocumentCommand{\GL}{o}{                            
    \operatorname{GL}\IfValueTF{#1}{_{#1}}{}(\C)}
\newcommand{\refPa}{\hyperref[def:passive]{(Pa)}}       
\newcommand{\refNN}{\hyperref[def:nonnegativeSupply]{(NN)}}   
\newcommand{\refKYP}{\hyperref[def:KYP]{(KYP)}}         
\newcommand{\refPH}{\hyperref[def:pH]{(pH)}}            
\newcommand{\RS}[1]{\mathcal R_{#1}}                    
\DeclareMathOperator{\realPart}{Re}                     
\DeclareMathOperator{\linGroup}{GL}                     
\let\ker\relax\DeclareMathOperator{\ker}{Ker}           
\DeclareMathOperator{\BV}{BV}                           
\DeclareMathOperator{\AUC}{AUC}                         
\DeclareMathOperator{\rank}{rank}                       
\DeclarePairedDelimiter{\set}{\{}{\}}
\DeclarePairedDelimiter{\pset}{(}{)}
\DeclarePairedDelimiter{\bset}{[}{]}
\DeclarePairedDelimiter{\abs}{\lvert}{\rvert}
\DeclarePairedDelimiter{\aset}{\langle}{\rangle}
\DeclarePairedDelimiter{\norm}{\lVert}{\rVert}
\theoremstyle{definition}
\newtheorem{theorem}{Theorem}[section]
\newtheorem{lemma}[theorem]{Lemma}
\newtheorem{corollary}[theorem]{Corollary}
\newtheorem{definition}[theorem]{Definition}
\newtheorem{remark}[theorem]{Remark}
\newtheorem{example}[theorem]{Example}
\title{Relationship between dissipativity concepts for linear time-varying port-Hamiltonian systems}
\author{Karim Cherifi\footnotemark[1]
\, \and Hannes Gernandt\footnotemark[1]\, \and Dorothea Hinsen\footnotemark[2]\,  \and Volker Mehrmann\footnotemark[2]\, \and Riccardo Morandin\footnotemark[3]}
\date{2024}
\begin{document}

\renewcommand*{\thefootnote}{\fnsymbol{footnote}}
\maketitle
\begin{abstract}
The relationship between different dissipativity concepts for linear time-varying systems is studied, in particular between port-Hamiltonian systems, passive systems, and systems with nonnegative supply. It is shown that linear time-varying port-Hamiltonian systems are passive, have nonnegative supply rates, and solve (under different smoothness assumptions) Kalman-Yakubovich-Popov differential and integral inequalities. The converse relations are also studied in detail. In particular,  sufficient conditions are presented to obtain a port-Hamiltonian representation starting from any of the other dissipativity concepts. Two applications are presented.
\end{abstract}
{\bf Keywords:}
port-Hamiltonian system, passivity, nonnegativity, Kalman-Yakubovich-Popov inequality, system transformation.

\noindent
{\bf AMS subject classification.:} 93A30, 65L80, 93B17, 93B11.

\footnotetext[2]{
Institut f\"ur Mathematik, MA 4-5, TU Berlin, Str. des 17. Juni 136,
D-10623 Berlin, FRG.
\texttt{\{hinsen,mehrmann\}@math.tu-berlin.de}. The authors gratefully acknowledge the support by  Deutsche
Forschungsgemeinschaft (DFG) as part of the collaborative research center \textsf{SFB TRR} 154 (grant no.~239904186), excellence cluster \textsf{MATH\textsuperscript{+}, Project AA4-12}, and by \textsf{BMBF (grant no.~05M22KTB)} through \textsf{EKSSE}.}

\footnotetext[1]{
Institute for Mathematical Modeling, Analysis and Computational Mathematics, University of Wuppertal, Gau\ss stra\ss e 20, 42119 Wuppertal, FRG. \texttt{\{cherifi,gernandt\}@uni-wuppertal.de}.}

\footnotetext[3]{
Institute of Analysis und Numerics, Otto von Guericke University Magdeburg, Universit\"atsplatz 2, 39106 Magdeburg. \texttt{riccardo.morandin@ovgu.de}.
The author gratefully acknowledge the support by Deutsche Forschungsgemeinschaft (DFG) through the project 446856041.}

\renewcommand{\thefootnote}{\arabic{footnote}}
\setcounter{footnote}{0}

\section*{Notation}

For an interval $I\subseteq\R$ we denote the length by $|I|$.
In this paper $\timeInt$ denotes a given open (possibly unbounded) interval $\timeInt\subseteq\R$, unless otherwise specified.
Given a vector $v\in\C^n$ or a matrix $A\in\C^{m,n}$ with $m,n \in \N$, we denote by $\norm{v}_2$ and $\norm{A}_2$ their 2-norm and induced 2-norm, respectively.
The matrix $\ct{A} \in \C^{n,m}$ stands for the complex conjugate of $A$.
For $p\in[1,\infty]$ and any measurable set $\Omega\subseteq\R^d$ we denote by $L^p(\Omega,\C)$ the usual Lebesgue spaces and by $W^{k,p}(\Omega,\C)$ for $k\in\mathbb N$ the corresponding Sobolev spaces, see \cite{Bre10}. \label{glo:W_kp}
Differentiability in this paper is to be intended not in the complex, holomorphic sense, but in the real sense, identifying $\C$ with $\R^2$.
Furthermore, we define $L^p(\Omega,\C^n)$ \label{glo:Lp} as the space of vector-valued functions whose entries are in $L^p(\Omega,\C)$. We define analogously $L^p(\Omega,\C^{m,n})$, $W^{k,p}(\Omega,\C^n)$.
In particular, we equip $L^p(\Omega,\C^{m,n})$ with the norm
\begin{alignat*}{3}
    & \norm{\,\cdot\,}_{L^p} &&: L^p(\Omega,\C^{m,n}) \to \R, &\qquad A &\mapsto \norm[\big]{ \norm{A}_2 }_{L^p}, \\
    & \norm{A}_2 &&: \Omega \to \R, &\qquad \omega &\mapsto \norm{A(\omega)}_2.
\end{alignat*}
In what follows, we will more often consider the local variants of these function spaces, i.e., \label{glo:Lploc}
\[
L^2_\loc(\Omega,\C^n) = \set[\big]{ f : \Omega\to\C^n \mid f|_K \in L^2(\Omega,\C^n)\text{ for all compact subsets }K\subseteq\Omega },
\]
and analogous definitions for the other spaces.
We sometimes omit the domain and co-domain from the function space notation, when they are general or clear from the context.
Note that $L^p$ for $p\in[1,\infty]$ is a Banach space, and $L^2$ is a Hilbert space with respect to the inner product 
\[
\aset{f,g}_{L^2} = \int_{\Omega}\ct{g(\omega)}f(\omega)\,\td \omega, \qquad\text{for all }f,g\in L^2(\Omega,\C^n).
\] 
Moreover, $\mathcal C(I,\C^{n})$ denotes the set of continuous functions $f:I\rightarrow \C^n$ \label{glo:cnt}. Furthermore, we denote by $\GL[n]$ \label{glo:Gl} the set of all invertible matrices $M\in\mathbb{C}^{n, n}$.
Given a pointwise invertible matrix function $A:\timeInt\to\GL[n]$, we denote by 
$
A^{-1} : \timeInt \to \GL[n], \ t \mapsto A(t)^{-1}
$
the pointwise inverse matrix function.
If $A$ is defined and invertible only for a.e.~$t\in\timeInt$, e.g.~if $A\in L^p_\loc(\timeInt,\GL[n])$, then $A^{-1}$ is only defined a.e.~on $t\in\timeInt$.

We denote by $\HerMat[n]\subseteq\C^{n,n}$ \label{glo:hermat}\label{glo:pos_semidef}\label{glo:pos_def} the subspace of Hermitian matrices of size $n$, and by $\posSD[n],\posDef[n]\subseteq\HerMat[n]$ the subsets of positive semi-definite and positive definite matrices, respectively. If $A\in\posSD[n]$ (resp.~$A\in\posDef[n]$), we also write $A\geq 0$ (resp.~$A>0$).
We equip $\HerMat[n]$ with the Loewner partial order, i.e., we write $A\geq B$ for $A,B\in\HerMat[n]$ if $A-B\geq 0$.

Given $t_0,t_1\in\R,\ t_0\leq t_1$, we denote by $\BV([t_0,t_1],\C^{m,n})$ the matrix functions of bounded variation on $[t_0,t_1]$ and by $\AUC([t_0,t_1],\HerMat[n])$ the absolutely upper semicontinuous Hermitian matrix functions on $[t_0,t_1]$.
For every open interval $\timeInt\subseteq\R$, we denote by $\BV_\loc(\timeInt,\C^{m,n})$ and $\AUC_\loc(\timeInt,\HerMat[n])$ the corresponding local variants of the spaces. We define these spaces precisely in \Cref{sec:null}. 

\section*{Glossary}
\begin{center}
\setlength{\tabcolsep}{2pt}
\scalebox{0.85}{
\begin{tabular}{|l |l | l|} 
 \hline
\begin{tabular}{@{}c@{}} \bfseries Abbreviations /  \\ \bfseries Symbols\end{tabular} & \bf ~Full name & \bf  \begin{tabular}{@{}c@{}} Reference  \\ in the text\end{tabular}  \\ [0.5ex] 
 \hline
$\AUC([t_0,t_1],\HerMat[n])$ & ~absolutely upper semicontinuous Hermitian matrix functions on $[t_0,t_1]$& p.\ \pageref{def:AUC_matrix}\\ \hline 
$\AUC_{\loc}(\timeInt,\HerMat[n])$ & $\begin{matrix} \text{locally absolutely upper semicontinuous Hermitian matrix functions} \\ \text{ on an open interval $\timeInt\subseteq\R$} \hspace{6.5cm}\end{matrix}$& p.\ \pageref{def:AUC_matrix}\\ \hline
$\BV([t_0,t_1],\C^{m,n})$ & ~matrix functions of bounded variation on $[t_0,t_1]$& p.\ \pageref{def:BV} \\ \hline
$\BV_\loc(\timeInt,\C^{m,n})$ & ~matrix functions of locally bounded variation on an open interval $\timeInt\subseteq\R$ & p.\ \pageref{def:BVloc} \\ \hline
$ \mathcal C(I,\C^{n})$  & ~set of continuous functions $f:I\rightarrow \C^n$ & p.\ \pageref{glo:cnt} \\ \hline 
$\GL[n]$ & ~set of invertible matrices in $\mathbb{C}^{n,n}$ & p.\ \pageref{glo:Gl}  
\\ \hline
\refKYP{} & ~Kalman-Yakubovich-Popov inequality for LTV system & p.\ \pageref{eq:KYP}
\\
 \hline
 $L^p(\Omega,\C^n)$ & ~space of measureable and $p$-integrable functions $f:\Omega\rightarrow\C^n$ & p.\ \pageref{glo:Lp}  \\ 
 \hline
  $L_{\loc}^p(\Omega,\C^n)$ & ~space of measureable and locally $p$-integrable functions $f:\Omega\rightarrow\C^n$ & p.\ \pageref{glo:Lploc} \\ \hline 
  LTI & ~linear time-invariant & p.\ \pageref{glo:LTI} \\ \hline 
LTV & ~linear time-varying & p.\ \pageref{eq:tv_system} \\ \hline 
$\LAMBDA_{t_0,t_1}$ & ~Popov operator in $L^2\pset[\big]{[t_0,t_1],\C^m}$ & p.\ \pageref{def:popov_op} \\  \hline
\refNN& ~LTV system with nonnegative supply & p.\ \pageref{def:nonnegativeSupply}
 \\
 \hline
\refPa{}& ~passive LTV system & p.\ \pageref{def:passive} \\ \hline
\refPH{}& ~linear time-varying port-Hamiltonian system & p.\ \pageref{def:pH}   
\\ \hline
$\stm$ & ~state-transition matrix associated with $\dot x=A(t)x$ & p.\ \pageref{def:stateTransMatrix}  
\\ \hline
$\HerMat[n]$ & ~set of Hermitian matrices in $\mathbb{C}^{n,n}$ & p.\ \pageref{glo:hermat}    \\ \hline
$\posSD[n]$ & ~set of Hermitian positive semi-definite matrices in $\mathbb{C}^{n,n}$ & p.\ \pageref{glo:pos_semidef} \\ \hline $\posDef[n]$ & ~set of Hermitian positive definite matrices in $\mathbb{C}^{n,n}$ & p.\ \pageref{glo:pos_def}\\
 \hline
$\timeInt$& ~open (possibly unbounded) time interval & 
 \\ \hline
 $\totVar[t_0][t_1]$ & ~total variation on $[t_0,t_1]$ & \pageref{glo:transfer_op}
 \\ \hline
 $W^{k,p}(\Omega,\C^n)$ & ~Sobolev space of $k$-times weakly differentiable $f:\Omega\rightarrow\C^n$ with derivatives in $L^p$ & p.\ \pageref{glo:W_kp}\\ \hline
 $W_{\loc}^{k,p}(\Omega,\C^n)$ & ~Sobolev space of $k$-times weakly differentiable $f:\Omega\rightarrow\C^n$ with derivatives in $L_{\loc}^p$ & p.\ \pageref{glo:W_kp}
 \\ \hline
 $\mathbf{Z}_{t_0,t_1}$ & ~transfer operator & p.\ \pageref{glo:transfer_op}\\ \hline
\end{tabular}
}
\end{center}

\section{Introduction}\label{sec:intro}
This paper is devoted to the analysis of linear time-varying (LTV) systems and their dissipativity properties. LTV systems appear quite naturally in many applications, for example, when nonlinear systems are linearized around nonstationary reference solutions \cite{Cam95} or in the context of linear systems when some of the system parameters are time-dependent. Examples are the rocket problem \cite{ForD10}, where the movement of a mass is described that decreases with time, or district heating systems that contain water storages with volumes that vary over time \cite{MacFCS22}. In addition, LTV systems also appear in the modeling and stability analysis of power systems, e.g., in the context of Harmonic Power Flow \cite{CecBPZLP23} or when time-varying switching signals of power converters are considered \cite{GerSZMS24}.

The popularity of dissipative system models in engineering applications stems from the fact that they are important in stabilization and optimal control design. 
In the literature, there are many ways to formalize and incorporate the physically motivated notion of dissipativity into the system model. Among the most common approaches are \emph{port-Hamiltonian} system formulations and the use of \emph{storage functions} that lead to the classical notions of (passive or) dissipative systems \cite{Wil72}. There are different variations of these concepts, and
the aim of this paper is to discuss proper definitions for each of these properties for linear time-varying systems and to analyze the relations between them. 
This work is inspired by previous work of the authors in the linear time-invariant (LTI) \label{glo:LTI}case~\cite{CheGH23,CheGHM23}.

We consider linear time-varying (LTV) systems of the form 
\begin{equation}
\label{eq:tv_system}
\begin{split}
    \dot x(t) &= A(t)x(t)+B(t)u(t),\\
    y(t) &= C(t)x(t)+D(t)u(t),
\end{split}
\end{equation}
in an open (possibly unbounded) time interval $\timeInt\subseteq\mathbb R$, with state 
$x\in W^{1,1}_\loc(\timeInt,\C^n)$, input and output variables $u,y\in ~L^2_\loc(\timeInt,\C^m)$, and the coefficients $A\in L^1_\loc(\timeInt,\C^{n,n})$, $B\in L^2_\loc(\timeInt,\C^{n,m})$, $C\in L^2_\loc(\timeInt,\C^{m,n})$ and $D\in L^\infty_\loc(\timeInt,\C^{m,m})$ are matrix functions. 

Note that since the (time) domain is one-dimensional, it holds that $W^{1,1}_\loc(\timeInt,\C^n)\subseteq\mathcal C(\timeInt,\C^n)\subseteq L^\infty_\loc(\timeInt,\C^n)$. Furthermore, for every input $u\in L^2_\loc(\timeInt,\C^m)$, system \eqref{eq:tv_system} satisfies the Carath\'eodory conditions \cite{Fil88} for the existence and uniqueness of solutions. Thus, for every pair $(t_0,x_0)\in\timeInt\times\C^n$ there exists exactly one state solution $x\in W^{1,1}_\loc(\timeInt,\C^n)$ that satisfies $x(t_0)=x_0$.
Moreover, the choice of the function spaces for $C$ and $D$ ensures that $y\in L^2_\loc(\timeInt,\C^m)$, see \Cref{cor:solution_tv} in the appendix for more details. Note that throughout the paper, we always use the weakest possible choices of function spaces in order to be able to deal with special situations like jumps in the coefficients or inputs as well as possible switching behavior.

We consider a complex-valued setting here and, therefore, our results are formulated accordingly. However, most of them also hold in the real-valued case. 

\noindent Extending to complex values what is presented in \cite{BeaMXZ18,MehM19}, we make use of the following definition of linear time-varying port-Hamiltonian (pH) systems.
\begin{definition}\label{def:pH}
    A linear time-varying \emph{port-Hamiltonian}  system \refPH{}  is a dynamical system of the form \eqref{eq:tv_system}
    together with a (time-varying) quadratic Hamiltonian $\mathcal{H}(t,x)=\frac{1}{2}\ct{x}Q(t)x$, such that
    \begin{equation}\label{eq:pH_coefficients}
    \bmat{A(t) & B(t) \\ C(t) & D(t)} = \bmat{ \pset[\big]{J(t)-R(t)}Q(t)-K(t) & G(t)-P(t) \\ \pset[\big]{\ct{G(t)}+\ct{P(t)}}Q(t) & S(t)-N(t) },
    \end{equation}
    where $Q\in W^{1,1}_\loc(\timeInt,\posSD[n])$, $J,R,K:\timeInt\to\mathbb C^{n,n}$, $G,P:\timeInt\to\mathbb C^{n,m}$ and $S,N:\timeInt\to\mathbb C^{m,m}$ are such that $J(t)=-\ct{J(t)}$, $N(t)=-\ct{N(t)}$, and \begin{align}\label{eq:Q_lyapunov}Q(t)K(t)+\ct{K(t)}Q(t)=\dot Q(t),\end{align} as well as 
    \[
        W(t) \coloneqq \bmat{R(t) & P(t) \\ \ct{P(t)} & S(t)}
        \in \posSD[n+m]
        \]
    for a.e.~$t\in\timeInt$.
    We say that an LTV system \eqref{eq:tv_system} admits a port-Hamiltonian formulation if there exist $Q,K,J,R,G,P,S,N$
    satisfying the properties stated above.
\end{definition}

\noindent The concept of \emph{passivity} was introduced independently by Kalman, Yakubovich and Popov  for linear time-invariant systems based on the solution of a linear matrix inequality, which is called the \emph{Kalman-Yakubovich-Popov (KYP) inequality}. For linear time-varying systems, this concept was generalized in \cite{AndM74} as follows.
\begin{definition} \label{def:KYP}
    Consider a system of the form \eqref{eq:tv_system}. We say that a matrix function $Q\in W^{1,1}_\loc(\timeInt,\posSD[n])$ pointwise satisfies an associated \emph{Kalman-Yakubovich-Popov inequality} \refKYP{} if
    \begin{equation}\label{eq:KYP}
        \bmat{ (-\ct AQ - QA - \dot Q)(t) & (\ct{C}-QB)(t) \\ (C-\ct B Q)(t) & (D+\ct D)(t) }\geq 0
    \end{equation}
    holds for a.e.~$t\in\timeInt$.
\end{definition}

\noindent The alternative definition of passivity of~\cite{Wil72} for LTI systems in terms of storage functions was generalized in \cite{HilM80} to time-varying systems.
\begin{definition}\label{def:passive}
    A system of the form \eqref{eq:tv_system} is called \emph{passive} \refPa{} if there exists a pointwise nonnegative real-valued function $V:\timeInt\times\C^n\to[0,\infty)$ that satisfies $V(t_0,0)=0$ as well as the \emph{dissipation inequality}
    \begin{equation}
    \label{def:passive_ineq}
        V\pset[\big]{t_1,x(t_1)} - V\pset[\big]{t_0,x(t_0)} \leq \int_{t_0}^{t_1}\realPart(\ct{y(t)}u(t))\td t
    \end{equation}
    for all $t_0,t_1\in\timeInt$ with $t_0\leq t_1$ and for all state-input-output solutions $(x,u,y)$ of \eqref{eq:tv_system}. 
    We call a function that satisfies these properties a \emph{storage function} for \eqref{eq:tv_system}.
\end{definition}

\noindent The property introduced in \Cref{def:passive} is sometimes just called passivity \cite{CheGH23} or \emph{impedance passivity}~\cite{CheGHM23,KurS07,Sta03}. In some references, the right-hand side in \eqref{def:passive_ineq} is replaced by a more general term $\int_{t_0}^{t_1}w(u(t),y(t))\td t$, where $w:\C^m\times\C^m\to\R$ denotes a more general \emph{supply rate} \cite{Wil72} which is usually quadratic. 
Moreover, in \cite{HilM80} a different but related \emph{dissipativity} concept is introduced, requiring the inequality 
\begin{align*}
    \int_{t_0}^{t_1}w(u(t),y(t))\td t\geq 0 
\end{align*}
to hold for every solution such that $x(t_0)=0$.
Passivity is then defined as the corresponding property for $w(u,y)=\realPart(\ct{y}u)$, i.e.,
\begin{equation*}
        \inf_{\substack{(t_0,t_1,u)\,:\,t_0\leq t_1, \\ x(t_0)=0}}\int_{t_0}^{t_1}\realPart\pset[\big]{\ct{y(t)}u(t)}\td t \geq 0.
\end{equation*}
We will show below that this property is weaker than that in~\Cref{def:passive}, but related to the nonnegativity of the \emph{Popov function}. In \Cref{def:nonnegativePopov} we will introduce a time-varying analogue of the Popov function, as in \cite{LozBEM00}, that
replaces the \emph{positive realness of the transfer function} (see, e.g.~\cite{CheGH23}). At this point, we do not discuss the Popov-operator, we will do this
in detail in Section~\ref{sec:popov},
but instead introduce \emph{nonnegativity of the supply rate}.
\begin{definition}\label{def:nonnegativeSupply}
    We say that the system \eqref{eq:tv_system} has \emph{nonnegative supply} \refNN{} if
    \begin{equation*}
        \int_{t_0}^{t_1}\realPart\pset[\big]{\ct{y(t)}u(t)}\td t \geq 0
    \end{equation*}
    holds for every $t_0,t_1\in\timeInt,\ t_0\leq t_1$ and every state-input-output solution $(x,u,y)$ such that $x(t_0)=0$.
\end{definition}
\noindent Note that
\cite{HilM80} characterizes also \emph{cyclo-dissipativity}, see \cite{Sch17}, where the nonnegativity is required to hold only for solutions satisfying $x(t_0)=x(t_1)=0$.

The nonnegativity condition for $Q$, or correspondingly the property for the Hamiltonian $\mathcal H$ to be bounded from below, is not always included in the definition of port-Hamiltonian systems, see e.g. \cite{SchJ14,SchJ21}. 
Since our goal is to compare the \refPH{} condition with \refKYP{}, \refPa{} and \refNN{}, it is natural to include $Q\geq 0$ in our definition.
If one allows $Q$ to be indefinite, then the pH systems are only \emph{cyclo-passive}, i.e., they satisfy \eqref{def:passive_ineq} when considering only \emph{cyclic trajectories}, i.e., trajectories with $x(t_1)=t_0$, see, e.g.~\cite{HilM80,SchJ21}. This is then
related to the existence of indefinite solutions of the KYP inequality and to the nonnegativity of the Popov operator when restricted to the inputs that induce cyclic trajectories.

For LTI systems, the mapping between input and output is usually characterized by a transfer operator
resulting from an impulsive input in the frequency domain. The extension of the transfer operator for time-varying systems is introduced in different ways in \cite{AndM74,BayE05,LozBEM00,Vid02,Zad50}. The challenge for linear time-varying systems is that the transfer operator is in general not independent of the time point where the
input impulse is applied.
In \cite{KamK82}, a definition of a transfer operator was considered based on a time-varying impulse. There, it is shown that the time-varying transfer operator can be considered as arising from a delayed linear time-invariant system with a delay corresponding to the time point where the impulse is applied. 
Another variant is used in \cite{DewV98} to convert the LTV system to an LTI system with an infinite number of entries.
Alternatively, in~\cite{BayE05}, a two-variable transfer function is considered, where the second variable depends on the time point at which the impulse is applied.
We present a new concept for the transfer operator in \eqref{def:transfer_op}.

In this paper, we provide a detailed discussion of the subtle relationships between the four properties \refPH{}, \refPa{}, \refKYP{}, and \refNN{}. Up to now, there exist only partial results on these relationships, and different notation is also used for each property. In \cite{AndM74} the relationship between \refKYP{} and \refNN{} is studied and the invertibility of solutions of the KYP inequality under adequate observability assumptions is presented. Such systems are called passive.
In \cite{HilM80} for real systems, more general quadratic supply rates in an input-output setting are studied. The \refNN{} property is called dissipativity in that context. In the same paper, storage functions and available storage are introduced and the dissipativity and cyclo-dissipativity are characterized by the finiteness of available storage using a reachability assumption.  In \cite{ForD10} the relationship between \refNN{} and the existence of solutions to Lur'e equations is studied under additional observability and controllability assumptions. The property \refNN{} is also called passive and strict passivity notions are considered. 
In \cite{MehM19} a definition of nonlinear time-varying pH systems is introduced, and the passivity of such systems is shown. 
Our paper is partly motivated by the detailed overview in \cite{LozBEM00}  where the notions \refNN{}, \refKYP{}, and \refPa{} are presented and related to those of the LTI case. Here we include \refPH{} and generally weaken the assumptions.

The content of the paper is as follows. In Section~\ref{sec:prelim} 
we present some preliminary results that are necessary for subsequent sections. 
In Section~\ref{sec:phsystem} we give a detailed discussion of the (dissipativity) properties of linear time-varying port-{H}amiltonian systems.
Section~\ref{sec:KYPinequality} analyzes the KYP inequality and its implications for port-Hamiltonian systems, and Section~\ref{sec:PatopH} analyzes the relationship between passivity and port-Hamiltonian systems. In Section~\ref{sec:popov} we study systems with nonnegative supply and the relationship to port-Hamiltonian systems including intermediate relations to KYP inequality and passivity. The paper ends with some applications in Section~\ref{sec:applications}  and conclusions in Section~\ref{sec:conclusions}. Several important results that are essentially known but often hard to track down are presented in the appendix. 

An overview of our results is presented in Figure~\ref{fig:overvieweinvertible}.

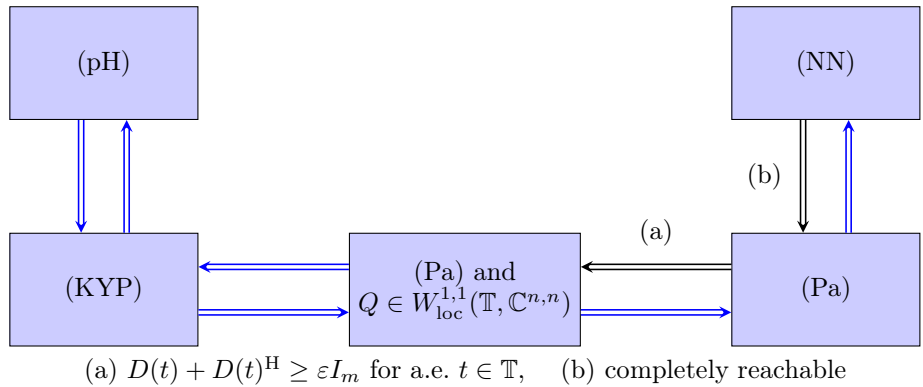
\begin{figure}[htbp!]
    \centering
        \begin{tikzpicture}
        \node[block] (a) {(pH)};
        \node[block, below =1.5cm of a]   (b){(KYP)};
        \node[block, right =2cm of b]   (bc){
        (Pa) and \\$Q\in W^{1,1}_{\mathrm{\loc}}(\mathbb T,\mathbb C^{n,n})$};
        \node[block, right =2cm of bc]   (c){(Pa)};
        \node[block, above =1.5cm of c]   (d){(NN)};
        
        \node[below = 0cm of bc] (e){
         (a) $D(t)+D(t)^{\mathrm{H}}\geq\varepsilon I_m$ for a.e.~$t\in\mathbb T$, \quad (b) completely reachable
          };
        \draw[->,semithick,double,double equal sign distance,>=stealth, color=blue] ([xshift=-2ex]a.south) -- ([xshift=-2ex]b.north);
        \draw[->,semithick,double,double equal sign distance,>=stealth, color=blue] ([xshift=2ex]b.north) -- ([xshift=2ex]a.south) node[midway,right = 1 ex]{};
         \draw[->,semithick,double,double equal sign distance,>=stealth, color=blue] ([yshift=-2ex]b.east) -- ([yshift=-2ex]bc.west) node[midway,below = 1 ex]{};
        \draw[->,semithick,double,double equal sign distance,>=stealth, color=blue] ([yshift=2ex]bc.west) -- ([yshift=2ex]b.east) node[midway,above = 1 ex]{};
        \draw[->,semithick,double,double equal sign distance,>=stealth, color=blue] ([yshift=-2ex]bc.east) -- ([yshift=-2ex]c.west) node[midway,below = 1 ex]{};
        \draw[->,semithick,double,double equal sign distance,>=stealth, color=black] ([yshift=2ex]c.west) -- ([yshift=2ex]bc.east) node[midway,above = 1 ex]{(a)};
        \draw[->,semithick,double,double equal sign distance,>=stealth, color=blue]([xshift=2ex]c.north) -- ([xshift=2ex]d.south);
        \draw[->,semithick,double,double equal sign distance,>=stealth,color=black] ([xshift=-2ex]d.south) -- ([xshift= -2ex]c.north) node[midway,left = 1 ex]{(b)};
          \end{tikzpicture}
    \caption{Relationship between \refPH{}, \refKYP{}, \refPa{} and \refNN{} for linear time-varying system \eqref{eq:tv_system}. Blue arrows indicate implications without additional assumptions, and black ones indicate 
    implications with additional assumptions. 
    For the LTI case complete reachability is equivalent to complete controllability resulting in the relationships from \cite{CheGH23}. A more detailed overview of the results including references can be found in \Cref{fig:overvieweinvertibleThm}. }
    \label{fig:overvieweinvertible}
\end{figure}

\section{Preliminaries}\label{sec:prelim}

In this section, we recall the solution theory for linear time-varying systems of the form \eqref{eq:tv_system}, which is used to define the Popov operator and to study the notions of controllability and observability. The definitions for these notions are not uniform in the literature. In this section, to create a complete picture of the different concepts, we present the definitions according to our notation. The proofs are given in the appendix.

\begin{definition}\label{def:stateTransMatrix}
    The \emph{state-transition matrix} of \eqref{eq:tv_system} is the unique solution $\stm\in W^{1,1}_\loc~(\timeInt~\times~\timeInt,\C^{n,n})$ of the initial value problem
    \begin{equation}\label{eq:stateTransMatrix}
        \pd{}{t}\stm(t,s) = A(t)\stm(t,s), \qquad \stm(s,s)=I_n,
    \end{equation}
    for $t,s\in\timeInt$.
\end{definition}

\noindent Details on the existence, uniqueness, and properties of the state-transition matrix are discussed in the appendix; see \Cref{thm:stateTransitionMatrix}.
In particular, we also have $\stm\in\mathcal C(\timeInt\times\timeInt,\GL[n])$, i.e., $\stm$ has a pointwise invertible continuous representative.
In the same result, a clear correspondence between the state transition matrix $\stm$ and the solutions of the homogeneous differential equation 
\begin{equation}\label{eq:tv_system_hom}
    \dot x(t) = A(t)x(t)
\end{equation}
is shown. 
In particular, for every $(t_0,x_0)\in\timeInt\times\C^n$ the unique solution of \eqref{eq:tv_system_hom} with $x(t_0)=x_0$ can be expressed as $x(t)=\stm(t,t_0)x_0$. 
Furthermore, the general solution of the inhomogeneous system \eqref{eq:tv_system} can be written as
\begin{equation}\label{eq:state_via_stm}
    x(t) = \stm(t,t_0)x_0 + \int_{t_0}^t\stm(t,s)B(s)u(s)\td s
\end{equation}
for all $t\in\timeInt$.

\begin{remark}\label{rem:fundamental}
In addition to using the state transition matrix $\stm$, the solutions of the initial value problem \eqref{eq:stateTransMatrix} can also be expressed as $\stm(s,t)=X(s)X(t)^{-1}$ where $X\in W^{1,1}_\loc(\timeInt,\C^{n,n})$ is the unique \emph{fundamental solution matrix} of $\dot X(t)=A(t)X(t)$ with initial condition $X(t_0)=I_n$ for some $t_0\in\timeInt$, see also \Cref{thm:fundamentalSolution} in the appendix. An advantage of using $\stm$ instead of $X$ is that no reference time $t_0$ has to be fixed.
\end{remark}

\noindent In the following, we recall some definitions of controllability and observability concepts for linear time-varying systems, see e.g.\ \cite{Ilc89,JikH14,KnoK13}.

\begin{definition}
\label{def:reachable}
    A state $x_0\in\C^n$ of the system \eqref{eq:tv_system} is called \emph{reachable} at time $t_0\in\timeInt$ if there exists a state-input-output solution $(x,u,y)$ of \eqref{eq:tv_system} such that $x(t_0)=x_0$ and $x(t_{-1})=0$ for some $t_{-1}\in\timeInt,\ t_{-1}\leq t_0$.
    The system \eqref{eq:tv_system} is called \emph{reachable} at time $t_0\in\timeInt$ if all states in $\C^n$ are reachable at time $t_0$.
    The system \eqref{eq:tv_system} is called \emph{completely reachable} if it is reachable at all times in $\timeInt$.
\end{definition}

\noindent Further, we use an observability concept that appears in \cite{AndM74} and is called \emph{reconstructability} in \cite{KnoK13}, see also \cite[Definition 3.3]{Ilc89}.
\begin{definition}
\label{def:reconstruct}
    The system \eqref{eq:tv_system} is called \emph{completely reconstructable} if for all $t_0\in\timeInt$ there exists $t_{-1}<t_0$ such that for all $x_0\in\R^n$ 
    \[
    C(t)\Phi(t,t_0)x_0=0
    \]
   for almost all $t\in\timeInt$ with $t\geq t_0$ implies $x_0=0$.
\end{definition}
\noindent The complete reconstructability is equivalent to the complete controllability of the dual system $\dot x(t)=A(-t)^\top x(t)+C(-t)^\top u(t)$, see \cite[Proposition 3.5]{Ilc89}.

\begin{remark}
It should be noted that, in contrast to LTI systems, there is a much greater variety of controllability and observability notions for linear time-varying systems. In particular, it is well known for LTI systems that reachability is equivalent to controllability and, therefore, to the reconstructability of the dual system. Consequently, it is known that for LTV systems with piecewise analytic coefficients, the complete reachability of Definition~\ref{def:reachable} is equivalent to complete controllability, see, e.g., \cite[Corollary 3.9]{Ilc89} and also \cite{JikH14} for LTV systems with continuous coefficients and piecewise continuous input. 
Moreover, it is known in the LTI case that the controllability (resp. reachability, reconstructability) at time $t_0\in\timeInt$ is equivalent to complete controllability (resp. reachability, reconstructability).
\end{remark}

\subsection{Transformation invariance of pH systems, KYP inequalities, passivity and nonnegative supplies}
In this subsection, we consider the invariance of the discussed dissipativity properties under state-space transformations. The following classical lemma, see, e.g. \cite{Ilc89}, describes the behavior of the coefficients of a LTV system under time-varying state space transformations.

\begin{lemma}\label{lem:changeOfVariables}
    Consider a LTV system of the form \eqref{eq:tv_system} and let $Z\in W^{1,1}_\loc(\timeInt, \GL[n])$. Then the change of variables $x(t)\coloneqq Z(t)\wt x(t)$ induces the equivalent LTV system
    \begin{equation}\label{eq:tv_system_changeOfVariables}
        \begin{split}
            \dot{\wt x}(t) &= \wt A(t)\wt x(t) + \wt B(t)u(t), \\
            y(t) &= \wt C(t)\wt x(t) + D(t)u(t),
        \end{split}
    \end{equation}
    where $\wt A=Z^{-1}(AZ-\dot Z)$, $\wt B=Z^{-1}B$ and $\wt C=CZ$.
    In particular, $(\wt x,u,y)$ is a solution of \eqref{eq:tv_system_changeOfVariables} if and only if $(x,u,y)$ is a solution of \eqref{eq:tv_system}.
\end{lemma}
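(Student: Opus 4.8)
The plan is to verify the change of variables directly by substituting $x(t) = Z(t)\wt x(t)$ into the state equation of \eqref{eq:tv_system} and reading off the transformed coefficients. First, I would note that since $Z \in W^{1,1}_\loc(\timeInt, \GL[n])$ is pointwise invertible with a locally integrable derivative, the product rule for weak derivatives gives $\dot x = \dot Z \wt x + Z \dot{\wt x}$ almost everywhere. The regularity here needs a small comment: because $Z \in W^{1,1}_\loc \subseteq \mathcal C(\timeInt, \C^{n,n})$ (a one-dimensional Sobolev embedding already invoked in the excerpt) and $Z^{-1}$ is continuous, the map $\wt x \mapsto Z \wt x$ preserves membership in $W^{1,1}_\loc(\timeInt,\C^n)$, so $\wt x = Z^{-1} x$ is a legitimate $W^{1,1}_\loc$ state whenever $x$ is, and conversely.

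Next, I would substitute into $\dot x = Ax + Bu$ to obtain $\dot Z \wt x + Z \dot{\wt x} = A Z \wt x + B u$. Solving for $\dot{\wt x}$ by left-multiplying with $Z^{-1}$ (valid a.e.\ since $Z(t)\in\GL[n]$ pointwise) yields
\begin{equation*}
    \dot{\wt x} = Z^{-1}(A Z - \dot Z)\wt x + Z^{-1} B u,
\end{equation*}
which identifies $\wt A = Z^{-1}(AZ - \dot Z)$ and $\wt B = Z^{-1} B$. For the output equation, I would simply replace $x = Z \wt x$ in $y = Cx + Du$ to get $y = (CZ)\wt x + D u$, giving $\wt C = CZ$ and leaving $D$ unchanged. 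I should check that the transformed coefficients lie in the required function spaces: $\wt A \in L^1_\loc$ follows since $Z^{-1}, Z$ are continuous (hence in $L^\infty_\loc$) and $A, \dot Z \in L^1_\loc$; similarly $\wt B \in L^2_\loc$ and $\wt C \in L^2_\loc$, so \eqref{eq:tv_system_changeOfVariables} is again an admissible LTV system of the same form.

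For the equivalence of solutions, the computation above is reversible at every step: the map $\wt x \mapsto x = Z \wt x$ is a pointwise-invertible bijection on $W^{1,1}_\loc(\timeInt,\C^n)$, and the algebraic manipulations (multiplication by $Z$ or $Z^{-1}$, the product rule) are all equivalences holding a.e. Hence $(\wt x, u, y)$ solves \eqref{eq:tv_system_changeOfVariables} if and only if $(x, u, y) = (Z\wt x, u, y)$ solves \eqref{eq:tv_system}, with the same input $u$ and output $y$. I do not anticipate a deep obstacle here; the only point requiring genuine care is the \emph{regularity bookkeeping}, namely justifying the product rule $\dot{(Z\wt x)} = \dot Z \wt x + Z \dot{\wt x}$ for $W^{1,1}_\loc$ functions and confirming that the transformation respects all the ambient function spaces. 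This is where I would focus the rigorous part of the argument, possibly citing the standard product rule for Sobolev functions on an interval, while the coefficient identification itself is a routine substitution.
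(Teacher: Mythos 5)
Your proposal follows essentially the same route as the paper's proof: substitute $x = Z\wt x$, use the product rule for $W^{1,1}_\loc$ functions, read off $\wt A = Z^{-1}(AZ-\dot Z)$, $\wt B = Z^{-1}B$, $\wt C = CZ$, verify the coefficient spaces via $Z,Z^{-1}\in L^\infty_\loc$ and $\dot Z\in L^1_\loc$, and argue both directions of the solution correspondence. The one place where your argument as written does not hold up is the regularity claim for the converse direction: you assert that, because $\wt x \mapsto Z\wt x$ preserves membership in $W^{1,1}_\loc(\timeInt,\C^n)$ and $Z^{-1}$ is continuous, ``conversely'' $\wt x = Z^{-1}x$ is in $W^{1,1}_\loc$ whenever $x$ is. That is a non sequitur: forward preservation of the class under multiplication by $Z$ says nothing about its inverse map, and continuity of $Z^{-1}$ alone does not let you differentiate $Z^{-1}x$ or apply the product rule in the backward direction. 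What you actually need is that $Z^{-1}$ itself lies in $W^{1,1}_\loc(\timeInt,\GL[n])$, with $\dd{}{t}(Z^{-1}) = -Z^{-1}\dot Z Z^{-1}\in L^1_\loc$. The paper isolates exactly this fact as \Cref{lem:inverseContinuous} (proved via $Z^{-1} = \det(Z)^{-1}\operatorname{adj}(Z)$) and invokes it as the very first line of its proof. Replacing your ``and conversely'' with a citation of that lemma (or a proof of it) makes your argument complete, and then it coincides with the paper's.
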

\begin{proof}
    We first note that $Z^{-1}\in W^{1,1}_\loc(\timeInt,\GL[n])$, as shown in \Cref{lem:inverseContinuous}.
    Furthermore, note that $\wt A\in L^1_\loc$, $\wt B\in L^2_\loc$ and $\wt C\in L^2_\loc$ hold, since $Z,Z^{-1}\in L^\infty_\loc$ and $\dot Z\in L^1_\loc$.
    Suppose first that $(\wt x,u,y)$ is a solution of \eqref{eq:tv_system_changeOfVariables}. Then 
    \begin{align*}
        \dot x &= Z\dot{\wt x} + \dot Z\wt x = Z(\wt A\wt x+\wt Bu) + \dot ZZ^{-1}x = Ax + Bu, \\
        y &= \wt C\wt x + Du = CZZ^{-1}x + Du = Cx + Du,
    \end{align*}
    i.e., $(x,u,y)$ is a solution of \eqref{eq:tv_system}.
    Conversely, if $(x,u,y)$ is a solution of \eqref{eq:tv_system_changeOfVariables}, then
    \begin{align*}
        \dot{\wt x} &= Z^{-1}\dot x - Z^{-1}\dot ZZ^{-1}x = Z^{-1}(Ax+Bu) - Z^{-1}\dot Z\wt x = \wt A\wt x + \wt Bu, \\
        y &= Cx + Du = CZ\wt x + Du = \wt C\wt x + Du,
    \end{align*}
    i.e., $(\wt x,u,y)$ is a solution of \eqref{eq:tv_system_changeOfVariables}.
\end{proof}

\noindent Another relevant transformation is applying a change of input and output variables to the system.

\begin{lemma}\label{lem:LTV_IO_ChangeOfVariables}
    Consider an LTV system of the form \eqref{eq:tv_system} and let $V\in L^\infty_\loc(\timeInt,\GL[m])$ be such that also $V^{-1}\in L^\infty_\loc(\timeInt,\GL[m])$. Then the change of input and output variables $\check{u}\coloneqq V^{-1}u$, $\check{y}\coloneqq\ct{V}y$ induces the equivalent LTV system
    \begin{equation}\label{eq:LTV_IO_ChangeOfVariables}
        \begin{split}
            \dot x(t) &= A(t)x(t) + \check{B}(t)\check{u}(t), \\
            \check{y}(t) &= \check{C}(t)x(t) + \check{D}(t)\check{u}(t),
        \end{split}
    \end{equation}
    where $\check{B}=BV$, $\check{C}=\ct{V}C$, $\check{D}=\ct{V}DV$.
    In particular, $(x,\check{u},\check{y})$ is a solution of \eqref{eq:LTV_IO_ChangeOfVariables} if and only if $(x,u,y)$ is a solution of \eqref{eq:tv_system}.
\end{lemma}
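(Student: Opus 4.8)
The plan is to proceed by direct substitution, exactly as in the proof of \Cref{lem:changeOfVariables}, since the state $x$ is left untouched by this transformation and only the input and output are rescaled pointwise. First I would verify that the transformed data lie in the correct function spaces. Because $V,V^{-1}\in L^\infty_\loc(\timeInt,\GL[m])$, taking conjugate transposes gives $\ct{V},\ict{V}\in L^\infty_\loc$ as well, so that $\check B=BV\in L^2_\loc$, $\check C=\ct{V}C\in L^2_\loc$ and $\check D=\ct{V}DV\in L^\infty_\loc$; likewise $\check u=V^{-1}u\in L^2_\loc$ and $\check y=\ct{V}y\in L^2_\loc$. This confirms that \eqref{eq:LTV_IO_ChangeOfVariables} is again an admissible system of the form \eqref{eq:tv_system} and that $(x,\check u,\check y)$ has the required regularity.

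For the forward implication, suppose $(x,u,y)$ solves \eqref{eq:tv_system}. Substituting $u=V\check u$ into the state equation yields
\[
\dot x = Ax + Bu = Ax + BV\check u = Ax + \check B\check u,
\]
while for the output
\[
\check y = \ct{V}y = \ct{V}(Cx+Du) = \ct{V}Cx + \ct{V}DV\check u = \check C x + \check D\check u,
\]
so that $(x,\check u,\check y)$ solves \eqref{eq:LTV_IO_ChangeOfVariables}. The converse is entirely symmetric: from a solution $(x,\check u,\check y)$ of \eqref{eq:LTV_IO_ChangeOfVariables} one recovers $u=V\check u$ and $y=\ict{V}\check y$ and substitutes back, using that $V$ is pointwise invertible for a.e.~$t\in\timeInt$ so that these inverse substitutions are well defined.

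I do not expect any genuine obstacle in this argument, as it is a purely algebraic verification; the only points requiring mild care are the a.e.~invertibility of $V$ (guaranteed by the hypothesis $V^{-1}\in L^\infty_\loc$) and the function-space bookkeeping above, both of which are immediate. In particular, since $x$ is unchanged, the solution theory for \eqref{eq:tv_system} recalled earlier transfers verbatim to \eqref{eq:LTV_IO_ChangeOfVariables}.
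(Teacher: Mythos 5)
Your proposal is correct and follows essentially the same route as the paper: verify the function-space consistency (the paper cites the generalized H\"older inequality explicitly for this, which is the substance of your ``bookkeeping'' remark) and then check both implications by direct pointwise substitution using $u=V\check u$ and $y=\ict{V}\check y$. The only cosmetic difference is that you work out the original-to-transformed direction in detail and declare the converse symmetric, while the paper writes out both; nothing of substance changes.
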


\begin{proof}
    We note that, because of the generalized H\"older inequality (\Cref{thm:genHolder}), it holds that $u,y\in L^2_\loc$ if and only if $V^{-1}u,\ct{V}y\in L^2_\loc$.
    Analogously, we have $\check{B},\check{C}\in L^2_\loc$ and $\check{D}\in L^\infty_\loc$ and thus the transformed LTV system \eqref{eq:LTV_IO_ChangeOfVariables} is consistent with the function space assumptions for \eqref{eq:tv_system}.
    Suppose first that $(x,\check{u},\check{y})$ is a solution of \eqref{eq:LTV_IO_ChangeOfVariables}. Then
    \begin{align*}
        \dot x &= Ax + \check{B}\check{u} = Ax + BVV^{-1}u = Ax + Bu, \\
        y &= \ict{V}\check{y} = \ict{V}(\check{C}x+\check{D}u) = \ict{V}(\ct{V}Cx+\ct{V}DCV^{-1}u) = Cx + Du,
    \end{align*}
    i.e., $(x,u,y)$ is a solution of \eqref{eq:tv_system}.
    Conversely, if $(x,u,y)$ is a solution of \eqref{eq:tv_system}, then
    \begin{align*}
        \dot x &= Ax + Bu = Ax + BV\check{u} = Ax + \check{B}\check{u}, \\
        \check{y} &= \ct{V}y = \ct{V}(Cx + Du) = \ct{V}Cx + \ct{V}DV\check{u} = \check{C}x + \check{D}\check{u},
    \end{align*}
    i.e., $(x,\check u,\check y)$ is a solution of \eqref{eq:LTV_IO_ChangeOfVariables}.
\end{proof}

\noindent Note that, due to \Cref{lem:inverseContinuous}, the condition $V\in\mathcal C(\timeInt,\GL[n])$ is sufficient to satisfy the hypothesis of \Cref{lem:LTV_IO_ChangeOfVariables}.

It is also relevant to describe the behavior of the coefficients of a LTV system under time-varying transformations.

\begin{lemma}\label{lem:timeTransformation}
    Consider a LTV system of the form \eqref{eq:tv_system}, let $\wh\timeInt\subseteq\R$ be another open time interval and let $\theta\in\mathcal C^1(\wh\timeInt,\timeInt)$ be a diffeomorphism such that $\dot\theta>0$ 
    holds pointwise.
    Then the time-varying transformation $t=\theta(\wh t)$ induces the equivalent linear time-varying system
    \begin{equation}\label{eq:tv_system_timeTrans}
        \begin{split}
            \dot{\wh x}(\wh t) &= \wh A(\wh t)\wh x(\wh t) + \wh B(\wh t)\wh u(\wh t), \\
            \wh y(\wh t) &= \wh C(\wh t)\wh x(\wh t) + \wh D(\wh t)\wh u(\wh t),
        \end{split}
    \end{equation}
    where $\wh A=\dot\theta(A\circ\theta)$, $\wh B=\dot\theta(B\circ\theta)$, $\wh C=\dot\theta(C\circ\theta)$ and $\wh D=\dot\theta(D\circ\theta)$.
    In particular, $(x,u,y)$ is a state-input-output solution of \eqref{eq:tv_system} if and only if $(\wh x,\wh u,\wh y)\coloneqq(x\circ\theta,u\circ\theta,\dot\theta(y\circ\theta))$ is a state-input-output solution of \eqref{eq:tv_system_timeTrans}.
    Furthermore, for every $\wh t_0,\wh t_1\in\wh\timeInt,\ \wh t_0\leq\wh t_1$ and the corresponding $t_0\coloneqq\theta(\wh t_0)$ and $t_1\coloneqq\theta(\wh t_1)$ it holds that $t_0\leq t_1$ and
    \begin{equation}\label{eq:tv_system_timeTrans_supply}
        \int_{\wh t_0}^{\wh t_1}\realPart\pset[\big]{ \ct{\wh y(\wh t)}\wh u(\wh t) }\td\wh t = \int_{t_0}^{t_1}\realPart\pset[\big]{ \ct{y(t)}u(t) }\td t,
    \end{equation}
    i.e., the supply is invariant under time-varying transformations.
\end{lemma}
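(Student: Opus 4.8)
The plan is to verify the transformed system directly by computing the derivatives under the substitution $t=\theta(\wh t)$ and the candidate transformed trajectory $(\wh x,\wh u,\wh y)=(x\circ\theta,u\circ\theta,\dot\theta\,(y\circ\theta))$. First I would address the function-space bookkeeping: since $\theta\in\mathcal C^1(\wh\timeInt,\timeInt)$ is a diffeomorphism with $\dot\theta>0$ pointwise, composition with $\theta$ is a bijection between the local Lebesgue and Sobolev spaces on $\timeInt$ and $\wh\timeInt$ (this is the change-of-variables formula for integrals together with the chain rule for weak derivatives), so that $\wh A\in L^1_\loc$, $\wh B,\wh C\in L^2_\loc$, $\wh D\in L^\infty_\loc$, $\wh x\in W^{1,1}_\loc$, and $\wh u,\wh y\in L^2_\loc$, confirming that \eqref{eq:tv_system_timeTrans} is a genuine LTV system of the required type. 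The factor $\dot\theta$ multiplying each coefficient is forced precisely by the chain rule, as the next step shows.

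Next I would establish the equivalence of solutions. Applying the chain rule to $\wh x=x\circ\theta$ gives $\dot{\wh x}(\wh t)=\dot\theta(\wh t)\,\dot x(\theta(\wh t))$, and substituting the state equation of \eqref{eq:tv_system} at $t=\theta(\wh t)$ yields
\begin{equation*}
\dot{\wh x}(\wh t)=\dot\theta(\wh t)\pset[\big]{A(\theta(\wh t))x(\theta(\wh t))+B(\theta(\wh t))u(\theta(\wh t))}=\wh A(\wh t)\wh x(\wh t)+\wh B(\wh t)\wh u(\wh t),
\end{equation*}
which is exactly the transformed state equation. For the output equation, multiplying the output relation of \eqref{eq:tv_system} evaluated at $t=\theta(\wh t)$ by $\dot\theta(\wh t)$ gives
\begin{equation*}
\wh y(\wh t)=\dot\theta(\wh t)\,y(\theta(\wh t))=\dot\theta(\wh t)\pset[\big]{C(\theta(\wh t))x(\theta(\wh t))+D(\theta(\wh t))u(\theta(\wh t))}=\wh C(\wh t)\wh x(\wh t)+\wh D(\wh t)\wh u(\wh t).
\end{equation*}
Since $\theta$ is a diffeomorphism, every step is reversible (one simply composes with $\theta^{-1}$ and divides by $\dot\theta$), which gives the claimed biconditional between solutions of the two systems.

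Finally I would prove the supply invariance \eqref{eq:tv_system_timeTrans_supply}. Because $\dot\theta>0$ pointwise and $\theta$ is increasing, $\wh t_0\leq\wh t_1$ implies $t_0=\theta(\wh t_0)\leq\theta(\wh t_1)=t_1$. For the integral identity, I would start from the left-hand side and substitute $\wh y=\dot\theta\,(y\circ\theta)$ and $\wh u=u\circ\theta$, so that $\realPart(\ct{\wh y(\wh t)}\wh u(\wh t))=\dot\theta(\wh t)\,\realPart(\ct{y(\theta(\wh t))}u(\theta(\wh t)))$; the single factor of $\dot\theta$ (not $\dot\theta^2$) appears because only $\wh y$ carries the Jacobian weight while $\wh u$ does not, and this is exactly the factor absorbed by the change of variables $t=\theta(\wh t)$, $\td t=\dot\theta(\wh t)\,\td\wh t$ in the integral. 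Applying that substitution converts the integral over $[\wh t_0,\wh t_1]$ into the integral over $[t_0,t_1]$ of $\realPart(\ct{y(t)}u(t))$, giving the result. The main subtlety is the asymmetric placement of the factor $\dot\theta$ in the definition of $(\wh x,\wh u,\wh y)$; it is chosen precisely so that the state and output equations transform with a uniform factor while the duality pairing $\ct{y}u$ in the supply picks up exactly one Jacobian factor, matching the measure transformation and leaving the supply invariant.
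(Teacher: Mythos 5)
Your proof is correct and follows essentially the same route as the paper: chain-rule computation for the state and output equations, reversibility via the diffeomorphism property, and the change-of-variables formula (with $t_0\leq t_1$ following from $\dot\theta>0$) for the supply identity. The only addition is your explicit function-space bookkeeping at the start, which the paper omits here but which is a harmless (indeed welcome) strengthening rather than a different approach.
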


\begin{proof}
    Suppose first that $(x,u,y)$ is a state-input-output solution of \eqref{eq:tv_system}, and let $(\wh x,\wh u,\wh y)\coloneqq(x\circ\theta,u\circ\theta,\dot\theta(y\circ\theta))$. Then it holds that
    \begin{align*}
        \dot{\wh x} &= \dot\theta(\dot x\circ\theta) = \dot\theta(A\circ\theta)(x\circ\theta) + \dot\theta(B\circ\theta)(u\circ\theta) = \wh A\wh x + \wh B\wh u, \\
        \wh y &= \dot\theta(y\circ\theta) = \dot\theta(C\circ\theta)(x\circ\theta) + \dot\theta(D\circ\theta)(u\circ\theta) = \wh C\wh x + \wh D\wh u,
    \end{align*}
    i.e., $(\wh x,\wh u,\wh y)$ is a state-input-output solution of \eqref{eq:tv_system_timeTrans}.
    Since $\theta$ is a diffeomorphism, the converse statement can be proven analogously.

    To prove \eqref{eq:tv_system_timeTrans_supply}, it is sufficient to note that
    \[
        \int_{\wh t_0}^{\wh t_1}\realPart\pset[\big]{ \ct{\wh y(\wh t)}\wh u(\wh t) }\td\wh t
        = \realPart\pset*{\int_{\theta^{-1}(t_0)}^{\theta^{-1}(t_1)} \dot\theta(\wh t) \ct{(y\circ\theta)(\wh t)}(u\circ\theta)(\wh t) \td\wh t}
        = \int_{t_0}^{t_1}\realPart\pset[\big]{ \ct{y(t)}u(t) }\td t
    \]
    is valid by applying the change of variable under the integral sign.
\end{proof}

\noindent Note that, due to the inverse function theorem, a continuously differentiable map $\theta\in\mathcal C^1(\wt\timeInt,\timeInt)$ is a diffeomorphism if and only if it is surjective and $\dot\theta\neq 0$ holds in $\wt\timeInt$. In particular, either $\dot\theta>0$ or $\dot\theta<0$ holds on the whole interval.
We prefer the choice of $\dot\theta>0$, since it preserves the order in the time interval, and consequently our properties of interest.

\begin{theorem}\label{thm:invariance}
    The properties of being passive \refPa{}, having a self-adjoint and positive semidefinite solution of the \refKYP{} inequality, having a~nonnegative supply \refNN{}, and admitting a \refPH{} representation, are all invariant under state space transformations $Z\in W^{1,1}_\loc(\timeInt,\linGroup_n(\C))$, input-output change of variables $V\in L^\infty_\loc(\timeInt,\GL[m])$ with $V^{-1}\in L^\infty_\loc(\timeInt,\GL[m])$, and time-varying diffeomorphisms $\theta\in\mathcal C^1(\wh\timeInt,\timeInt)$ with $\dot\theta>0$ pointwise.
\end{theorem}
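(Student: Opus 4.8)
The plan is to prove invariance property-by-property, for each of the three transformation types, by combining the three structural lemmas (\Cref{lem:changeOfVariables}, \Cref{lem:LTV_IO_ChangeOfVariables}, \Cref{lem:timeTransformation}) with the definitions of the four dissipativity concepts. Since each of these lemmas already establishes a bijection between solution triples of the original and transformed systems, the bulk of the argument is to verify that the defining inequality or algebraic structure of each property is preserved under the corresponding coordinate change. I would organize the proof as a grid: four properties times three transformations, but exploit symmetry so that many cells follow from a single computation.

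First I would treat the state-space transformation $x = Z\wt x$. For \refNN{} and \refPa{}, the key observation is that this transformation leaves the input $u$ and output $y$ (hence the supply rate $\realPart(\ct{y}u)$) entirely unchanged, and maps the zero initial state to the zero initial state (since $Z(t)$ is invertible, $x(t_0)=0 \iff \wt x(t_0)=0$); thus the integral conditions of \Cref{def:nonnegativeSupply} and \Cref{def:passive} transfer verbatim, with the storage function $V$ in the passive case simply composed as $\wt V(t,\wt x) \coloneqq V(t, Z(t)\wt x)$. For \refKYP{}, I would substitute the transformed coefficients $\wt A = Z^{-1}(AZ - \dot Z)$, $\wt B = Z^{-1}B$, $\wt C = CZ$ into the matrix \eqref{eq:KYP} and show that the resulting block matrix equals $\diag(\ct Z, I_m)$ times the original KYP matrix times $\diag(Z, I_m)$; this congruence preserves positive semidefiniteness, and the transformed solution is $\wt Q = \ct Z Q Z$, which remains in $W^{1,1}_\loc(\timeInt, \posSD[n])$ since $Z, Z^{-1} \in W^{1,1}_\loc$. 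For \refPH{}, I would read off from \Cref{lem:changeOfVariables} that $\wt Q = \ct Z Q Z$ together with the transformed structure matrices $\wt J, \wt R, \wt K$, etc., again satisfy \eqref{eq:pH_coefficients}, \eqref{eq:Q_lyapunov}, and the dissipation condition $\wt W \geq 0$; the Lyapunov-type identity \eqref{eq:Q_lyapunov} for $\wt Q$ is the most delicate check, since it involves $\dot Z$ and the transformed $\wt K$.

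Next I would handle the input-output change $\check u = V^{-1}u$, $\check y = \ct V y$. Here the crucial algebraic fact is that the supply rate is invariant, $\realPart(\ct{\check y}\check u) = \realPart((\ct V y)^{\mathrm H} V^{-1} u) = \realPart(\ct y u)$, which immediately gives invariance of \refNN{} and \refPa{} (the state, and hence the storage function, is untouched). For \refKYP{}, I would compute that the transformed KYP matrix is a congruence of the original via $\diag(I_n, \ct V)$ applied appropriately, using $\check D = \ct V D V$, $\check B = BV$, $\check C = \ct V C$, so positive semidefiniteness and the solution $Q$ persist unchanged. The \refPH{} case follows by inspecting how $V$ acts on the blocks $G, P, S, N$ of \eqref{eq:pH_coefficients}. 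Finally, for the time transformation $t = \theta(\wh t)$, invariance of \refNN{} and \refPa{} is essentially immediate from \eqref{eq:tv_system_timeTrans_supply} of \Cref{lem:timeTransformation}, which states precisely that the supply integral is preserved; for \refKYP{} and \refPH{} I would observe that multiplying all coefficients by the positive scalar $\dot\theta$ scales the KYP matrix (and the dissipation matrix $W$) by $\dot\theta > 0$, preserving semidefiniteness, with $\wh Q \coloneqq Q \circ \theta$ as the transformed Hamiltonian matrix.

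\textbf{The main obstacle} I anticipate is the \refPH{} case, and specifically verifying that the transformed structure matrices continue to satisfy all the constraints of \Cref{def:pH} simultaneously — the skew-adjointness of $\wt J$ and $\wt N$, the positive semidefiniteness of $\wt W$, and above all the Lyapunov identity \eqref{eq:Q_lyapunov} for the transformed $\wt Q$ and $\wt K$. Under the state-space transformation the natural candidate is $\wt Q = \ct Z Q Z$, and one must exhibit a correct choice of $\wt K$ (incorporating the $\dot Z$ term) so that $\wt Q \wt K + \ct{\wt K}\wt Q = \dot{\wt Q}$ holds; this requires differentiating $\ct Z Q Z$ by the product rule and matching terms, which is where the computation is genuinely substantive rather than routine bookkeeping. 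The remaining cells of the grid reduce to congruence transformations or scalar multiplications that manifestly preserve the Loewner order, so once the pH structural computation is carried out, the theorem assembles directly.
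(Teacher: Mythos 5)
Your proposal is correct and follows essentially the same route as the paper's own proof (given in the appendix on system transformations): a property-by-transformation grid, with supply-rate invariance and storage-function composition handling \refPa{} and \refNN{}, congruence transformations $\operatorname{diag}(Z,I_m)$, $\operatorname{diag}(I_n,V)$ and scaling by $\dot\theta>0$ handling \refKYP{}, and explicitly transformed structure matrices for \refPH{}. You also correctly single out the genuinely substantive step, namely exhibiting $\wt K$ (the paper takes $\wt K=Z^{-1}(\dot Z+KZ)$) so that the Lyapunov identity $\wt Q\wt K+\ct{\wt K}\wt Q=\dot{\wt Q}$ holds for $\wt Q=\ct{Z}QZ$.
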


\begin{proof}
    See \Cref{sec:systemTransformations}.
\end{proof}

\begin{remark}\label{rem:stationaryAtRest}
    A particularly interesting case of \Cref{lem:changeOfVariables} is when we define a time-varying change of variables using the state transition matrix, that is, $Z(t)\coloneqq\stm(t,t_0)$ for any fixed $t_0\in\timeInt$. Then the resulting system has the form
    \begin{equation*}
        \begin{split}
            \dot{\wt x}(t) &= \stm(t_0,t)B(t)u(t), \\
            y(t) &= C(t)\stm(t,t_0)\wt x(t) + D(t)u(t),
        \end{split}
    \end{equation*}
    in particular $\wt A=0$.
    This is not surprising, since for vanishing input signals $u\equiv 0$ we have $x(t)=\stm(t,t_0)x(t_0)$ for all $t\in\timeInt$, so $\wt x(t)\equiv x(t_0)$ must be constant.
\end{remark}

\begin{remark}
    Because of \Cref{thm:invariance}, to study our properties of interest, it can be useful to apply first state space and time-varying transformations to bring the system \eqref{eq:tv_system} to some specific form.
    For example, up to applying some time-varying transformation, we may always assume without loss of generality that the time interval $\timeInt$ is a fixed open interval of our choice, e.g.~$\timeInt=\R$ or $\timeInt=(0,+\infty)$.
\end{remark}

\subsection{Null space decomposition}\label{sec:null}
 \noindent We recall the class of functions of bounded variation, which will be used in studying the smoothness properties of solutions of the KYP inequality~\refKYP{}.

 \noindent Given any nonempty compact interval $[t_0,t_1]\subseteq\timeInt$, we denote by
 \[
 \partInt{[t_0,t_1]} = \set[\big]{ \partit = \set{\wt t_0,\wt t_1,\ldots,\wt t_K} \mid t_0 = \wt t_0 < \wt t_1 < \ldots < \wt t_K = t_1,\ K\in\N }
 \]
 the set of \emph{partitions} of $[t_0,t_1]$. Given a partition $\partit=\set{\wt t_0,\wt t_1,\ldots,\wt t_K}\in\partInt{[t_0,t_1]}$, we denote by $\norm{\partit}\coloneqq\sup_{1\leq k\leq K}(\wt t_k-\wt t_{k-1})$ its \emph{norm}.
 To keep the notation brief, we often write $\partit\in\partInt{}$ as a~shorthand for $\partit=\set{\wt t_0,\wt t_1,\ldots,\wt t_K}\in\partInt{[t_0,t_1]}$.
 In particular, given a function $f:[t_0,t_1]\to\C$, we denote its \emph{total variation} as \label{glo:totVar}
 \[
 \totVar[t_0][t_1](f) \coloneqq \sup_{\partit\in\partInt{}} \sum_{k=1}^K \abs{ f(\wt t_k) - f(\wt t_{k-1}) }.
 \]
 We denote then the spaces of \emph{functions of bounded variation} on $[t_0,t_1]$ and \emph{functions of locally bounded variation} on $\timeInt$ as
 \begin{align}\label{def:BV}
 & \BV([t_0,t_1],\C) \coloneqq \set{ f:[t_0,t_1]\to\C \mid \totVar[t_0][t_1](f) < \infty } \quad\text{and}\\
 \label{def:BVloc}
 & \BV_\loc(\timeInt,\C) \coloneqq \set{ f:[t_0,t_1]\to\C \mid \totVar[t_0][t_1](f) < \infty \text{ for all }t_0,t_1\in\timeInt,\ t_0\leq t_1 },
 \end{align}
 respectively. Furthermore, given a matrix function 
 $F:[t_0,t_1]\to\C^{m,n}$ we define its \emph{total variation} as
 \[
 \totVar[t_0][t_1](F) \coloneqq \sup_{\partit\in\partInt{}} \sum_{k=1}^K \norm{ F(\wt t_k) - F(\wt t_{k-1}) },
 \]
 and we define analogously the spaces of matrices of (locally) bounded variation $\BV([t_0,t_1],\C^{m,n})$ and $\BV_\loc(\timeInt,\C^{m,n})$.

Recall that every function of bounded variation $f\in\BV([t_0,t_1],\R)$ is differentiable at a.e.~$t\in(t_0,t_1)$ and can be split into $f=f_a+f_s$, where $f_a\in W^{1,1}((t_0,t_1),\R)$, and $f_s\in\BV([t_0,t_1],\R)$ satisfies $\dot f_s(t)=0$ for a.e.~$t\in(t_0,t_1)$. The splitting is unique up to fixing any value of $f_a$, and is usually chosen such that $f_a(t_0)=0$. The components $f_a$ and $f_s$ are sometimes called the \emph{absolutely continuous part} and the \emph{singular part} of $f$, respectively.

We now need to introduce an intermediate regularity assumption, stricter than bounded variation but weaker than absolute continuity.
Such functions appear in the literature with different names and equivalent definitions, like ``upper absolutely continuous'', ``semi-absolutely continuous'', ``absolute upper semicontinuous'', or ``of bounded variation with nonincreasing singular part'' (see e.g.~\cite{Lee78,Pon77,Pou01,Top17}). We combine these equivalent definitions into one.

 \begin{definition}\label{def:AUC_function}
     We call a function $f:[t_0,t_1]\to\R$ \emph{absolutely upper semicontinuous} and we write $f\in\AUC([t_0,t_1])$ if any of the following equivalent definitions are satisfied:
     \begin{enumerate}
         \item[\rm (i)] For every $\varepsilon>0$ there is $\delta>0$ such that, for every choice of $r_1,s_1,\ldots,r_K,s_K\in[t_0,t_1]$ with $r_1<s_1\leq r_2<s_2\leq\ldots\leq r_K<s_K$ it holds that
         \begin{equation*}
             \sum_{k=1}^K\pset{s_k-r_k}<\delta \implies \sum_{k=1}^K\pset[\big]{f(s_k)-f(r_k)} < \varepsilon.
         \end{equation*}
         \item[\rm (ii)] There exists $g\in L^1([t_0,t_1],\R)$ such that
         \begin{equation*}
             f(s) - f(r) \leq \int_r^s g(t)\td t
         \end{equation*}
         holds for all $r,s\in[t_0,t_1],\ r\leq s$.
         \item[\rm (iii)] The derivative $\dot f(t)$ exists for a.e.~$t\in[t_0,t_1]$, $\dot f\in L^1([t_0,t_1],\R)$ and
         \begin{equation*}
             f(s) - f(r) \leq \int_r^s \dot f(t)\td t
         \end{equation*}
         holds for all $r,s\in[t_0,t_1],\ r\leq s$.
         \item[\rm (iv)] $f\in\BV([t_0,t_1],\R)$ with monotonically nonincreasing singular part.
     \end{enumerate}
     We call a function $f:\timeInt\to\R$ \emph{locally absolutely upper semicontinuous} and write $f\in\AUC_\loc(\timeInt)$ if $f|_{[t_0,t_1]}\in\AUC([t_0,t_1])$ for all $t_0,t_1\in\timeInt,\ t_0\leq t_1$.
 \end{definition}
 \noindent We now extend the concept of absolute upper semi-continuity to complex pointwise Hermitian matrix functions. One possibility to do this would be to apply the definition entrywise, after splitting $\C$ into $\R\times\R$. However, to use the results in the context of the Loewner ordering, we will proceed differently.

 \begin{definition}
 We say that $Q:\timeInt\to\HerMat[n]$ is \emph{weakly monotonically increasing}, or simply \emph{weakly increasing} if $Q(t_0)\leq Q(t_1)$ for all $t_0,t_1\in\timeInt,\ t_0\leq t_1$.    
     Analogously, we say that $Q$ is \emph{weakly monotonically decreasing} or simply \emph{weakly decreasing} if $Q(t_0)\geq Q(t_1)$ for all $t_0,t_1\in\timeInt,\ t_0\leq t_1$.
 \end{definition}
\noindent Using the concept of weakly monotonically decreasing matrix functions, in \cite{MorH24} the following result is proven.
 \begin{theorem}\label{thm:AUC_matrix}
     Let $Q:[t_0,t_1]\to\HerMat[n]$ be a pointwise Hermitian matrix function. Then the following statements are equivalent.
     \begin{enumerate}[label=\rm(\roman*)]
         \item 
         The function $[t_0,t_1]\to\R,\ t\mapsto\frac{1}{2}\ct{x}Q(t)x$ is absolutely upper semicontinuous for all $x\in\C^n$.
         \item 
         For every $\varepsilon>0$ there exists $\delta>0$ such that, for every choice of $r_1,s_1,\ldots,r_K,s_K\in[t_0,t_1]$ with $r_1<s_1\leq r_2<s_2\leq\ldots\leq r_K<s_K$, it holds that
         \begin{equation*}
             \sum_{k=1}^K\pset{s_k-r_k}<\delta \implies \sum_{k=1}^K\pset[\big]{Q(s_k)-Q(r_k)} < \varepsilon I_n.
         \end{equation*}
         \item 
         There exists $G\in L^1([t_0,t_1],\HerMat[n])$ such that
         \begin{equation*}
             Q(s) - Q(r) \leq \int_r^s G(t)\td t
         \end{equation*}
         holds for all $r,s\in[t_0,t_1],\ r\leq s$.
         \item 
         The derivative $\dot Q(t)$ exists for a.e.~$t\in(t_0,t_1)$, $\dot Q\in L^1([t_0,t_1],\HerMat[n])$, and the matrix inequality
         \begin{equation*}
             Q(s) - Q(r) \leq \int_r^s \dot Q(t)\td t
         \end{equation*}
         holds for all $r,s\in[t_0,t_1],\ r\leq s$.
         \item 
         $Q\in\BV([t_0,t_1],\HerMat[n])$ with weakly monotonically decreasing singular part.
     \end{enumerate}
     \end{theorem}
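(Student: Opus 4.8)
The plan is to prove the single implication cycle (i) $\Rightarrow$ (v) $\Rightarrow$ (iv) $\Rightarrow$ (iii) $\Rightarrow$ (ii) $\Rightarrow$ (i), which yields the equivalence of all five statements. The organizing idea is to pass between matrix statements and the \emph{scalar} equivalences already collected in \Cref{def:AUC_function} by testing the quadratic form $\ct{x}Q(\cdot)x$ against vectors $x\in\C^n$, exploiting that $\ct{x}M x\leq\ct{x}N x$ for all $x$ is exactly the Loewner inequality $M\leq N$. The key technical device in the ``upward'' direction is that the $\BV$ splitting $Q=Q_a+Q_s$ (applied entrywise, with $Q_a\in W^{1,1}$ absolutely continuous and $\dot Q_s=0$ a.e.) is $\R$-linear and unique, so it is compatible with the quadratic form and with the Hermitian structure.

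For (i) $\Rightarrow$ (v): By (i) each scalar function $g_x:=\ct{x}Q(\cdot)x$ is absolutely upper semicontinuous, hence of bounded variation by \Cref{def:AUC_function}(iv). Applying this to the finitely many vectors $x,\,x+\imath^k y$ appearing in the polarization identity for the Hermitian sesquilinear form shows that every entry of $Q$ is a complex combination of $\BV$ functions, so $Q\in\BV([t_0,t_1],\HerMat[n])$; by uniqueness of the splitting, $Q_a$ and $Q_s$ are again Hermitian-valued. Since $\ct{x}Q_a(\cdot)x$ is absolutely continuous and $\frac{\td}{\td t}\ct{x}Q_s(\cdot)x=\ct{x}\dot Q_s x=0$ a.e., the singular part of $g_x$ coincides (up to a constant) with $\ct{x}Q_s(\cdot)x$. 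By \Cref{def:AUC_function}(iv) this singular part is nonincreasing for every $x$, i.e.\ $\ct{x}Q_s(s)x\leq\ct{x}Q_s(r)x$ for all $r\leq s$ and all $x$, which is precisely $Q_s(s)\leq Q_s(r)$, so $Q_s$ is weakly decreasing, giving (v).

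The middle arrows are short. For (v) $\Rightarrow$ (iv): a $\BV$ function is differentiable a.e., $\dot Q=\dot Q_a\in L^1$, and for $r\leq s$ one has $Q(s)-Q(r)=(Q_a(s)-Q_a(r))+(Q_s(s)-Q_s(r))=\int_r^s\dot Q(t)\td t+(Q_s(s)-Q_s(r))\leq\int_r^s\dot Q(t)\td t$, since the singular increment is $\leq 0$ by weak monotonicity. Then (iv) $\Rightarrow$ (iii) is immediate with $G:=\dot Q$. For (iii) $\Rightarrow$ (ii), given disjoint intervals with $r_1<s_1\leq\ldots\leq r_K<s_K$ and $E:=\bigcup_k(r_k,s_k)$, one estimates $\sum_k(Q(s_k)-Q(r_k))\leq\int_E G(t)\td t\leq\big(\int_E\norm{G(t)}_2\td t\big)I_n$, using that $G(t)\leq\norm{G(t)}_2 I_n$ pointwise and that integration preserves the Loewner order; since $\norm{G}_2\in L^1([t_0,t_1])$, absolute continuity of the Lebesgue integral furnishes, for each $\varepsilon>0$, a $\delta>0$ with $\lvert E\rvert<\delta\Rightarrow\int_E\norm{G}_2<\varepsilon$, which is (ii). Finally (ii) $\Rightarrow$ (i): fixing $x\neq 0$ and applying (ii) with tolerance $\varepsilon/\norm{x}_2^2$ yields the scalar $\varepsilon$--$\delta$ condition of \Cref{def:AUC_function}(i) for $\ct{x}Q(\cdot)x$, hence (i).

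I expect the genuine obstacle to be the step (i) $\Rightarrow$ (v): converting the pointwise-in-$x$ scalar facts into a matrix statement without any estimate uniform in $x$. The route above circumvents this by proving (i) $\Rightarrow$ (v) (which needs only polarization for finitely many directions plus a direction-by-direction Loewner reading of the singular parts) and then regenerating the uniform matrix $\varepsilon$--$\delta$ condition (ii) ``for free'' out of the integral representation (iii) via absolute continuity of the $L^1$ integral; a direct proof of (i) $\Rightarrow$ (ii) would instead require controlling the modulus of continuity simultaneously over all unit vectors, which is exactly what this detour avoids. The only points deserving care are the identification of the scalar singular part of $\ct{x}Q(\cdot)x$ with $\ct{x}Q_s(\cdot)x$ (which rests on linearity and uniqueness of the splitting) and the Hermitian compatibility of the entrywise splitting.
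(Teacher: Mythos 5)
Your proof cannot be compared against an in-paper argument, because the paper does not actually prove \Cref{thm:AUC_matrix}: it is quoted verbatim as a result established in \cite{MorH24}, and no proof is reproduced in the text. Judged on its own merits, your cyclic argument (i)\,$\Rightarrow$\,(v)\,$\Rightarrow$\,(iv)\,$\Rightarrow$\,(iii)\,$\Rightarrow$\,(ii)\,$\Rightarrow$\,(i) is correct, and the two points you flag as delicate are indeed the only delicate ones, and you handle both properly. For (i)\,$\Rightarrow$\,(v): polarization recovers each entry $\ct{e_j}Q(\cdot)e_l$ from the quadratic forms at the four vectors $e_j+\imagUnit^k e_l$, $k=0,\dots,3$, so $Q\in\BV([t_0,t_1],\HerMat[n])$; the splitting $Q=Q_a+Q_s$ normalized by $Q_a(t_0)=0$ is unique and commutes with conjugate transposition, hence $Q_a,Q_s$ are Hermitian-valued, and since $\ct{x}Q_a(\cdot)x$ is absolutely continuous while $\ct{x}Q_s(\cdot)x$ is of bounded variation with a.e.\ vanishing derivative, uniqueness of the scalar splitting identifies the singular part of $\ct{x}Q(\cdot)x$ with $\ct{x}Q_s(\cdot)x$ up to a constant; reading \Cref{def:AUC_function}(iv) direction by direction then gives $Q_s(s)\leq Q_s(r)$ for $r\leq s$, which is exactly (v). The steps (v)\,$\Rightarrow$\,(iv)\,$\Rightarrow$\,(iii) are as routine as you claim, and (iii)\,$\Rightarrow$\,(ii) is the clever part of the route: the pointwise bound $G(t)\leq\norm{G(t)}_2 I_n$, monotonicity of the integral in the Loewner order, and absolute continuity of $E\mapsto\int_E\norm{G(t)}_2\td t$ produce the $\varepsilon$--$\delta$ statement uniformly in all directions at once, which is precisely what a direct attack on (i)\,$\Rightarrow$\,(ii) would struggle with; strictness of the resulting Loewner inequality holds because your bound is $\leq cI_n$ with $c<\varepsilon$, so the gap to $\varepsilon I_n$ is at least $(\varepsilon-c)I_n>0$. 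Finally (ii)\,$\Rightarrow$\,(i) by testing against a fixed $x\neq 0$ with tolerance rescaled by $\norm{x}_2^2$ is immediate. I find no gap; only, if you write this up, make the polarization step and the Hermitian compatibility of the normalized splitting explicit, since they carry the scalar-to-matrix transfer on which the whole cycle rests.
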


 \noindent We then introduce the following definition.
\begin{definition}\label{def:AUC_matrix}
     We call a pointwise Hermitian matrix function $Q:[t_0,t_1]\to\HerMat[n]$ \emph{absolutely upper semi-continuous} and write $Q\in\AUC([t_0,t_1],\HerMat[n])$ if any of the equivalent statements in \Cref{thm:AUC_matrix} are satisfied. We call a pointwise Hermitian matrix function $Q:\timeInt\to\HerMat[n]$ \emph{locally absolutely upper semi-continuous} and write $Q\in\AUC_\loc(\timeInt,\HerMat[n])$ if $Q|_{[t_0,t_1]}\in\AUC([t_0,t_1],\HerMat[n])$ for all $t_0,t_1\in\timeInt,\ t_0\leq t_1$.
 \end{definition}
 
A classical example of a function of bounded variation that is not absolutely continuous is the \emph{Cantor function} (see, e.g.~\cite{Car00}).

 \begin{example}
     
     Consider the Cantor function $\Cantor:[0,1]\to[0,1]$ which is weakly monotonic increasing from $\Cantor(0)=0$ to $\Cantor(1)=1$, but its derivative is $\dot\Cantor=0$ a.e.~on $[0,1]$.
     In fact, $\Cantor\in\BV([0,1],\R)\setminus\AUC([0,1])$, since it coincides with its singular part and is not weakly monotonically decreasing.
     Note that, up to scaling or extending the definition interval appropriately, we can use $\Cantor\cdot I_n$ as an example of a function in $\BV([t_0,t_1],\posSD[n])\setminus\AUC([t_0,t_1],\posSD[n])$ or $\BV_\loc(\timeInt,\posSD[n])\setminus\AUC_\loc(\timeInt,\posSD[n])$.
 \end{example}

\noindent The following result from \cite{MorH24} characterizes weakly decreasing matrix functions in terms of their absolute semicontinuity and classical derivative.
 \begin{lemma}\label{lem:BV_decreasing}
     Let $Q:\timeInt\to\HerMat[n]$. Then $Q$ is weakly monotonically decreasing if and only if $Q\in\AUC_\loc(\timeInt,\HerMat[n])$ and $\dot Q(t)\leq 0$ for a.e.~$t\in\timeInt$.
 \end{lemma}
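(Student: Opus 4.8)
The plan is to prove the two implications separately, leaning throughout on the equivalent characterizations of absolute upper semicontinuity collected in \Cref{thm:AUC_matrix}.

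I would first dispatch the easier ``if'' direction. Assuming $Q\in\AUC_\loc(\timeInt,\HerMat[n])$ with $\dot Q(t)\leq 0$ for a.e.~$t$, I fix arbitrary $t_0,t_1\in\timeInt$ with $t_0\leq t_1$ and apply \Cref{thm:AUC_matrix}(iv) on the compact subinterval $[t_0,t_1]$ to obtain $Q(t_1)-Q(t_0)\leq\int_{t_0}^{t_1}\dot Q(t)\td t$. Since the integral of a pointwise negative semidefinite matrix function is negative semidefinite (testing against any $x\in\C^n$ reduces to the scalar integral $\int_{t_0}^{t_1}\ct{x}\dot Q(t)x\td t\leq 0$), the right-hand side is $\leq 0$, so $Q(t_1)\leq Q(t_0)$. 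As $t_0\leq t_1$ were arbitrary, $Q$ is weakly monotonically decreasing.

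For the ``only if'' direction I would start from $Q$ weakly decreasing and proceed in two stages. To establish $Q\in\AUC_\loc$, I observe that for each fixed $x\in\C^n$ the scalar map $f_x(t)\coloneqq\frac12\ct{x}Q(t)x$ is monotonically nonincreasing, hence on every compact subinterval it is of bounded variation with monotonically nonincreasing singular part, so $f_x\in\AUC_\loc(\timeInt)$ by \Cref{def:AUC_function}(iv). The equivalence \Cref{thm:AUC_matrix}(i) then upgrades this to $Q\in\AUC_\loc(\timeInt,\HerMat[n])$, and in particular $\dot Q$ exists a.e.

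The remaining step, deducing $\dot Q(t)\leq 0$ a.e.~from the a.e.~nonpositivity of each $\dot f_x$, is where the only genuine care is needed and is the step I expect to be the main obstacle. Monotonicity of $f_x$ yields $\ct{x}\dot Q(t)x=2\dot f_x(t)\leq 0$ only outside a null set $N_x$ that a priori depends on $x$, whereas $\dot Q(t)\leq 0$ requires nonpositivity of the quadratic form simultaneously for all $x$ at a common time. To remove this dependence I would fix a countable dense subset $\mathcal D\subseteq\C^n$, set $N\coloneqq\bigcup_{x\in\mathcal D}N_x$ (still a null set), and note that for every $t\in\timeInt\setminus N$ the inequality $\ct{x}\dot Q(t)x\leq 0$ holds for all $x\in\mathcal D$; continuity of $x\mapsto\ct{x}\dot Q(t)x$ extends it to all $x\in\C^n$ by density, giving $\dot Q(t)\leq 0$ for a.e.~$t\in\timeInt$ and completing the proof.
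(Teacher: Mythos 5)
Your proof is correct. Note that the paper itself offers no proof of this lemma --- it is quoted from the external reference \cite{MorH24} --- so there is no internal argument to compare against; your proposal is a valid, self-contained derivation from the equivalences in \Cref{thm:AUC_matrix}, which is the natural route given what the paper provides. Both directions check out: the ``if'' part correctly combines \Cref{thm:AUC_matrix}(iv) with the fact that $\int_{t_0}^{t_1}\dot Q(t)\td t\leq 0$ when $\dot Q\leq 0$ a.e., and the ``only if'' part correctly passes from monotonicity of each scalar function $f_x(t)=\tfrac12\ct{x}Q(t)x$ (whose singular part is nonincreasing by Lebesgue's theorem for monotone functions, a point worth one explicit line) to $Q\in\AUC_\loc(\timeInt,\HerMat[n])$ via \Cref{thm:AUC_matrix}(i). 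One remark on the step you flagged as the main obstacle: the countable dense subset is not actually needed. Since $Q\in\AUC_\loc$ guarantees that $\dot Q(t)$ exists outside a single null set $N_0$, at any $t\notin N_0$ and for \emph{every} $x\in\C^n$ the difference quotients satisfy $\ct{x}\bigl(Q(t+h)-Q(t)\bigr)x/h\leq 0$ for $h>0$ by monotonicity, and letting $h\to 0^+$ gives $\ct{x}\dot Q(t)x\leq 0$ directly; the $x$-dependent null sets $N_x$ never enter. Your density argument is nevertheless sound, just slightly longer than necessary.
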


\noindent Below, we will study time-varying quadratic storage functions of the form
 \begin{equation}\label{eq:quadStorFunc}
    \quadSt{Q} : \timeInt \times \C^n \to \R, \qquad (t,x) \mapsto \frac{1}{2}\ct{x}Q(t)x,
 \end{equation}
 for general $Q:\timeInt\to\C^{n,n}$ such that $Q(t)=\ct{Q(t)}\geq 0$.
 
The following result from \cite{MorH24} characterizes the regularity of quadratic storage functions for passive LTV systems.

 \begin{theorem}\label{thm:storageFunctionAUC}
     Suppose that for some $Q:\timeInt\to\posSD[n]$, $\quadSt{Q}$ is a storage function for a passive LTV system of the form \eqref{eq:tv_system}. Then $Q\in\AUC_\loc(\timeInt,\posSD[n])$.
 \end{theorem}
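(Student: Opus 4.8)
The plan is to reduce the claim, via the scalar characterization of absolute upper semicontinuity, to a one-dimensional statement: by \Cref{thm:AUC_matrix}\,(i) it suffices to show that $t\mapsto\ct xQ(t)x$ lies in $\AUC_\loc(\timeInt)$ for every fixed $x\in\C^n$ (the harmless factor $\tfrac12$ does not affect membership in $\AUC_\loc$). Together with the hypothesis $Q(t)\geq 0$, this will give $Q\in\AUC_\loc(\timeInt,\posSD[n])$. The strategy is to first extract a matrix monotonicity inequality from the dissipation inequality, then transfer it through a state transformation that makes the system stationary, and finally push the resulting regularity back to $Q$.

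First I would specialize the dissipation inequality \eqref{def:passive_ineq} to homogeneous trajectories $u\equiv0$, for which the supply rate vanishes and the unique solution with $x(s)=x_0$ is $x(t)=\stm(t,s)x_0$. Since $\quadSt{Q}(t,x)=\tfrac12\ct xQ(t)x$ is the storage function, this yields $\ct{x_0}\ct{\stm(t,s)}Q(t)\stm(t,s)x_0\leq\ct{x_0}Q(s)x_0$ for all $s\leq t$ and all $x_0$, hence
\begin{equation*}
\ct{\stm(t,s)}Q(t)\stm(t,s)\leq Q(s),\qquad s\leq t.
\end{equation*}
Fixing $t_0\in\timeInt$ and setting $\tilde Q(t):=\ct{\stm(t,t_0)}Q(t)\stm(t,t_0)$, I apply this inequality with $(s,t)$ replaced by $(r,s)$ and use the flow property $\stm(s,r)\stm(r,t_0)=\stm(s,t_0)$ together with congruence-invariance of the Loewner order to conclude $\tilde Q(s)\leq\tilde Q(r)$ for all $r\leq s$. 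Thus $\tilde Q$ is weakly monotonically decreasing (this is precisely the transformation of \Cref{rem:stationaryAtRest}, which makes $\wt A=0$ so that the transformed state is constant under $u\equiv0$); in particular $\tilde Q$ is bounded on every compact subinterval of $\timeInt$.

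The remaining and main step is to transfer regularity back to $Q(t)=\ct{\stm(t_0,t)}\tilde Q(t)\stm(t_0,t)$, where $\stm(t_0,t)=\stm(t,t_0)^{-1}$ and $t\mapsto\stm(t_0,t)\in W^{1,1}_\loc(\timeInt,\GL[n])$ by \Cref{lem:inverseContinuous}. The obstacle is that $\tilde Q$ is in general only of bounded variation (its singular part need not vanish, cf.\ the Cantor example), so one cannot simply differentiate the congruence by the absolutely continuous factor. I would circumvent this by fixing $x$, setting $v(t):=\stm(t_0,t)x\in W^{1,1}_\loc(\timeInt,\C^n)$ and $f(t):=\ct xQ(t)x=\ct{v(t)}\tilde Q(t)v(t)$, and splitting, for $r\leq s$,
\begin{equation*}
f(s)-f(r)=\ct{v(s)}\bigl(\tilde Q(s)-\tilde Q(r)\bigr)v(s)+\bigl(\ct{v(s)}\tilde Q(r)v(s)-\ct{v(r)}\tilde Q(r)v(r)\bigr).
\end{equation*}
The first summand is $\leq0$ because $\tilde Q$ is weakly decreasing. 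The second summand equals $h_r(s)-h_r(r)$ for $h_r(t):=\ct{v(t)}\tilde Q(r)v(t)$, which is absolutely continuous since the matrix $\tilde Q(r)$ is frozen and $v\in W^{1,1}_\loc$; hence it equals $\int_r^s2\realPart\bigl(\ct{v(t)}\tilde Q(r)\dot v(t)\bigr)\td t$ and is bounded above by $\int_r^sg(t)\td t$ with $g(t):=2M_1M_2\norm{\dot v(t)}_2$, where $M_1,M_2$ are uniform bounds for $\norm{v}_2$ and $\norm{\tilde Q}_2$ on the compact interval. Since $g\in L^1_\loc(\timeInt)$ does not depend on $r,s$, characterization (ii) of \Cref{def:AUC_function} gives $f\in\AUC_\loc(\timeInt)$.

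As $x\in\C^n$ was arbitrary, \Cref{thm:AUC_matrix}\,(i) then yields $Q\in\AUC_\loc(\timeInt,\HerMat[n])$, and combined with $Q(t)\geq0$ this is the assertion $Q\in\AUC_\loc(\timeInt,\posSD[n])$. The single delicate point—and the reason it is essential to establish weak monotonicity of $\tilde Q$ rather than of $Q$ itself—is exactly the freezing-and-splitting device in the third paragraph, which isolates the non-absolutely-continuous part of the congruence into a sign-definite term while leaving an absolutely continuous remainder controlled by $\dot v\in L^1_\loc$.
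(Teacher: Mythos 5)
The paper does not actually contain a proof of this theorem: it is quoted verbatim from \cite{MorH24}, so there is no in-paper argument to compare yours against. Judged on its own, your proof is correct and self-contained, using only tools available in the paper. The two key steps both check out. First, the dissipation inequality with $u\equiv 0$ and the flow property $\stm(s,r)\stm(r,t_0)=\stm(s,t_0)$, together with congruence-invariance of the Loewner order, do give that $\wt Q(t)=\ct{\stm(t,t_0)}Q(t)\stm(t,t_0)$ is weakly decreasing; combined with $\wt Q\geq 0$ this also yields the local boundedness $\norm{\wt Q(t)}_2\leq\norm{\wt Q(a)}_2$ on $[a,b]$ that you need for the constant $M_2$. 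Second, the freeze-and-split device is exactly what makes the transfer back to $Q$ work: the increment $\ct{v(s)}\bigl(\wt Q(s)-\wt Q(r)\bigr)v(s)$ is nonpositive by monotonicity, while $h_r(t)=\ct{v(t)}\wt Q(r)v(t)$ is genuinely absolutely continuous because the matrix argument is frozen and $v=\stm(t_0,\cdot)x\in W^{1,1}_\loc$ (via \Cref{lem:inverseContinuous} and \Cref{thm:stateTransitionMatrix}), so its increment is bounded by $\int_r^s g$ with $g=2M_1M_2\norm{\dot v}_2\in L^1$ independent of $r,s$, which is precisely condition (ii) of \Cref{def:AUC_function}. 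Passing through \Cref{thm:AUC_matrix}\,(i) on each compact subinterval then gives $Q\in\AUC_\loc(\timeInt,\HerMat[n])$, and pointwise positive semidefiniteness upgrades this to $\AUC_\loc(\timeInt,\posSD[n])$. A naive alternative—trying to differentiate the congruence $Q=\ct{\stm(t_0,\cdot)}\wt Q\,\stm(t_0,\cdot)$ directly—would indeed fail, since $\wt Q$ may have a nontrivial singular part; your isolation of that part into a sign-definite term is the right fix.
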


 \noindent In general, the rank of $Q$, associated with a storage function $\quadSt{Q}$, is not necessarily constant. To address this, we state the following key theorem, also from \cite{MorH24}, which offers a smooth \emph{null space decomposition} for $Q$.
 This approach does not rely on the typical constant rank assumptions generally required for null space decompositions of matrix functions, as seen in works like \cite{Dol64,KunM24}.

 \begin{theorem}\label{thm:nullSpaceDec}
     Suppose that $\quadSt{Q}:\timeInt\times\C^n\to\R$ is a storage function for \eqref{eq:tv_system}. Then the following statements hold.
     \begin{enumerate}
         \item $r\coloneqq \rank\circ\,Q:\timeInt\to\set{0,1,\ldots,n}$ is weakly decreasing.
         \item There exists $V\in W^{1,1}_\loc(\timeInt,\GL[n])$ such that $\wt Q\coloneqq\ct{V}QV$ is weakly decreasing, of the form
         \begin{equation}\label{eq:nullSpaceDec}
             \wt Q(t) = \bmat{\wt Q_{11}(t) & 0 \\ 0 & 0_{n-r(t)}},
         \end{equation}
         and satisfies
         \begin{equation}\label{eq:nullSpaceLimit}
             \lim_{s\to t^\pm}\wt Q(s) = \bmat{\wt Q_{11}^\pm & 0 \\ 0 & 0_{n-r_\pm(t)}},
         \end{equation}
         where $\wt Q_{11}(t)\in\posDef[r(t)]$ and $\wt Q_{11}^\pm\in\posDef[r_\pm(t)]$ with $r_\pm(t)=\lim_{s\to t^\pm}r(t)$, for all $t\in\timeInt$.
         \item There exists $U\in W^{1,1}_\loc(\timeInt,\GL[n])$ pointwise unitary such that $\wh Q\coloneqq\ct{U}QU$ is of the form \eqref{eq:nullSpaceDec} and satisfies \eqref{eq:nullSpaceLimit}.
         \item If $Q\in W^{1,1}_\loc(\timeInt,\posSD[n])$, then $\wt Q,\wh Q\in W^{1,1}_\loc(\timeInt,\posSD[n])$ also holds.
     \end{enumerate}
 \end{theorem}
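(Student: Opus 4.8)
The plan is to reduce all four claims to the case of a weakly monotonically decreasing matrix function, where the kernel structure collapses to a finite flag, and then read everything off. First I would record two consequences of the hypothesis that $\quadSt{Q}$ is a storage function. By \Cref{thm:storageFunctionAUC} we have $Q\in\AUC_\loc(\timeInt,\posSD[n])$, so in particular $Q$ is locally $\BV$ and has one-sided limits everywhere. Next, fixing a reference $\bar t\in\timeInt$, let $X\coloneqq\stm(\cdot,\bar t)\in W^{1,1}_\loc(\timeInt,\GL[n])$ be the fundamental solution matrix (\Cref{rem:fundamental}). Applying \Cref{lem:changeOfVariables} with $Z=X$ gives $\wt A=0$ (cf.\ \Cref{rem:stationaryAtRest}), so in the new coordinates homogeneous solutions are constant; inserting $u\equiv 0$ into the dissipation inequality \eqref{def:passive_ineq} and writing $x(t)=X(t)\wt x$ with $\wt x$ constant yields $\ct{\wt x}M(t_1)\wt x\le\ct{\wt x}M(t_0)\wt x$ for all $\wt x\in\C^n$ and all $t_0\le t_1$, where $M\coloneqq\ct X Q X$. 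Hence $M$ is weakly monotonically decreasing (consistent with \Cref{lem:BV_decreasing}).

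Claim 1 is then immediate: congruence by the invertible $X(t)$ preserves rank, so $\rank Q=\rank M$, and for weakly decreasing positive semidefinite $M$ the kernels are nested, since $\ct x M(t_0)x=0$ together with $0\le M(t_1)\le M(t_0)$ forces $M(t_1)x=0$; thus $\ker M(s)\subseteq\ker M(t)$ for $s\le t$ and $r=\rank Q$ is non-increasing. The structural heart of the argument is that, because $M$ is weakly decreasing, the whole collection $\set{\ker M(t)}\cup\set{\ker M(t^\pm)}$ (the one-sided limits exist by local $\BV$) is totally ordered by inclusion: any two of the matrices $M(t),M(t^\pm)$ are Loewner-comparable, and comparability passes to inclusion of kernels exactly as above. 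A chain of subspaces of $\C^n$ has at most $n+1$ members, so this is a finite flag, and I can fix a single \emph{constant} unitary $W\in\C^{n,n}$ adapted to it, i.e.\ so that every subspace of the flag is a coordinate tail $\Span\set{e_{k+1},\dots,e_n}$.

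For Claim 2 I set $V\coloneqq XW\in W^{1,1}_\loc(\timeInt,\GL[n])$. Then $\wt Q=\ct V Q V=\ct W M W$ is weakly decreasing (congruence by the fixed unitary $W$ preserves the Loewner order), and since $W$ sends $\ker M(t)$ to the coordinate tail of dimension $n-r(t)$ it has the block form \eqref{eq:nullSpaceDec} with $\wt Q_{11}(t)\in\posDef[r(t)]$; as $W$ simultaneously aligns the limiting kernels $\ker M(t^\pm)$, passing to one-sided limits gives \eqref{eq:nullSpaceLimit}. For Claim 3 I take the pointwise QL factorization $V=UL$ ($U$ unitary, $L$ lower triangular invertible), which is a smooth diffeomorphism on $\GL[n]$; since $V\in W^{1,1}_\loc(\timeInt,\GL[n])$, both $U,L\in W^{1,1}_\loc$, so $U$ is a genuinely $W^{1,1}_\loc$ (hence continuous) unitary with \emph{no patching across rank jumps required}. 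The lower-triangular structure forces the last $n-r(t)$ columns of $U(t)$ to span the same space as the last columns of $V(t)$, namely $X(t)\ker M(t)=\ker Q(t)$; hence $\ker\wh Q(t)=\ct U(t)\ker Q(t)$ is the coordinate tail and $\wh Q=\ct U Q U$ has the form \eqref{eq:nullSpaceDec}, with \eqref{eq:nullSpaceLimit} following from the same column-span computation applied to $\ker Q(t^\pm)$.

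Finally, Claim 4 is regularity bookkeeping: $V,U\in W^{1,1}_\loc\cap L^\infty_\loc$, and if $Q\in W^{1,1}_\loc\subseteq L^\infty_\loc$ then the Leibniz rule with the generalized H\"older inequality (\Cref{thm:genHolder}) gives $\wt Q=\ct V Q V,\ \wh Q=\ct U Q U\in W^{1,1}_\loc$. \emph{Main obstacle.} The delicate point is the limit statement \eqref{eq:nullSpaceLimit}: it requires that the one-sided limits of $M$ (equivalently $Q$) exist and that their kernels lie in the same finite chain, so that the single constant $W$ and the frame $U$ align the values and both one-sided limits at once. Extra care is needed because the rank of a one-sided limit can be strictly smaller than the nearby value of $r$, so $r_\pm(t)$ must be interpreted as the rank of the limiting matrix; verifying that the chain is finite and that $W$ aligns the limit kernels is where the real work lies, whereas the existence and $W^{1,1}_\loc$-regularity of the QL factor are standard consequences of $V\in W^{1,1}_\loc(\timeInt,\GL[n])$.
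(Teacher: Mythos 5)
The paper itself contains no proof of this theorem: it is imported verbatim from \cite{MorH24}, so there is no internal argument to compare yours against, and I assess it on its own merits. Your construction is sound and essentially complete. Testing the dissipation inequality \eqref{def:passive_ineq} with $u\equiv 0$ after the change of variables $Z=X=\stm(\cdot,\bar t)$ does show that $M\coloneqq\ct{X}QX$ is weakly decreasing; kernel nesting of Loewner-comparable positive semidefinite matrices gives part 1; the kernels of all values \emph{and} one-sided limits of $M$ lie in a single chain of subspaces of $\C^n$, hence a finite flag, so one constant unitary $W$ aligns them all, and $V\coloneqq XW$ gives part 2, including the weak decrease of $\wt Q=\ct{W}MW$. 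Your choice of a QL rather than QR factorization is exactly right: lower-triangularity of $L^{-1}$ makes the trailing column spans of $U=VL^{-1}$ and of $V$ coincide, which is what transports $\ker Q(t)$ and $\ker Q(t^\pm)$ to coordinate tails. One simplification: the $W^{1,1}_\loc$-regularity of the factors needs no appeal to smoothness of the QL map, since you can apply the paper's \Cref{lem:Cholesky} to $\ct{V}V\in W^{1,1}_\loc(\timeInt,\posDef[n])$ to write $\ct{V}V=\ct{L}L$ and set $U\coloneqq VL^{-1}$, which is pointwise unitary and lies in $W^{1,1}_\loc$ by \Cref{lem:inverseContinuous} and \Cref{thm:genHolder}; part 4 is then, as you say, the Leibniz rule.

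The only substantive point is the one you flagged yourself: your proof establishes \eqref{eq:nullSpaceLimit} with $r_\pm(t)$ equal to the \emph{rank of the one-sided limit}, whereas the statement defines $r_\pm(t)=\lim_{s\to t^\pm}r(s)$. This is a defect of the statement as transcribed, not of your argument: under the literal definition the claim is false for left limits. Concretely, take $\dot x=u$, $y=\max(1-t,0)\,x$ on $\timeInt=\R$; by \Cref{rem:only_C} the function $\quadSt{Q}$ with $Q(t)=\max(1-t,0)$ is a storage function, and at $t=1$ one has $r_-(1)=\lim_{s\to 1^-}r(s)=1$, yet $\lim_{s\to 1^-}\ct{V(s)}Q(s)V(s)=0$ for every $V\in W^{1,1}_\loc(\timeInt,\GL[1])$ (such $V$ is continuous and pointwise invertible), so no block in $\posDef[1]$ can appear in the limit. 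It would strengthen your write-up to add the observation that for \emph{right} limits the two readings agree: monotonicity gives $\ker M(t^+)=\bigcap_{s>t}\ker M(s)$, so the rank cannot drop in the limit from the right, and the discrepancy is purely a left-limit phenomenon. With $r_\pm(t)$ read as the rank of the limiting matrix, your proof is complete and correct.
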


    \noindent It is important to emphasize that the size of $\wt Q_{11}$ in \eqref{eq:nullSpaceDec} is weakly decreasing in time.
    However, in every subinterval of $\timeInt$ where the rank is constant, the size of $\wt Q_{11}$ is also constant.

In the following lemma, we provide an inclusion result for the kernel of the  matrix that induces the storage function of passive systems, generalizing \cite[Proposition 11]{CheGH23} for LTI systems.

\begin{lemma}\label{lem:Pa_kernelInclusion}
    Suppose that $\quadSt{Q}$ as in \eqref{eq:quadStorFunc} with $Q\in\AUC_\loc(\timeInt,\posSD[n])$ is a storage function for a passive LTV system \eqref{eq:tv_system}.
    Then $\ker(Q(t))\subseteq\ker(Q(t)A(t)+\dot Q(t))\cap\ker(C(t))$ holds for a.e.~$t\in\timeInt$.
\end{lemma}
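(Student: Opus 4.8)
The plan is to fix a single time $t^\ast\in\timeInt$ belonging to a full-measure set of ``good'' points --- namely Lebesgue points of $A,B,C,D$ at which moreover $Q$ is classically differentiable with derivative $\dot Q(t^\ast)$ (the latter holds a.e.\ by the AUC characterization in \Cref{thm:AUC_matrix}) --- take an arbitrary $v\in\ker Q(t^\ast)$, and prove the two inclusions $v\in\ker\bigl(Q(t^\ast)A(t^\ast)+\dot Q(t^\ast)\bigr)$ and $v\in\ker C(t^\ast)$ separately. Throughout I will use two elementary facts about the positive semidefinite matrix $Q(t)$: that $\quadSt{Q}(t,x)=\tfrac12\ct{x}Q(t)x\ge 0$, and that $\ct{z}Q(t)z=0$ forces $Q(t)z=0$.

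For the first inclusion I would exploit invariance of $\ker Q$ under the free flow. Taking the input $u\equiv 0$ and the solution with $x(t^\ast)=v$, the dissipation inequality \eqref{def:passive_ineq} gives $\quadSt{Q}\bigl(t,x(t)\bigr)\le\quadSt{Q}(t^\ast,v)=0$ for all $t\ge t^\ast$, while nonnegativity gives the reverse inequality; hence $\ct{x(t)}Q(t)x(t)=0$ and therefore $Q(t)\stm(t,t^\ast)v=0$ for all $t\ge t^\ast$. This vector-valued function is identically zero to the right of $t^\ast$, so its one-sided derivative there vanishes; computing that derivative by the product rule, using $\partial_t\stm(t,t^\ast)\big|_{t=t^\ast}=A(t^\ast)$ and the classical derivative $\dot Q(t^\ast)$, yields exactly $\bigl(\dot Q(t^\ast)+Q(t^\ast)A(t^\ast)\bigr)v=0$.

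For the second inclusion I would instead probe the output with a small constant input. On $[t^\ast,t^\ast+h]$ I take $x(t^\ast)=v$ and a constant input $u\equiv w\in\C^m$; at the good point $t^\ast$ one has the first-order expansions $x(t^\ast+h)=v+h\bigl(A(t^\ast)v+B(t^\ast)w\bigr)+o(h)$ and $Q(t^\ast+h)=Q(t^\ast)+h\dot Q(t^\ast)+o(h)$. Since $Q(t^\ast)v=0$ kills all cross terms, the endpoint storage collapses to $\quadSt{Q}\bigl(t^\ast+h,x(t^\ast+h)\bigr)=\tfrac{h}{2}\ct{v}\dot Q(t^\ast)v+o(h)$, and in fact $\ct{v}\dot Q(t^\ast)v=0$ because $t^\ast$ is an interior minimiser of the nonnegative scalar function $t\mapsto\ct{v}Q(t)v$. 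Feeding these into \eqref{def:passive_ineq}, dividing by $h$ and letting $h\to0^+$ leaves $0\le\realPart\bigl(\ct{(C(t^\ast)v+D(t^\ast)w)}w\bigr)$ for every $w$; replacing $w$ by $tw$ and sending $t\to0^+$ isolates the linear term $0\le\realPart\bigl(\ct{(C(t^\ast)v)}w\bigr)$, and applying this to $\pm w$ gives $C(t^\ast)v=0$.

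The main obstacle is purely technical: justifying the two first-order expansions at a.e.\ $t^\ast$ and, above all, controlling the $o(h)$ remainders once they are inserted into the quadratic form $\ct{x(t^\ast+h)}Q(t^\ast+h)x(t^\ast+h)$. The delicate point is that, because $Q$ is only locally absolutely upper semicontinuous, I cannot rely on a Loewner bound of the form $Q(t^\ast+h)\le Q(t^\ast)+h\dot Q(t^\ast)$ but must use the genuine classical differentiability of $Q$ that holds at almost every $t^\ast$. Once the remainders are shown to be of order $o(h)$, the two arguments combine over the common full-measure set of good points to give $\ker Q(t)\subseteq\ker\bigl(Q(t)A(t)+\dot Q(t)\bigr)\cap\ker C(t)$ for a.e.\ $t\in\timeInt$.
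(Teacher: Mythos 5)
Your proposal is correct, but note that the paper states \Cref{lem:Pa_kernelInclusion} without giving a proof (it is presented as the LTV generalization of \cite[Proposition 11]{CheGH23}, with the surrounding machinery imported from \cite{MorH24}), so the comparison is with the natural in-framework argument rather than with a written one. That argument would go through \Cref{cor:storageCharacterization}: since $\AUC_\loc(\timeInt,\posSD[n])\subseteq\BV_\loc(\timeInt,\posSD[n])$, the storage function $\quadSt{Q}$ forces $Q$ to satisfy the differential \refKYP{} inequality \eqref{eq:KYP} a.e.; then, for $v\in\ker Q(t)$, the interior-minimum argument (the same one you use) gives $\ct{v}\dot Q(t)v=0$, positive semidefiniteness of the $(1,1)$ block then yields $(\ct{A}Q+QA+\dot Q)(t)v=0$ and hence $(QA+\dot Q)(t)v=0$, and the standard kernel inclusion for positive semidefinite block matrices gives $(C-\ct{B}Q)(t)v=0$, i.e.\ $C(t)v=0$. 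Your route is more elementary: it never invokes the KYP characterization (whose proof via \Cref{thm:KYP_integral} and \Cref{cor:storageCharacterization} rests on the Riemann--Stieltjes machinery of the appendix) and instead reads both inclusions directly off the dissipation inequality \eqref{def:passive_ineq} — the first from invariance of $\ker Q$ under the free flow, the second from probing with constant inputs. This buys self-containedness, and in particular keeps the lemma independent of Section~4 (it is stated already in Section~2), at the cost of redoing by hand the Lebesgue-point and classical-differentiability bookkeeping that the other route hides inside \Cref{cor:storageCharacterization}. Your bookkeeping is sound: at a common Lebesgue point of $A,B,C,D$ at which $Q$ is classically differentiable, both expansions hold and the flow-derivative computation is legitimate.

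Two small remarks. First, your second step is simpler than you make it: you do not need the expansions of $x(t^\ast+h)$ and $Q(t^\ast+h)$, nor the identity $\ct{v}\dot Q(t^\ast)v=0$, because the dissipation inequality together with $\quadSt{Q}\geq 0$ and $\quadSt{Q}(t^\ast,v)=0$ already gives
\begin{equation*}
    \int_{t^\ast}^{t^\ast+h}\realPart\pset[\big]{\ct{y(t)}w}\td t \;\geq\; \quadSt{Q}\pset[\big]{t^\ast+h,x(t^\ast+h)} \;\geq\; 0,
\end{equation*}
so dividing by $h$ and using only the Lebesgue-point property of $C$ and $D$ (plus continuity of $x$) yields $\realPart\bigl(\ct{(C(t^\ast)v+D(t^\ast)w)}w\bigr)\geq 0$ directly; the delicate $o(h)$ control of the quadratic form that you flag as the main obstacle is therefore not needed at all. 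Second, from $\realPart\bigl(\ct{(C(t^\ast)v)}w\bigr)\geq 0$ for all $w$, testing with $\pm w$ only kills the real part; take also $\pm\imagUnit w$, or simply $w=-C(t^\ast)v$, to conclude $C(t^\ast)v=0$. These are cosmetic; the proof stands.
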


In the next section, we consider linear time-varying pH systems and their properties.

\section{Linear  time-varying pH systems}\label{sec:phsystem}
In this section we discuss the properties of port-Hamiltonian systems of the form in \Cref{def:pH}. For this it is often useful to rewrite the \refPH{} system in the alternative form
\begin{equation}\label{eq:PHS}
    \begin{split}
        \dot x + Kx &= \big(J-R\big)Qx + \big(G-P\big)u, \\
        y &= \ct{\big(G+P\big)}Qx + \big(S-N\big)u,
    \end{split}
\end{equation}
or in compact form 
\begin{equation}\label{eq:PHS_compact}
    \bmat{\dot x+Kx \\ -y} = \pset[\big]{L-W}\bmat{Qx \\ u},
\end{equation}
where it holds that
\[
L \coloneqq \bmat{J & G \\ -\ct{G} & N} = -\ct{L} \in \mathbb C^{n+m,n+m}, \qquad W(t)\in\posSD[n+m],\, \text{for a.e.~$t\in\timeInt$.}
\]
and
\begin{align*}
QK+\ct{K}Q=\dot Q.\end{align*} 
The Hamiltonian $\mathcal H:\timeInt\times\C^{n,n}\to\R,\ (t,x)\mapsto\frac{1}{2}\ct{x}Q(t)x$, which can be equivalently written as $\mathcal H=\quadSt{Q}$, is a continuous map, and it is continuously differentiable with respect to $x$ (in the real sense) and weakly differentiable with respect to $t$. In particular, $\mathcal H\in W^{1,1}_\loc(\timeInt\times\C^n,\R)$ with derivatives
\[
\nabla\mathcal H(t,x) = Q(t)x, \qquad
\frac{\partial\mathcal H}{\partial t}(t,x) = \frac{1}{2}\ct{x}\dot Q(t)x.
\]
Note that, since $\mathcal H$ is a real-valued function, for every $x\in W^{1,1}_\loc(\timeInt,\C^n)$ the chain rule
\begin{equation*}
    \dd{}{t}\pset[\big]{\mathcal H(\cdot,x)} = \pd{\mathcal H}{t} + \realPart\pset[\big]{\ct{\nabla\mathcal H}\dot x} = \frac{1}{2}\ct{x}Qx + \realPart\pset[\big]{\ct{x}Q\dot x}
\end{equation*}
holds. We thus have the following classical result for port-Hamiltonian systems.
\begin{theorem}\label{thm:pH_PBE}
    Let $(x,u,y)$ be any state-input-output solution of the pH system \eqref{eq:PHS}. Then the \emph{power balance equation}
    \begin{equation}\label{eq:pH_PBE}
        \dd{}{t}\mathcal H\pset[\big]{t,x(t)} = -\ct{\bmat{Q(t)x(t) \\ u(t)}}W(t)\bmat{Q(t)x(t) \\ u(t)} + \realPart\pset[\big]{\ct y(t)u(t)}
    \end{equation}
    and the \emph{dissipation inequality}
    \begin{equation}\label{eq:pH_dissIneq}
        \dd{}{t}\mathcal H\pset[\big]{t,x(t)} \leq \realPart\pset[\big]{\ct y(t)u(t)}
    \end{equation}
    hold for a.e.~$t\in\timeInt$.
\end{theorem}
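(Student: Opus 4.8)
The plan is to derive the power balance equation \eqref{eq:pH_PBE} directly from the chain rule recorded immediately before the theorem,
\[
\dd{}{t}\mathcal H\pset[\big]{t,x(t)} = \tfrac{1}{2}\ct{x}\dot Q x + \realPart\pset[\big]{\ct{x}Q\dot x},
\]
which holds for a.e.\ $t\in\timeInt$ because $Q,x\in W^{1,1}_\loc$ are continuous and hence locally bounded, so that $t\mapsto\mathcal H\pset{t,x(t)}$ is locally in $W^{1,1}$ with the stated weak derivative. Abbreviating $z\coloneqq\bmat{Qx \\ u}$, the compact form \eqref{eq:PHS_compact} reads $\bmat{\dot x+Kx \\ -y}=(L-W)z$, so I would substitute $\dot x = [(L-W)z]_1 - Kx$ (the first block of $(L-W)z$ minus $Kx$) into the term $\realPart\pset{\ct{x}Q\dot x}$ and expand.

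The key step is the cancellation of the $K$-contribution. Since $Q$ is Hermitian, the scalar $\ct{x}QKx$ satisfies $\realPart\pset{\ct{x}QKx}=\tfrac{1}{2}\ct{x}(QK+\ct{K}Q)x$, and the Lyapunov-type identity \eqref{eq:Q_lyapunov}, $QK+\ct{K}Q=\dot Q$, turns this into $\tfrac{1}{2}\ct{x}\dot Q x$. Hence the explicit $\tfrac{1}{2}\ct{x}\dot Q x$ produced by the chain rule is exactly canceled by $-\realPart\pset{\ct{x}QKx}$, leaving $\dd{}{t}\mathcal H = \realPart\pset[\big]{\ct{(Qx)}\,[(L-W)z]_1}$, where I used $\ct{x}Q=\ct{(Qx)}$.

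It then remains to recognize the surviving term as (minus) the full quadratic form. Expanding $\ct{z}(L-W)z = \ct{(Qx)}[(L-W)z]_1 + \ct{u}[(L-W)z]_2$ and using $[(L-W)z]_2=-y$ gives $\ct{(Qx)}[(L-W)z]_1 = \ct{z}(L-W)z + \ct{u}y$. Taking real parts, the skew-Hermitian property $L=-\ct{L}$ forces $\realPart\pset{\ct{z}Lz}=0$, while the Hermitian $W$ yields $\realPart\pset{\ct{z}(L-W)z} = -\ct{z}Wz$. Combined with $\realPart\pset{\ct{u}y}=\realPart\pset{\ct{y}u}$, this produces $\dd{}{t}\mathcal H = -\ct{z}Wz + \realPart\pset{\ct{y}u}$, which is precisely \eqref{eq:pH_PBE}. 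The dissipation inequality \eqref{eq:pH_dissIneq} is then immediate from $W(t)\in\posSD[n+m]$, i.e.\ $\ct{z}Wz\geq 0$ for a.e.\ $t\in\timeInt$.

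The main obstacle is not deep analytically: it is the bookkeeping of real parts of Hermitian and skew-Hermitian forms and, above all, spotting that the constraint \eqref{eq:Q_lyapunov} is exactly what makes the $\dot Q$-term and the $K$-term annihilate each other; without this identity the chain-rule derivative would not collapse to the clean expression in \eqref{eq:pH_PBE}. The only genuine analytic point is the a.e.\ validity of the chain rule, which is guaranteed by the $W^{1,1}_\loc$ regularity and local boundedness of $Q$ and $x$ already established in the preceding discussion.
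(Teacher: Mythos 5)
Your proof is correct and is essentially the paper's own argument, just reorganized: where the paper evaluates the quantity $\realPart\pset[\big]{\ct{z}\bmat{\dot x+Kx \\ -y}}$ with $z=\bmat{Qx\\ u}$ in two ways (once via the compact form \eqref{eq:PHS_compact} giving $-\ct{z}Wz$, once by direct expansion using \eqref{eq:Q_lyapunov} and the chain rule) and equates them, you substitute the dynamics into the chain rule and let the same Lyapunov identity cancel the $\dot Q$-term, arriving at the identical conclusion with the identical ingredients. As a minor aside, your version of the chain rule (with $\tfrac12\ct{x}\dot Q x$) corrects a typo in the paper's displayed formula, which omits the dot on $Q$.
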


\begin{proof}
    On the one hand, from the compact notation \eqref{eq:PHS_compact} it follows that
    \[
    \realPart\left(\ct{\bmat{Qx \\ u}}\bmat{\dot x+Kx \\ -y}\right) = -\ct{\bmat{Qx \\ u}}W\bmat{Qx \\ u} \leq 0
    \]
    almost everywhere.
    On the other hand, using \eqref{eq:Q_lyapunov}, we obtain 
    \begin{align*}
        \realPart\left(\ct{\bmat{Qx \\ u}}\bmat{\dot x+Kx \\ -y}\right)
        &= \frac{1}{2}\ct{x}(QK+\ct{K}Q)x + \realPart(\ct x Q\dot x-\ct{u}y)  \\
        &= \frac{1}{2}\ct{x}\dot Qx + \realPart(\ct xQ\dot x) - \realPart(\ct yu)
        = \dd{}{t}\mathcal H(\cdot,x) - \realPart(\ct yu).
    \end{align*}
    The power balance equation and the dissipation inequality follow immediately.
\end{proof}

\begin{remark}\label{rem:pH_PBE}
    Note that calling \eqref{eq:pH_dissIneq} a dissipation inequality is consistent with \Cref{def:passive}. In fact, since $\mathcal H=\quadSt{Q}\in W^{1,1}_\loc(\timeInt\times\C^n,\R)$, we obtain
    \[
    \quadSt{Q}\pset[\big]{t_1,x(t_1)} - \quadSt{Q}\pset[\big]{t_0,x(t_0)} = \int_{t_0}^{t_1}\dd{}{t}\mathcal H\pset[\big]{t,x(t)}\td t \leq \int_{t_0}^{t_1}\realPart\pset[\big]{\ct y(t)u(t)}\td t
    \]
    for all $t_0,t_1\in\timeInt,\ t_0\leq t_1$, i.e., $\mathcal H$ is a storage function and the system is passive.
    However, the power balance equation \eqref{eq:pH_PBE} provides more information than the dissipation inequality alone, since it includes an explicit term for the dissipated energy.
\end{remark}

\begin{remark}
    Another characterizing property of pH systems is that they can be expressed in terms of a Dirac structure, a resistive structure, and a Hamiltonian \cite{SchJ14} (or Lagrangian submanifold \cite{SchM18}). Up to making the system autonomous, this also applies to time-varying systems, including the ones of the form \eqref{eq:PHS}, see, e.g.~\cite{MehM19,Mor24}.
\end{remark}

\noindent The Hamiltonian $\mathcal{H}$ in the system formulation~\refPH{} depends on complex variables; however, when computing its derivative, we consider the differential in the real sense. The following remark puts it in relation to the complex derivative. 
\begin{remark}
    The real differential $\nabla\mathcal H$ of the Hamiltonian is closely connected to its \emph{Wirtinger derivatives}, see e.g.~\cite{Hen93}.
    More precisely, given a complex function $f:\C^n\to\C^m:z=z_\R+\imagUnit z_{\mathbb I}\mapsto f(z)$, differentiable with respect to both $z_\R$ and $z_{\mathbb I}$, its Wirtinger derivatives are defined as
    \[
    \pd{f}{z} = \frac{1}{2}\pset*{\pd{f}{z_\R} - \imagUnit\pd{f}{z_{\mathbb I}}} \qquad\text{and}\qquad
    \pd{f}{\overline{z}} = \frac{1}{2}\pset*{\pd{f}{z_\R} + \imagUnit\pd{f}{z_{\mathbb I}}}.
    \]
    Furthermore, if $g:\C^k\to\C^n$ is differentiable with respect to $z_\R,z_{\mathbb I}$, then the chain rules
    \begin{align*}
        \pd{}{z}(f\circ g) &= \pset*{\pd{f}{z}\circ g}^\top\pd{g}{z} + \pset*{\pd{f}{\overline z}\circ g}^\top\pd{\overline g}{z}, \\
        \pd{}{\overline z}(f\circ g) &= \pset*{\pd{f}{z}\circ g}^\top\pd{g}{\overline z} + \pset*{\pd{f}{\overline z}\circ g}^\top\pd{\overline g}{\overline z}
    \end{align*}
    are satisfied.
    When $f$ is real-valued, like in the case of the Hamiltonian, we deduce that $\pd{f}{\overline z}=\overline{\pd{f}{z}}$.
    When $g$ depends only on a real argument, like in the case of a state trajectory, it follows that $\pd{g}{\overline z}=\pd{g}{z}$.
    Under both assumptions, we obtain that
    \[
    \pd{}{z}(f\circ g) = \pd{}{\overline z}(f\circ g) = \realPart\pset*{\pset*{\pd{f}{z}\circ g}^\top \pd{g}{z}} = \realPart\pset*{\ct{\pset*{\pd{f}{\overline z}\circ g}} \pd{g}{z}}.
    \]
    Note that, up to applying the isomorphism $\R^{2n}\to\C^n,\ (z_\R,z_{\mathbb I})\mapsto z_\R+\imagUnit z_{\mathbb I}$, it is natural to identify the real differential $\nabla f$ with $2\pd{f}{\overline z}$, in particular
    \[
    \pd{}{z}(f\circ g) = \frac{1}{2}\realPart\pset*{\ct{(\nabla f\circ g)}\pd{g}{z}}.
    \]
    Assuming for the sake of simplicity that $Q$ is constant in time, and defining $f(x)\coloneqq\frac{1}{2}\ct{x}Qx$ and $g(t+\imagUnit s)\coloneqq x(t)$ for some (weakly) differentiable $x:\timeInt\to\C^n$, we obtain then
    \[
    \pd{f}{z} = \frac{1}{2}\overline{Qx}, \qquad
    \pd{f}{\overline z} = \frac{1}{2}Qx, \qquad
    \dd{}{t}(f\circ x) = 2\pd{}{z}(f\circ g) = \realPart\pset*{\ct{x}Q\dot x}.
    \]
\end{remark}

\subsection{Dissipativity properties of pH systems}
As in the linear time-invariant case, the port-Hamiltonian structure retains all dissipativity properties for time-varying systems, i.e. if a system is \refPH{} then it is passive \refPa{}, the KYP inequality has a solution according to \refKYP{} and the supply is \refNN{}. The implication that \refKYP{} implies \refPa{} and \refNN{} was already discussed in \cite{LozBEM00}. In particular, it is well known that the passivity for quadratic and differentiable storage functions is equivalent to the existence of nonnegative solutions to the KYP inequality~\eqref{eq:KYP}, see e.g.\ \cite[Theorem 16]{HilM80}. For completeness, we recall this result as well and show in the following theorem that the \refPH{} property implies all the other dissipativity properties.

\begin{theorem}
\label{thm:ph2All}
   Consider a LTV system of the form \eqref{eq:tv_system}. Then the following implications hold.
    \begin{align*}
\refPH{} \quad  \implies \quad \refKYP{} \quad \implies \quad  \refPa{} \quad \implies \quad \refNN{}.
\end{align*}
    In addition, if the system is \refPH{} of the form \eqref{eq:pH_coefficients}, then $Q$ is a solution to the \refKYP{} inequality \eqref{eq:KYP}.
    Furthermore, if $Q\in W^{1,1}_\loc(\timeInt,\posSD[n])$ is a solution of the \refKYP{} inequality \eqref{eq:KYP}, then $\quadSt{Q}$ is a storage function for \eqref{eq:tv_system}.
\end{theorem}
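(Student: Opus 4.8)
The plan is to prove the chain of implications by establishing each link separately, leveraging the structural results already developed. The cleanest strategy is to prove the two ``In addition'' statements first, since they do most of the work and the main chain then follows quickly.

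First I would prove that a \refPH{} system satisfies \refKYP{} with $Q$ itself as the solution matrix. Starting from the coefficient identities \eqref{eq:pH_coefficients}, I would substitute $A=(J-R)Q-K$, $B=G-P$, $C=\ct{(G+P)}Q$, $D=S-N$ directly into the KYP block matrix \eqref{eq:KYP} and simplify. The key computation is the $(1,1)$ block: using $J=-\ct J$, $N=-\ct N$, and crucially the Lyapunov-type relation \eqref{eq:Q_lyapunov}, namely $QK+\ct KQ=\dot Q$, one finds that the $-\ct AQ-QA-\dot Q$ term collapses to $2QRQ$ (the skew-Hermitian $J$ and the $K$-terms cancel against $\dot Q$). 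The off-diagonal block becomes $\ct C-QB=\ct{(G+P)}Q \cdot{}$(wait, sign-checking) $=2QP$ up to the pointwise structure, and the $(2,2)$ block $D+\ct D=S-N+\ct S+\ct N=2S$. The upshot is that the KYP matrix equals $2W(t)=2\bmat{R & P \\ \ct P & S}\geq 0$, so \refKYP{} holds. I expect the sign bookkeeping in matching the off-diagonal blocks to be the only delicate point; everything else is routine once \eqref{eq:Q_lyapunov} is used.

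Next I would prove that if $Q\in W^{1,1}_\loc(\timeInt,\posSD[n])$ solves \refKYP{}, then $\quadSt{Q}$ is a storage function, which simultaneously gives the \refKYP{}$\implies$\refPa{} implication. For any solution $(x,u,y)$, I would compute $\dd{}{t}\quadSt{Q}(t,x(t))=\frac12\ct x\dot Qx+\realPart(\ct xQ\dot x)$ using the chain rule stated before \Cref{thm:pH_PBE}, substitute $\dot x=Ax+Bu$, and recognize the resulting quadratic form in $(x,u)$ as exactly $-\frac12$ times the quadratic form associated with the KYP matrix \eqref{eq:KYP} plus $\realPart(\ct yu)$ (using $y=Cx+Du$). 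Since the KYP matrix is positive semidefinite, that quadratic form is nonnegative, giving $\dd{}{t}\quadSt{Q}\leq\realPart(\ct y u)$ pointwise a.e.; integrating from $t_0$ to $t_1$ yields \eqref{def:passive_ineq}. Nonnegativity $\quadSt{Q}\geq 0$ follows from $Q\geq 0$, and $\quadSt{Q}(t_0,0)=0$ is immediate, so $\quadSt{Q}$ is a storage function and the system is \refPa{}.

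Finally, the remaining links are short. For \refPa{}$\implies$\refNN{}, I would take any storage function $V$ and any solution with $x(t_0)=0$; then $V(t_0,x(t_0))=V(t_0,0)=0$ and $V(t_1,x(t_1))\geq 0$, so \eqref{def:passive_ineq} immediately gives $\int_{t_0}^{t_1}\realPart(\ct y u)\td t\geq V(t_1,x(t_1))-V(t_0,0)\geq 0$, which is exactly \refNN{}. The full chain \refPH{}$\implies$\refKYP{}$\implies$\refPa{}$\implies$\refNN{} then assembles from these pieces. The main obstacle throughout is purely the algebraic verification in the first step: one must carefully exploit the skew-Hermitian structure of $J,N$ and the identity \eqref{eq:Q_lyapunov} to see the $K$-contributions vanish, since without \eqref{eq:Q_lyapunov} the $(1,1)$ block would not reduce to a positive semidefinite expression.
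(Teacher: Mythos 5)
Your proposal is correct and follows essentially the same route as the paper: a direct algebraic verification that the KYP matrix of a \refPH{} system reduces via \eqref{eq:Q_lyapunov} and the skew-Hermitian structure of $J,N$ to a positive semidefinite expression built from $W$, then the pointwise dissipation inequality from the KYP quadratic form integrated over $[t_0,t_1]$, and finally evaluation of a storage function along trajectories with $x(t_0)=0$ for \refNN{}. One small correction to your first step: your computed blocks $2QRQ$, $2QP$, $2S$ are right, but the assembled KYP matrix is not ``$2W(t)$'' itself; it equals the congruence $2\bmat{Q & 0 \\ 0 & I_m}W(t)\bmat{Q & 0 \\ 0 & I_m}$, which is positive semidefinite because it is a Hermitian congruence of $W(t)\geq 0$ --- this is precisely the identity the paper's proof uses.
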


\begin{proof}
\underline{\em \refPH{} $\implies$ \refKYP{}:}
    Consider a pH system of the form \eqref{eq:PHS}. Since $W\geq 0$ pointwise, we also have
    \[
    \bmat{Q & 0 \\ 0 & I_m}\pset[\big]{(W-L)+\ct{(W-L)}}\bmat{Q & 0 \\ 0 & I_m} = 2\ct{\bmat{Q & 0 \\ 0 & I_m}}W\bmat{Q & 0 \\ 0 & I_m} \geq 0,
    \]
    cf.~\eqref{eq:PHS_compact}.
    Then, since
    \[
    \bmat{Q & 0 \\ 0 & I_m}(L-W)\bmat{Q & 0 \\ 0 & I_m}
    = \bmat{Q & 0 \\ 0 & I_m}\bmat{A+K & B \\ -C & -D}
    = \bmat{QA+QK & QB \\ -C & -D},
    \]
    it follows that pointwise
    \begin{align*}
    \bmat{ -\ct AQ - QA - \dot Q & -QB+\ct C \\ C-\ct B Q & D+\ct D }
    &= \bmat{ -\ct AQ - QA - QK - \ct{K}Q & -QB+\ct C \\ C-\ct B Q & D+\ct D }  \\
    &= \bmat{Q & 0 \\ 0 & I_m}\pset[\big]{(W-L)+\ct{(W-L)}}\bmat{Q & 0 \\ 0 & I_m} \geq 0,
    \end{align*}
    i.e., $Q$ is a solution of the KYP inequality \eqref{eq:KYP}.\smallskip

\underline{\em \refKYP{} $\implies$ \refPa{}:}
    If $Q$ solves the KYP inequality \eqref{eq:KYP}, then 
    we can rewrite the KYP inequality as 
    \begin{equation}
       \label{eq:KYP2PA}
\ct{\begin{bmatrix}
            x(t)\\u(t)
        \end{bmatrix}}\bmat{ (-\ct AQ - QA - \dot Q)(t) & (-QB+\ct C)(t) \\ (C-\ct B Q)(t) & (D+\ct D)(t) }\begin{bmatrix}
            x(t)\\u(t)
        \end{bmatrix} \geq 0.
    \end{equation}
    Passivity as in \Cref{def:passive} 
    with respect to the storage function $V=\quadSt{Q}$ follows by integrating \eqref{eq:KYP2PA} over intervals $(t_0,t_1)\subseteq\mathbb{T}$ and rearranging terms, resulting in 
    \begin{align*}
    &~~~~\int_{t_0}^{t_1}\realPart\pset[\big]{\ct{y(t)}u(t)}\td t\\
&=\frac{1}{2}\int_{t_0}^{t_1}     \ct{\begin{bmatrix}
            x(t)\\u(t)
        \end{bmatrix}} \bmat{ 0 & \ct C(t) \\ C(t) & (D+\ct D)(t) }\begin{bmatrix}
            x(t)\\u(t)
        \end{bmatrix}\mathrm{d}t \\
        &\geq  \frac{1}{2}\int_{t_0}^{t_1}    \ct{\begin{bmatrix}
            x(t)\\u(t)
        \end{bmatrix}}\bmat{ (\ct AQ + QA + \dot Q)(t) & Q(t)B(t) \\ \ct{B(t)} Q(t) & 0 }\begin{bmatrix}
            x( t)\\u(t)
        \end{bmatrix}\mathrm{d}t\\
        &= \int_{t_0}^{t_1} \pset*{ \frac{1}{2}\ct{\dot x(t)}Q(t)x(t) + \frac{1}{2}\ct{x(t)}\dot Q(t)x(t) + \frac{1}{2}\ct{x(t)}Q(t)\dot x(t) } \td t \\
        &= \int_{t_0}^{t_1} \dd{}{t} V\pset[\big]{t,x(t)} \td t
        = V\pset[\big]{t_1,x(t_1)}- V\pset[\big]{t_0,x(t_0)},
    \end{align*}
 which is the dissipation inequality \eqref{def:passive_ineq}. This shows that \refKYP{} implies \refPa{} with the requested storage function. 
    \smallskip
    
\underline{\em \refPa{}$\implies$ \refNN{}:}
    Since the system is passive, then according to \Cref{def:passive}, there exists a storage function $V:\timeInt\times\C^n\rightarrow\R$ such that $V(t_1,x(t_1))\leq V(t_0,x_0)+\int_{t_0}^{t_1}\realPart(\ct{y}(s)u(s)) {\rm d}s$ and $V(t_0,0)=0$ holds for all $t_0\in\timeInt$. Choosing $x_0=0$, it follows that $\int_{t_0}^{t_1}\realPart(\ct{y(s)} u(s)) {\rm d}s\geq V(t_1,x(t_1))\geq 0$ which shows that the supply is nonnegative \refNN{} for all $t_0,t_1\in\timeInt,\ t_0\leq t_1$.
\end{proof}

\noindent In \Cref{thm:ph2All} we have shown that if a system is \refPH{}, then all other properties follow. The converse implications (i.e. from \refNN{}, \refPa{} and \refKYP{} to \refPH{}) are discussed separately in detail in the following sections, as they require more involved derivations.

\subsection{Equivalent representations 
for pH systems}\label{sec:equiv}
In this subsection, we review different representations for pH systems and extend them to the case of complex systems. We recall that the property of admitting a pH \emph{representation} is invariant under coordinate transformations, see \Cref{lem:PHS_changeOfVariables} in the appendix.
Note that typically for the description of a LTI system via a transfer function in the frequency domain, one also has different possibilities to choose the coefficients for the same transfer function, which is again invariant under coordinate transformations. Choosing a specific set of coefficients, one typically speaks of a \emph{realization} of the transfer function.

A key ingredient in the construction of pH representations is the null space decomposition in \Cref{thm:nullSpaceDec}. 
For a \refPH{} system of the form \eqref{eq:PHS}, we obtain from \Cref{thm:ph2All} that the system is passive and $\mathcal H=\quadSt{Q}$ is a storage function.
Thus, because of \Cref{thm:nullSpaceDec}, there is a pointwise unitary $U\in W^{1,1}_\loc(\timeInt,\GL[n])$ such that $\wt Q=\ct{U}QU$ has pointwise the form \eqref{eq:nullSpaceDec} .
Then, by applying the change of variables $x=U\wt x$, we obtain an equivalent pH system where $Q$ is replaced by $\wt Q$ in the representation.
Therefore, up to applying a pointwise unitary change of variables, we may assume without loss of generality that $Q$ is of the form \eqref{eq:nullSpaceDec}.

It should be noted, however, that even by fixing $Q\in W^{1,1}_\loc(\timeInt,\posSD[n])$, i.e., the Hamiltonian, the pH representation \eqref{eq:PHS} is not unique.
The following lemma characterizes the degrees of freedom in the representation.
\begin{lemma}\label{lem:pH_degreesOfFreedom}
    Suppose that an LTV system \eqref{eq:tv_system} is \refPH{} with Hamiltonian $\mathcal H=\quadSt{Q}$ defined by a matrix function $Q\in W^{1,1}_\loc(\timeInt,\posSD[n])$ of the form \eqref{eq:nullSpaceDec} (for simplicity leaving out the tilde $\tilde{}$ ). Express the system in the form \eqref{eq:PHS} and partition the coefficients pointwise as
    \[
    K = \bmat{K_{11} & K_{12} \\ K_{21} & K_{22}}, \quad J = \bmat{J_{11} & J_{12} \\ J_{21} & J_{22}}, \quad R = \bmat{R_{11} & R_{12} \\ R_{21} & R_{22}}, \quad G = \bmat{G_1 \\ G_2}, \quad P = \bmat{P_1 \\ P_2},
    \]
    consistently with \eqref{eq:nullSpaceDec}. Partition analogously
    \[
    A = \bmat{A_{11} & A_{12} \\ A_{21} & A_{22}}, \qquad B = \bmat{B_1 \\ B_2}, \qquad C = \bmat{C_1 & C_2}.
    \]
    Then $A_{12}=0$, $C_2=0$, and the following coefficients are uniquely determined a.e.~on $\timeInt$:
 \begin{subequations}\label{eq:pH_necessary}
    \begin{align}
        K_{12} &= 0, \\
        K_{22} &= A_{22}, \\
        R_{11} &= -\frac{1}{2}\pset[\big]{A_{11}Q_{11}^{-1}+Q_{11}^{-1}\ct{A}_{11}(t)+Q_{11}^{-1}\dot Q_{11}Q_{11}^{-1}}, \label{eq:pH_necessary_R11} \\
        G_1 &= \frac{1}{2}\pset[\big]{Q_{11}^{-1}\ct{C}_1+B_1}, \label{eq:pH_necessary_G1} \\
        P_1 &= \frac{1}{2}\pset[\big]{Q_{11}^{-1}\ct{C}_1-B_1}, \label{eq:pH_necessary_P1} \\
        N &= \frac{1}{2}\pset[\big]{D-\ct{D}}, \label{eq:pH_necessary_N} \\
        S &= \frac{1}{2}\pset[\big]{D+\ct{D}}. \label{eq:pH_necessary_S}
    \end{align}
    Furthermore, the coefficients $J(t)$, $R_{12}(t)$, $R_{22}(t)$ and $P_2(t)$ can be arbitrarily chosen for all $t\in\timeInt$, as long as $W(t)\in\posSD[n]$ and $J(t)=-\ct{J(t)}$ hold for a.e.~$t\in\timeInt$. In that case, the remaining coefficients are uniquely determined as
    \begin{align}
        K_{11} &= \pset[\big]{ J_{11}-R_{11}}Q_{11} - A_{11}, \\
        K_{21} &= \pset[\big]{ J_{21}-R_{21} }Q_{11} - A_{21}, \\
        G_2 &= B_2 + P_2.
    \end{align}
    \end{subequations}
\end{lemma}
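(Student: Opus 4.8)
The plan is to prove everything by a direct block-wise comparison of the four coefficient identities that the \refPH{} representation encodes. From \eqref{eq:pH_coefficients} I would start with the pointwise relations
\[
A = (J-R)Q - K, \qquad B = G - P, \qquad C = (\ct G + \ct P)Q, \qquad D = S - N,
\]
together with the structural constraints $J=-\ct J$, $N=-\ct N$, $W\in\posSD[n+m]$ (so in particular $R=\ct R$ and $S=\ct S$), and the Lyapunov constraint \eqref{eq:Q_lyapunov}. The whole argument is then a matter of inserting the null-space form \eqref{eq:nullSpaceDec} of $Q$ and projecting each identity onto the Hermitian/skew-Hermitian and diagonal/off-diagonal parts dictated by the structure.

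First I would use that $Qx$ involves only $Q_{11}x_1$, so the products $(J-R)Q$ and $(\ct G+\ct P)Q$ have vanishing second block column. Comparing second block columns in $A=(J-R)Q-K$ and in $C=(\ct G+\ct P)Q$ then gives $A_{12}=-K_{12}$, reads $K_{22}$ directly off the $(2,2)$ block, and forces $C_2=0$. Next I would substitute the block form of $Q$ into the Lyapunov constraint \eqref{eq:Q_lyapunov}: the $(1,2)$ block yields $Q_{11}K_{12}=0$, and since $Q_{11}$ is pointwise invertible this gives $K_{12}=0$ (whence $A_{12}=0$), while the $(1,1)$ block gives the reduced Lyapunov equation $Q_{11}K_{11}+\ct K_{11}Q_{11}=\dot Q_{11}$, which I would rewrite, by multiplying with $Q_{11}^{-1}$ on both sides, as $K_{11}Q_{11}^{-1}+Q_{11}^{-1}\ct K_{11}=Q_{11}^{-1}\dot Q_{11}Q_{11}^{-1}$.

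I would then solve the remaining identities for the determined coefficients. From $C_1=\ct{(G_1+P_1)}Q_{11}$ and invertibility of $Q_{11}$ one gets $G_1+P_1=Q_{11}^{-1}\ct C_1$, which combined with $B_1=G_1-P_1$ yields the stated formulas \eqref{eq:pH_necessary_G1}--\eqref{eq:pH_necessary_P1}; the formulas \eqref{eq:pH_necessary_N}--\eqref{eq:pH_necessary_S} for $N$ and $S$ follow by splitting $D=S-N$ into its Hermitian and skew-Hermitian parts via $S=\ct S$, $N=-\ct N$. The key computation is \eqref{eq:pH_necessary_R11}: from $A_{11}=(J_{11}-R_{11})Q_{11}-K_{11}$ I would write $(A_{11}+K_{11})Q_{11}^{-1}=J_{11}-R_{11}$, and since $J_{11}$ is skew-Hermitian and $R_{11}$ Hermitian, extract $-R_{11}$ as the Hermitian part of the left-hand side,
\[
-R_{11} = \tfrac12\pset[\big]{(A_{11}+K_{11})Q_{11}^{-1} + Q_{11}^{-1}\ct{(A_{11}+K_{11})}}.
\]
Substituting the reduced Lyapunov equation collapses the $K_{11}$-terms into $Q_{11}^{-1}\dot Q_{11}Q_{11}^{-1}$, leaving exactly \eqref{eq:pH_necessary_R11}.

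Finally I would account for the degrees of freedom: the blocks $J$ (subject only to $J=-\ct J$), $R_{12}$, $R_{22}$ and $P_2$ never enter any identity solved above, so they remain free, constrained only by $W\in\posSD[n+m]$, which couples them with the already-determined $R_{11},P_1,S$. Once they are fixed, $K_{11}$ and $K_{21}$ are forced by solving the $(1,1)$ and $(2,1)$ blocks of $A=(J-R)Q-K$ for $K$, and $G_2=B_2+P_2$ from the second block of $B=G-P$. The step requiring the most care is the time-varying rank: the partition sizes $r(t)$ and $n-r(t)$ vary with $t$, so the block form of $\dot Q$ and every block identity must be justified \emph{pointwise a.e.} This is legitimate because $r$ is weakly decreasing, hence has at most countably many jumps (a null set), and on each subinterval of constant rank the block sizes are locally constant, so $\dot Q=\bmat{\dot Q_{11} & 0 \\ 0 & 0}$ holds for a.e.~$t\in\timeInt$; this is the main obstacle to writing the argument rigorously rather than a genuine conceptual difficulty.
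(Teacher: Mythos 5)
Your proposal is correct in substance and, on the necessity half, performs the same core computation as the paper (insert the block form \eqref{eq:nullSpaceDec} of $Q$ into the coefficient identities and extract Hermitian parts), but it reaches the preliminary identities $A_{12}=0$, $C_2=0$, $K_{12}=0$ by a genuinely different and more self-contained route. The paper obtains $A_{12}=0$ and $C_2=0$ by invoking \Cref{lem:Pa_kernelInclusion}, i.e., through storage-function/passivity theory (via \Cref{thm:ph2All}), and only afterwards works blockwise; you instead read $C_2=0$ directly off $C=\ct{(G+P)}Q$, get $A_{12}=-K_{12}$ from $A=(J-R)Q-K$, and kill $K_{12}$ using the $(1,2)$ block of \eqref{eq:Q_lyapunov}. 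That is purely algebraic and needs no passivity input. Your explicit justification of the a.e.\ block form of $\dot Q$ (the rank is integer-valued and weakly decreasing, hence has finitely many jumps, a null set) is also more careful than the paper, which asserts this form without comment. One caution on signs: carried out exactly, your block comparison yields $K_{22}=-A_{22}$ (since $A_{22}=[(J-R)Q]_{22}-K_{22}=-K_{22}$) and, from $D=S-N$ with $S=\ct S$, $N=-\ct N$, it yields $N=\frac{1}{2}(\ct{D}-D)$; these disagree with the signs printed in the statement for $K_{22}$ and $N$, but the discrepancy is internal to the paper (its own displayed identity in the proof implies $-K_{22}=A_{22}$), so it is a defect of the printed statement rather than of your method.

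There is, however, one genuine omission: you prove only the uniqueness/necessity half of the second claim. The lemma asserts that $J$, $R_{12}$, $R_{22}$, $P_2$ \emph{can be arbitrarily chosen} subject to $J=-\ct{J}$ and $W\in\posSD[n+m]$; to establish this you must verify that, after defining $K_{11}=(J_{11}-R_{11})Q_{11}-A_{11}$, $K_{21}=(J_{21}-R_{21})Q_{11}-A_{21}$, $G_2=B_2+P_2$ (together with $K_{12}=0$), all structural constraints of \Cref{def:pH} hold for the reconstructed coefficients --- in particular the Lyapunov constraint \eqref{eq:Q_lyapunov}, which is not among the identities you solved for $K$. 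This is precisely the closing step of the paper's proof: by construction
\[
Q_{11}K_{11} = Q_{11}J_{11}Q_{11} - Q_{11}R_{11}Q_{11} - Q_{11}A_{11},
\]
and inserting the already-derived formula \eqref{eq:pH_necessary_R11} for $R_{11}$ shows that the Hermitian part of $Q_{11}K_{11}$ equals $\frac{1}{2}\dot Q_{11}$, i.e., $Q_{11}K_{11}+\ct{K}_{11}Q_{11}=\dot Q_{11}$, while the remaining blocks of \eqref{eq:Q_lyapunov} vanish because $K_{12}=0$ and the second block row of $Q$ is zero. Without this short check, ``the remaining coefficients are uniquely determined'' is proved, but ``can be arbitrarily chosen'' is not.
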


\begin{proof}
    Since $\quadSt{Q}$ is a storage function for \eqref{eq:tv_system}, we deduce from \Cref{lem:Pa_kernelInclusion} that $A_{12}=0$ and $C_2=0$. Because of the specific form of $Q$ and $\dot Q$, the coefficients $K,J,R,Q,G,P,S,N$ determine a pH formulation for \eqref{eq:tv_system} if and only if
    \[
    \bmat{ (J_{11}-R_{11})Q_{11} - K_{11} & -K_{12} & G_1 - P_1 \\ (J_{21}-R_{21})Q_{11} - K_{21} & -K_{22} & G_2 - P_2 \\ \ct{(G_1+P_1)}Q_{11} & 0 & S - N}
    = \bmat{ A_{11} & 0 & B_1 \\ A_{21} & A_{22} & B_2 \\ C_1 & 0 & D }
    \]
    holds pointwise, $J$ and $N$ are pointwise skew-symmetric, and we have the two conditions
    \begin{equation}\label{eq:pH_degreesOfFreedom:1}
        \bmat{Q_{11}K_{11} + \ct{K}_{11}Q_{11} & Q_{11}K_{12} \\ \ct{K}_{12}Q_{11} & 0} = \bmat{\dot Q_{11} & 0 \\ 0 & 0}    
    \end{equation}
    and
    \[
    W(t) = \bmat{R_{11}(t) & R_{12}(t) & P_1(t) \\ R_{21}(t) & R_{22}(t) & P_2(t) \\ \ct{P}_1(t) & \ct{P}_2(t) & S(t)} \in \posSD[n+m],\, \text{for a.e.~$t\in\timeInt$.}
    \]
    We immediately see that $K_{12}=0$ and $K_{22}=A_{22}$ are necessary conditions.
    Splitting $D$ into its Hermitian and skew-Hermitian parts, we also obtain $S=\frac{1}{2}(D+\ct{D})$ and $N=\frac{1}{2}(D-\ct{D})$.
    By combining $G_1-P_1=B_1$ and $\ct{(G_1+P_1)}Q_{11}=C_1$, we deduce that $G_1=Q_{11}^{-1}\ct{C_1}+B_1$ and $P_1=Q_{11}^{-1}\ct{C_1}-B_1$.
    By studying the Hermitian part of $Q_{11}(J_{11}-R_{11})Q_{11}-Q_{11}K_{11}=Q_{11}A_{11}$, we obtain that
    \begin{align*}
        & -Q_{11}R_{11}Q_{11} - \frac{1}{2}(Q_{11}K_{11} + \ct{K}_{11}Q_{11}) = \frac{1}{2} (Q_{11}A_{11} + \ct{A}_{11}Q_{11} ) \\
        &\implies Q_{11}R_{11}Q_{11} = -\frac{1}{2}( Q_{11}A_{11} + \ct{A}_{11}Q_{11} + \dot Q_{11} ) \\
        &\implies R_{11} = -\frac{1}{2}( A_{11}Q_{11}^{-1} + Q_{11}^{-1}\ct{A}_{11} + Q_{11}^{-1}\dot Q_{11}Q_{11}^{-1} ).
    \end{align*}
    Suppose now that $J$, $R_{12}$, $R_{22}$ and $P_2$ are fixed. Then we immediately obtain $K_{11} = (J_{11}-R_{11})Q_{11} - A_{11}$, $K_{21} = (J_{21}-R_{11})Q_{11} - A_{21}$, and $G_2=B_2+P_2$ pointwise.
    It remains to verify that, as long as $J$ is pointwise skew-Hermitian and $W(t)\in\posSD[n+m]$ for a.e.~$t\in\timeInt$, every choice of $J$, $R_{12}$, $R_{22}$ and $P_2$, and the consequent values for $K_{11}$, $K_{21}$ and $G_2$, are consistent with the pH formulation.
    In fact, it only remains to verify that \eqref{eq:pH_degreesOfFreedom:1} holds.
    Clearly $Q_{11}K_{12}=0$ and $\ct{K}_{12}Q_{11}=0$ pointwise, since $K_{12}=0$.
    By construction, we also have
    \[
    Q_{11}K_{11} = Q_{11}(J_{11}-R_{11})Q_{11} - Q_{11}A_{11} = Q_{11}J_{11}Q_{11} - \frac{1}{2}Q_{11}A_{11} + \frac{1}{2}\ct{A}_{11}Q_{11} + \frac{1}{2}\dot Q_{11}
    \]
    a.e.~on $\timeInt$, from which, by extracting the Hermitian part, we obtain that
    $
    Q_{11}K_{11} + \ct{K}_{11}Q_{11} = \dot Q_{11},
    $
    concluding the proof.
\end{proof}

\begin{remark}\label{rem:pH_degreesOfFreedom}
In view of \Cref{lem:pH_degreesOfFreedom}, it is useful to discuss the degrees of freedom for the representation in \refPH{} for LTV systems, to be able to choose the coefficients in a canonical way.
The coefficients $R_{12}$, $R_{21}$, $R_{22}$ and $P_2$, although part of $W$, do not contribute to the dissipation of the system, since the dissipation term in the power balance equation \eqref{eq:pH_PBE} is
    \[
    \ct{\bmat{Qx \\ u}}W\bmat{Qx \\ u} = \ct{\bmat{x \\ u}}\bmat{ Q_{11}R_{11}Q_{11} & 0 & Q_{11}P_1 \\ 0 & 0 & 0 \\ \ct{P_1}Q_{11} & 0 & S }\bmat{x \\ u}.
    \]
    Furthermore, the coefficients $J_{12}$, $J_{22}$, $R_{12}$ and $R_{22}$ do not even influence the dynamics of the system, since
    \[
    (J-R)Q = \bmat{J_{11}-R_{11} & J_{12}-R_{12} \\ J_{21}-R_{21} & J_{22}-R_{22}}\bmat{Q_{11} & 0 \\ 0 & 0} = \bmat{(J_{11}-R_{11})Q_{11} & 0 \\ (J_{21}-R_{21})Q_{21} & 0 }.
    \]
It seems natural to choose $J_{12}=R_{12}=0$ and $J_{22}=R_{22}=0$, and therefore $J_{21}=-\ct{J}_{12}=0$, $R_{21}=\ct{R}_{12}=0$, and $P_2=0$ (since $W\geq 0$ must hold a.e.), from which we get $K_{21}=A_{21}$ and $G_2=B_2$.

Taking this choice, the only remaining degree of freedom is $J_{11}$, whose choice would then impact the coefficient $K_{11}$.
On the one hand, since we only require $J_{11}$ to be a.e.~skew-Hermitian, one possible choice is to take $J_{11}=0$, from which $K_{11}=-R_{11}Q_{11}-A_{11}$.
On the other hand, the choice of $K_{11}$ is constrained by the condition $Q_{11}K_{11}+\ct{K}_{11}Q_{11}=\dot Q_{11}$, i.e., that the Hermitian part of $Q_{11}K_{11}$ is $\frac{1}{2}\dot Q_{11}$, which is ensured by the definition of $K_{11}$ as a function of $J_{11}$.
In particular, we in general cannot impose the condition $K_{11}=0$; however, we can minimize the 2-norm of $Q_{11}K_{11}$ by imposing that $Q_{11}K_{11}=\frac{1}{2}\dot Q_{11}$, or equivalently
\begin{equation*}
        K_{11} = \frac{1}{2}Q_{11}^{-1}\dot Q_{11}, \qquad J_{11}  = \frac{1}{2}(A_{11}Q_{11}^{-1}-Q_{11}^{-1}\ct{A_{11}})
\end{equation*}
which is a valid choice, since $J_{11}$ is indeed skew-Hermitian.

To summarize, we have a canonical \refPH{} representation whenever $Q$ is of the form \eqref{eq:nullSpaceDec}, defined by
\begin{subequations}\label{eq:pH_canonical}\begin{align*}
            J &\coloneqq \frac{1}{2}\bmat{A_{11}Q_{11}^{-1}-Q_{11}^{-1}\ct{A}_{11} & 0 \\ 0 & 0}, 
            &G &\coloneqq \frac{1}{2}\bmat{Q_{11}^{-1}\ct{C}_1 + B_1 \\ B_2}, &
            N &\coloneqq \frac{1}{2}(D-\ct{D}),\\ 
            R &\coloneqq \frac{1}{2}\bmat{A_{11}Q_{11}^{-1}+Q_{11}^{-1}\ct{A}_{11} & 0 \\ 0 & 0},  
            &P &\coloneqq \frac{1}{2}\bmat{Q_{11}^{-1}\ct{C}_1 - B_1 \\ 0}, & 
            S &\coloneqq \frac{1}{2}(D+\ct{D}), \\
            K &\coloneqq \bmat{\frac{1}{2}Q_{11}^{-1}\dot Q_{11} & 0 \\ A_{21} & A_{22}}.
        \end{align*}
    \end{subequations}
\end{remark}
\noindent From \Cref{lem:pH_degreesOfFreedom} and \Cref{rem:pH_degreesOfFreedom} one easily obtains a condensed pH representation, which extends the results from the constant coefficient case \cite{GerPPS23,MehS23}. 

\begin{theorem}\label{thm:pH_decoupled}
Suppose that the LTV system \eqref{eq:tv_system} admits a \refPH{} formulation. Then, by performing a pointwise unitary change of basis and scaling the system from the left, the system can be equivalently rewritten in the reduced form 
\begin{subequations}\label{eq:pH_decoupled}
        \begin{align}
            \dot x_1 + K_{11}x_1 &= \pset[\big]{J_{11}-R_{11}}Q_{11}x_1 + \pset[\big]{G_1-P_1}u, \label{eq:pH_decoupled:1} \\
            \dot x_2 &= A_{21}x_1 + A_{22}x_2 + B_2u, \label{eq:pH_decoupled:2} \\
            y &= \ct{\pset[\big]{G_1+P_1}}Q_{11}x_1 + \pset[\big]{S-N}u, \label{eq:pH_decoupled:3}
        \end{align}
    \end{subequations}
    with $J_{11}(t)=-\ct{J_{11}(t)}\in\C^{r(t),r(t)}$, $N(t)=-\ct{N(t)}\in\C^{n-r(t),n-r(t)}$, $Q_{11}(t)\in\posDef[r(t)]$, $K_{11}(t)\in\C^{r(t),r(t)}$ satisfies $Q_{11}K_{11}+\ct{K_{11}}Q_{11}=\dot Q_{11}$, and such that
    \[
    W_{11}(t) \coloneqq \bmat{R_{11}(t) & P_1(t) \\ \ct{P_1(t)} & S(t)} \in \posDef[r(t)+m]
    \]
holds for a.e.~$t\in\timeInt$,  where $r:\timeInt\to\set{0,\ldots,n}$ is a weakly decreasing map.
A canonical choice of coefficients is then given by \eqref{eq:pH_canonical}.
\end{theorem}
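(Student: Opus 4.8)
The plan is to read the statement off from the three preceding results: the smooth null-space decomposition of \Cref{thm:nullSpaceDec}, the description of the pH degrees of freedom in \Cref{lem:pH_degreesOfFreedom}, and the canonical choice of coefficients recorded in \Cref{rem:pH_degreesOfFreedom}. The substantive content is already contained in these, so what remains is to perform the block expansion and to verify the claimed structure of the coefficients.

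First I would invoke \Cref{thm:ph2All}, by which a \refPH{} system is passive with storage function $\mathcal H=\quadSt{Q}$, so that \Cref{thm:nullSpaceDec} is applicable to $Q$. Its part~(iii) supplies a pointwise unitary $U\in W^{1,1}_\loc(\timeInt,\GL[n])$ such that $\wh Q\coloneqq\ct U Q U$ has the block form \eqref{eq:nullSpaceDec}, and part~(iv) keeps $\wh Q\in W^{1,1}_\loc$. Applying the change of variables $x=U\wt x$ of \Cref{lem:changeOfVariables} and using the invariance of the \refPH{} property under such transformations (\Cref{thm:invariance}), I may assume without loss of generality that $Q$ already has the form \eqref{eq:nullSpaceDec}, with $Q_{11}(t)\in\posDef[r(t)]$ and $r$ weakly decreasing; this realizes the ``pointwise unitary change of basis'' of the statement, and the freedom of an additional admissible left scaling is again covered by \Cref{thm:invariance}.

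Next I would apply \Cref{lem:pH_degreesOfFreedom} in these coordinates. It gives $A_{12}=0$ and $C_2=0$ and, for the canonical choice \eqref{eq:pH_canonical} of \Cref{rem:pH_degreesOfFreedom}, the coefficients $J$ and $R$ supported in their $(1,1)$ blocks, $K_{12}=0$, $P_2=0$, $G_2=B_2$, $N=\tfrac12(D-\ct D)$ and $S=\tfrac12(D+\ct D)$. Substituting these into $\dot x=\bigl((J-R)Q-K\bigr)x+(G-P)u$ and $y=\ct{(G+P)}Qx+(S-N)u$, and using $Q_{22}=0$ together with $A_{12}=0$, $C_2=0$, the first block row collapses to \eqref{eq:pH_decoupled:1}, the second to \eqref{eq:pH_decoupled:2}, and the output equation to \eqref{eq:pH_decoupled:3}. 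The skew-Hermitian property of $J_{11}$ and $N$, the relation $Q_{11}K_{11}+\ct{K_{11}}Q_{11}=\dot Q_{11}$ (the $(1,1)$ block of \eqref{eq:Q_lyapunov}, since $Q$ is block diagonal), and $Q_{11}(t)\in\posDef[r(t)]$ are then immediate. Finally, in these coordinates $W_{11}=\bmat{R_{11}&P_1\\ \ct{P_1}&S}$ is exactly the principal submatrix of $W\geq0$ obtained by deleting the rows and columns indexed by the zero block of $Q$, whence $W_{11}\geq0$.

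The step I expect to carry the real weight has already been discharged in \Cref{thm:nullSpaceDec}: producing a single transformation that block-diagonalizes $Q$ at all times, lies in $W^{1,1}_\loc$, and tracks the time-varying rank $r(t)$ without any constant-rank hypothesis. In the remaining argument the subtlety I would flag is regularity near rank drops: since $Q_{11}(t)$ is only pointwise positive definite, $Q_{11}^{-1}$ and hence $R_{11},G_1,P_1,J_{11},K_{11}$ may blow up as $t$ approaches a point where $r$ decreases, so the formulas \eqref{eq:pH_canonical} are to be understood on the open subintervals of locally constant rank, in line with the remark following \Cref{thm:nullSpaceDec}. I would also note that the principal-submatrix argument yields $W_{11}\geq0$; obtaining the strict bound $W_{11}(t)\in\posDef[r(t)+m]$ asserted in the statement requires the effective dissipation block to be nonsingular (for instance when $D+\ct D>0$), which I would either add as a hypothesis or weaken to positive semidefiniteness.
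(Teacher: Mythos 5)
Your proof follows the same route as the paper's own (very short) proof: use \Cref{thm:ph2All} so that \Cref{thm:nullSpaceDec} applies to $Q$, pass via the pointwise unitary transformation to coordinates where $Q$ has the form \eqref{eq:nullSpaceDec}, and then read off \eqref{eq:pH_decoupled} from \Cref{lem:pH_degreesOfFreedom} together with the canonical choice \eqref{eq:pH_canonical} of \Cref{rem:pH_degreesOfFreedom}; the paper compresses exactly these steps into three sentences and concludes that ``the remaining conditions are then clearly satisfied by construction.''

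Your two closing reservations are both well placed, and the second one identifies a genuine defect in the statement rather than a gap in your argument. Since $R_{11}$, $P_1$ and $S$ are uniquely determined by \eqref{eq:pH_necessary}, the block $W_{11}$ is forced to be a principal submatrix of the original positive semidefinite $W$, so the construction — yours and the paper's alike — yields only $W_{11}(t)\in\posSD[r(t)+m]$. Positive definiteness as literally asserted cannot hold in general: for the lossless system $\dot x=u$, $y=x$ with $Q=1$ one gets $W_{11}=0$. The assertion $W_{11}(t)\in\posDef[r(t)+m]$ should therefore be read as a typo for $\posSD[r(t)+m]$, consistent with \Cref{def:pH} and \Cref{lem:pH_degreesOfFreedom}, which only ever require positive semidefiniteness of $W$; adding a hypothesis such as $D+\ct{D}\geq\varepsilon I_m$, as you suggest, is the alternative repair. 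Your first flag, on possible blow-up of $Q_{11}^{-1}$ and hence of the coefficients \eqref{eq:pH_canonical} near points where $r$ drops, matches the paper's own caveat in \Cref{exm:pH_integrability}; since neither \Cref{def:pH} nor the theorem claims local integrability of the coefficients, it does not affect the validity of the proof, only its interpretation.
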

\begin{proof}
We may assume without loss of generality (by applying a pointwise unitary change of basis) that $Q$ is in the form \eqref{eq:nullSpaceDec}, i.e., that
    \[
    Q(t) = \bmat{Q_{11}(t) & 0 \\ 0 & 0_{n-r(t)}},
    \]
    with $Q_{11}(t)\in\posDef[r(t)]$ for all $t\in\timeInt$, where $r\coloneqq\rank\circ\,Q:\timeInt\to\set{0,1,\ldots,n}$ is weakly decreasing.
    By applying \Cref{lem:pH_degreesOfFreedom} and choosing the remaining free coefficients as described in \Cref{rem:pH_degreesOfFreedom}, we obtain \eqref{eq:pH_decoupled}.
    The remaining conditions are then clearly satisfied by construction.
\end{proof}

\begin{remark}\label{rem:pH_decoupled}
    A possible interpretation of \Cref{thm:pH_decoupled} is that, up to first applying a pointwise unitary change of variables, the state $x(t)$ admits a natural pointwise splitting into two components $x_1(t)\in\C^{n_1(t)}$ and $x_2(t)\in\C^{n_2(t)}$, such that the Hamiltonian $\mathcal H$ is a strictly convex function of $x_1$, independent of $x_2$.
    The first state equation \eqref{eq:pH_decoupled:1} and the output equation \eqref{eq:pH_decoupled:3} are completely decoupled from the second state $x_2$, and form a pH system with Hamiltonian $\mathcal H_1:\timeInt\times\C^{n_1(t)}\to\R,\ (t,x_1)\mapsto\frac{1}{2}\ct{x_1}Q_{11}(t)x_1=\mathcal H(t,(x_1,0))$, in all open subintervals of $\timeInt$ where $Q$ has constant rank.
    The second state equation~\eqref{eq:pH_decoupled:2} that does not contribute to the Hamiltonian is completely unstructured, and its solution depends, in general, on the solution of \eqref{eq:pH_decoupled:1} and \eqref{eq:pH_decoupled:3},
    see analogous results in the time-invariant case, \cite{MehU23,MehS23}.
\end{remark}

\begin{remark}\label{rem:eqform}
If $Q$ is pointwise invertible, 
i.e. it has constant rank $n$, then no unitary change of variables is necessary, and the representation in \refPH{} can be equivalently reformulated as
\begin{align*}
    \dot x + \frac{1}{2}Q^{-1}\dot Qx &= (J-R)Qx + (G-P)u, \\
    y &= \ct{(G+P)}Qx + (S-N)u.
    \end{align*}
    In other words, for pointwise  $Q$ a canonical choice is $K=\frac{1}{2}Q^{-1}\dot Q$.
\end{remark}
\noindent One might have observed that in \Cref{def:pH} we have omitted any integrability condition for the new coefficients $K,J,R,G,P,S,N$. Although from \eqref{eq:pH_necessary_S} and \eqref{eq:pH_necessary_N} we immediately deduce that $S,N\in L^\infty_\loc(\timeInt,\C^{m,m})$, it is more problematic to infer integrability properties for the other coefficients without including additional assumptions for $Q$, as the following example illustrates.
\begin{example}\label{exm:pH_integrability}
Consider the one-dimensional LTV system
    \begin{equation*}
        \dot x = u, \qquad y = \max(-t,0)x
    \end{equation*}
    on $\timeInt=\R$, and let us look for a pH formulation of the form \eqref{eq:pH_coefficients}, by using the necessary conditions \eqref{eq:pH_necessary}.
    Since $D=0$, we necessarily have $S=N=0$.
    From $W(t)\in\posSD[n+m]$ we obtain $P(t)=0$ and therefore $G(t)=B(t)+P(t)=1$, for a.e.~$t\in\timeInt$.
    It follows that
    \[
    Q(t) = \ct{\pset[\big]{G(t)+P(t)}}Q(t) = C(t) = \min(-t,0)
    \]
    for a.e.~$t\in\timeInt$, in particular $Q\in W^{1,1}_\loc(\timeInt,\posSD[1])$, as expected. From \eqref{eq:pH_necessary_R11} we deduce that
    \[
    R(t) = -\frac{1}{2}Q(t)^{-1}\dot Q(t)Q(t)^{-1} = \frac{1}{2t^2}
    \]
    for a.e.~$t<0$, thus $R\notin L^1_\loc(\timeInt,\posSD[1])$, since $[-1,1]\subseteq\timeInt$ is compact and
    \[
    \int_{-1}^{1}\abs{R(t)}\td t \geq \int_{-1}^0\frac{1}{2t^2}\td t = +\infty.
    \]
    However, we can easily obtain a \refPH{} formulation by defining $R(t)=K(t)=0$ for $t\geq 0$ and $K(t)=-\frac{1}{2t^2}$ for $t<0$.
\end{example}

\noindent 
In order to avoid difficulties as in \Cref{exm:pH_integrability}, additional conditions on $Q$ are sufficient to guarantee the local integrability of the remaining coefficients.

\begin{theorem}
    Suppose that an LTV system of the form \eqref{eq:tv_system} has a \refPH{} representation such that $Q\in W^{1,1}_\loc(\timeInt,\posSD[n])$ has constant rank. Then there exists a pH representation such that $K,J,R\in L^1_\loc(\timeInt,\C^{n,n})$, $G,P\in L^2_\loc(\timeInt,\C^{n,m})$, and $S,N\in L^\infty_\loc(\timeInt,\C^{m,m})$.
\end{theorem}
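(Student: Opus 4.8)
The plan is to reduce, via the null space decomposition, to the case where the Hamiltonian matrix is block diagonal with a pointwise positive definite block of \emph{constant size}, and then read off the canonical coefficients of \Cref{rem:pH_degreesOfFreedom} and bound them by elementary (H\"older-type) product estimates. First I would invoke \Cref{thm:nullSpaceDec} to obtain a pointwise unitary $U\in W^{1,1}_\loc(\timeInt,\GL[n])$ such that $\wt Q\coloneqq\ct U Q U$ has the block form \eqref{eq:nullSpaceDec}, with $\wt Q\in W^{1,1}_\loc(\timeInt,\posSD[n])$ by part~(iv) of that theorem. Since $Q$ has constant rank $r$, the rank map is constant, so $\wt Q_{11}(t)\in\posDef[r]$ has fixed size $r$ and $\wt Q_{11}\in W^{1,1}_\loc(\timeInt,\posDef[r])$. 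Because the \refPH{} property is invariant under the state-space transformation $x=U\wt x$ (\Cref{thm:invariance}, see also \Cref{lem:PHS_changeOfVariables}), and because $U,\ct U\in L^\infty_\loc$ while $\dot U\in L^1_\loc$, any pH representation of the transformed system whose coefficients lie in the claimed spaces transforms back to a pH representation of the original system with coefficients in the \emph{same} spaces (the back-transformation multiplies by $U,\ct U\in L^\infty_\loc$ and adds terms built from $\dot U\in L^1_\loc$, which preserves $L^1_\loc$, $L^2_\loc$, and $L^\infty_\loc$). Hence it suffices to exhibit an integrable canonical representation in the block-diagonal coordinates.

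The crucial step, and the one I expect to be the main obstacle, is to show $\wt Q_{11}^{-1}\in L^\infty_\loc(\timeInt,\C^{r,r})$; this is precisely where the constant rank hypothesis is used. As an element of $W^{1,1}_\loc$ over a one-dimensional domain, $\wt Q_{11}$ admits a continuous representative, and being pointwise positive definite of constant size $r$, its smallest eigenvalue attains a strictly positive minimum on every compact $K\subseteq\timeInt$; thus $\norm{\wt Q_{11}^{-1}}_2$ is bounded on $K$, giving $\wt Q_{11}^{-1}\in L^\infty_\loc$, and in fact $\wt Q_{11}^{-1}\in W^{1,1}_\loc(\timeInt,\GL[r])$ by \Cref{lem:inverseContinuous}. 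This is exactly the property that fails in \Cref{exm:pH_integrability}, where the rank drops and the pointwise inverse blows up, so the constant rank assumption cannot be dropped.

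With local boundedness of $\wt Q_{11}^{-1}$ established, I would take the canonical coefficients \eqref{eq:pH_canonical} and verify the integrability claims blockwise using the generalized H\"older inequality (\Cref{thm:genHolder}). By \Cref{lem:changeOfVariables} the transformed coefficients satisfy $\wt A\in L^1_\loc$, $\wt B,\wt C\in L^2_\loc$, and $D\in L^\infty_\loc$, so their blocks $A_{11},A_{21},A_{22},B_1,B_2,C_1$ lie in the corresponding spaces. Then $S=\frac12(D+\ct D)$ and $N=\frac12(D-\ct D)$ lie in $L^\infty_\loc$; each entry of $J$, $R$, and $K$ is a finite sum of products of a single $L^1_\loc$ factor (one of $A_{11},A_{21},A_{22},\dot Q_{11}$) with $L^\infty_\loc$ factors (copies of $\wt Q_{11}^{-1}$), hence lies in $L^1_\loc$; and each entry of $G$ and $P$ is a sum of some $B_j\in L^2_\loc$ with a term $\wt Q_{11}^{-1}\ct C_1\in L^\infty_\loc\cdot L^2_\loc\subseteq L^2_\loc$, hence lies in $L^2_\loc$.

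In summary, once constant rank is used to pin down that $\wt Q_{11}$ is uniformly positive definite on compacta, every remaining step is a routine product estimate, and transforming back by $U$ preserves all memberships, yielding a pH representation of \eqref{eq:tv_system} with $K,J,R\in L^1_\loc(\timeInt,\C^{n,n})$, $G,P\in L^2_\loc(\timeInt,\C^{n,m})$, and $S,N\in L^\infty_\loc(\timeInt,\C^{m,m})$.
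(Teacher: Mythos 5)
Your proposal is correct and follows essentially the same route as the paper's proof: reduce via the null space decomposition of \Cref{thm:nullSpaceDec} to a block-diagonal $Q$ with $Q_{11}\in W^{1,1}_\loc(\timeInt,\posDef[r])$ of constant size, obtain $Q_{11}^{-1}\in W^{1,1}_\loc$ from \Cref{lem:inverseContinuous}, and then check the integrability of the canonical coefficients \eqref{eq:pH_canonical} with the generalized H\"older inequality (\Cref{thm:genHolder}). The extra care you take (unpacking why $\wt Q_{11}^{-1}$ is locally bounded, and explicitly transforming back via \Cref{lem:PHS_changeOfVariables} while tracking function spaces) only makes explicit what the paper leaves implicit, so there is no substantive difference.
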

\begin{proof}
Since $Q$ has constant rank $r$, by \Cref{thm:nullSpaceDec} there exists a pointwise unitary change of variables, such that the transformed system has
\[
Q = \bmat{Q_{11} & 0 \\ 0 & 0}, \qquad \dot Q = \bmat{\dot Q_{11} & 0 \\ 0 & 0},
\]
with $Q_{11}\in W^{1,1}_\loc(\timeInt,\posDef[r])$. 
In particular, by \Cref{lem:inverseContinuous} then $Q_{11}^{-1}\in W^{1,1}_\loc(\timeInt,\posDef[r])$.     Then, by the generalized H\"older inequality (\Cref{thm:genHolder}) the choice of the pH coefficients as in \eqref{eq:pH_canonical} yields that $K,J,R\in L^1_\loc(\timeInt,\C^{n,n})$, $G,P\in L^2_\loc(\timeInt,\C^{n,m})$, and $S,N\in L^\infty_\loc(\timeInt,\C^{m,m})$.
\end{proof}

\begin{remark}\label{rem:relto BeaMXZ}
The presented formulation of linear time-varying pH systems is closely related to the one presented in \cite{BeaMXZ18} in the case where $E=I_n$.     The main differences are that the formulation in \cite{BeaMXZ18} is for real variables, and that the conditions
    \begin{subequations}
    \begin{align}
        & Q(K-JQ) + \ct{(K-JQ)}Q = \dot Q \qquad\text{and} \label{eq:BeaMXZ18_cond_1} \\
        & W_Q \coloneqq \bmat{QRQ & QP \\ \ct{P}Q & S} \in \posSD[n+m] \label{eq:BeaMXZ18_cond_2}
    \end{align}
    \end{subequations}
    replace
    \begin{align}
         J=-\ct{J},\qquad  \label{eq:PH_cond_1} 
         QK+\ct{K}Q=\dot Q, \qquad
         W = \bmat{R & P \\ \ct{P} & S} \in \posSD[n+m]. 
    \end{align}
It is clear that conditions \eqref{eq:PH_cond_1} imply \eqref{eq:BeaMXZ18_cond_1} and \eqref{eq:BeaMXZ18_cond_2}.
Furthermore, the converse statement also holds, up to switching to an equivalent formulation. This can be seen as follows.
    
Suppose that conditions \eqref{eq:BeaMXZ18_cond_1} and \eqref{eq:BeaMXZ18_cond_2} are satisfied.
Since     \Cref{thm:pH_PBE}, \Cref{rem:pH_PBE} and \Cref{lem:PHS_changeOfVariables} apply to the formulation in \cite{BeaMXZ18} as well (see Theorem 15 and Theorem 18 in \cite{BeaMXZ18}), it follows, in particular, that $\quadSt{Q}$ is a storage function for \eqref{eq:tv_system} and we may assume, up to pointwise unitary change of variables, that $Q$ is of the form \eqref{eq:nullSpaceDec}.
Then condition \eqref{eq:BeaMXZ18_cond_1} is equivalent to
\[
    \bmat{Q_{11}(K_{11}-J_{11}Q_{11}) + \ct{(K_{11}-J_{11}Q_{11})}Q_{11} & Q_{11}K_{12} \\ \ct{K_{12}}Q_{11} & 0} = \bmat{\dot Q_{11} & 0 \\ 0 & 0},
\]
i.e., the Hermitian part of $Q_{11}(K_{11}-J_{11}Q_{11})$ is $\frac{1}{2}\dot Q_{11}$ and $K_{12}=0$.
Furthermore, condition \eqref{eq:BeaMXZ18_cond_2} is equivalent to
\[
    \bmat{Q_{11}(t)R_{11}(t)Q_{11}(t) & 0 & Q_{11}(t)P_1(t) \\ 0 & 0 & 0 \\ \ct{P_1}(t)Q_{11}(t) & 0 & S(t)} \in \posSD[n+m],
\]
    i.e.,
\[
    \bmat{R_{11}(t) & P_1(t) \\ \ct{P_1}(t) & S(t)} \in \posSD[r(t)+m],
\]
for all $t\in\timeInt$.
This suggests to replace $K,J,R,G,P$ with
\begin{align*}
        \wt K &= \bmat{\wt K_{11} & \wt K_{12} \\ \wt K_{21} & \wt K_{22}} \coloneqq \bmat{K_{11} - \frac{1}{2}(J_{11}+\ct{J_{11}})Q_{11} & 0 \\ K_{21} & K_{22}}, \\
        \wt J &= \bmat{\wt J_{11} & \wt J_{12} \\ \wt J_{21} & \wt J_{22}} \coloneqq \bmat{\frac{1}{2}(J_{11}-\ct{J_{11}}) & -\ct{(J_{21}-R_{21})} \\ J_{21}-R_{21} & 0}, \\
        \wt R &= \bmat{\wt R_{11} & \wt R_{12} \\ \wt R_{21} & \wt R_{22}} \coloneqq \bmat{R_{11} & 0 \\ 0 & 0}, \\
        \wt G &= \bmat{G_1 \\ G_2 - P_2}, \\
        \wt P &= \bmat{P_1 \\ 0}.
    \end{align*}
In fact, the matrix functions $\wt K,\wt J,\wt R,\wt G,\wt P$ satisfy the conditions \eqref{eq:PH_cond_1} by construction.
Furthermore, it holds that
    \begin{align*}
        (\wt J-\wt R)Q - \wt K &= \bmat{ \pset[\big]{\frac{1}{2}(J_{11}-\ct{J}_{11})-R_{11}}Q_{11} - K_{11} + \frac{1}{2}(J_{11}+\ct{J}_{11})Q_{11} & 0 \\ (J_{21}-R_{21})Q_{11} - K_{21} & -K_{22} }  \\
        &= \bmat{(J_{11}-R_{11})Q_{11} - K_{11} & -K_{12} \\ (J_{21}-R_{21})Q_{11} - K_{21} & -K_{22}} = (J-R)Q - K, \\
        \wt G-\wt P &= \bmat{G_1 - P_1 \\ G_2 - P_2} = G - P, \\
        \ct{(\wt G+\wt P)}Q &= \bmat{\ct{(G_1 + P_1)} & \ct{(G_2-P_2)}}\bmat{Q_{11} & 0 \\ 0 & 0} = \bmat{\ct{(G_1+P_1)}Q_{11} & 0} \\
        &= \bmat{\ct{(G_1 + P_1)} & \ct{(G_2 + P_2)}}\bmat{Q_{11} & 0 \\ 0 & 0} = \ct{(G+P)}Q.
\end{align*}    
Thus the system that we have obtained by replacing $K,J,R,G,P$ with $\wt K,\wt J,\wt R,\wt G,\wt P$ is equivalent to the original one.
One can then apply the inverse of the unitary change of variables that we used to partition $Q$ to reformulate the pH system in terms of the original variables.
\end{remark}

\noindent Under some constant rank assumption for $Q$, using pointwise invertible but non-unitary
transformations we can even make the Hamiltonian autonomous as the next results shows.
\begin{theorem}\label{thm:autonomousQ}
    Consider a \refPH{} system of the form \eqref{eq:PHS}, and suppose that $Q$ has constant rank $r$ in $\timeInt$.
    Then there exists a change of variables with $Z\in W^{1,1}_\loc(\timeInt,\GL[n])$ such that, by applying \Cref{lem:changeOfVariables}, we obtain a pH system with
    \[
    \wt Q = \bmat{I_r & 0 \\ 0 & 0}
    \]
In particular, the 
transformed Hamiltonian $\wt{\mathcal H}(t,\wt x)=\quadSt{\wt Q}(t,\wt x)=\frac{1}{2}\norm{\wt x_1}^2$ does not depend explicitly on $t$.
\end{theorem}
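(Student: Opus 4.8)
The plan is to use the constant rank assumption together with the null space decomposition (\Cref{thm:nullSpaceDec}) to first bring $Q$ into the block form \eqref{eq:nullSpaceDec} with an invertible top-left block, and then to apply a second, non-unitary change of variables that flattens $Q_{11}$ to the identity. The key point is that constant rank makes both steps produce coefficients with the required local integrability, and it guarantees that the transforming matrix stays in $W^{1,1}_\loc(\timeInt,\GL[n])$ so that \Cref{lem:changeOfVariables} applies.

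First I would invoke \Cref{thm:nullSpaceDec}: since $\quadSt{Q}$ is a storage function for the \refPH{} system (by \Cref{thm:ph2All} and \Cref{rem:pH_PBE}), there is a pointwise unitary $U\in W^{1,1}_\loc(\timeInt,\GL[n])$ with $\wh Q=\ct{U}QU$ of the form \eqref{eq:nullSpaceDec}. Because $Q$ has \emph{constant} rank $r$, the block $\wh Q_{11}(t)\in\posDef[r]$ has fixed size $r$, and by part (iv) of \Cref{thm:nullSpaceDec} we have $\wh Q\in W^{1,1}_\loc(\timeInt,\posSD[n])$, hence $\wh Q_{11}\in W^{1,1}_\loc(\timeInt,\posDef[r])$. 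By \Cref{lem:changeOfVariables} this unitary transformation yields an equivalent pH system, so without loss of generality I may assume $Q=\wh Q$ is already in block-diagonal form with $Q_{11}$ pointwise positive definite.

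Next I would construct the non-unitary transformation. Since $Q_{11}(t)\in\posDef[r]$ for all $t$, it admits a pointwise Hermitian (or Cholesky) square-root factorization $Q_{11}=\ct{L}L$ with $L(t)\in\GL[r]$; the crucial regularity claim is that $L$ can be chosen in $W^{1,1}_\loc(\timeInt,\GL[r])$ when $Q_{11}\in W^{1,1}_\loc(\timeInt,\posDef[r])$. Defining
\[
Z \coloneqq \bmat{L^{-1} & 0 \\ 0 & I_{n-r}},
\]
I obtain $Z\in W^{1,1}_\loc(\timeInt,\GL[n])$ by \Cref{lem:inverseContinuous}, and the state-space transformation $x=Z\wt x$ of \Cref{lem:changeOfVariables} gives
\[
\wt Q = \ct{Z}QZ = \bmat{\ict{L}\,\ct{L}L\,L^{-1} & 0 \\ 0 & 0} = \bmat{I_r & 0 \\ 0 & 0}.
\]
Because \refPH{} representability is invariant under state-space transformations in $W^{1,1}_\loc(\timeInt,\GL[n])$ (\Cref{thm:invariance}, or \Cref{lem:PHS_changeOfVariables}), the transformed system is again \refPH{}, now with autonomous $\wt Q$, and the transformed Hamiltonian is $\wt{\mathcal H}(t,\wt x)=\frac{1}{2}\ct{\wt x}\wt Q\wt x=\frac{1}{2}\norm{\wt x_1}^2$, which is manifestly independent of $t$.

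The main obstacle is the regularity of the square root: I must justify that a pointwise positive-definite factorization of $Q_{11}\in W^{1,1}_\loc(\timeInt,\posDef[r])$ can be taken with $L,L^{-1}\in W^{1,1}_\loc$. For the constant-rank case this follows because the map $M\mapsto M^{1/2}$ (or the Cholesky map) is smooth on $\posDef[r]$ with locally bounded derivative on compact subsets of the image, so it preserves $W^{1,1}_\loc$ regularity by the chain rule for Sobolev functions composed with smooth maps; invertibility of the derivative of this map on the positive-definite cone then yields $L^{-1}\in W^{1,1}_\loc$ via \Cref{lem:inverseContinuous}. This is exactly where the hypothesis of constant rank (keeping $Q_{11}$ away from the boundary of $\posDef[r]$ on each compact subinterval) is indispensable.
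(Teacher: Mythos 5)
Your proof is correct and follows essentially the same route as the paper: apply the null space decomposition of \Cref{thm:nullSpaceDec} to reduce to a block-diagonal $Q$ with $Q_{11}\in W^{1,1}_\loc(\timeInt,\posDef[r])$ (using the constant rank and item 4 of that theorem for the regularity), then factor $Q_{11}=\ct{L}L$ and transform with $Z$ built from $L^{-1}$, invoking \Cref{lem:changeOfVariables} and \Cref{lem:inverseContinuous}. The only difference is how the $W^{1,1}_\loc$-regularity of the factor $L$ is justified: the paper uses its appendix result (\Cref{lem:Cholesky}, an inductive construction of the Cholesky factor), whereas you sketch a chain-rule argument for smooth maps on the open cone $\posDef[r]$ composed with Sobolev functions, which is also valid since continuity of $Q_{11}$ keeps its values in compact subsets of $\posDef[r]$ on compact subintervals.
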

\begin{proof}
    Since $Q$ has constant rank $r$, because of \Cref{thm:nullSpaceDec} there exists $V\in W^{1,1}_\loc(\timeInt,\GL[n])$ such that
\[
    \ct{V}QV = \bmat{Q_{11} & 0 \\ 0 & 0},
\]
where $Q_{11}\in W^{1,1}_\loc(\timeInt,\posDef[r])$.
    Let now $V_{11}\in W^{1,1}_\loc(\timeInt,\posDef[r])$ be the Cholesky factor of $Q_{11}$, cf.~\Cref{lem:Cholesky}, and let
\[
    Z \coloneqq V\bmat{V_{11}^{-1} & 0 \\ 0 & I_{n-r}} \in W^{1,1}_\loc(\timeInt,\GL[n]).
\]
By applying \Cref{lem:changeOfVariables} we then obtain a pH system with
\[
    \wt Q = \ct{Z}QZ = \ict{\bmat{V_{11} & 0 \\ 0 & I_{n-r}}} \ct{V} Q V \bmat{V_{11}^{-1} & 0 \\ 0 & I_{n-r}}
    = \bmat{\ict{V}_{11}\ct{V}_{11}V_{11}V_{11}^{-1} & 0 \\ 0 & 0} = \bmat{I_r & 0 \\ 0 & 0},
\]
as desired.
\end{proof}
\noindent Note that, while in the proof of \Cref{thm:autonomousQ} we define $V_{11}$ as the Cholesky factor of $Q_{11}$, it is sufficient to choose any $V_{11}\in W^{1,1}_\loc(\timeInt,\GL[n])$ such that $Q_{11}=\ct{V}_{11}V_{11}$.
\section{The KYP inequality and its implications for pH systems}\label{sec:KYPinequality}

In this section, in order to study the relationship between \refKYP{} and \refPH{}, we study weakly differentiable solutions $Q\in W^{1,1}_\loc(\timeInt,\C^{n,n})$ to \refKYP{}. This restriction of the solution class is motivated by the following example of an LTV system with continuous coefficients for which the solutions to the KYP inequality~\eqref{eq:KYP} are not continuously differentiable.
\begin{example}\label{ex:notdiffkyp}
Consider the linear time-varying system
\[
    \dot x(t) = u(t), \qquad y(t) = \abs{t}x(t),
\]
for $t\in\R$ and $x\in\C$. This system has continuous, but not continuously differentiable coefficients.
Since $D=0$, the only solution of the KYP inequality is $Q(t)=\abs{t}$, which is continuous but only weakly differentiable.
\end{example}

\noindent In order to present our analysis of the solutions to the KYP inequality, we first prove the following result.

\begin{theorem}\label{thm:posDef_equiv}
    Let $M\in L^1_\loc(\timeInt,\HerMat[n])$. Then the following statements are equivalent.
    \begin{enumerate}[label=\rm(\roman*)] 
        \item\label{it:posDef_equiv:1} $M\in L^1_\loc(\timeInt,\posSD[n])$.
        \item\label{it:posDef_equiv:2} $\int_{t_0}^{t_1}\ct{v(t)}M(t)v(t)\td t\geq 0$ for all $t_0,t_1\in\timeInt,\ t_0\leq t_1$ and $v\in L^1([t_0,t_1],\C^n)$.
        \item\label{it:posDef_equiv:3} $\int_{t_0}^{t_1}\ct{v(t)}M(t)v(t)\td t\geq 0$ for all $t_0,t_1\in\timeInt,\ t_0\leq t_1$ and $v\in\mathcal V_{t_0,t_1}$, where $\mathcal V_{t_0,t_1}\subseteq L^1([t_0,t_1],\C^n)$ is any family of subsets that contains constant functions. 
        \item\label{it:posDef_equiv:4} $\int_{t_0}^{t_1}M(t)\td t\in\posSD[n]$ for all $t_0,t_1\in\timeInt,\ t_0\leq t_1$.
    \end{enumerate}
Note that we allow the integrals in \ref{it:posDef_equiv:2} and \ref{it:posDef_equiv:3} to be $+\infty$.
\end{theorem}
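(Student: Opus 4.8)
The plan is to prove the cyclic chain of implications $\ref{it:posDef_equiv:1}\Rightarrow\ref{it:posDef_equiv:2}\Rightarrow\ref{it:posDef_equiv:3}\Rightarrow\ref{it:posDef_equiv:4}\Rightarrow\ref{it:posDef_equiv:1}$, of which the first three are elementary and only the last requires genuine work. For $\ref{it:posDef_equiv:1}\Rightarrow\ref{it:posDef_equiv:2}$, if $M(t)\in\posSD[n]$ for a.e.\ $t$, then the integrand $\ct{v(t)}M(t)v(t)$ is nonnegative for a.e.\ $t$, so its integral over any $[t_0,t_1]$ is nonnegative (allowing the value $+\infty$). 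The implication $\ref{it:posDef_equiv:2}\Rightarrow\ref{it:posDef_equiv:3}$ is immediate, since any admissible family $\mathcal V_{t_0,t_1}$ is contained in $L^1([t_0,t_1],\C^n)$.

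For $\ref{it:posDef_equiv:3}\Rightarrow\ref{it:posDef_equiv:4}$ I would test with constant functions: for any $c\in\C^n$ the constant function $v\equiv c$ lies in $\mathcal V_{t_0,t_1}$ by assumption, and
\[
0\leq\int_{t_0}^{t_1}\ct{c}M(t)c\td t = \ct{c}\pset*{\int_{t_0}^{t_1}M(t)\td t}c,
\]
where $\int_{t_0}^{t_1}M(t)\td t$ is a well-defined finite Hermitian matrix because $M\in L^1_\loc$ and $[t_0,t_1]\subseteq\timeInt$ is compact. Since $c$ was arbitrary, this matrix is positive semidefinite.

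The substantive step is $\ref{it:posDef_equiv:4}\Rightarrow\ref{it:posDef_equiv:1}$, which I would handle with the Lebesgue differentiation theorem together with a density argument to control the dependence of the exceptional null sets on the test vector. Fix $c\in\C^n$ and consider the scalar function $g_c\coloneqq\ct{c}Mc\in L^1_\loc(\timeInt,\R)$. Hypothesis $\ref{it:posDef_equiv:4}$ gives $\int_{t}^{t+h}g_c(s)\td s\geq 0$ whenever $[t,t+h]\subseteq\timeInt$, so at every Lebesgue point $t$ of $g_c$ one obtains
\[
g_c(t)=\lim_{h\to0^+}\frac1h\int_t^{t+h}g_c(s)\td s\geq 0,
\]
and hence $g_c(t)\geq 0$ for all $t$ outside a Lebesgue-null set $N_c$. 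The difficulty is that $N_c$ depends on $c$, and a union over all (uncountably many) $c$ need not be null. To circumvent this I would fix a countable dense subset $D\subseteq\C^n$ (for instance the vectors with Gaussian-rational entries) and set $N\coloneqq\bigcup_{c\in D}N_c$, which is null as a countable union of null sets. For every $t\notin N$ we then have $\ct{c}M(t)c\geq 0$ for all $c\in D$; since the real-valued map $c\mapsto\ct{c}M(t)c$ is continuous and $D$ is dense, it follows that $\ct{c}M(t)c\geq 0$ for every $c\in\C^n$, i.e.\ $M(t)\in\posSD[n]$. Thus $M(t)\in\posSD[n]$ for a.e.\ $t\in\timeInt$, establishing $\ref{it:posDef_equiv:1}$.

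I expect the only genuine obstacle to be this last implication, and specifically the passage from ``for each $c$, nonnegativity holds a.e.'' to ``a.e., nonnegativity holds for all $c$''. The countable-dense-set device, relying on the continuity of the Hermitian form in $c$ together with the stability of null sets under countable unions, is exactly what resolves this interchange of quantifiers.
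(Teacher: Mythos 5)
Your proof is correct, and its overall architecture coincides with the paper's: the same cyclic chain $\ref{it:posDef_equiv:1}\Rightarrow\ref{it:posDef_equiv:2}\Rightarrow\ref{it:posDef_equiv:3}\Rightarrow\ref{it:posDef_equiv:4}\Rightarrow\ref{it:posDef_equiv:1}$, with the first three implications handled identically (including testing $\ref{it:posDef_equiv:3}$ with constant functions). The difference lies in the last implication. The paper reduces to the scalar functions $f^v=\ct{v}Mv$ and invokes its dedicated \Cref{lem:positiveIntegrals} (a scalar $L^1_\loc$ function whose integral over every compact subinterval is nonnegative is nonnegative a.e., proved there via approximation by open sets and dominated convergence), whereas you obtain the same scalar conclusion from the one-sided Lebesgue differentiation theorem at Lebesgue points; both tools are valid and essentially interchangeable here. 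More notably, the paper's write-up ends with ``$\ct{v}M(t)v\geq 0$ for every $v\in\C^n$ and for a.e.~$t\in\timeInt$'' and passes directly to $M\in L^1_\loc(\timeInt,\posSD[n])$, leaving implicit exactly the quantifier interchange you isolate: the exceptional null set produced by the scalar argument depends on $v$, and one must pass from ``for each $v$, a.e.~$t$'' to ``for a.e.~$t$, all $v$''. Your countable-dense-subset argument (null sets are stable under countable unions, and $c\mapsto\ct{c}M(t)c$ is continuous for each fixed $t$) is precisely the standard device that justifies this step, so your version is, if anything, more complete than the paper's at the one point where completeness matters.
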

\begin{proof}
    The implication \ref{it:posDef_equiv:1}$\implies$\ref{it:posDef_equiv:2} is obvious, since $\ct{v(t)}M(t)v(t)\geq 0$ holds for a.e.~$t\in[t_0,t_1]$.
    The implication \ref{it:posDef_equiv:2}$\implies$\ref{it:posDef_equiv:3} is self-evident, since $\mathcal V_{t_0,t_1}\subseteq L^1([t_0,t_1],\C^n)$.
    The implication \ref{it:posDef_equiv:3}$\implies$\ref{it:posDef_equiv:4} is also clear, since for every $v\in\C^n$ the corresponding constant function $\wt v:[t_0,t_1]\to\C^n,\ t\mapsto v$ is contained in $\mathcal V_{t_0,t_1}$, and therefore
    \[
    \infty > \ct{v}\pset*{\int_{t_0}^{t_1}M(t)\td t}v = \int_{t_0}^{t_1}\ct{\wt v(t)}M(t)\wt v(t)\td t \geq 0.
    \]
    It remains to show that \ref{it:posDef_equiv:4}$\implies$\ref{it:posDef_equiv:1}.
    For every $v\in\C^n$, let us define $f^v:\timeInt\to\R,\ t\mapsto\ct{v}M(t)v$. We clearly have $f^v\in L^1_\loc(\timeInt,\R)$ and
\[
    \int_{t_0}^{t_1}f^v(t)\td t = \ct{v}\pset*{\int_{t_0}^{t_1}M(t)\td t}v \geq 0
\]
 for all $t_0,t_1\in\timeInt,\ t_0\leq t_1$, and thus $f^v(t)\geq 0$ for a.e.~$t\in\timeInt$, because of \Cref{lem:positiveIntegrals}.
    In other words, we have $\ct{v}M(t)v\geq 0$ for every $v\in\C^n$ and for a.e.~$t\in\timeInt$, i.e., $M\in L^1_\loc(\timeInt,\posSD[n])$, as claimed.
\end{proof}

\noindent We obtain the following characterization of weakly differentiable solutions to \refKYP{}. 
\begin{corollary}
A matrix function  $Q\in W^{1,1}_\loc(\timeInt,\posSD[n])$ 
is a solution of the \refKYP{} inequality \eqref{eq:KYP} if and only if the bounded linear operator
    \begin{gather*}
        \mathrm{KYP}(Q)_{t_0,t_1} : L^\infty([t_0,t_1],\C^n)\times L^2([t_0,t_1],\C^m) \to L^1([t_0,t_1],\C^n)\times L^2([t_0,t_1],\C^m), \\
        \bmat{x \\ u} \mapsto \bmat{-\ct{A}Q-QA-\dot Q & \ct{C}-QB \\ C-\ct{B}Q & D+\ct{D}}\bmat{x \\ u}
    \end{gather*}
    satisfies
    \begin{equation}\label{eq:KYP_operator_pos}
        \int_{t_0}^{t_1} \ct{\bmat{x(t) \\ u(t)}}\pset*{\mathrm{KYP}(Q)_{t_0,t_1}\bmat{x \\ u}}(t)\td t \geq 0
    \end{equation}
    for all $t_0,t_1\in\timeInt,\ t_0\leq t_1$.
\end{corollary}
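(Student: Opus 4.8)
The plan is to read the claim off directly from \Cref{thm:posDef_equiv}, applied to the $(n+m)\times(n+m)$ pointwise Hermitian matrix function
\[
M \coloneqq \bmat{ -\ct{A}Q - QA - \dot Q & \ct{C}-QB \\ C-\ct{B}Q & D+\ct{D} }
\]
and to the stacked variable $v \coloneqq \bmat{x \\ u}$ of size $n+m$. Indeed, the \refKYP{} inequality \eqref{eq:KYP} is by definition the pointwise condition $M(t)\geq 0$ for a.e.~$t\in\timeInt$, that is, $M\in L^1_\loc(\timeInt,\posSD[n+m])$, which is statement \ref{it:posDef_equiv:1} of \Cref{thm:posDef_equiv}. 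On the other hand, since the integrand in \eqref{eq:KYP_operator_pos} equals $\ct{v}Mv$ pointwise, the operator inequality \eqref{eq:KYP_operator_pos} is precisely statement \ref{it:posDef_equiv:3} for the test family $\mathcal V_{t_0,t_1} \coloneqq L^\infty([t_0,t_1],\C^n)\times L^2([t_0,t_1],\C^m)$.

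First I would check that the hypotheses of \Cref{thm:posDef_equiv} are met. That $M$ is pointwise Hermitian follows from $Q=\ct{Q}$ together with the identity $\ct{(\ct{C}-QB)}=C-\ct{B}Q$, and that $M\in L^1_\loc(\timeInt,\HerMat[n+m])$ follows from the function-space assumptions on $A,B,C,D$ combined with $Q\in W^{1,1}_\loc\subseteq L^\infty_\loc$ and $\dot Q\in L^1_\loc$: the top-left block lies in $L^1_\loc$, the bottom-right block in $L^\infty_\loc$, and the off-diagonal blocks in $L^2_\loc$, using the generalized H\"older inequality (\Cref{thm:genHolder}).

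Next I would verify that $\mathcal V_{t_0,t_1}$ is an admissible family in the sense of \ref{it:posDef_equiv:3}, namely that it is contained in $L^1([t_0,t_1],\C^{n+m})$ and contains the constant functions. On the compact interval $[t_0,t_1]$ we have $L^\infty\subseteq L^2\subseteq L^1$, hence $L^\infty\times L^2\subseteq L^1([t_0,t_1],\C^{n+m})$, and every constant vector clearly lies in $L^\infty\times L^2$. With these checks in place, the equivalence \ref{it:posDef_equiv:1}$\iff$\ref{it:posDef_equiv:3} of \Cref{thm:posDef_equiv} is exactly the asserted equivalence.

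I do not expect any genuine obstacle: the mathematical content is entirely carried by \Cref{thm:posDef_equiv}, and the only work is the bookkeeping of function spaces. The one point worth making explicit is that the integrand $\ct{v}Mv$ really is in $L^1([t_0,t_1])$, so that \eqref{eq:KYP_operator_pos} is well posed with a finite value; this follows by pairing $x\in L^\infty$ with the first output component in $L^1$ and $u\in L^2$ with the second in $L^2$, consistently with $\mathrm{KYP}(Q)_{t_0,t_1}$ mapping into $L^1\times L^2$.
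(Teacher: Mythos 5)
Your proposal is correct and takes essentially the same approach as the paper: both reduce the corollary to the equivalence of statements (i) and (iii) in \Cref{thm:posDef_equiv}, applied to the block matrix $M=\bmat{-\ct{A}Q-QA-\dot Q & \ct{C}-QB \\ C-\ct{B}Q & D+\ct{D}}$ with the test family $\mathcal V_{t_0,t_1}=L^\infty([t_0,t_1],\C^n)\times L^2([t_0,t_1],\C^m)$. The only cosmetic difference is that the paper writes out the explicit norm estimates showing $\mathrm{KYP}(Q)_{t_0,t_1}$ is bounded as a map into $L^1\times L^2$, whereas in your write-up this is left implicit in the block-wise H\"older bookkeeping, which nonetheless yields the same bounds.
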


\begin{proof}
    The fact that the operator $\mathrm{KYP}(Q)_{t_0,t_1}$ is well-defined, linear and bounded is clear, since
    \begin{align*}
        & \norm{(- \ct{A}Q - QA-\dot Q )x + (\ct{C}-QB)u}_{L^1} \\
        &\qquad\leq (2\norm{A}_{L^1}\norm{Q}_{L^\infty}+\norm{\dot Q}_{L^1})\norm{x}_{L^\infty} + (\norm{C}_{L^2}-\norm{Q}_{L^\infty}\norm{B}_{L^2})\norm{u}_{L^2}, \\
        & \norm{(C-Q\ct{B})x + (D+\ct{D})u}_{L^2} \leq (\norm{C}_{L^2}+\norm{Q}_{L^\infty}\norm{B}_{L^2})\norm{x}_{L^\infty} + 2\norm{D}_{L^\infty}\norm{u}_{L^2}
    \end{align*}
    for all $x\in L^\infty([t_0,t_1],\C^n)$ and $u\in L^2([t_0,t_1],\C^m)$.
    By defining
 \[
    M_Q \coloneqq \bmat{-\ct{A}Q-QA -\dot Q & \ct{C}-QB \\ C-\ct{B}Q & D+\ct{D}} \in L^1_\loc(\timeInt,\C^{n,n}),
\]
we obtain that $Q$ is a solution of the KYP inequality if and only if $M_Q(t)\geq 0$ for a.e.~$t\in\timeInt$.
    Furthermore, it holds that
\[
    \int_{t_0}^{t_1} \ct{\bmat{x(t) \\ u(t)}}\pset*{\mathrm{KYP}(Q)_{t_0,t_1}\bmat{x \\ u}}(t)\td t = \int_{t_0}^{t_1}\ct{v(t)}M_Q(t)v(t)\td t
    \]
for all $v=(x,u)\in\mathcal V_{t_0,t_1}\coloneqq L^\infty([t_0,t_1],\C^n)\times L^2([t_0,t_1],\C^m)$.
The assertion then follows immediately from \Cref{thm:posDef_equiv}.
\end{proof}

\begin{remark}
Condition \eqref{eq:KYP_operator_pos} can be interpreted as self-adjointness and positive semi-definiteness of the operator $\mathrm{KYP(Q)_{t_0,t_1}}$.
    In fact, denoting more in general
    \[
    \aset{f,g}_{t_0,t_1} = \int_{t_0}^{t_1}\ct{g(t)}f(t)\td t
    \]
    whenever $f,g\in L^1_\loc(\timeInt,\C^k)$ are such that $\ct{g}f\in L^1_\loc(\timeInt,\C)$, one can easily see that
    \[
    \aset*{ \mathrm{KYP(Q)_{t_0,t_1}}\bmat{x \\ u} , \bmat{\wt x \\ \wt u} }
    = \aset*{ \bmat{x \\ u} , \mathrm{KYP(Q)_{t_0,t_1}}\bmat{\wt x \\ \wt u} }
    \]
    for all $x,\wt x\in L^\infty([t_0,t_1],\C^n)$ and $u,\wt u\in L^2([t_0,t_1],\C^m)$, so  $\mathrm{KYP(Q)_{t_0,t_1}}=\ct{\mathrm{KYP(Q)}}_{\mathrm{t_0,t_1}}$.
    Similarly, \eqref{eq:KYP_operator_pos} is equivalent to
    \[
    \aset*{ \mathrm{KYP(Q)_{t_0,t_1}}\bmat{x \\ u} , \bmat{x \\ u} } \geq 0
    \]
    for all $x\in L^\infty([t_0,t_1],\C^n)$ and $u\in L^2([t_0,t_1],\C^m)$. 
    This property is also related to the extended concept of operators,  introduced in \cite{Mor24}, which are symmetric positive semi-definite with respect to certain \emph{spaces of port variables}.
\end{remark}

\subsection{Relation of the pH representation to the KYP inequality}

In this section, we show that pointwise invertible solutions of \eqref{eq:KYP} can be used to obtain a pH formulation of a given time-varying system \eqref{eq:tv_system}.

\begin{theorem}\label{thm:KYP_to_PH}
    Suppose that $Q\in W^{1,1}_\loc(\timeInt,\posSD[n])$ is a solution of the \refKYP{} inequality \eqref{eq:KYP} associated to the LTV system \eqref{eq:tv_system}.
    Then the system can be formulated as a \refPH{} system of the form \eqref{eq:PHS} with Hamiltonian $\mathcal H=\quadSt{Q}$ as in \eqref{eq:quadStorFunc}.
\end{theorem}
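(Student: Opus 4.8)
The plan is to reduce to the pointwise invertible case via the null space decomposition and then read off the port-Hamiltonian coefficients from a congruence transform of the KYP matrix. First I would invoke the last assertion of \Cref{thm:ph2All}: since $Q\in W^{1,1}_\loc(\timeInt,\posSD[n])$ solves \eqref{eq:KYP}, the quadratic form $\quadSt{Q}$ is a storage function for \eqref{eq:tv_system}, so \Cref{thm:nullSpaceDec} applies. Using parts (iii)--(iv) of that theorem I obtain a pointwise unitary $U\in W^{1,1}_\loc(\timeInt,\GL[n])$ such that $\wh Q\coloneqq\ct U Q U\in W^{1,1}_\loc(\timeInt,\posSD[n])$ has the block form \eqref{eq:nullSpaceDec}, i.e.\ $\wh Q=\bmat{Q_{11} & 0\\ 0 & 0}$ with $Q_{11}(t)\in\posDef[r(t)]$. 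Because having a self-adjoint and positive semidefinite solution of \refKYP{} and admitting a \refPH{} representation are both invariant under state-space transformations (\Cref{thm:invariance}), and the unitary change of variables $x=U\wt x$ leaves the Hamiltonian unchanged ($\ct{\wt x}\wh Q\wt x=\ct x Q x$), it suffices to construct a \refPH{} representation of the transformed system; transforming back then yields the claim for the original system with Hamiltonian $\quadSt{Q}$.

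Next I would extract the algebraic consequences of the KYP inequality in these coordinates. Writing $\wh M$ for the KYP matrix of the transformed system and partitioning $A,B,C$ conformally with $\wh Q$, the state-state block of $\wh M$ has a vanishing $(2,2)$ entry, since $Q_{11}$ and $\dot Q_{11}$ occupy only the first block. As $\wh M\geq 0$ a.e.\ and a positive semidefinite matrix with a zero diagonal block has vanishing rows and columns through that block, I deduce $A_{12}=0$ and $C_2=0$ a.e.\ (as already guaranteed by \Cref{lem:Pa_kernelInclusion}). Deleting the identically zero second state-block then leaves the reduced inequality
\[
\bmat{-\ct{A}_{11}Q_{11}-Q_{11}A_{11}-\dot Q_{11} & \ct C_1-Q_{11}B_1 \\ C_1-\ct B_1 Q_{11} & D+\ct D}\geq 0,
\]
which is precisely \refKYP{} for the reduced system $(A_{11},B_1,C_1,D)$ whose Hamiltonian matrix $Q_{11}$ is pointwise positive definite.

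With $Q_{11}$ invertible the coefficients can be defined explicitly, following the canonical choice \eqref{eq:pH_canonical}: take $S$ and $N$ to be the Hermitian and the negative skew-Hermitian parts of $D$, so that $S-N=D$; set $K_{11}=\tfrac12 Q_{11}^{-1}\dot Q_{11}$, which satisfies the Lyapunov condition \eqref{eq:Q_lyapunov} because $Q_{11}$ and $\dot Q_{11}$ are Hermitian; set $G_1=\tfrac12(Q_{11}^{-1}\ct C_1+B_1)$ and $P_1=\tfrac12(Q_{11}^{-1}\ct C_1-B_1)$, giving $G_1-P_1=B_1$ and $\ct{(G_1+P_1)}Q_{11}=C_1$; and define the dissipation block by the congruence $\bmat{R_{11} & P_1\\ \ct P_1 & S}=\tfrac12\,\mathrm{diag}(Q_{11}^{-1},I_m)\,\wh M^{\mathrm{red}}\,\mathrm{diag}(Q_{11}^{-1},I_m)$. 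This is consistent with the already fixed $P_1$ and $S$, it determines $R_{11}$, and crucially it exhibits $W_{11}\geq 0$ as a congruence image of $\wh M^{\mathrm{red}}\geq 0$. Finally I set $J_{11}\coloneqq(A_{11}+K_{11})Q_{11}^{-1}+R_{11}$; the essential point is that $J_{11}$ is skew-Hermitian, which holds precisely because the Hermitian part of $(A_{11}+K_{11})Q_{11}^{-1}$ equals $-R_{11}$, by the very formula for $R_{11}$. The free second-state coefficients are chosen as in \Cref{rem:pH_degreesOfFreedom} ($J_{21}=R_{21}=R_{22}=P_2=0$, $G_2=B_2$, and $K_{21},K_{22}$ forced by $A=(J-R)Q-K$), after which the defining relations \eqref{eq:pH_coefficients}, the skew-Hermiticity of $J$ and $N$, the condition \eqref{eq:Q_lyapunov}, and $W\geq 0$ all follow by routine block multiplication.

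I expect the main obstacle to be the rank-varying, non-invertible case: one must ensure the null space decomposition of \Cref{thm:nullSpaceDec} yields a genuinely weakly differentiable block form (so that $\dot Q_{11}$, $Q_{11}^{-1}$, and hence $K_{11},R_{11},J_{11}$ are well defined a.e.), and that the congruence identifying $W_{11}$ with the reduced KYP matrix is valid pointwise even though the block size $r(t)$ is only weakly decreasing rather than constant. The one computation I would carry out carefully is the identity that the Hermitian part of $(A_{11}+K_{11})Q_{11}^{-1}$ is $-R_{11}$, since this is exactly what makes $J_{11}$ skew-Hermitian; everything else is bookkeeping of block products. Note that no local integrability of $K,J,R,G,P$ need be established here, because \Cref{def:pH} imposes none — integrability is a separate matter addressed under the constant-rank hypothesis.
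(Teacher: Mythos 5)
Your proposal is correct and follows essentially the same route as the paper's own proof: reduce to the block form of $Q$ via \Cref{thm:ph2All}, \Cref{thm:invariance} and \Cref{thm:nullSpaceDec}, take the canonical coefficients \eqref{eq:pH_canonical}, and obtain $W\geq 0$ from the congruence of the KYP matrix with $\operatorname{diag}(Q_{11},I)$. The only cosmetic differences are that you deduce $A_{12}=0$ and $C_2=0$ directly from the vanishing diagonal block of the positive semidefinite KYP matrix (the paper invokes \Cref{lem:Pa_kernelInclusion}, which you also cite) and that you verify skew-Hermitianity of $J_{11}$ by an explicit computation rather than reading it off the canonical formula; both are equivalent.
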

\begin{proof}
    Due to \Cref{thm:ph2All}, $\quadSt{Q}$ is a storage function for \eqref{eq:tv_system}.
    Then, because of \Cref{thm:invariance} and \Cref{thm:nullSpaceDec}, we may assume without loss of generality that
    \begin{alignat*}{5}
        Q(t) &= \bmat{Q_{11}(t) & 0 \\ 0 & 0}, &\qquad& Q_{11}(t)\in\posDef[r(t)], &\qquad& \text{for all }t\in\timeInt, \\
        \dot Q(t) &= \bmat{\dot Q_{11}(t) & 0 \\ 0 & 0} &\qquad& \dot Q_{11}(t)\in\C^{r(t),r(t)}, &\qquad& \text{for a.e.~}t\in\timeInt,
    \end{alignat*}
    where $r=\rank\circ\,Q:\timeInt\to\set{0,\ldots,n}$ is weakly decreasing.
    Define the coefficients $K,J,R,G,P,S,N$ following \eqref{eq:pH_canonical}, noting that $A_{12}=0$ and $C_2=0$, because of \Cref{lem:Pa_kernelInclusion}.
    By construction, we have
    \begin{align*}
        Q(t)K(t) + \ct{K(t)}Q(t) &= \bmat{ \frac{1}{2}\dot Q_{11}(t) & 0 \\ 0 & 0 } + \bmat{ \frac{1}{2}\dot Q_{11}(t) & 0 \\ 0 & 0 } = \bmat{\dot Q_{11}(t) & 0 \\ 0 & 0} = \dot{Q}(t),\\
    J(t)&=-\ct{J}(t),\ N=-\ct{N}(t),
    \end{align*}
    and
    \begin{align*}
        0 &\leq \bmat{ -Q(t)A(t) - \ct{A(t)}Q(t) - \dot{Q}(t) & \ct{C(t)} - Q(t)B(t) \\ C(t) - \ct{B(t)}Q(t) & D(t) + \ct{D(t)} } \\
        &= \bmat{-Q_{11}(t)A_{11}(t) - \ct{A_{11}(t)} - \dot{Q}_{11}(t) & 0 & \ct{C_1(t)} - Q_{11}(t)B_1(t) \\ 0 & 0 & 0 \\ C_1(t) - \ct{B_1(t)}Q_{11}(t) & 0 & D(t) + \ct{D(t)} } \\
        &= \bmat{2Q_{11}(t)R_{11}(t)Q_{11}(t) & 0 & 2Q_{11}(t)P_1(t) \\ 0 & 0 & 0 \\ 2\ct{P_1(t)}Q_{11}(t) & 0 & 2S(t)} \\
        &= 2\bmat{Q_{11}(t) & 0 & 0 \\ 0 & I_{n-r(t)} & 0 \\ 0 & 0 & I_m} \bmat{R_{11}(t) & 0 & P_1(t) \\ 0 & 0 & 0 \\ \ct{P_1(t)} & 0 & S(t)} \bmat{Q_{11}(t) & 0 & 0 \\ 0 & I_{n-r(t)} & 0 \\ 0 & 0 & I_m}
        = 2\ct{V(t)}W(t)V(t),
    \end{align*}
    where $V(t)\coloneqq\operatorname{diag}(Q_{11}(t),I_{n-r(t)},I_m)\in\GL[n+m]$, and therefore
    \[
    W(t) = \ict{V(t)}\pset*{\ct{V(t)}W(t)V(t)}V(t)^{-1} \geq 0,
    \]
    for a.e.~$t\in\timeInt$. Thus, the conditions for \refPH{} are satisfied, and we have obtained a pH representation of \eqref{eq:tv_system} with Hamiltonian $\mathcal H=\quadSt{Q}$.
\end{proof}

\begin{remark}\label{rem:recon}
Assuming complete reconstructability, it was shown in \cite[Lemma 3]{AndM74} that the solutions $Q$ to \refKYP{} are pointwise invertible. However, in contrast to the LTI case, the pH representation of LTV systems can be obtained without further invertibility assumptions on $Q$. The reason for this is that the additional term $K$ in \refPH{} allows for more freedom in choosing $Q$ that can also be non-invertible. 
\end{remark}

\subsection{Further remarks on the solvability of KYP inequalities}

In order to study the solvability of the KYP inequality, one has to solve the linear matrix inequality \eqref{eq:KYP}. In the LTI case, this is usually done using a quadratic programming approach \cite{BoyGFB94} or by computing the invariant subspaces of matrix pencils \cite{BeaMX15ppt}. For the LTV case, there have been attempts to solve this differential matrix inequality directly \cite{AndM74}. However, most of these methods are numerically infeasible, especially for large-scale systems.
In the context of dissipativity, solutions $Q$ to \eqref{eq:KYP} are also required to be pointwise positive semidefinite or even positive definite. Furthermore, in a LTV setting, $Q$ is often required to have a constant rank.

An approach to compute such solutions is available if $(D+\ct D)>0$ pointwise. Then using the Schur complement, one can solve the \refKYP{} in \eqref{eq:KYP} via the solution of a differential Riccati inequality of the form 
\begin{equation*}
    -\ct AQ - QA - \dot Q -(QB-\ct C)(D+\ct D)^{-1}(\ct B Q-C)\geq 0,
\end{equation*}
and, as a special case, one can solve the equality case, see \cite{AboFIJ12,EisEM19,Rei72} for a detailed analysis of these equations, and \cite{BenM13,Die92,DieE94, KenL85} for numerical methods.

If $(D+\ct D)\geq 0$ is pointwise but singular, then a strategy to solve the \refKYP{} can be devised only on the non-singular part of $(D+\ct D)$ as presented in \cite{AndM74} or for the time-invariant case in \cite{BeaMX15ppt}. Under constant rank assumptions, a recursive procedure is proposed to reduce the order of the state space system until a nonsingular $(D+\ct D)$ is encountered. Then, one can apply the Schur complement and solve the Riccati inequality on the reduced system. 
 
Alternatively, instead of solving the \refKYP{}, one can use the equality case for the quadratic form associated with \eqref{eq:KYP} by multiplying from the left by $\ct z=\begin{bmatrix} \ct x & \ct u \end{bmatrix}$ and from the right by $z$. This results in an associated differential-algebraic system, similar to the approach to solving the \refKYP{} in the LTI case, see \cite{BeaMX15ppt}. In the LTV case, the associated differential-algebraic system is given by 
\begin{equation*}
\begin{bmatrix}
    \phantom{-}0 & I & 0\\
-I & 0 & 0\\
\phantom{-}0 & 0 & 0\end{bmatrix} \frac{d}{dt}
       \begin{bmatrix}  \lambda \\ x \\ u\end{bmatrix}=
\begin{bmatrix}
0 & A & B\\
\ct A & 0 & -\ct C\\
          \ct B & -C & D+\ct D
\end{bmatrix}
\begin{bmatrix}\lambda \\ x \\ u\end{bmatrix}.
\end{equation*}
Inserting the ansatz $\lambda=Qx$, we get the two equations $0=-(\dot Q+QA +\ct AQ) x+(\ct C-QB)u$ and $0=(C-\ct BQ) x+(D+\ct D)u$, which correspond to the two equations obtained by multiplying with $z$ from the right in the equality case of \eqref{eq:KYP}.

\begin{remark}
  Equivalently to the \refKYP{} inequality \eqref{eq:KYP}, one can solve a \emph{Lur'e type inequality}
\begin{equation*}
\label{eq:LMI_QSR}
\begin{bmatrix}
\mathcal{L}(t) & \mathcal{W}(t) \\
\ct{\mathcal{W}}(t) & (D+\ct{D})(t)
\end{bmatrix} \geq 0,
\end{equation*}
where
\begin{align*}
\ct{A}(t)Q(t) + Q(t)A(t) +\dot{Q}(t) &= -\mathcal{L}(t) \\
\ct{C}(t) - P(t)B(t) &= \mathcal{W}(t)
\end{align*}
for $Q(t) = \ct{Q}(t) \geq 0$ pointwise. Most of the literature on the subject, see e.g. \cite{Bru13,ForD10,LozBEM00,ReiRV15}, considers the solution of the \emph{Lur'e equation} 
\begin{align*}
& Q(t)A(t)+\ct{A}(t) Q(t)+\dot{Q}(t)=-L(t) \ct{L}(t), \\
& C(t)-P(t)B(t)=\ct{L}(t) W(t), \\
& D(t)+\ct{D}(t)=\ct{W}(t) W(t) .
\end{align*} 
This is again a special case that restricts the solution space of the \refKYP{}. Thus, we refrain from looking at this approach in more detail.  
\end{remark}

\section{Passivity and its implications for pH systems}
\label{sec:PatopH}
We have already seen in Section~\ref{sec:phsystem} that port-Hamiltonian systems are passive, that is, \refPH{} implies \refPa{}. In this section, we derive the extra conditions under which the converse implication holds. For this analysis, we proceed via the available storage as discussed in Section~\ref{sec:null}.

\subsection{Available storage}\label{sec:storage}

\noindent We define the available storage for LTV systems, as it was introduced, e.g.~in~\cite{HilM80,LozBEM00}.

\begin{definition}\label{def:avstor}
    The \emph{available storage} of the LTV system \eqref{eq:tv_system} at time $t_0\in\timeInt$ and state $x_0\in\C^n$ is
    \begin{equation}\label{eq:availableStorage}
            \avSt(t_0,x_0)
            = \sup_{\avStArg{}}\left( -\int_{t_0}^{t_1}\realPart(\ct{y(t)}u(t))\td t\right )
            = -\inf_{\avStArg{}}\int_{t_0}^{t_1}\realPart(\ct{y(t)}u(t))\td t,
        \end{equation}
        where the supremum (or infimum) is taken among all times $t_1\in\timeInt,\ t_1\geq t_0$ and inputs $u\in L^2_\loc(\timeInt,\C^n)$, and $x\in W^{1,1}_\loc(\timeInt,\C^n)$ and $y\in L^2_\loc(\timeInt,\C^m)$ denote the state and output trajectories uniquely determined by $u$ and by the condition $x(t_0)=x_0$.
\end{definition}

\noindent Since the supremum in \Cref{def:avstor} is taken over a nonempty set, it is always well defined.
Furthermore, since for $t_1=t_0$ the integral in \eqref{eq:availableStorage} vanishes, the available storage is always nonnegative.
We can then intepret it as a possibly infinite-valued function $V_a:\timeInt\times\C^n\to[0,+\infty]$.
If we actually have $V_a(t_0,x_0)<+\infty$ for all $t_0\in\timeInt$ and $x_0\in\C^n$, we say that the available storage is finite.

The finiteness of the available storage is closely related to the passivity of the system \cite{HilM80,LozBEM00}. In fact, in \cite{MorH24} the following result is proven.

\begin{theorem}\label{thm:availableStorage}
    Consider an LTV system of the form \eqref{eq:tv_system}. Then the following statements are equivalent.
    \begin{enumerate}[label=\rm (\roman*)]
        \item The system is passive.
        \item There exists $\beta:\timeInt\times\C^n\to\R$ such that for every $t_0,t_1\in\timeInt,\ t_0\leq t_1$ and state-input-output solution $(x,u,y)$ it holds that
        \begin{equation*}
            \int_{t_0}^{t_1}\realPart\pset[\big]{\ct{y}(t)u(t)}\td t \geq \beta\pset[\big]{t_0,x(t_0)}.
        \end{equation*}
        \item The available storage function is finite.
        \item The available storage function is of the form $V_a=\quadSt{Q}$ for some matrix function $Q:\timeInt\to\posSD[n]$.
    \end{enumerate}
\end{theorem}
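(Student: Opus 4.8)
The plan is to prove the cycle $\mathrm{(i)}\Rightarrow\mathrm{(ii)}\Rightarrow\mathrm{(iii)}\Rightarrow\mathrm{(iv)}\Rightarrow\mathrm{(i)}$, so that all four statements become equivalent. The first two implications are essentially bookkeeping, the implication $\mathrm{(iv)}\Rightarrow\mathrm{(i)}$ rests on a dynamic-programming (Bellman) identity for the available storage, and the genuinely hard step is $\mathrm{(iii)}\Rightarrow\mathrm{(iv)}$: showing that a \emph{finite} available storage is automatically a Hermitian quadratic form in the state.

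For $\mathrm{(i)}\Rightarrow\mathrm{(ii)}$ I would take a storage function $V\geq 0$ and use the dissipation inequality \eqref{def:passive_ineq} together with $V\geq 0$ to get $\int_{t_0}^{t_1}\realPart(\ct{y}u)\td t\geq V(t_1,x(t_1))-V(t_0,x(t_0))\geq -V(t_0,x(t_0))$, so that $\beta\coloneqq -V$ works. For $\mathrm{(ii)}\Rightarrow\mathrm{(iii)}$ the inequality defining $\beta$ reads $-\int_{t_0}^{t_1}\realPart(\ct{y}u)\td t\leq -\beta(t_0,x_0)$ for every admissible pair $(t_1,u)$; taking the supremum in \eqref{eq:availableStorage} gives $\avSt(t_0,x_0)\leq -\beta(t_0,x_0)<+\infty$, and since $\avSt\geq 0$ (as noted after \Cref{def:avstor}) the available storage is finite.

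For $\mathrm{(iv)}\Rightarrow\mathrm{(i)}$ I would assume $\avSt=\quadSt{Q}$ with $Q(t)\geq 0$, so that immediately $\avSt\geq 0$ and $\avSt(t_0,0)=\quadSt{Q}(t_0,0)=0$ for all $t_0\in\timeInt$. The dissipation inequality for $\avSt$ then follows from a Bellman principle: fixing a solution on $[t_0,t_1]$ ending at $x(t_1)$ and concatenating it with an arbitrary admissible continuation on $[t_1,t_2]$, additivity of the integral and the definition of $\avSt$ give $-\int_{t_0}^{t_1}\realPart(\ct{y}u)\td t+\avSt(t_1,x(t_1))\leq\avSt(t_0,x(t_0))$, since the left-hand side is the supremum over the subclass of continuations that extend the fixed trajectory. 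Rearranging yields the dissipation inequality, so $\avSt$ is a storage function and the system is passive.

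The hard part will be $\mathrm{(iii)}\Rightarrow\mathrm{(iv)}$. The scaling $(x,u,y)\mapsto(\lambda x,\lambda u,\lambda y)$ shows that $\avSt(t_0,\cdot)$ is homogeneous of degree two and, being a supremum of functions affine in $x_0$, convex; but this alone is insufficient, since a pointwise supremum of Hermitian forms such as $\max(\ct{v}M_1v,\ct{v}M_2v)$ need not be a Hermitian form. I would therefore exploit the linear-quadratic structure: for a fixed finite horizon $t_1\geq t_0$, I would split the output via \eqref{eq:state_via_stm} into a free response $C(\cdot)\stm(\cdot,t_0)x_0$ (linear in $x_0$) and a forced response (linear in $u$), writing $\int_{t_0}^{t_1}\realPart(\ct{y}u)\td t=\aset{Hu,u}+\realPart\aset{u,\Psi x_0}$ with $H$ a bounded Hermitian form and $\Psi$ linear in $x_0$ on $L^2([t_0,t_1],\C^m)$ (boundedness following from $B,C\in L^2_\loc$, $D\in L^\infty_\loc$ and the continuity of $\stm$). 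Finiteness of $\avSt$ forces $H\geq 0$, as otherwise choosing $x_0=0$ and scaling a negative direction of $H$ would drive the supremum to $+\infty$; completing the square then shows that the finite-horizon available storage $\avSt^{t_1}(t_0,\cdot)=\sup_{u}\pset[\big]{-\aset{Hu,u}-\realPart\aset{u,\Psi x_0}}$ is a positive semidefinite Hermitian quadratic form $\tfrac12\ct{x_0}Q_{t_1}(t_0)x_0$. Extending any input by zero shows $\avSt^{t_1}$ is nondecreasing in $t_1$, hence $Q_{t_1}(t_0)$ is nondecreasing in the Loewner order; finiteness bounds it above, so the Loewner supremum $Q(t_0)\coloneqq\sup_{t_1}Q_{t_1}(t_0)$ exists, is positive semidefinite, and satisfies $\avSt(t_0,x_0)=\sup_{t_1}\avSt^{t_1}(t_0,x_0)=\tfrac12\ct{x_0}Q(t_0)x_0$, which is $\mathrm{(iv)}$. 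The delicate points I expect are the boundedness of the input-output operators on compact intervals, the range condition needed to complete the square when $H$ is singular (handled through the Moore-Penrose inverse, whose use is justified by finiteness), and the monotone convergence of $Q_{t_1}(t_0)$ in the Loewner order.
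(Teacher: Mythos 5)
The paper itself contains no proof of \Cref{thm:availableStorage}: the result is quoted from \cite{MorH24}, so there is no in-paper argument to compare yours against. Judged on its own merits, your proposal is correct and complete in outline. The easy implications (i)$\Rightarrow$(ii)$\Rightarrow$(iii) and the Bellman concatenation argument for (iv)$\Rightarrow$(i) are exactly right (concatenation of an input on $[t_0,t_1]$ with an arbitrary continuation is admissible in the supremum \eqref{eq:availableStorage}, and uniqueness of solutions makes the pasted trajectory consistent). The hard step (iii)$\Rightarrow$(iv) via the finite-horizon LQ structure also works: with $H=\tfrac12\LAMBDA_{t_0,t_1}$ self-adjoint and bounded and $\Psi x_0 = C(\cdot)\stm(\cdot,t_0)x_0$, finiteness forces $H\geq 0$, the finite-horizon value is a positive semidefinite Hermitian form in $x_0$, zero-extension of inputs gives Loewner monotonicity in $t_1$, and the iterated supremum recovers $\avSt$.

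Two technical points deserve care when you write this out, both of which you flag but state slightly imprecisely. First, in the infinite-dimensional completion of squares the correct range condition is $\Psi x_0\in\operatorname{ran}\pset[\big]{H^{1/2}}$, not $\operatorname{ran}(H)$: finiteness of $\sup_u\pset[\big]{-\aset{Hu,u}-\realPart\aset{u,b}}$ is equivalent to the estimate $\abs{\aset{u,b}}\leq c\,\norm{H^{1/2}u}$ for all $u$, which by Riesz representation on $\overline{\operatorname{ran}}\pset[\big]{H^{1/2}}$ yields $b=H^{1/2}c$ and the value $\tfrac14\norm{c}^2$; the pseudo-inverse to use is that of $H^{1/2}$, and since it is only applied to the finite-dimensional subspace $\Psi(\C^n)$, linearity (hence a Hermitian matrix $Q_{t_1}(t_0)$) follows. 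Second, the ``Loewner supremum'' $Q(t_0)=\sup_{t_1}Q_{t_1}(t_0)$ should not be taken as given: what finiteness of $\avSt$ provides is that each scalar sequence $\ct{x_0}Q_{t_1}(t_0)x_0=2\avSt^{t_1}(t_0,x_0)$ is monotone and bounded, hence convergent; entrywise convergence of $Q_{t_1}(t_0)$ to a limit in $\posSD[n]$ then follows by polarization. With these two repairs the argument is rigorous.
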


\noindent Since the available storage of a passive LTV system is the minimal storage function \cite{HilM80}, it is then natural to summarize its properties in the following corollary from \cite{MorH24}.

\begin{corollary}
    The available storage of a passive LTV system of the form \eqref{eq:tv_system} is of the form $V_a=V_Q$ for some $Q\in\AUC_\loc(\timeInt,\posSD[n])$ and is minimal among all storage functions of \eqref{eq:tv_system}, in the sense that $V_a(t_0,x_0)\leq V(t_0,x_0)$ holds for every storage function $V:\timeInt\times\C^n\to\R$, $t_0\in\timeInt$ and $x_0\in\C^n$.
\end{corollary}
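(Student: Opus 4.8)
The plan is to assemble the statement from three ingredients already available: Theorem \ref{thm:availableStorage} to get the quadratic form $\avSt=\quadSt{Q}$, a direct verification that this $\avSt$ is itself a storage function, Theorem \ref{thm:storageFunctionAUC} for the regularity $Q\in\AUC_\loc(\timeInt,\posSD[n])$, and finally an elementary comparison argument for minimality. First I would invoke Theorem \ref{thm:availableStorage}: since the system is passive, implication (i)$\Rightarrow$(iv) gives immediately that $\avSt=\quadSt{Q}$ for some $Q:\timeInt\to\posSD[n]$, so in particular $\avSt$ is finite and nonnegative.

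Next I would check that $\avSt$ is a storage function in the sense of \Cref{def:passive}. Nonnegativity is immediate from \eqref{eq:availableStorage}, since choosing $t_1=t_0$ makes the integral vanish and hence the supremum is at least $0$. For the normalization $\avSt(t_0,0)=0$ I would use that passivity implies nonnegative supply (Theorem \ref{thm:ph2All}): every solution with $x(t_0)=0$ satisfies $\int_{t_0}^{t_1}\realPart(\ct{y(t)}u(t))\td t\geq 0$, so all the negated integrals are $\leq 0$ and thus $\avSt(t_0,0)\leq 0$, giving equality. The dissipation inequality is the crux and I would prove it by a concatenation (dynamic-programming) argument: fixing a solution $(x,u,y)$ and $t_0\leq t_1$, any admissible continuation $(\wt x,\wt u,\wt y)$ on $[t_1,t_2]$ with $\wt x(t_1)=x(t_1)$ can be glued to the restriction of $(x,u,y)$ to $[t_0,t_1]$ to form an admissible triple starting at $x(t_0)$, which is therefore a competitor in the supremum defining $\avSt(t_0,x(t_0))$. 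Splitting the integral over $[t_0,t_2]$ at $t_1$ and taking the supremum over all continuations yields $\avSt(t_0,x(t_0))\geq -\int_{t_0}^{t_1}\realPart(\ct{y(t)}u(t))\td t+\avSt(t_1,x(t_1))$, which rearranges exactly into \eqref{def:passive_ineq}; existence and uniqueness of the glued solution is guaranteed by the Carath\'eodory theory recalled in \Cref{sec:intro}.

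Having established that $\avSt=\quadSt{Q}$ is a storage function for the passive system, I would apply Theorem \ref{thm:storageFunctionAUC} directly to conclude $Q\in\AUC_\loc(\timeInt,\posSD[n])$, which settles the regularity claim. For minimality, I would take an arbitrary storage function $V$: its dissipation inequality \eqref{def:passive_ineq}, applied to any solution with $x(t_0)=x_0$, gives $-\int_{t_0}^{t_1}\realPart(\ct{y(t)}u(t))\td t\leq V(t_0,x_0)-V(t_1,x(t_1))\leq V(t_0,x_0)$, where the last step uses $V\geq 0$. Taking the supremum over all $t_1\geq t_0$ and all admissible inputs yields $\avSt(t_0,x_0)\leq V(t_0,x_0)$, as required.

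The main obstacle is the dissipation inequality for $\avSt$: the manipulation of the supremum in the concatenation step must be handled carefully, so that the glued input-state-output triple is genuinely an admissible solution of \eqref{eq:tv_system} and lands among the competitors for $\avSt(t_0,x(t_0))$, and so that the split of the integral at $t_1$ is legitimate. Everything else reduces to bookkeeping with the nonnegativity of storage functions and the already-established theorems.
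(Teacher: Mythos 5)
Your proposal is correct, but it is worth noting that the paper does not actually prove this corollary: it imports the statement from the reference [MorH24] (with the minimality fact attributed to [HilM80]), so there is no internal proof to compare against. What you have done is assemble a genuine, self-contained derivation from results the paper does state, and every step checks out. Your verification that $\avSt$ is itself a storage function is the real content: nonnegativity and $\avSt(t_0,0)=0$ follow as you say (the appeal to \Cref{thm:ph2All} for Pa $\Rightarrow$ NN is not circular, since that implication is proved directly from \Cref{def:passive}), and your concatenation argument for the dissipation inequality is essentially a re-derivation of \Cref{lem:availableStorageIneq} --- which the paper only proves much later, in the \refNN{} section, and with a superfluous hypothesis $x(t_{-1})=0$ that its own proof never uses for the inequality itself. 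Your handling of the gluing step is sound: the concatenated input lies in $L^2_\loc(\timeInt,\C^m)$, Carath\'eodory uniqueness forces the glued state trajectory to agree with $x$ on $[t_0,t_1]$ and with the continuation on $[t_1,t_2]$, and the supremum splits because the term $-\int_{t_0}^{t_1}\realPart(\ct{y}u)\td t$ is constant with respect to the variables of the inner supremum. With $\avSt=\quadSt{Q}$ established as a storage function, the application of \Cref{thm:storageFunctionAUC} gives the $\AUC_\loc$ regularity, and your minimality argument (bounding $-\int_{t_0}^{t_1}\realPart(\ct{y}u)\td t \leq V(t_0,x_0)-V(t_1,x(t_1)) \leq V(t_0,x_0)$ and taking the supremum) is the standard and correct one. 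In short: where the paper relies on a citation, you have supplied the proof, and it is the natural one.
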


\noindent A trivial but very relevant consequence, which justifies our focus on quadratic storage functions, is then the following observation.

\begin{corollary}\label{cor:Pa_to_quadraticStorage}
    Every passive \refPa{} LTV system of the form \eqref{eq:tv_system} has a quadratic storage function $\quadSt{Q}$ for some $Q\in\AUC_\loc(\timeInt,\posSD[n])$.
\end{corollary}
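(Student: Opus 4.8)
The plan is to read the statement off directly from the two preceding results, both taken from \cite{MorH24}, since passivity has already been characterized there through the available storage of \Cref{def:avstor}. The natural candidate for the quadratic storage function is the available storage itself, and the only genuine verification is that it qualifies as a \emph{storage function} in the sense of \Cref{def:passive}.

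First I would invoke \Cref{thm:availableStorage}: the hypothesis \refPa{} is exactly statement (i), so statements (iii) and (iv) hold as well, yielding that the available storage $V_a$ is finite and of the quadratic form $V_a=\quadSt{Q}$ for some $Q:\timeInt\to\posSD[n]$. Next I would recall the classical fact, recorded in the corollary immediately above, that $V_a$ is the \emph{minimal} storage function of the system; in particular $V_a$ is itself a storage function, so it satisfies the dissipation inequality \eqref{def:passive_ineq} and $V_a(t_0,0)=0$. Thus $\quadSt{Q}=V_a$ is a quadratic storage function, which already gives the conclusion apart from the regularity of $Q$. For that final point I would apply \Cref{thm:storageFunctionAUC} to the quadratic storage function $\quadSt{Q}$, upgrading $Q:\timeInt\to\posSD[n]$ to $Q\in\AUC_\loc(\timeInt,\posSD[n])$; equivalently, this regularity is already asserted in the preceding corollary, so one may simply cite it.

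I do not expect any real obstacle here, as all the substantive work is contained in \Cref{thm:availableStorage}, \Cref{thm:storageFunctionAUC}, and the minimality of $V_a$. The one point that deserves a moment of care is confirming that the available storage is genuinely admissible as a storage function, namely that $V_a(t_0,0)=0$: this follows because passivity forces $\int_{t_0}^{t_1}\realPart(\ct{y}u)\,\td t\ge 0$ for every trajectory with $x(t_0)=0$ (the supply is nonnegative), hence $-\int_{t_0}^{t_1}\realPart(\ct{y}u)\,\td t\le 0$, while taking $t_1=t_0$ in \eqref{eq:availableStorage} gives the reverse inequality $V_a(t_0,0)\ge 0$. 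Once this is noted, the corollary is immediate.
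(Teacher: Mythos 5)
Your proposal is correct and follows essentially the same route as the paper: the corollary is read off from \Cref{thm:availableStorage} together with the preceding corollary stating that the available storage $V_a$ equals $\quadSt{Q}$ for some $Q\in\AUC_\loc(\timeInt,\posSD[n])$ and is itself the minimal storage function. Your additional check that $V_a(t_0,0)=0$ (via nonnegativity of the supply from states at rest, plus the choice $t_1=t_0$ in \eqref{eq:availableStorage}) is a sound and welcome precision, but it does not change the argument.
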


\noindent \Cref{cor:Pa_to_quadraticStorage} has important ramifications. Since every passive \refPa{} LTV system has a quadratic storage function $\quadSt{Q}$ with $Q\in\AUC_\loc(\timeInt,\posSD[n])$, the properties studied in \Cref{sec:null}, most notably the null space decomposition, can be applied.
Furthermore, one can relate the matrix function $Q$ inducing a quadratic storage function to the \refKYP{} inequality and to \refPH{} formulations, while this cannot be done for general storage functions.

\subsection{Relation of passivity and KYP inequalities}
\noindent It is well known that the (algebraic) KYP inequality associated with a passive LTI system always admits a solution $Q\in\posSD[n]$, see, e.g. \cite[Proposition 1]{CheGH23} and \cite[Corollary 3.4]{LozBEM00}. In this subsection, we investigate what can be said in the case of LTV systems, distinguishing between the different regularity properties of $Q$.

If $Q\in\BV_\loc(\timeInt,\posSD[n])$ then we characterize the quadratic storage functions in terms of an \emph{integral KYP inequality} as follows. 

\begin{theorem}\label{thm:KYP_integral}
Let $Q\in\BV_\loc(\timeInt,\posSD[n])$.
    Then $\quadSt{Q}$, as defined in \eqref{eq:quadStorFunc}, is a storage function for \eqref{eq:tv_system} if and only if $Q$ satisfies the \emph{integral KYP inequality}
    \begin{equation}\label{eq:KYP_integral}
        \bmat{ - \displaystyle\int_{t_0}^{t_1}\pset[\big]{\ct{A(t)}Q(t)+Q(t)A(t)}\td t + Q(t_0) - Q(t_1) & \displaystyle\int_{t_0}^{t_1}\pset[\big]{\ct{C(t)} - Q(t)B(t)}\td t \\
        \displaystyle\int_{t_0}^{t_1}\pset[\big]{C(t) - \ct{B(t)}Q(t)}\td t & \displaystyle\int_{t_0}^{t_1}\pset[\big]{D(t)+\ct{D(t)}}\td t} \geq 0
    \end{equation}
for every $t_0,t_1\in\timeInt,\ t_0\leq t_1$.
\end{theorem}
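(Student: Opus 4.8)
The plan is to reduce both implications to a single identity that expresses the \emph{dissipated energy} of a trajectory as a Stieltjes integral against a matrix-valued measure whose increments are exactly the left-hand side of \eqref{eq:KYP_integral}. For a solution $(x,u,y)$ and $t_0\leq t_1$ set
\[
d(t_0,t_1)\coloneqq\int_{t_0}^{t_1}\realPart\pset[\big]{\ct{y}u}\td t-\tfrac12\ct{x(t_1)}Q(t_1)x(t_1)+\tfrac12\ct{x(t_0)}Q(t_0)x(t_0),
\]
so that $\quadSt{Q}$ is a storage function exactly when $d(t_0,t_1)\geq0$ for all solutions and all $t_0\leq t_1$ (nonnegativity and $\quadSt{Q}(t,0)=0$ being immediate from $Q\geq0$). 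Since $x\in W^{1,1}_\loc$ is continuous and $Q\in\BV_\loc$, the product $\ct{x}Qx$ is of locally bounded variation and the Leibniz rule for Stieltjes integrals applies; because $x$ is continuous there is no coincident-jump contribution, and writing the Lebesgue decomposition $\td Q=\dot Q\,\td t+\td\mu_s$ of the measure $\td Q$ and inserting $\dot x=Ax+Bu$, $y=Cx+Du$, I would obtain
\[
2\,d(t_0,t_1)=\int_{t_0}^{t_1}\ct{\bmat{x \\ u}}M_Q\bmat{x \\ u}\td t-\int_{(t_0,t_1]}\ct{x}\,\td\mu_s\,x,
\]
where $M_Q=\bmat{-\ct AQ-QA-\dot Q & \ct C-QB \\ C-\ct BQ & D+\ct D}\in L^1_\loc$ is the pointwise matrix of \eqref{eq:KYP}. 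Introducing the $\HerMat[n+m]$-valued measure $\td\mathcal M\coloneqq M_Q\,\td t+\bmat{-\td\mu_s & 0 \\ 0 & 0}$, a short computation shows that the increment $\mathcal M\pset[\big]{(t_0,t_1]}$ is precisely the matrix in \eqref{eq:KYP_integral} (the boundary term $Q(t_0)-Q(t_1)$ absorbing $-\int\dot Q-\mu_s$), so the identity becomes $2\,d(t_0,t_1)=\int_{(t_0,t_1]}\ct{[x;u]}\,\td\mathcal M\,[x;u]$.

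\textbf{Sufficiency (\eqref{eq:KYP_integral} $\Rightarrow$ storage function).} Assuming \eqref{eq:KYP_integral}, i.e.\ $\mathcal M\pset[\big]{(s,s']}\geq0$ for all $s\leq s'$, I would fix a solution and $t_0\leq t_1$ and approximate $(x,u)$ on $[t_0,t_1]$ by functions $v_N$ that are constant on the cells of a partition of mesh tending to $0$ (possible since $x$ is continuous and $u\in L^2$). On each cell $(s_{k-1},s_k]$ the constant value $c_k$ yields $\int_{(s_{k-1},s_k]}\ct{c_k}\,\td\mathcal M\,c_k=\ct{c_k}\mathcal M\pset[\big]{(s_{k-1},s_k]}c_k\geq0$, hence $\int_{(t_0,t_1]}\ct{v_N}\,\td\mathcal M\,v_N\geq0$. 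Since $\mathcal M$ is a finite matrix measure on $[t_0,t_1]$ with $L^1$ absolutely continuous part, and $v_N\to(x,u)$ (uniformly in the continuous state component, in $L^2$ in the input), the Stieltjes integrals converge and $2\,d(t_0,t_1)\geq0$, as required.

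\textbf{Necessity (storage function $\Rightarrow$ \eqref{eq:KYP_integral}).} Here I would first invoke \Cref{thm:storageFunctionAUC} to get $Q\in\AUC_\loc(\timeInt,\posSD[n])$, so that by \Cref{thm:AUC_matrix} the singular part of $Q$ is weakly decreasing, i.e.\ $-\td\mu_s$ is a positive-semidefinite measure. It then suffices to show $M_Q(t)\geq0$ for a.e.\ $t$, since then $\mathcal M\pset[\big]{(s,s']}=\int_s^{s'}M_Q\,\td t+\pset[\big]{-\mu_s((s,s'])}\oplus 0\geq0$. Fixing $\xi\in\C^n$, $\eta\in\C^m$, I would take, for each $t_*$, the solution on $[t_*,t_*+h]$ with $u\equiv\eta$ and $x(t_*)=\xi$; as $A\in L^1_\loc$, $B\in L^2_\loc$, one has $\sup_{[t_*,t_*+h]}\norm{x-\xi}\to0$ as $h\to0$. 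Dividing the identity (with $d\geq0$) by $h$ and letting $h\to0$, at every Lebesgue point of $M_Q$ not charged by the singular measure $\mu_s$ — which is a.e.\ $t_*$, using the Lebesgue differentiation of a singular measure — the term $\tfrac1h\int_{(t_*,t_*+h]}\ct{x}\,\td\mu_s\,x\to0$ while the absolutely continuous term tends to $\ct{[\xi;\eta]}M_Q(t_*)[\xi;\eta]$, giving $\ct{[\xi;\eta]}M_Q(t_*)[\xi;\eta]\geq0$. Ranging over a countable dense set of pairs $(\xi,\eta)$ and using continuity in $(\xi,\eta)$ then yields $M_Q(t_*)\geq0$ for a.e.\ $t_*$.

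\textbf{Main obstacle.} The delicate point is the integration-by-parts identity for $\ct{x}Qx$ with $x\in W^{1,1}_\loc$ and $Q\in\BV_\loc$: one must make the matrix-valued Stieltjes integral against $\td Q$ rigorous, verify that the continuity of $x$ eliminates coincident-jump corrections, and confirm that $\mathcal M\pset[\big]{(t_0,t_1]}$ equals the matrix in \eqref{eq:KYP_integral}. The two accompanying limit arguments are the other technical hurdles, namely the vanishing of $\tfrac1h\,\mu_s((t_*,t_*+h])$ at a.e.\ $t_*$ in the necessity direction, and the convergence of the Stieltjes integrals under the piecewise-constant approximation in the sufficiency direction (these play, for the measure $\mathcal M$, the role that \Cref{thm:posDef_equiv} plays for an $L^1_\loc$ density).
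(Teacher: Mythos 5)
Your proposal is correct, but only the sufficiency half follows the paper's route; the necessity half is genuinely different. Your identity $2\,d(t_0,t_1)=\int\ct{[x;u]}\,\td\mathcal M\,[x;u]$ is exactly the paper's Riemann--Stieltjes identity \eqref{eq:KYP_integralScalar} (written with the Lebesgue decomposition $\td Q=\dot Q\,\td t+\td\mu_s$ made explicit), and your piecewise-constant approximation for sufficiency is precisely the mechanism inside the paper's proof of \Cref{thm:RiemannStieltjesIntegralInequality} (endpoint values of the continuous state, cell averages of the input, convergence via \Cref{thm:integralAverage}), which the paper simply cites as implication (i)$\Rightarrow$(ii) of that theorem. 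For necessity, however, the paper stays entirely at the integral level: it also restricts to constant inputs, but then writes $[x;u]=\bmat{X & Y \\ 0 & I_m}[x_0;u]$ with the fundamental solution $X$ and $Y(t)=X(t)\int_{t_0}^t X^{-1}B$, observes that this multiplier and its inverse lie in $L^\infty_\loc$, and strips it off using the equivalences of \Cref{thm:RiemannStieltjesIntegralInequality} to land directly on \eqref{eq:KYP_integral} --- no splitting of $\td Q$ into absolutely continuous and singular parts, and no pointwise statement about $M_Q$. You instead import \Cref{thm:storageFunctionAUC} to get $Q\in\AUC_\loc(\timeInt,\posSD[n])$ (hence $-\td\mu_s\geq 0$) and run a shrinking-interval/Lebesgue-differentiation argument to obtain the differential inequality $M_Q(t)\geq 0$ a.e., then reassemble. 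Both routes are sound and non-circular (your differential-KYP derivation does not pass through the integral KYP), but the trade-off is worth noting: the paper's argument is self-contained at the $\BV_\loc$ level --- consistent with its remark that the theorem deliberately requires only bounded variation and does not use the $\AUC$ regularity result in the proof --- whereas yours leans on the cited regularity theorem of \cite{MorH24}; in exchange, your route produces the pointwise differential \refKYP{} inequality and the sign of the singular part as byproducts, i.e., it establishes the first half of \Cref{cor:storageCharacterization} directly (the paper instead derives that corollary from this theorem by a difference-quotient argument, and your final reassembly step coincides with the corollary's converse half). Your flagged technical obstacles (the three-term integration by parts with two continuous $\BV$ factors, and the convergence of the Stieltjes integrals under approximation) are genuine but are exactly what the paper's appendix supplies, in \Cref{lem:RiemannStieltjes_matrixIntegrationByParts} and \Cref{thm:RS_fundCalc}.
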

\begin{proof}
Starting from \eqref{eq:tv_system} expressed as
\[
    \bmat{-\dot x \\ y} = \bmat{ -A & -B \\ C & D } \bmat{x \\ u},
\]
we obtain
\[
    - \ct{x}Q\dot x + \ct{u}y = \ct{ \bmat{Qx \\ u } } \bmat{-\dot x \\ y}
    = \ct{ \bmat{x \\ u} } \bmat{ -QA & -QB \\ C & D } \bmat{x \\ u},
    \]
    and therefore
    \[
    -\frac{1}{2}(\ct{\dot x}Qx + \ct{x}Q\dot x) + \realPart(\ct{y}u)
    = \frac{1}{2}\ct{ \bmat{x \\ u} } \bmat{ -QA-\ct{A}Q & \ct{C}-QB \\ C-\ct{B}Q & D+\ct{D} }\bmat{x \\ u},
    \]
    by extracting the real part.
    Furthermore, for every state-input-output solution $(x,u,y)$ of \eqref{eq:tv_system}, exploiting the properties of the Riemann-Stieltjes integral (see Section~\ref{sec:RiemannStieltjes} in the appendix), we have
    \begin{align*}
        & \quadSt{Q}\pset[\big]{t_0,x(t_0)} - \quadSt{Q}\pset[\big]{t_1,x(t_1)} + \int_{t_0}^{t_1}\realPart\pset[\big]{\ct{y(t)}u(t)}\td t  \\
        &\qquad= \frac{1}{2}\pset[\big]{ \ct{x(t_0)}Q(t_0)x(t_0) - \ct{x(t_1)}Q(t_1)x(t_1) } + \int_{t_0}^{t_1}\realPart\pset[\big]{\ct{y(t)}u(t)}\td t  \\
        &\qquad= -\frac{1}{2}\int_{t_0}^{t_1} \pset[\Big]{ \td\pset[\big]{\ct{x}Q(t)x} } + \int_{t_0}^{t_1} \realPart\pset[\big]{\ct{y(t)}u(t)}\td t  \\
        &\qquad= \int_{t_0}^{t_1}\pset*{ -\frac{1}{2}\ct{x}(\td Q)x } + \int_{t_0}^{t_1}\pset*{ -\frac{1}{2}(\ct{\dot x}Qx + \ct{x}Q\dot x) + \realPart(\ct{y}u) }\td t  \\
        &\qquad= \frac{1}{2} \int_{t_0}^{t_1} \ct{\bmat{x \\ u}} \bmat{ -\td Q - (QA+\ct{A}Q)\td t & (\ct{C}-QB)\td t \\ (C-\ct{B}Q)\td t & (D+\ct{D})\td t } \bmat{x \\ u},
    \end{align*}
    for every $t_0,t_1\in\timeInt,\ t_0\leq t_1$.
    In particular, $\quadSt{Q}$ is a storage function if and only if the Riemann-Stieltjes integral inequality
\begin{equation}\label{eq:KYP_integralScalar}
        \int_{t_0}^{t_1} \ct{\bmat{x \\ u}} \bmat{ -\td Q - (QA+\ct{A}Q)\td t & (\ct{C}-QB)\td t \\ (C-\ct{B}Q)\td t & (D+\ct{D})\td t } \bmat{x \\ u} \geq 0
    \end{equation}
holds for every state-input solution $(x,u)$ of \eqref{eq:tv_system} and $t_0,t_1\in\timeInt,\ t_0\leq t_1$.
It is then sufficient to show that \eqref{eq:KYP_integralScalar} is equivalent to \eqref{eq:KYP_integral}.
In fact, if \eqref{eq:KYP_integral} holds, since $Q(t_0)-Q(t_1)=\int_{t_0}^{t_1}\td Q$, it follows immediately from \Cref{thm:RiemannStieltjesIntegralInequality} that \eqref{eq:KYP_integralScalar} holds for all $t_0,t_1\in\timeInt,\ t_0\leq t_1$.%

For the converse implication suppose now that \eqref{eq:KYP_integralScalar} holds for all state-input solutions $(x,u)$ of \eqref{eq:tv_system} and $t_0,t_1\in\timeInt,\ t_0\leq t_1$.
    In particular, this holds for all constant input functions $u(t)\equiv u\in\C^m$.
    For constant inputs, we have that
    \[
    x(t) = X(t)x(t_0) + \pset*{X(t)\int_{t_0}^{t}X(s)^{-1}B(s)\td s}u = X(t)x_0 + Y(t)u
    \]
    for all $t\in[t_0,t_1]$, where $X\in W^{1,1}_\loc(\timeInt,\GL[n])$ is any fundamental solution of \eqref{eq:tv_system} and
    \[
    Y : \timeInt\to\C^{n,m}, \qquad t\mapsto X(t)\int_{t_0}^{t}X(s)^{-1}B(s)\td s.
    \]
    Thus,
    \[
    \bmat{x(t) \\ u(t)} = \bmat{X(t) & Y(t) \\ 0 & I_m}\bmat{x(t) \\ u}
    \]
    for all $t\in[t_0,t_1]$.
    In particular, we can rewrite \eqref{eq:KYP_integralScalar} as
    \[
    \ct{\bmat{x_0 \\ u}} \pset*{\int_{t_0}^{t_1} \ct{\bmat{X & Y \\ 0 & I_m}}\bmat{ -\td Q - (QA+\ct{A}Q)\td t & (\ct{C}-QB)\td t \\ (C-\ct{B}Q)\td t & (D+\ct{D})\td t } \bmat{X & Y \\ 0 & I_m}} \bmat{x_0 \\ u} \geq 0
    \]
    for all $x_0\in\C^n$ and $u\in\C^m$, and therefore
    \[
    \int_{t_0}^{t_1} \ct{\bmat{X & Y \\ 0 & I_m}}\bmat{ -\td Q - (QA+\ct{A}Q)\td t & (\ct{C}-QB)\td t \\ (C-\ct{B}Q)\td t & (D+\ct{D})\td t } \bmat{X & Y \\ 0 & I_m} \geq 0.
    \]
    Note that $Y\in L^\infty_\loc(\timeInt,\C^{n,m})$, since
    \[
    \norm{Y(t)} \leq \norm{X}_{L^\infty}\norm{X^{-1}}_{L^\infty}\int_{t_0}^{t_1}\norm{B(s)}\td s \leq (t_1-t_0)^{\frac{1}{2}}\norm{X}_{L^\infty}\norm{X^{-1}}_{L^\infty}\norm{B}_{L^2}.
    \]
    In particular,
    \[
    \bmat{X & Y \\ 0 & I_m}^{-1} = \bmat{X^{-1} & -X^{-1}Y \\ 0 & I_m} \in L^\infty_\loc(\timeInt,\C^{n+m,n+m})
    \]
    also holds.
    We can then apply \Cref{thm:RiemannStieltjesIntegralInequality} to conclude that \eqref{eq:KYP_integral} holds.
\end{proof}

\noindent Note that \Cref{thm:KYP_integral} only requires $Q$ to be of bounded variation, although we already know from \Cref{thm:storageFunctionAUC} that if $\quadSt{Q}$ is a storage function, then the stronger condition $Q\in\AUC_\loc(\timeInt,\posSD[n])$ holds.
Furthermore, since for $Q$ of bounded variation the classical derivative $\dot Q$ is defined a.e.~on $\timeInt$, one might wonder whether it is really necessary to change from the differential inequality \refKYP{} \eqref{eq:KYP} to the integral KYP inequality \eqref{eq:KYP_integral}.
The following result sheds light on the apparent gap.
\begin{corollary}\label{cor:storageCharacterization}
    Let $Q\in\BV_\loc(\timeInt,\posSD[n])$. If $\quadSt{Q}$ is a storage function for \eqref{eq:tv_system}, then $Q$ is a solution of the differential KYP inequality \eqref{eq:KYP}.
    Conversely, if $Q\in\AUC_\loc(\timeInt,\posSD[n])$ and $Q$ is a solution of the differential KYP inequality \eqref{eq:KYP}, then $\quadSt{Q}$ is a storage function.
   \end{corollary}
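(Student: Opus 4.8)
The plan is to connect the two regularity-dependent statements through the \emph{integral} KYP inequality \eqref{eq:KYP_integral}. The common tool is the splitting $Q=Q_a+Q_s$ available for any $Q\in\AUC_\loc(\timeInt,\posSD[n])$ by \Cref{thm:AUC_matrix}, where $Q_a$ is absolutely continuous with $Q_a(t_1)-Q_a(t_0)=\int_{t_0}^{t_1}\dot Q\td t$ and $Q_s$ is a weakly decreasing singular part with $\dot Q_s=0$ a.e. Writing $M_Q$ for the matrix on the left-hand side of the differential inequality \eqref{eq:KYP} and $I_{t_0,t_1}$ for the left-hand side of \eqref{eq:KYP_integral}, the computation $Q(t_0)-Q(t_1)=-\int_{t_0}^{t_1}\dot Q\td t+\pset[\big]{Q_s(t_0)-Q_s(t_1)}$ plugged into the $(1,1)$-block of \eqref{eq:KYP_integral} yields the identity
\[
I_{t_0,t_1} = \int_{t_0}^{t_1} M_Q(t)\td t + \bmat{Q_s(t_0)-Q_s(t_1) & 0 \\ 0 & 0},
\]
in which the correction block is positive semidefinite because $Q_s$ is weakly decreasing.

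For the converse implication I would assume $Q\in\AUC_\loc(\timeInt,\posSD[n])$ solves \eqref{eq:KYP}, so that $M_Q(t)\geq 0$ for a.e.~$t$ and hence $\int_{t_0}^{t_1}M_Q(t)\td t\geq 0$ on every interval (immediately, or via \Cref{thm:posDef_equiv}). Adding the positive semidefinite correction keeps the sum positive semidefinite, so $I_{t_0,t_1}\geq 0$; since $\AUC_\loc\subseteq\BV_\loc$, \Cref{thm:KYP_integral} then gives that $\quadSt{Q}$ is a storage function.

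For the forward implication I would first note that $\quadSt{Q}$ being a storage function makes the system passive, so \Cref{thm:storageFunctionAUC} upgrades the hypothesis $Q\in\BV_\loc$ to $Q\in\AUC_\loc(\timeInt,\posSD[n])$; in particular $\dot Q\in L^1_\loc$, so $M_Q\in L^1_\loc$ and \eqref{eq:KYP} is meaningful, and \Cref{thm:KYP_integral} gives $I_{t_0,t_1}\geq 0$ for all intervals. To pass from the integral to the pointwise inequality I would apply the identity above on shrinking intervals $[t,t+h]$, divide by $h$, and let $h\to 0^+$:
\[
0 \leq \frac{1}{h}I_{t,t+h} = \frac{1}{h}\int_t^{t+h} M_Q(s)\td s + \frac{1}{h}\bmat{Q_s(t)-Q_s(t+h) & 0 \\ 0 & 0}.
\]
By the Lebesgue differentiation theorem the first summand converges to $M_Q(t)$ at a.e.~$t$, while the singular correction $\frac{1}{h}\pset[\big]{Q_s(t)-Q_s(t+h)}$ converges to $-\dot Q_s(t)=0$ at a.e.~$t$. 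Therefore $M_Q(t)=\lim_{h\to0^+}\frac{1}{h}I_{t,t+h}\geq 0$ for a.e.~$t$, which is precisely \eqref{eq:KYP}.

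The main obstacle lies in this last limit, namely in justifying that the singular correction vanishes per unit length: over any fixed interval a Cantor-type $Q_s$ contributes a strictly positive amount to $I_{t_0,t_1}$, yet because $\dot Q_s=0$ a.e.~its contribution rescaled by $1/h$ tends to zero at almost every point. This is the analytic core of the corollary and accounts for the asymmetry in the hypotheses: bounded variation suffices for storage $\Rightarrow$ \eqref{eq:KYP} once passivity forces the $\AUC_\loc$ upgrade, whereas the converse genuinely needs $\AUC_\loc$ so that the singular part can only strengthen, never weaken, the integral inequality.
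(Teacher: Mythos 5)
Your proof is correct and follows essentially the same route as the paper: both directions pass through the integral KYP inequality of \Cref{thm:KYP_integral}, with the forward implication obtained by dividing the inequality on shrinking intervals $[t,t+h]$ by $h$ and letting $h\to 0^+$ (so that the singular contribution vanishes a.e.\ while the Lebesgue points recover $M_Q(t)$), and the converse by observing that the weakly decreasing singular part makes the $(1,1)$-block of the integral inequality dominate the integral of the differential one. The only cosmetic differences are that you make the splitting $Q=Q_a+Q_s$ explicit where the paper works directly with the a.e.\ classical differentiability of $\BV_\loc$ functions, and that your invocation of \Cref{thm:storageFunctionAUC} to upgrade $\BV_\loc$ to $\AUC_\loc$ in the forward direction is superfluous, since your own limiting argument only needs $\dot Q_s=0$ a.e., which holds for the singular part of any function of bounded variation.
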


\begin{proof}
    Suppose first that $\quadSt{Q}$ is a storage function, in particular $Q$ is a solution of the integral KYP~\eqref{eq:KYP_integral} as in \Cref{thm:KYP_integral}.
    Since $Q\in\BV_\loc(\timeInt,\posSD[n])$, its time derivative $\dot Q(t)$ exists for a.e.~$t\in\timeInt$ in the classical sense, i.e.
    \[
    \timeInt_0 = \set*{ t\in\timeInt \;\;\middle|\;\; \dot Q(t) = \lim_{h\to 0}\frac{Q(t+h)-Q(t)}{h}\text{ exists and is finite} }
    \]
    satisfies $\abs{\timeInt\setminus\timeInt_0}=0$.
    From the integral KYP inequality we deduce that
    \[
    \bmat{ \displaystyle\frac{Q(t) - Q(t+h)}{h} - \frac{1}{h}\int_{t}^{t+h}\pset[\big]{\ct{A(t)}Q(t)+Q(t)A(t)}\td t & \displaystyle\frac{1}{h}\int_{t}^{t+h}\pset[\big]{\ct{C(t)} - Q(t)B(t)}\td t \\
        \displaystyle\frac{1}{h}\int_{t}^{t+h}\pset[\big]{C(t) - \ct{B(t)}Q(t)}\td t & \displaystyle\frac{1}{h}\int_{t}^{t+h}\pset[\big]{D(t)+\ct{D(t)}}\td t} \geq 0
    \]
    holds for all $t\in\timeInt_0$ and $h>0$ such that $t+h\in\timeInt$.
    Going to the limit for $h\to 0$ we then obtain
    \[
    \bmat{ -\dot Q(t) -\ct{A(t)}Q(t)-Q(t)A(t) & \ct{C(t)}-Q(t)B(t) \\ C(t)-\ct{B(t)}Q(t) & D(t)+\ct{D(t)} } \geq 0,
    \] 
    i.e., $Q$ is a solution of the differential KYP inequality. 

    Suppose now that $Q\in\AUC_\loc(\timeInt,\posSD[n])$ is a solution of the differential KYP inequality.
    In particular, for every $t_0,t_1\in\timeInt,\ t_0\leq t_1$ we have
    \[
    Q(t_1) - Q(t_0) \leq \int_{t_0}^{t_1}\dot Q(t)\td t
    \implies Q(t_0) - Q(t_1) \geq -\int_{t_0}^{t_1}\dot Q(t)\td t.
    \]
    We deduce that
    {\small\[
    \bmat{Q(t_0)-Q(t_1)-\int_{t_0}^{t_1}(\ct{A}Q+QA)\td t & \int_{t_0}^{t_1}(\ct{C}-QB)\td t \\ \int_{t_0}^{t_1}(C-\ct{B}Q)\td t & \int_{t_0}^{t_1}(D+\ct{D})}
    \geq \int_{t_0}^{t_1}\bmat{-\dot Q-\ct{A}Q-QA & \ct{C}-QB \\ C-\ct{B}Q & D+\ct{D}}\td t \geq 0,
    \]}%
    and therefore $Q$ is a solution of the integral KYP inequality.
    By \Cref{thm:KYP_integral} then $\quadSt{Q}$ is a storage function.
\end{proof}

\begin{remark}\label{rem:storageCharacterization}
    An important consequence of \Cref{cor:storageCharacterization} is that, for solutions $Q\in\AUC_\loc(\timeInt,\posSD[n])$, the differential KYP inequality \eqref{eq:KYP} and the integral KYP inequality \eqref{eq:KYP_integral} are equivalent.
    However, for more general solutions $Q\in\BV_\loc(\timeInt,\posSD[n])$, the integral KYP inequality is stricter than the differential KYP inequality.
    
    Consider, for example, the stationary system $\dot x=0$, where $A,B,C,D$ are identically zero, and any $Q\in\BV_\loc(\timeInt,\posSD[n])\setminus\AUC_\loc(\timeInt,\posSD[n])$, e.g.~the Cantor function.
    On the one hand, $\dot Q=0$ a.e.~in $\timeInt$, thus the differential KYP inequality is trivially satisfied.
    On the other hand, $\quadSt{Q}$ cannot be a storage function, since $Q\notin\AUC_\loc(\timeInt,\posSD[n])$, thus it cannot be a solution of the integral KYP inequality.
\end{remark}

\begin{remark}\label{rem:only_C}
    Consider an LTV system of the form
    \begin{equation}\label{eq:only_C}
        \dot x(t) = u(t), \qquad y(t) = C(t)x(t)
    \end{equation}
    for some $C\in L^2_\loc(\timeInt,\C^{n,n})$, where the state, input and output variables have the same dimension $n$.
    Since $A=0$, $B=I_n$ and $D=0$, the differential KYP inequality \eqref{eq:KYP} can be written as
    \[
    \bmat{-\dot Q & \ct{C}-Q \\ C-Q & 0} \geq 0.
    \]
    Equivalently, the differential KYP inequality \eqref{eq:KYP} admits a solution $Q\in\AUC_\loc(\timeInt,\posSD[n])$ if and only if $C\in\AUC_\loc(\timeInt,\posSD[n])$ with $\dot C\leq 0$ (i.e., if $C$ is pointwise Hermitian positive semi-definite and weakly decreasing, due to \Cref{lem:BV_decreasing}). In that case, the solution is uniquely determined as $Q=C$.
    In particular, for every weakly decreasing $C:\timeInt\to\posSD[n]$, the matrix function $C$ itself is a solution of the differential KYP inequality \eqref{eq:KYP}, thus the LTV system \eqref{eq:only_C} is passive \refPa{} with storage function $\quadSt{C}$ due to \Cref{cor:storageCharacterization}, and therefore it also has nonnegative supply \refNN{}, because of \Cref{thm:ph2All}.
    If additionally $C\in W^{1,1}_\loc(\timeInt,\posSD[n])$, then $C$ is an absolutely continuous solution of the \refKYP{} inequality. Thus, by \Cref{thm:KYP_to_PH} \eqref{eq:only_C} also admits a \refPH{} representation.
\end{remark}

\noindent \Cref{rem:only_C} is particularly useful to construct examples as follows.
\begin{example}\label{exm:discontinuousQ}
    Consider the LTV system
    \begin{equation}\label{eq:discontinuousQ}
        \dot x(t) = u(t), \qquad y(t) = \begin{cases}
            x(t) & \text{for }t<0, \\ 0 & \text{otherwise}
        \end{cases}
    \end{equation}
    in the time interval $\timeInt=\R$.
    In particular, $y(t)=C(t)x(t)$ with
    \[
    C(t) = \begin{cases}
        1 & \text{for }t<0, \\ 0 & \text{otherwise}.
    \end{cases}
    \]
    Clearly $C:\timeInt\to\posSD[1]$ is weakly decreasing and, therefore, \eqref{eq:discontinuousQ} is passive with storage function $\quadSt{C}$, as discussed in \Cref{rem:only_C}.
    In particular, $Q=C\in\AUC_\loc(\timeInt,\posSD[1])$ is a solution of the differential KYP inequality \eqref{eq:KYP}, although it is discontinuous.
\end{example}

\begin{remark}\label{rem:pH_with_AUC}
    One could consider requesting the weaker condition $Q\in\AUC_\loc(\timeInt,\posSD[n])$ instead of $Q\in W^{1,1}_\loc(\timeInt,\posSD[n])$ when defining \refPH{} systems, that is, in \Cref{def:pH}.
    In fact, in that case we obtain, analogously as in the proof of \Cref{thm:ph2All}, that
    \[
    \bmat{-\dot Q - \ct{A}Q - QA & \ct{C}-QB \\ C-\ct{B}Q & D+\ct{D}} = \bmat{Q & 0 \\ 0 & I_m}W\bmat{Q & 0 \\ 0 & I_m}
    \]
    holds a.e.~on $\timeInt$, i.e., $Q$ is still a solution of the differential KYP inequality \eqref{eq:KYP}.
    Then, due to \Cref{cor:storageCharacterization}, we deduce that $\quadSt{Q}$ is a storage function and \eqref{eq:tv_system} is passive \refPa{}. However, we do not have an immediate way to recover a power balance equation or associate a Dirac structure for the system.
    In fact, the system may be dissipative even when $W=0$, as the following example shows.
\end{remark}

\begin{example}\label{exm:pH_with_AUC}
    Consider the LTV system
    \[
    \dot x(t) = u(t), \qquad y(t) = \CantorReverse(t)x(t),
    \]
    in the time interval $\timeInt=(0,1)$, where $\CantorReverse\in\AUC(\timeInt,\posSD[1])\setminus W^{1,1}_\loc(\timeInt,\posSD[1])$ is the \emph{reverse Cantor function} $\CantorReverse:(0,1)\to[0,1],\ t\mapsto\Cantor(1-t)$, which coincides with its singular part but is weakly monotonically decreasing, in particular $\CantorReverse\in\AUC((0,1))\setminus W^{1,1}((0,1),\R)$. 

    Following the observations in \Cref{rem:pH_with_AUC}, we rewrite the system in the form~\eqref{eq:pH_coefficients} with $J=R=K=S=N=P=0$, $G=1$, and $Q=\CantorReverse$. This system satisfies all required properties, since $\dot Q=0$ a.e.~on $(0,1)$, and thus $Q\in\AUC_\loc(\timeInt,\posSD[1])$ is a solution of the differential KYP inequality \eqref{eq:KYP} and the system is passive.
    Furthermore, for the input signal $u\equiv 0$ the system is stationary, and therefore the dissipation inequality
    \[
    \quadSt{Q}\pset[\big]{t_1,x(t_1)} - \quadSt{Q}\pset[\big]{t_0,x(t_0)} = \frac{1}{2}\pset[\big]{\CantorReverse(t_1)-\CantorReverse(t_0)}x(t_0)^2 < 0
    \]
    holds strictly for all $t_0,t_1\in\timeInt$ such that $\CantorReverse(t_0)>\CantorReverse(t_1)$.
    Thus, the dissipation of this system does not originate from the dissipation operator $W=0$, but from the non-zero singular part of $Q$.
\end{example}

\noindent Note that in \Cref{def:KYP}, we required the solutions of inequality \refKYP{} to be in $W^{1,1}_\loc(\timeInt,\posSD[n])$. This condition is relevant for the transition from \refKYP{} to \refPH{}, as discussed in \Cref{rem:pH_with_AUC}. While \Cref{cor:storageCharacterization} shows that the matrix function $Q$ inducing a quadratic storage function $\quadSt{Q}$ is always a solution of \eqref{eq:KYP}, in general we can only guarantee $Q\in\AUC_\loc(\timeInt,\posSD[n])$, see, e.g.,~\Cref{exm:pH_with_AUC}.
However, since $W^{1,1}_\loc(\timeInt,\posSD[n])\subseteq\AUC_\loc(\timeInt,\posSD[n])$, we have the following result.

\begin{theorem}\label{thm:AC_Pa_to_KYP}
    Suppose that $\quadSt{Q}$ with $Q\in W^{1,1}_\loc(\timeInt,\posSD[n])$ is a storage function for a passive \refPa{} LTV system. Then $Q$ is a solution of the \refKYP{} inequality \eqref{eq:KYP}.
    In particular, the LTV system admits a \refPH{} representation.
\end{theorem}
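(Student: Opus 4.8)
The plan is to obtain both assertions by chaining together two results already established earlier in the paper, namely \Cref{cor:storageCharacterization} and \Cref{thm:KYP_to_PH}. The observation that makes this short is the chain of inclusions $W^{1,1}_\loc(\timeInt,\posSD[n])\subseteq\AUC_\loc(\timeInt,\posSD[n])\subseteq\BV_\loc(\timeInt,\posSD[n])$, which holds since a weakly differentiable matrix function with $L^1_\loc$ derivative is in particular absolutely upper semi-continuous and hence of locally bounded variation.

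First I would derive the \refKYP{} inequality. Because $Q\in W^{1,1}_\loc(\timeInt,\posSD[n])\subseteq\BV_\loc(\timeInt,\posSD[n])$ and, by hypothesis, $\quadSt{Q}$ is a storage function for the given passive \refPa{} system, the forward implication of \Cref{cor:storageCharacterization} applies verbatim and yields that $Q$ solves the differential \refKYP{} inequality \eqref{eq:KYP}. This settles the first assertion without any further work.

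For the second assertion I would simply invoke \Cref{thm:KYP_to_PH}: since $Q\in W^{1,1}_\loc(\timeInt,\posSD[n])$ is now known to be a solution of the \refKYP{} inequality associated with \eqref{eq:tv_system}, that theorem produces a \refPH{} representation of the form \eqref{eq:PHS} with Hamiltonian $\mathcal H=\quadSt{Q}$. The only point worth flagging—and it is minor—is that the hypothesis $Q\in W^{1,1}_\loc$ is precisely what is needed to enter \emph{both} cited results: it is strong enough to place $Q$ in $\BV_\loc$ for the storage-to-KYP step, and it is exactly the regularity demanded by \Cref{thm:KYP_to_PH}, which (in contrast to the merely $\AUC_\loc$ situation discussed in \Cref{rem:pH_with_AUC}) does allow one to recover the full port-Hamiltonian structure. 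I therefore expect no genuine obstacle: the theorem is essentially a bookkeeping corollary that records the composite of the machinery already developed, and the proof amounts to verifying that the inclusions line up with the hypotheses of the two invoked statements.
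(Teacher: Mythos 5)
Your proposal is correct and matches the paper's intended argument exactly: the paper states this theorem without a separate proof precisely because, as noted in the preceding paragraph, the inclusion $W^{1,1}_\loc(\timeInt,\posSD[n])\subseteq\AUC_\loc(\timeInt,\posSD[n])\subseteq\BV_\loc(\timeInt,\posSD[n])$ lets the forward implication of \Cref{cor:storageCharacterization} deliver the \refKYP{} inequality, and \Cref{thm:KYP_to_PH} then supplies the \refPH{} representation. Your flagging of the regularity bookkeeping (that $W^{1,1}_\loc$ is what both cited results require) is the only nontrivial point, and you handle it correctly.
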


\noindent In view of \Cref{thm:AC_Pa_to_KYP} it is interesting to derive sufficient conditions for a passive LTV system to have a storage function $\quadSt{Q}$ with $Q\in W^{1,1}_\loc(\timeInt,\posSD[n])$. In \cite{AndM74} it is mentioned that, for $\timeInt=\R$ and under the strong regularity assumptions $A\in\mathcal C^1(\R,\C^{n,n})$, $B\in\mathcal C^1(\R,\C^{n,m})$, $C\in\mathcal C^1(\R,\C^{m,n})$ and $D\in\mathcal C^1(\R,\C^{m,m})$, if the system is completely reachable, satisfies condition $D(t)+\ct{D(t)}>0$ for all $t\in\R$, and has nonnegative supply \refNN{}, then the Riccati differential equation
\begin{equation}\label{eq:RDE}
    \ct{A}Q + QA +\dot Q + (\ct{C}-QB)(D+\ct{D})^{-1}(C-\ct{B}Q) = 0
\end{equation}
has a solution $Q\in\mathcal C^1(\R,\posSD[n])$ with $\lim_{t\to\infty}Q(t)=0$, solving the minimization problem
\[
-\ct{x(t)}Q(t)x(t) = 2\inf_{u\in\mathcal C(\R,\C^m)}\int_{t}^{\infty}\realPart\pset[\big]{\ct{y(s)}u(s)}\td s.
\]
From this it follows that $Q\in\mathcal C^1(\timeInt,\posSD[n])$ induces the available storage function, i.e. $V_a=\quadSt{Q}$. In the following, 
using a different approach, we prove that a similar result holds in our more general setting.

The following lemma allows us to combine two storage functions defined on smaller time subintervals to obtain one defined on a larger time interval.
\begin{lemma}\label{lem:combineStorage}
    Let $\timeInt_1,\timeInt_2\subseteq\timeInt$ be two time subintervals such that $\inf(\timeInt_1)\leq\inf(\timeInt_2)$ and $\sup(\timeInt_1)\leq\sup(\timeInt_2)$, and let $V_1:\timeInt_1\times\C^n\to\R$ and $V_2:\timeInt_2\times\C^n\to\R$ be storage functions for an LTV system \eqref{eq:tv_system}, restricted to the corresponding time subintervals. Suppose that there exists $t_0\in\timeInt_1\cap\timeInt_2$ such that $V_1(t_0,x)=V_2(t_0,x)$ for all $x\in\C^n$. Let $\timeInt_0\coloneqq\timeInt_1\cup\timeInt_2$ and let
    \[
    V : \timeInt_0\times\C^n \to \R, \qquad t \mapsto
    \begin{cases}
        V_1(t) & \text{for }t< t_0, \\
        V_2(t) & \text{for }t\geq t_0.
    \end{cases}
    \]
    Then $V$ is a storage function for \eqref{eq:tv_system} restricted to $\timeInt_0$.
\end{lemma}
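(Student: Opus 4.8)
The plan is to verify directly that $V$ satisfies all the requirements of a storage function from \Cref{def:passive} on the enlarged interval $\timeInt_0$. Nonnegativity is immediate since $V$ coincides at each time with either $V_1$ or $V_2$, and the normalization $V(\cdot,0)=0$ is inherited for the same reason (each $V_i(\cdot,0)\equiv 0$); the substance is therefore to establish the dissipation inequality \eqref{def:passive_ineq} for $V$ on $\timeInt_0$. Throughout I write $(x,u,y)$ for an arbitrary state-input-output solution of \eqref{eq:tv_system} on $\timeInt_0$, using that its restriction to any subinterval of $\timeInt_1$ (resp.\ $\timeInt_2$) is again a solution there, so that the dissipation inequalities for $V_1$ and $V_2$ may be invoked on those pieces.

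First I would pin down which of $V_1,V_2$ governs a given time. Since $\inf\timeInt_0=\inf\timeInt_1$, $\sup\timeInt_0=\sup\timeInt_2$, and $t_0\in\timeInt_1\cap\timeInt_2$, every $t\in\timeInt_0$ with $t\leq t_0$ satisfies $\inf\timeInt_1\leq t\leq t_0\leq\sup\timeInt_1$ and hence lies in $\timeInt_1$, while every $t\in\timeInt_0$ with $t\geq t_0$ satisfies $\inf\timeInt_2\leq t_0\leq t\leq\sup\timeInt_2$ and hence lies in $\timeInt_2$. In particular $V(t,\cdot)=V_1(t,\cdot)$ on $\set{t\leq t_0}$ and $V(t,\cdot)=V_2(t,\cdot)$ on $\set{t\geq t_0}$, the two formulas being consistent at $t_0$ thanks to the hypothesis $V_1(t_0,\cdot)=V_2(t_0,\cdot)$.

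Next I would verify \eqref{def:passive_ineq} for $V$ over an arbitrary pair $\tau_0\leq\tau_1$ in $\timeInt_0$, splitting into three cases according to the position of $t_0$. If $\tau_1\leq t_0$, then $[\tau_0,\tau_1]\subseteq\timeInt_1$ and $V=V_1$ there, so the claim is exactly the dissipation inequality for $V_1$; symmetrically, if $\tau_0\geq t_0$, it is the dissipation inequality for $V_2$. In the remaining case $\tau_0< t_0<\tau_1$, I would use the telescoping identity
\begin{align*}
V\pset[\big]{\tau_1,x(\tau_1)}-V\pset[\big]{\tau_0,x(\tau_0)}
&= \pset[\big]{V_2(\tau_1,x(\tau_1))-V_2(t_0,x(t_0))} \\
&\quad + \pset[\big]{V_1(t_0,x(t_0))-V_1(\tau_0,x(\tau_0))},
\end{align*}
which holds precisely because $V_1(t_0,x(t_0))=V_2(t_0,x(t_0))$. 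Applying the dissipation inequality for $V_2$ on $[t_0,\tau_1]\subseteq\timeInt_2$ and for $V_1$ on $[\tau_0,t_0]\subseteq\timeInt_1$, then adding the two bounds and using additivity of the integral over $[\tau_0,t_0]\cup[t_0,\tau_1]$, yields
\[
V\pset[\big]{\tau_1,x(\tau_1)}-V\pset[\big]{\tau_0,x(\tau_0)}\leq\int_{\tau_0}^{\tau_1}\realPart\pset[\big]{\ct{y(t)}u(t)}\td t,
\]
as required.

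The argument is essentially bookkeeping; the one point requiring care is the gluing case $\tau_0<t_0<\tau_1$, where the shared state value $x(t_0)$ together with the hypothesis $V_1(t_0,\cdot)=V_2(t_0,\cdot)$ are exactly what make the two separate dissipation estimates telescope without leaving a residual term. The inf/sup ordering of the endpoints of $\timeInt_1$ and $\timeInt_2$ is what guarantees both that $\timeInt_0$ is an interval and that every ``left'' time is controlled by $V_1$ and every ``right'' time by $V_2$, so that no configuration of $\tau_0,\tau_1$ falls outside the reach of one of the two given storage functions.
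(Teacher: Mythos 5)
Your proof is correct and follows essentially the same route as the paper's: the same three-way case split on the position of $t_0$ relative to $[\tau_0,\tau_1]$, and the same telescoping identity at $t_0$ using $V_1(t_0,\cdot)=V_2(t_0,\cdot)$ to glue the two dissipation inequalities. The only difference is that you spell out the interval-membership bookkeeping (which times are governed by $V_1$ versus $V_2$) more explicitly than the paper does, which is harmless.
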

\begin{proof}
    It is clear from the assumption that $V(t,x)\geq 0$ and $V(t,0)=0$ hold for all $t\in\timeInt_0$ and $x(t)\in\C^n$.
    Suppose now that $(x,u,y)$ is a state-input-output solution of \eqref{eq:tv_system} on $\timeInt_0$.
    It is clear that the dissipation inequality
    \[
    V\pset[\big]{t_2,x(t_2)} - V\pset[\big]{t_1,x(t_1)} \leq \int_{t_1}^{t_2} \realPart\pset[\big]{ \ct{y(t)}u(t) }\td t
    \]
    holds for all $t_1,t_2\in\timeInt_0$ such that $t_1\leq t_2<t_0$ or $t_0\leq t_1\leq t_2$, since in those cases it reduces to the dissipation inequality for $V_1$ or $V_2$, respectively.
    Suppose now that $t_1<t_0\leq t_2$, then
    \begin{align*}
        & V\pset[\big]{t_2,x(t_2)} - V\pset[\big]{t_1,x(t_1)}
        = V_2\pset[\big]{t_2,x(t_2)} - V_2\pset[\big]{t_0,x(t_0)} + V_1\pset[\big]{t_0,x(t_0)} - V_1\pset[\big]{t_1,x(t_1)} \leq \\
        &\qquad\leq \int_{t_0}^{t_2}\realPart\pset[\big]{ \ct{y(t)}u(t) }\td t + \int_{t_1}^{t_0}\realPart\pset[\big]{ \ct{y(t)}u(t) }\td t
        = \int_{t_1}^{t_2} \realPart\pset[\big]{ \ct{y(t)}u(t) }\td t,
    \end{align*}
    since $V_1(t_0,\cdot)=V_2(t_0,\cdot)$. We then conclude that $V$ is a storage function on $\timeInt_0$.
\end{proof}
\noindent We now proceed to extend the aforementioned result of \cite{AndM74} to our setting.
\begin{theorem}\label{thm:Pa_to_KYP}
    Consider a passive LTV system of the form \eqref{eq:tv_system}
    and suppose that for every $t_0,t_1\in\timeInt,\ t_0\leq t_1$ there exists $c>0$ such that $D(t)+\ct{D(t)}\geq cI_m$ for a.e.~$t\in[t_0,t_1]$. Then the Riccati differential equation \eqref{eq:RDE} has a solution $Q_a\in W^{1,1}_\loc(\timeInt,\posSD[n])$ which induces the available storage function $V_a$, i.e., such that $V_a=\quadSt{Q_a}$.
    In particular, $Q_a$ is a solution of the differential \refKYP{} inequality \eqref{eq:KYP}.
\end{theorem}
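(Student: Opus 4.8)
The plan is to identify the available storage $V_a$ with a quadratic form whose generating matrix solves \eqref{eq:RDE}, exploiting that the hypothesis makes the control-weight block $D+\ct D$ uniformly positive definite on every compact subinterval. First I would invoke \Cref{thm:availableStorage} together with \Cref{cor:Pa_to_quadraticStorage}: since the system is passive, the available storage is finite and of the form $V_a=\quadSt{Q_a}$ for some $Q_a\in\AUC_\loc(\timeInt,\posSD[n])$. By \Cref{cor:storageCharacterization} this already yields that $Q_a$ solves the differential \refKYP{} inequality \eqref{eq:KYP}; writing $S\coloneqq D+\ct D$ and recalling $S\geq cI_m$ on $[t_0,t_1]$, the Schur complement with respect to the (positive definite) lower-right block shows that on each compact interval \eqref{eq:KYP} is equivalent to the Riccati \emph{inequality} $-\ct AQ_a-Q_aA-\dot Q_a-(\ct C-Q_aB)S^{-1}(C-\ct BQ_a)\geq 0$. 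The entire difficulty is therefore to upgrade this inequality to the Riccati \emph{equality} \eqref{eq:RDE} and, simultaneously, to improve the regularity of $Q_a$ from $\AUC_\loc$ to $W^{1,1}_\loc$.

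The mechanism I would use is a completion of squares together with the optimality of $V_a$. Fix a compact interval $[a,b]\subseteq\timeInt$. The dynamic programming principle for the available storage (which holds because the supremum in \Cref{def:avstor} is additive over concatenated time intervals and because $V_a\geq 0$ makes stopping at any time admissible) gives, for $t\in[a,b]$,
\[
V_a(t,x)=\sup_{u}\pset*{-\int_t^b\realPart(\ct y u)\,\td s+V_a\pset[\big]{b,x(b)}},
\]
where the supremum runs over inputs $u$ on $[t,b]$ and $x$ denotes the state with value $x$ at time $t$. This is a finite-horizon linear--quadratic problem with indefinite running cost, uniformly positive control weight $S$, and quadratic terminal cost $\quadSt{Q_a(b)}$, whose value we already know to be finite and equal to $\quadSt{Q_a(t)}$. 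By classical finite-horizon Riccati theory for such problems, finiteness of the value over $[a,b]$ is equivalent to the existence on all of $[a,b]$ of a solution $P\in W^{1,1}([a,b],\posSD[n])$ of \eqref{eq:RDE} with terminal datum $P(b)=Q_a(b)$. Given such a $P$, the chain rule and \eqref{eq:RDE} produce the pointwise identity $\realPart(\ct y u)-\dd{}{t}\quadSt{P}(t,x)=\frac12\,\ct{(u+S^{-1}(C-\ct BP)x)}\,S\,(u+S^{-1}(C-\ct BP)x)\geq 0$, which after integration over $[t,b]$ shows that $\quadSt{P}$ dominates the bracket above, with equality attained by the feedback $u^\ast=-S^{-1}(C-\ct BP)x^\ast$ (a well-posed Carath\'eodory ODE, since its coefficients lie in $L^1_\loc$). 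Hence the value equals $\quadSt{P}$, and comparison with $V_a=\quadSt{Q_a}$ forces $Q_a=P$ on $[a,b]$. Thus $Q_a$ solves \eqref{eq:RDE}, and since the right-hand side of \eqref{eq:RDE} lies in $L^1_\loc$ (using $A\in L^1_\loc$, $B,C\in L^2_\loc$, $Q_a\in L^\infty_\loc$, $S^{-1}\in L^\infty_\loc$ and the generalized H\"older inequality), we conclude $Q_a\in W^{1,1}_\loc(\timeInt,\posSD[n])$. As $[a,b]$ is arbitrary, this holds throughout $\timeInt$; here \Cref{lem:combineStorage} is the tool that guarantees the locally optimal storage functions coincide with the globally defined $V_a$ and glue consistently across overlapping subintervals.

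The final assertion that $Q_a$ solves \eqref{eq:KYP} is then immediate: with $S>0$ the Schur complement of \eqref{eq:KYP} is precisely the left-hand side of \eqref{eq:RDE}, which vanishes, so the \refKYP{} block is positive semidefinite (alternatively, one re-applies \Cref{cor:storageCharacterization} now that $Q_a\in W^{1,1}_\loc$). The step I expect to be the main obstacle is the passage from the Riccati inequality satisfied a priori by $Q_a\in\AUC_\loc$ to the Riccati equality, that is, proving that the minimal storage matrix is an \emph{exact} solution of \eqref{eq:RDE} and carries no singular part. The uniform positivity $D+\ct D\geq cI_m$ is exactly what powers this: it underwrites both the finiteness-of-value $\Leftrightarrow$ global-solvability dictionary of Riccati theory in this rough-coefficient, free-terminal-time setting and the uniqueness of the minimizing feedback, along whose optimal trajectory the dissipation inequality becomes an equality, squeezing out any singular part of $Q_a$ and pinning down the equation.
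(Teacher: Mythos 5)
Your route is genuinely different from the paper's, and most of it is sound: the dynamic programming identity for $V_a$ (the zero-input extension plus $V_a\geq 0$ handles early stopping times, the definition of $V_a(b,\cdot)$ handles late ones), the completion-of-squares identity
\[
\realPart(\ct{y}u)-\dd{}{t}\quadSt{P}\pset[\big]{t,x(t)} = \tfrac12\,\ct{\pset[\big]{u+S^{-1}(C-\ct{B}P)x}}S\pset[\big]{u+S^{-1}(C-\ct{B}P)x},
\]
and the feedback attainment argument are all correct, and together they do identify $Q_a$ with any solution $P$ of \eqref{eq:RDE} satisfying $P(b)=Q_a(b)$, \emph{wherever such a $P$ exists}. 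The paper instead never formulates a dynamic programming principle: it takes the local solution $Q$ of \eqref{eq:RDE_simple} through $(t_0,Q_a(t_0))$, proves $0\leq Q\leq Q_a$ backward in time by two comparison arguments (a Lyapunov-type representation via \Cref{thm:ODE_linearMatrix} for $Q\geq 0$, and a Riemann--Stieltjes representation via \Cref{thm:OIE_linearMatrix_RS} involving the weakly decreasing defect $\wt D$ of $Q_a$ for $Q_a-Q\geq 0$), uses this bound to rule out backward blow-up, and then invokes minimality of $V_a$ together with \Cref{lem:combineStorage} to force $Q=Q_a$. Your approach buys a cleaner identification (value equals value, so the singular part of $Q_a$ never has to be confronted directly), at the price of needing the DPP; the paper's buys self-containedness in the rough-coefficient setting.

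The genuine gap is the sentence ``by classical finite-horizon Riccati theory, finiteness of the value is equivalent to existence of $P$ on all of $[a,b]$.'' That equivalence is precisely the crux of the theorem, and in this setting ($A\in L^1_\loc$, $B,C\in L^2_\loc$, complex coefficients, terminal datum only in $\AUC_\loc$) it is not available off the shelf; it is what the paper spends its proof establishing. Citing it as classical makes your argument circular at its core: you need global existence of $P$ to run the completion of squares on $[a,b]$, and global existence is exactly what is in question. The good news is that your own identity closes the hole without any external citation: let $(\tau,b]$ be the maximal backward interval of existence of $P$ (local existence is \Cref{thm:RDE_localSolutions}); on $(\tau,b]$ your DPP argument already gives $P=Q_a$, hence $0\leq P\leq \sup_{[a,b]}\norm{Q_a}_2\, I_n<\infty$ since $Q_a\in\AUC_\loc\subseteq\BV_\loc$ is locally bounded; then $\dot P\in L^1((\tau,b),\HerMat[n])$ from \eqref{eq:RDE}, so $\lim_{t\to\tau^+}P(t)$ exists and $P$ extends past $\tau$, contradicting maximality unless $\tau<a$. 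With that inserted, your proof is complete. One last correction: \Cref{lem:combineStorage} plays no role in your argument (the identification $P=Q_a$ on $[a,b]$ is direct and needs no gluing); it is needed only in the paper's minimality-based route, so you should drop that remark rather than lean on it.
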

\begin{proof}
    Note that the condition on $D+\ct{D}$ implies that $D+\ct{D}\in L^\infty_\loc(\timeInt,\posDef[m])$ and $(D+\ct{D})^{-1}\in L^\infty_\loc(\timeInt,\posDef[m])$.
    Thus, applying the Schur complement, we can write
    {\small\begin{multline*}
        \bmat{-\ct{A}Q - QA - \dot Q - (\ct{C}-QB)(D+\ct{D})^{-1}(C-\ct{B}Q) & 0 \\ 0 & D+\ct{D}} \\
        = \ct{\bmat{I_n & 0 \\ -(D+\ct{D})^{-1}(C-\ct{B}Q) & I_m}}\bmat{-\ct{A}Q-QA-\dot Q & \ct{C}-QB \\ C-\ct{B}Q & D+\ct{D}}\bmat{I_n & 0 \\ -(D+\ct{D})^{-1}(C-\ct{B}Q) & I_m}
    \end{multline*}}%
    a.e.~in $\timeInt$.
    In particular, we deduce from \Cref{cor:storageCharacterization} that $Q\in\AUC_\loc(\timeInt,\posSD[n])$ induces a storage function $\quadSt{Q}$ for \eqref{eq:tv_system} if and only if the Riccati differential inequality
    \begin{equation*}
        \ct{A}Q + QA +\dot Q + (\ct{C}-QB)(D+\ct{D})^{-1}(C-\ct{B}Q) \leq 0
    \end{equation*}
    holds a.e.~on $\timeInt$, or equivalently
    \begin{equation}\label{eq:RDI_simple}
       \ct{\wt A}Q + Q\wt A + \dot Q + Q\wt BQ + \wt C \leq 0,
    \end{equation}
    where
    $\wt A\coloneqq A-B(D+\ct{D})^{-1}C\in L^1_\loc(\timeInt,\C^{n,n})$,
    $\wt B\coloneqq B(D+\ct{D})^{-1}\ct{B}\in L^1_\loc(\timeInt,\posSD[n])$ and
    $\wt C\coloneqq \ct{C}(D+\ct{D})^{-1}C\in L^1_\loc(\timeInt,\posSD[n])$. 
    We consider analogously the Riccati differential equation \eqref{eq:RDE}
    \begin{equation}\label{eq:RDE_simple}
       \ct{\wt A}Q + Q\wt A + \dot Q + Q\wt BQ + \wt C = 0.
    \end{equation}
    Let $t_0\in\timeInt$ be any time point, and let $Q\in W^{1,1}_\loc(\timeInt_1,\HerMat[n])$ be the unique local solution of \eqref{eq:RDE_simple} satisfying $Q(t_0)=Q_a(t_0)$, where $\timeInt_1=(t_{-1},t_1)\subseteq\timeInt$, with $-\infty\leq t_{-1}<t_0<t_1\leq+\infty$, is its maximum interval of definition.
    In particular,
    \[
    Q_a(t_0) - Q(t) = -\int_{t_0}^{t}\dot Q(s)\td s = \int_{t_0}^{t}\pset[\big]{ \ct{\wt A}Q + Q\wt{A} + Q\wt{B}Q + \wt{C} }\td s
    \]
    holds for all $t\in\timeInt_1$.
    Let us rewrite \eqref{eq:RDE_simple} as
    $\dot Q = -\ct{\wt A}Q - Q\wt A - \wh C$
    with $\wh C\coloneqq Q\wt BQ+\wt C\in L^1_\loc(\timeInt,\posSD[n])$.
    Because of \Cref{thm:ODE_linearMatrix}, we can then write
    \[
    Q(t) = \stm_{-\wt A}(t,t_0)Q_a(t_0)\ct{\stm_{-\wt A}(t,t_0)} - \int_{t_0}^{t}\stm_{-\wt A}(t,s)\wh C(s)\ct{\stm_{-\wt A}(t,s)}\td s \geq 0
    \]
    for every $t\in(t_{-1},t_0)$. In particular, we deduce that $\quadSt{Q}$ is a storage function for \eqref{eq:tv_system} on the restricted time interval $(t_{-1},t_0]\subseteq\timeInt$, since $Q$ satisfies \eqref{eq:RDI_simple}.

    Let us now define
    \[
    \wt D : \timeInt_1\to\HerMat[n], \qquad t \mapsto Q_a(t) - Q_a(t_0) + \int_{t_0}^{t} \pset[\big]{\ct{\wt A}Q_a + Q_a\wt A + Q_a\wt BQ_a + \wt C} \td s,
    \]
    in particular $\wt D\in L^\infty_\loc(\timeInt_1,\HerMat[n])$, $\wt D(t_0)=0$, and
    \begin{align*}
        \wt D(s)-\wt D(r) &= Q_a(s) - Q_a(r) + \int_r^s\pset[\big]{\ct{\wt A}Q_a + Q_a\wt A + Q_a\wt BQ_a + \wt C} \td s \\
        &\leq \int_r^s\pset[\big]{\dot Q_a + \ct{\wt A}Q_a + Q_a\wt A + Q_a\wt BQ_a + \wt C} \td s \leq 0,
    \end{align*}
    for all $r,s\in\timeInt_1,\ r\leq s$, since $Q_a\in\AUC_\loc(\timeInt,\posSD[n])$ solves \eqref{eq:RDI_simple}, i.e., $\wt D$ is weakly decreasing.
    By defining $\wt Q\coloneqq Q_a-Q\in\AUC_\loc(\timeInt,\HerMat[n])$ we have that $\wt Q(t_0)=0$ and
    \begin{align*}
        \wt Q(t) &= Q_a(t) - Q(t)
        = \wt D(t) + Q_a(t_0) - Q(t) - \int_{t_0}^{t} \pset[\big]{\ct{\wt A}Q_a + Q_a\wt A + Q_a\wt BQ_a + \wt C} \td s \\
        &= \wt D(t) + \int_{t_0}^{t} \pset[\big]{\ct{\wt A}Q + Q\wt A + Q\wt BQ - \ct{\wt A}Q_a - Q_a\wt A - Q_a\wt BQ_a} \td s \\
        &= \wt D(t) + \int_{t_0}^{t} \pset[\big]{-\ct{\wt A}\wt Q - \wt Q\wt A - \wt Q\wt B\wt Q - \wt Q\wt BQ - Q\wt B\wt Q} \td s \\
        &= \wt Q(t_0) + \int_{t_0}^{t} \pset[\big]{ -\ct{\wh{A}}\wt{Q} - \wt{Q}\wh{A} - \wh{C} } \td s + \wt D(t) - \wt D(t_0),
    \end{align*}
    with $\wh{A}\coloneqq \wt A+\wt B Q\in L^1_\loc(\timeInt_1,\C^{n,n})$ and $\wh{C}\coloneqq\wt{Q}\wt{B}\wt{Q}\in L^1_\loc(\timeInt_1,\posSD[n])$. Because of \Cref{thm:OIE_linearMatrix_RS}, we can then write
    \[
    \wt Q(t) = -\int_{t_0}^{t}\stm_{-\ct{\wh A}}(t,s)\wh C(s)\ct{\stm_{-\ct{\wh A}}(t,s)}\td s + \int_{t_0}^{t}\pset*{\stm_{-\ct{\wh A}}(t,s)\td\wt D(s)\ct{\stm_{-\ct{\wh A}}(t,s)}}
    \]
    for all $t\in\timeInt_1$, where $\stm_{-\ct{\wh A}}\in W^{1,1}_\loc(\timeInt_1,\GL[n])$ denotes the state-transition matrix associated to $\dot x=-\ct{\wh A}x$.
    In particular, for $t\leq t_0$ we have $\wt Q(t)\geq 0$, since $\wh C$ is positive semi-definite a.e.~on $\timeInt_1$ and $\wt D$ is decreasing, see \Cref{lem:RiemannStieltjes_increasing}.
    Thus, $0\leq Q(t)\leq Q_a(t)$ for all $t\in(t_{-1},t_0]$.
    
    Suppose now for the sake of contradiction that $t_{-1}>\inf(\timeInt)$, in particular $t_{-1}\in\R$.
    Since $0\leq Q(t)\leq Q_a(t)$ for all $t\in(t_{-1},t_0]$, and $Q\in\mathcal C(\timeInt_1,\posSD[n])\subseteq W^{1,1}_\loc(\timeInt_1,\posSD[n])$, we deduce that $Q(t_{-1})=\lim_{t\to t_{-1}^+}Q(t)$ exists and satisfies $0\leq Q(t_{-1})\leq\norm{Q_a}_{L^\infty([t_{-1},t_0])}\cdot I_n<\infty$. Then we can extend $Q$ locally from $(t_{-1},Q(t_{-1}))$ and obtain a larger defining interval for $Q$, which contradicts the maximality of $\timeInt_1$.
    We conclude that necessarily $t_{-1}=\inf(\timeInt)$, i.e., that $\timeInt$ and $\timeInt_1$ have the same left end point.

    Let us then define
    \[
    \wh Q(t) : \timeInt \to \C^{n,n}, \qquad t \mapsto
    \begin{cases}
        Q(t) & \text{for }t<t_0, \\
        Q_a(t) & \text{for }t\geq t_0.
    \end{cases}
    \]
    It is clear by construction that $\wh Q\in\AUC_\loc(\timeInt,\posSD[n])$ with $\wh{Q}\leq Q_a$ pointwise.
    Furthermore, since $Q$ and $Q_a$ define storage functions for \eqref{eq:tv_system} restricted on $(t_{-1},t_0]$ and $\timeInt$, respectively, and $Q(t_0)=Q_a(t_0)$, we deduce from \Cref{lem:combineStorage} that $\wh Q$ defines a storage function on $\timeInt$.
    In particular, $\wh Q$ defines a smaller storage function than $Q_a$.
    However, since $Q_a$ is minimal, this can only be the case if $\wh Q=Q_a$.
    Due to \Cref{thm:AC_Pa_to_KYP}, we conclude that $Q_a$ is a solution of the differential KYP inequality \eqref{eq:KYP}.
\end{proof}

\begin{remark}
Note that if we have the condition $D+\ct{D}\geq c I_m$ as in \Cref{thm:Pa_to_KYP}, then we can modify \Cref{fig:overvieweinvertible} to equivalently have the Riccati inequality instead of \refKYP{}.
\end{remark}

\begin{remark}
 There is a close connection between the Riccati differential equation and the matrix differential-algebraic equation
\begin{equation}\label{eq:tpbvp}
\bmat{\dot X \\ -\dot Y \\ 0} = \bmat{0 & A & B \\ \ct{A} & 0 & -\ct{C} \\ \ct{B} & -C & -D-\ct{D}}\bmat{Y \\ X \\ U},
\end{equation}  
with the extra condition $Y=QX$.
The condition that $D+\ct{D}\geq c I_m$, in particular, implies that this differential algebraic equation is of differentiation index one, see \cite{KunM24}, and by taking the Schur complement one gets to the matrix Riccati differential equation 
\[
\pset[\big]{\dot Q + \ct{A}Q + QA + (\ct{C}-QB)(D+\ct{D})^{-1}(C-\ct{B}Q)}X = 0,
\]
which for pointwise invertible $X\in W^{1,1}_\loc(\timeInt,\C^{n,n})$ implies that $Q_a=Q=YX^{-1}$, for which the solution theory is discussed in \cite{AboFIJ12,Rei72}.

Note that many of the properties of \eqref{eq:tpbvp} still hold if $D+\ct{D}$ is indefinite, see \cite{Meh91,ReiV19} but then typically the system is not equivalent to a \refPH{} system; see \cite{ChuM24d_ppt} in the LTI case and~\Cref{thm:nonnegativeimpliesDsemipos} for the LTV case. 
\end{remark}

\section{Nonnegative supply and its implications for pH systems}
\label{sec:popov}

In this section, we study the properties of LTV systems with nonnegative supply \refNN{}, and in particular what this property implies in terms of passivity \refPa{}, solvability of the \refKYP{} inequality, and admitting a \refPH{} representation.

\subsection{Nonnegativity of the Popov operator}\label{sec: subpopov}
For LTI systems, the nonnegativity of the Popov operator is equivalent to the positive realness of the transfer function, see \cite[Proposition 2.36]{LozBEM00}. In this subsection, we define the Popov and transfer operators for LTV systems and study their properties and their relation to each other. These operators can be considered as time-domain counterparts of the Popov function and the transfer function, respectively. 
\begin{definition}\label{def:nonnegativePopov}
    For all $t_0,t_1\in\timeInt,\ t_0\leq t_1$ we call
\begin{equation}
 \label{def:popov_op}
\begin{split}
        &\LAMBDA_{t_0,t_1} : L^2\pset[\big]{[t_0,t_1],\C^m} \to L^2\pset[\big]{[t_0,t_1],\C^m}, \qquad u \mapsto \LAMBDA_{t_0,t_1} u, \\
        &\LAMBDA_{t_0,t_1} u(t) \coloneqq \int_{t_0}^t C(t)\stm(t,s)B(s)u(s)\td s + \int_{t}^{t_1} \ct{B(t)}\ct{\stm(s,t)}\ct{C(s)}u(s)\td s + \pset[\big]{D(t)+\ct{D(t)}}u(t).
\end{split}
\end{equation}
the \emph{Popov operator} of the system~\eqref{eq:tv_system} in $(t_0,t_1)$.
\end{definition}
\noindent We first verify that the Popov operator is well defined. 
\begin{lemma}
    For all $t_0,t_1\in\timeInt,\ t_0\leq t_1$, the Popov operator $\LAMBDA_{t_0,t_1}$ as in \eqref{def:popov_op} is a well-defined bounded linear operator.
\end{lemma}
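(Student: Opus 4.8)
The plan is to write $\LAMBDA_{t_0,t_1} = K_1 + K_2 + M$ as the sum of two integral operators and a multiplication operator, and to establish boundedness of each summand separately. Linearity is immediate, since each of the three terms depends linearly on $u$. The multiplication operator $Mu \coloneqq (D+\ct D)u$ is bounded because $D\in L^\infty_\loc(\timeInt,\C^{m,m})$ restricts to an element of $L^\infty([t_0,t_1],\C^{m,m})$ on the compact interval, so that $\norm{Mu}_{L^2}\le\norm{D+\ct D}_{L^\infty}\norm{u}_{L^2}$ for all $u\in L^2([t_0,t_1],\C^m)$.

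For the first integral operator $K_1$, whose kernel is $k_1(t,s)\coloneqq C(t)\stm(t,s)B(s)$ on the triangle $\{s\le t\}$ and zero otherwise, I would argue that $K_1$ is Hilbert--Schmidt. Recall from \Cref{def:stateTransMatrix} and \Cref{thm:stateTransitionMatrix} that $\stm$ admits a continuous representative on $\timeInt\times\timeInt$; in particular it is bounded on the compact square $[t_0,t_1]\times[t_0,t_1]$, say $\norm{\stm(t,s)}_2\le M$ there. Using submultiplicativity of the induced $2$-norm together with $B,C\in L^2_\loc$, I then estimate
\[
\int_{t_0}^{t_1}\!\!\int_{t_0}^{t_1}\norm{k_1(t,s)}_2^2\,\td s\,\td t
\le M^2\int_{t_0}^{t_1}\norm{C(t)}_2^2\,\td t\int_{t_0}^{t_1}\norm{B(s)}_2^2\,\td s
= M^2\norm{C}_{L^2}^2\norm{B}_{L^2}^2 < \infty,
\]
so $k_1\in L^2([t_0,t_1]^2,\C^{m,m})$ and $K_1$ is a Hilbert--Schmidt operator on $L^2([t_0,t_1],\C^m)$, hence bounded. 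By Fubini and Cauchy--Schwarz this square-integrability of the kernel also guarantees that the defining integral converges for a.e.~$t$ and that $K_1u\in L^2$, which settles well-definedness of this term.

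For the second integral operator $K_2$, with kernel $\ct{B(t)}\ct{\stm(s,t)}\ct{C(s)}$ supported on $\{s\ge t\}$, I would observe that it is precisely the Hilbert-space adjoint of $K_1$: for an integral operator with kernel $k(t,s)$ the adjoint has kernel $\ct{k(s,t)}$, and
\[
\ct{k_1(s,t)} = \ct{\pset[\big]{C(s)\stm(s,t)B(t)}}\,\heavi_{\{t\le s\}} = \ct{B(t)}\,\ct{\stm(s,t)}\,\ct{C(s)}\,\heavi_{\{t\le s\}},
\]
which coincides (up to the diagonal null set $\{s=t\}$) with the kernel of $K_2$. Consequently $K_2=\ct{K_1}$ is bounded with $\norm{K_2}=\norm{K_1}$; equivalently, the Hilbert--Schmidt estimate above applies verbatim with the roles of $B$ and $C$ interchanged. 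Summing, $\LAMBDA_{t_0,t_1}=K_1+\ct{K_1}+M$ is a well-defined bounded linear operator on $L^2([t_0,t_1],\C^m)$, and it is moreover self-adjoint since $M$ is. The only point requiring care is the boundedness of $\stm$ on the compact square, which rests on its continuity as recorded in \Cref{thm:stateTransitionMatrix}; with that in hand the Hilbert--Schmidt bound, and hence well-definedness, is routine.
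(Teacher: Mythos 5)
Your proof is correct and uses the same three-term splitting as the paper's own proof (\Cref{lem:popovbounded}): two integral operators plus multiplication by $D+\ct{D}$. The difference is in how boundedness is established. The paper bounds the first integral operator by a direct computation, expanding $\norm{\LAMBDA_1 u}_{L^2}^2$ into a triple integral and estimating it by $\norm{\stm}_{L^\infty}\norm{C}_{L^2}\norm{B}_{L^2}\norm{u}_{L^2}$, and then treats the second operator ``analogously''. You instead invoke the Hilbert--Schmidt criterion for the first kernel and dispose of the second by observing that its kernel is $\ct{k_1(s,t)}$, i.e.\ $K_2=\ct{K_1}$, so it is bounded with the same norm. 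Both routes yield essentially the same bound (yours up to a dimensional constant: the Hilbert--Schmidt norm of a matrix kernel is the Frobenius norm, which is equivalent to the induced $2$-norm you estimated, so this is harmless in finite dimensions). Your adjoint observation buys something extra: it gives $\LAMBDA_{t_0,t_1}=K_1+\ct{K_1}+M$ and hence self-adjointness for free, anticipating part (b) of \Cref{thm:PopovFromTransfer}, which the paper proves separately via the transfer operator. One further point in your favor: your estimate for the multiplication term, $\norm{(D+\ct{D})u}_{L^2}\le\norm{D+\ct{D}}_{L^\infty}\norm{u}_{L^2}$, is the correct Hölder pairing $L^\infty\times L^2\to L^2$; the inequality printed in the paper's proof, $\norm{(D+\ct{D})u}_{L^2}\le 2\norm{D}_{L^2}\norm{u}_{L^\infty}$, would require $u\in L^\infty$ and appears to be a misprint.
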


\begin{proof}
For the proof, we refer to \Cref{lem:popovbounded}.
\end{proof}

\begin{remark}\label{rem:globalPopov}
  Note that $\LAMBDA_{t_0,t_1}$ is also well defined as an operator on $L^2_\loc(\timeInt,\C^m)$.  However, the properties that make this operator of interest only concern the fixed compact interval $[t_0,t_1]\subseteq\timeInt$.
    Furthermore, since $L^2_\loc(\timeInt,\C^m)$ is not a normed vector space, the boundedness of $\LAMBDA_{t_0,t_1}$ is lost in that setting.
    Note that the dependency of $\LAMBDA_{t_0,t_1}$ on $t_0,t_1$ cannot be removed, as these parameters determine the integration intervals. 
\end{remark}

\noindent By interpreting the general state solution in terms of the initial condition $x(t_0)=x_0$ for $(t_0,x_0)\in\timeInt\times\C^n$ and of the state transition matrix using \eqref{eq:state_via_stm}, we write the output trajectory in the form of
\begin{align}
    y(t) &= C(t)\Phi(t,t_0)x_0 + C(t)\int_{t_0}^t\Phi(t,s)B(s)u(s)\td s + D(t)u(t) 
    \nonumber \\
    &= C(t)\Phi(t,t_0)x_0 + \int_{t_0}^t C(t)\Phi(t,s)B(s)u(s)\td s + D(t)u(t)  \label{eq:u_to_y}
\end{align}
for a.e.~$t\in\timeInt$. We express \eqref{eq:u_to_y} as
\begin{equation}\label{eq:transferOp_to_y}
    y(t) = C(t)\stm(t,t_0)x_0+\mathbf Z_{t_0}u(t),   
\end{equation}
where $\mathbf{Z}_{t_0}:L^2_\loc(\timeInt,\C^m) \to L^2_\loc(\timeInt,\C^m)$ is the \emph{transfer operator} at $t_0\in\timeInt$, defined such that
\begin{align}
       \label{def:transfer_op}
\mathbf Z_{t_0}u(t) \coloneqq \int_{t_0}^t C(t)\Phi(t,s)B(s)u(s)\td s + D(t)u(t),
\end{align}
for all $t\in\timeInt$.
The transfer operator is clearly a linear operator on $L^2_\loc(\timeInt,\C^m)$.

Consider now a compact subinterval $[t_0,t_1]\subseteq\timeInt$: On the one hand, it is clear that for every input $u\in L^2_\loc(\timeInt,\C^m)$, the restriction $(\mathbf Z_{t_0}u)|_{[t_0,t_1]}\in L^2([t_0,t_1],\C^m)$ depends only on $u|_{[t_0,t_1]}\in L^2([t_0,t_1],\C^m)$.
On the other hand, for every local input $\wt u\in L^2([t_0,t_1],\C^m)$ we can trivially extend $\wt u$ to
\[
\underline{\wt u} : \timeInt \to \C^m, \qquad t \mapsto \begin{cases}
    \wt u(t) & \text{for }t\in[t_0,t_1], \\
    0 & \text{otherwise},
\end{cases}
\]
which clearly satisfies $\underline{\wt u}\in L^2(\timeInt,\C^m)$.
Therefore, $\mathbf Z_{t_0}$ induces a \emph{local transfer operator} $\mathbf Z_{t_0,t_1}$ on $L^2([t_0,t_1],\C^m)$ that satisfies
\begin{equation}\label{eq:locTransferOp}
    \mathbf Z_{t_0,t_1}\wt u(t) = \int_{t_0}^t C(t)\Phi(t,s)B(s)\wt u(s)\td s + D(t)\wt u(t) = (\mathbf Z_{t_0}\underline{\wt u})|_{[t_0,t_1]}
\end{equation}
for all $\wt u\in L^2([t_0,t_1],\C^m)$ and $t\in[t_0,t_1]$.
In particular, we have the following result.
\begin{lemma}\label{lem:locTransferOp}
    For every $t_0,t_1\in\timeInt,\ t_0\leq t_1$, the local transfer operator $\mathbf Z_{t_0,t_1}$ in \eqref{eq:locTransferOp} is a well-defined and bounded linear operator.
    Furthermore, the formula
    \begin{equation}\label{eq:locTransferOp_to_y}
        y(t) = C(t)\stm(t,t_0)x_0 + (\mathbf Z_{t_0,t_1}u|_{[t_0,t_1]})(t)
    \end{equation}
    holds for all $t\in[t_0,t_1]$ and for every state-input-output solution $(x,u,y)$ of \eqref{eq:tv_system}.
\end{lemma}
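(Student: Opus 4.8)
The plan is to split $\mathbf Z_{t_0,t_1}$ into its Volterra integral part and its pointwise-multiplication part, bound each separately, and then read off the stated output formula directly from the already-established identity \eqref{eq:transferOp_to_y}. Linearity is immediate, since $\mathbf Z_{t_0,t_1}$ is built from an integral against a kernel and a pointwise multiplication, both linear in the input. Writing $\mathbf Z_{t_0,t_1}=K+M_D$, where
\[
K\wt u(t)\coloneqq\int_{t_0}^t C(t)\stm(t,s)B(s)\wt u(s)\td s,\qquad M_D\wt u(t)\coloneqq D(t)\wt u(t),
\]
it suffices to prove that $K$ and $M_D$ are bounded operators on $L^2([t_0,t_1],\C^m)$; well-definedness (that the image lies in $L^2$) is then part of the same estimate.

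For $M_D$, since $D\in L^\infty_\loc(\timeInt,\C^{m,m})$ its restriction to the compact interval $[t_0,t_1]$ is essentially bounded, so $\norm{M_D\wt u}_{L^2}\leq\norm{D}_{L^\infty([t_0,t_1])}\norm{\wt u}_{L^2}$. For $K$, I would view it as an integral operator with kernel $k(t,s)=C(t)\stm(t,s)B(s)\heavi_{\{s\leq t\}}$ and show it is Hilbert--Schmidt: by continuity of $\stm$ on the compact set $[t_0,t_1]\times[t_0,t_1]$ (\Cref{thm:stateTransitionMatrix}) there is $M_\Phi\coloneqq\max_{[t_0,t_1]\times[t_0,t_1]}\norm{\stm(t,s)}_2<\infty$, whence $\norm{k(t,s)}_2\leq M_\Phi\norm{C(t)}_2\norm{B(s)}_2$ and
\[
\int_{t_0}^{t_1}\!\int_{t_0}^{t_1}\norm{k(t,s)}_2^2\td s\,\td t\leq M_\Phi^2\norm{C}_{L^2}^2\norm{B}_{L^2}^2<\infty,
\]
using $B,C\in L^2_\loc$. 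A single application of the Cauchy--Schwarz inequality in $s$ then gives $\norm{K\wt u}_{L^2}\leq\norm{k}_{L^2}\norm{\wt u}_{L^2}$. Combining the two bounds shows $\mathbf Z_{t_0,t_1}$ is well-defined, linear and bounded, with $\norm{\mathbf Z_{t_0,t_1}}\leq\norm{k}_{L^2}+\norm{D}_{L^\infty([t_0,t_1])}$. I note that these are exactly the estimates already carried out for the first integral term and the $D$-term of the Popov operator in \Cref{lem:popovbounded}, so this step could also be obtained by directly invoking that proof.

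For the output formula I would take any state-input-output solution $(x,u,y)$ and set $x_0\coloneqq x(t_0)$. By \eqref{eq:transferOp_to_y} we already have $y(t)=C(t)\stm(t,t_0)x_0+\mathbf Z_{t_0}u(t)$ for a.e.\ $t\in\timeInt$. For $t\in[t_0,t_1]$ the value $\mathbf Z_{t_0}u(t)$ depends only on the values of $u$ on $[t_0,t]\subseteq[t_0,t_1]$, so replacing $u$ by the zero-extension $\underline{u|_{[t_0,t_1]}}$ of its restriction leaves it unchanged there; by the defining identity \eqref{eq:locTransferOp} this value equals $\mathbf Z_{t_0,t_1}(u|_{[t_0,t_1]})(t)$, which yields the claim. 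I expect the only genuinely delicate point to be the Hilbert--Schmidt estimate for $K$ — specifically verifying square-integrability of the kernel over the compact square, for which continuity of $\stm$ together with $B,C\in L^2_\loc$ is exactly what is needed; everything else is bookkeeping.
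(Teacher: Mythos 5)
Your proof is correct and follows essentially the same route as the paper: the paper defers well-definedness and boundedness to \Cref{lem:transferbounded}, whose estimate (boundedness of $\stm$ on the compact square, Cauchy--Schwarz against $B,C\in L^2_\loc$, and $D\in L^\infty_\loc$) is exactly what your Hilbert--Schmidt kernel bound and multiplication-operator bound repackage, and your derivation of \eqref{eq:locTransferOp_to_y} from \eqref{eq:transferOp_to_y} via the zero-extension identity \eqref{eq:locTransferOp} is the paper's argument verbatim. The only cosmetic difference is that you split $\mathbf Z_{t_0,t_1}$ into its Volterra and multiplication parts before estimating, whereas the paper bounds the sum in one chain of inequalities; both give a bound of the form $\norm{\mathbf Z_{t_0,t_1}}\lesssim\norm{\stm}_{L^\infty}\norm{B}_{L^2}\norm{C}_{L^2}+\norm{D}_{L^\infty}$.
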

\begin{proof} For the proof that $\mathbf Z_{t_0,t_1}$ is well-defined, bounded and linear, we refer to \Cref{lem:transferbounded}.
Furthermore, since
\[
(\mathbf Z_{t_0,t_1}u|_{[t_0,t_1]})(t) = \int_{t_0}^{t_1}C(t)\stm(t,s)B(s)u(s)\td s + D(t)u(t) = \mathbf Z_{t_0}u(t)
\]
holds for all $t\in[t_0,t_1]$, the formula \eqref{eq:locTransferOp_to_y}  follows from \eqref{eq:transferOp_to_y}.
 \end{proof}

\noindent We now analyze the relation between the Popov and the transfer operator, which is analogous to the one between the Popov and the transfer function in the LTI case.
\begin{theorem}\label{thm:PopovFromTransfer}
Consider a LTV system of the form \eqref{eq:tv_system} on $\timeInt$ with the transfer operator $\mathbf{Z}_{t_0,t_1}$ in the Hilbert space $L^2([t_0,t_1],\C^m)$ given by \eqref{def:transfer_op} and Popov operator $\LAMBDA_{t_0,t_1}$ given in \Cref{def:nonnegativePopov}, with $t_0,t_1\in\timeInt,\ t_0\leq t_1$. Then the following statements hold.
\begin{itemize}
    \item[\rm (a)] The adjoint $\ct{\mathbf{Z}}_{t_0,t_1}$ of the transfer operator $\mathbf{Z}_{t_0,t_1}$ \label{glo:transfer_op} is given by 
\[
\ct{\mathbf{Z}}_{t_0,t_1}u(t):=\int_{t}^{t_1} \ct{B(t)}\ct{\Phi(s,t)}\ct{C}(s) u(s)\td s+\ct{D(t)}u(t)\quad \text{for }t_0\leq t\leq t_1.
\]
    \item[\rm (b)] The Popov operator is given by $\mathbf{\Lambda}_{t_0,t_1}=\mathbf{Z}_{t_0,t_1}+\ct{\mathbf{Z}}_{t_0,t_1}$. In particular, $\mathbf{\Lambda}_{t_0,t_1}$ is self-adjoint.
\end{itemize}
Furthermore, 
\begin{equation}\label{eq:supplyWithPopov}
    \int_{t_0}^{t_1}\realPart\pset[\big]{\ct{y(t)}u(t)}\td t = \realPart\pset*{\pset*{\int_{t_0}^{t_1}\ct{u(t)}C(t)\Phi(t,t_0)\td t}x_0} + \frac{1}{2}\aset{\boldsymbol{\Lambda}_{t_0,t_1}u|_{[t_0,t_1]},u|_{[t_0,t_1]}}
\end{equation}
holds for every state-input-output solution $(x,u,y)$ of \eqref{eq:tv_system}.
\end{theorem}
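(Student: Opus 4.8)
The plan is to prove (a) and (b) by a direct computation of the Hilbert-space adjoint, and then to obtain \eqref{eq:supplyWithPopov} by substituting the output representation \eqref{eq:locTransferOp_to_y} and invoking (b). For part (a), I would fix arbitrary $u,v\in L^2([t_0,t_1],\C^m)$ and expand $\aset{\mathbf{Z}_{t_0,t_1}u,v}=\int_{t_0}^{t_1}\ct{v(t)}(\mathbf{Z}_{t_0,t_1}u)(t)\td t$ using the definition \eqref{def:transfer_op}, then rearrange it into the form $\aset{u,\ct{\mathbf{Z}}_{t_0,t_1}v}$ to read off the adjoint. The feedthrough term $D(t)u(t)$ contributes $\ct{D(t)}v(t)$ to the adjoint immediately. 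The Volterra term produces the double integral $\int_{t_0}^{t_1}\int_{t_0}^{t}\ct{v(t)}C(t)\stm(t,s)B(s)u(s)\td s\,\td t$ over the triangle $t_0\leq s\leq t\leq t_1$; applying Fubini's theorem turns the inner limits $[t_0,t]$ into $[s,t_1]$, and after conjugate-transposing and relabeling the two variables one recovers exactly the kernel $\ct{B(t)}\ct{\stm(s,t)}\ct{C}(s)$ claimed in (a). Fubini is legitimate because on the compact interval the integrand is absolutely integrable, since $\stm$ is continuous, $B,C\in L^2_\loc$ and $u,v\in L^2$, as reflected in the boundedness of $\mathbf{Z}_{t_0,t_1}$ from \Cref{lem:transferbounded}.

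For part (b), I would add the formula for $\ct{\mathbf{Z}}_{t_0,t_1}$ just derived to the definition \eqref{def:transfer_op} of $\mathbf{Z}_{t_0,t_1}$ and observe that the sum coincides term-by-term with the definition of $\LAMBDA_{t_0,t_1}$ in \eqref{def:popov_op}: the lower Volterra kernel of $\mathbf{Z}_{t_0,t_1}$, the upper kernel of $\ct{\mathbf{Z}}_{t_0,t_1}$, and $D(t)+\ct{D(t)}$ match precisely. Self-adjointness is then immediate, since $\LAMBDA_{t_0,t_1}=\mathbf{Z}_{t_0,t_1}+\ct{\mathbf{Z}}_{t_0,t_1}$ gives $\ct{\LAMBDA}_{t_0,t_1}=\ct{\mathbf{Z}}_{t_0,t_1}+\mathbf{Z}_{t_0,t_1}=\LAMBDA_{t_0,t_1}$.

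For the supply formula, I would substitute $y(t)=C(t)\stm(t,t_0)x_0+(\mathbf{Z}_{t_0,t_1}u)(t)$ from \eqref{eq:locTransferOp_to_y} into $\int_{t_0}^{t_1}\realPart(\ct{y(t)}u(t))\td t=\realPart\int_{t_0}^{t_1}\ct{y(t)}u(t)\td t$ and split the result into two real parts. The first term equals $\realPart\int_{t_0}^{t_1}\ct{x_0}\ct{\stm(t,t_0)}\ct{C(t)}u(t)\td t$; since this integrand is a scalar and $\realPart(z)=\realPart(\bar z)$, conjugating yields exactly the first summand $\realPart\pset{\pset{\int_{t_0}^{t_1}\ct{u(t)}C(t)\stm(t,t_0)\td t}x_0}$ of \eqref{eq:supplyWithPopov}. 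The second term is $\realPart\aset{u,\mathbf{Z}_{t_0,t_1}u}=\tfrac12\pset{\aset{u,\mathbf{Z}_{t_0,t_1}u}+\aset{\mathbf{Z}_{t_0,t_1}u,u}}=\tfrac12\aset{\pset{\mathbf{Z}_{t_0,t_1}+\ct{\mathbf{Z}}_{t_0,t_1}}u,u}=\tfrac12\aset{\LAMBDA_{t_0,t_1}u,u}$, using $\aset{\mathbf{Z}_{t_0,t_1}u,u}=\overline{\aset{u,\mathbf{Z}_{t_0,t_1}u}}$, the adjoint identity, and part (b); this is the final summand of \eqref{eq:supplyWithPopov}.

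The main obstacle is the Fubini exchange in part (a): one must track the triangular integration region correctly and verify absolute integrability on the compact interval so that the swap, and the subsequent passage of the conjugate transpose through the integral, are justified. Everything else reduces to bookkeeping with the inner product convention $\aset{f,g}=\int\ct{g}f$ and with conjugate transposes.
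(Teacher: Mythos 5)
Your proposal is correct and follows essentially the same route as the paper's proof: computing the adjoint of the Volterra and feedthrough parts of $\mathbf{Z}_{t_0,t_1}$ by swapping the order of integration over the triangle, recognizing $\LAMBDA_{t_0,t_1}=\mathbf{Z}_{t_0,t_1}+\ct{\mathbf{Z}}_{t_0,t_1}$ term by term, and then deriving \eqref{eq:supplyWithPopov} by substituting \eqref{eq:locTransferOp_to_y} and symmetrizing the real part of $\aset{\mathbf{Z}_{t_0,t_1}u,u}$. Your explicit justification of the Fubini exchange (absolute integrability via $\norm{C}\norm{v},\norm{B}\norm{u}\in L^1$ on the compact interval) is a detail the paper leaves implicit, but the argument is otherwise identical.
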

\begin{proof}
Observe that $\mathbf Z_{t_0,t_1}=\mathbf Z_1+\mathbf Z_0$ with 
\[
	\mathbf Z_1 u(t) = \int_{t_0}^{t}C(t)\Phi(t,s)B(s)u(s) \td s, \qquad
	\mathbf Z_0 u(t) = D(t)u(t)
\]
Then 
\begin{align*}
	\aset{\mathbf Z_1u_2,u_1}
	&= \int_{t_0}^{t_1}\int_{t_0}^{t} \ct{u_1(t)}C(t)\Phi(t,s)B(s)u_2(s)\td s\td t  \\
	&= \int_{t_0}^{t_1} \ct{\pset*{ \int_{s}^{t_1} \ct{B(s)}\ct{\Phi(t,s)}\ct{C(t)}u_1(t)\td t }} u_2(s)\td s
    = \aset{u_2,\ct{\mathbf Z}_1 u_1}, \\
	\aset{\mathbf Z_0u_2,u_1}
	&= \int_{t_0}^{t_1} \ct{u_1(t)}D(t)u_2(t)\td t
	= \int_{t_0}^{t_1}\ct{\pset[\big]{\ct{D(t)}u_2(t)}}u_1(t)\td t
	= \aset{u_2,\ct{\mathbf Z}_0 u_1},
\end{align*}
where
\begin{align*}
	\ct{\mathbf Z}_1 u(t) &= \int_{t}^{t_1} \ct{B(t)}\ct{\Phi(s,t)}\ct{C(s)}u(s)\td s, \\
	\ct{\mathbf Z}_0 u(t) &= \ct{D(t)} u(t).
\end{align*}
It follows that 
\begin{equation*}
\LAMBDA_{t_0,t_1}u(t) = (\mathbf Z_{t_0,t_1}+\ct{\mathbf Z}_{t_0,t_1})u(t). 
\end{equation*}
This implies that $\LAMBDA_{t_0,t_1}$ is self-adjoint. Moreover, from \eqref{eq:locTransferOp_to_y} we obtain that
    \begin{align*}
        \int_{t_0}^{t_1}\realPart\pset[\big]{\ct{y(t)}u(t)}\td t &= \realPart\pset*{\int_{t_0}^{t_1}\ct{u(t)}\pset[\big]{C(t)\Phi(t,t_0)x_0 + (\mathbf{Z}_{t_0,t_1}u|_{[t_0,t_1]})(t)}\td t}  \\
        &= \realPart\pset*{\int_{t_0}^{t_1}\pset*{\ct{u(t)}C(t)\Phi(t,t_0)\td t}x_0} + \realPart(\aset{\mathbf{Z}_{t_0,t_1}u|_{[t_0,t_1]},u|_{[t_0,t_1]}})\\
        &= \realPart\pset*{\int_{t_0}^{t_1}\pset*{\ct{u(t)}C(t)\Phi(t,t_0)\td t}x_0} + \frac{1}{2}\aset{\LAMBDA_{t_0,t_1}u|_{[t_0,t_1]},u|_{[t_0,t_1]}}. \qedhere
    \end{align*}
\end{proof}

\noindent Since the first term in \eqref{eq:supplyWithPopov} vanishes for $x_0=0$, by \Cref{thm:PopovFromTransfer}, we immediately have the following result.
\begin{corollary}
\label{thm:PopovSupply}
    The supply of an LTV system \eqref{eq:tv_system} satisfies
    \begin{equation}\label{eq:PopovSupply}
        \int_{t_0}^{t_1}\realPart\pset[\big]{\ct{y(t)}u(t)}\td t = \frac{1}{2}\aset{\LAMBDA_{t_0,t_1}u,u}_{L^2}
    \end{equation}
    for every $t_0,t_1\in\timeInt,\ t_0\leq t_1$ and every state-input-output solution $(x,u,y)$ of \eqref{eq:tv_system} such that $x(t_0)=0$.
    In particular, the system \eqref{eq:tv_system} has nonnegative supply \refNN{} if and only if the Popov operator $\LAMBDA_{t_0,t_1}$ is positive semi-definite for all $t_0,t_1\in\timeInt,\ t_0\leq t_1$. 
\end{corollary}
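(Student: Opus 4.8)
The plan is to derive both assertions as direct consequences of \Cref{thm:PopovFromTransfer}. For the supply formula \eqref{eq:PopovSupply}, I would simply specialize the identity \eqref{eq:supplyWithPopov} to the case $x_0=0$. In that formula the first summand equals $\realPart\pset*{\pset*{\int_{t_0}^{t_1}\ct{u(t)}C(t)\Phi(t,t_0)\td t}x_0}$, which is linear in $x_0$ and therefore vanishes identically when $x_0=0$. What remains is exactly $\frac{1}{2}\aset{\LAMBDA_{t_0,t_1}u,u}_{L^2}$, which is \eqref{eq:PopovSupply}. This step is immediate and requires no further computation.

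For the equivalence between \refNN{} and positive semi-definiteness of the Popov operator, I would argue as follows. By \Cref{def:nonnegativeSupply}, the system has nonnegative supply precisely when $\int_{t_0}^{t_1}\realPart\pset[\big]{\ct{y(t)}u(t)}\td t\geq 0$ holds for all $t_0\leq t_1$ and all state-input-output solutions with $x(t_0)=0$. Using the formula \eqref{eq:PopovSupply} just established, the left-hand side equals $\frac{1}{2}\aset{\LAMBDA_{t_0,t_1}u,u}_{L^2}$ for each such solution. Thus the \refNN{} condition is equivalent to $\aset{\LAMBDA_{t_0,t_1}u,u}_{L^2}\geq 0$ holding for all inputs arising from solutions with vanishing initial state, and for all $t_0\leq t_1$.

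It remains to check that these inputs exhaust all of $L^2([t_0,t_1],\C^m)$, so that the resulting condition is genuine positive semi-definiteness of $\LAMBDA_{t_0,t_1}$. Given any $\wt u\in L^2([t_0,t_1],\C^m)$, I would extend it by zero to a global input $\underline{\wt u}\in L^2_\loc(\timeInt,\C^m)$ and invoke the existence and uniqueness of solutions (the Carath\'eodory argument recalled in the introduction) to obtain a unique state-input-output solution with $x(t_0)=0$. Since the Popov operator depends only on $u|_{[t_0,t_1]}$, this realizes every local input; conversely, every admissible input restricts to an element of $L^2([t_0,t_1],\C^m)$. Hence the two quantifications coincide, and \refNN{} is equivalent to $\LAMBDA_{t_0,t_1}\geq 0$ for all $t_0\leq t_1$.

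I do not anticipate any genuine obstacle: the corollary is essentially a reformulation of \Cref{thm:PopovFromTransfer}. The only point requiring a moment of care is the correspondence between local inputs in $L^2([t_0,t_1],\C^m)$ and global solutions with $x(t_0)=0$, which is handled by zero-extension together with the well-posedness of \eqref{eq:tv_system}.
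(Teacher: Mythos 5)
Your proposal is correct and follows essentially the same route as the paper: specialize \eqref{eq:supplyWithPopov} from \Cref{thm:PopovFromTransfer} to $x_0=0$, then read off the equivalence between \refNN{} and $\LAMBDA_{t_0,t_1}\geq 0$. Your extra care about realizing every local input in $L^2([t_0,t_1],\C^m)$ via zero-extension and well-posedness is a valid elaboration of a step the paper treats as immediate (the same zero-extension device already appears in the paper's construction of the local transfer operator).
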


\noindent \Cref{thm:PopovSupply} introduces a condition equivalent to \refNN{} in terms of the positive semi-definiteness of the Popov operator, which resembles the known definition of positive realness for LTI systems; see, e.g.~\cite{CheGH23}. In \cite{LozBEM00}, $\LAMBDA$ is defined as the integral of a kernel involving the Heaviside step function and the Dirac delta distribution. Similarly, the transfer function $\mathbf Z_{t_0}$ can be equivalently defined as the integral of a kernel, which was introduced in \cite{AndM68} under the name \emph{impulse response matrix} and in \cite{AndM74} as \emph{impedance matrix}.

\begin{remark}\label{rem:ydifferent}
    By only looking at \eqref{eq:PopovSupply}, one might be tempted to claim that $y=\frac{1}{2}\LAMBDA_{t_0,t_1}u$ for $x(t_0)=0$.
    However, this is in general not true, unless quite restrictive assumptions like $C(t)\stm(t,s)B(s)=0$ and $D(t)=\ct{D(t)}$ for all $t,s\in[t_0,t_1]$ hold. In fact, Consider the example
    \begin{equation*}
    \dot x = -x + u , \qquad y = x,
    \end{equation*}
    with $x(t),u(t),y(t)\in\C$ on $\timeInt=\R$, which is clearly \refPH{} with Hamiltonian $\mathcal H(x)=\frac{1}{2}\abs{x}^2$, and observe that $y=\frac{1}{2}\LAMBDA_{t_0,t_1}u$ for $x(t_0)=0$ would imply
    \[
    \int_{t_0}^{t}e^{t-s}u(s)\td s = \frac{1}{2}\int_{t_0}^{t}e^{t-s}u(s)\td s + \frac{1}{2}\int_{t}^{t_1}e^{s-t}u(s)\td s
    \implies \int_{t_0}^{t}e^{t-s}u(s)\td s = \int_{t}^{t_1}e^{s-t}u(s)\td s
    \]
    for every $u\in L^2_\loc(\R)$ and a.e.~$t\in\timeInt$, which is evidently not true.
\end{remark}

\subsection{Relation of nonnegative supply and passivity}

In this subsection, we study the property of having nonnegative supply \refNN{} and its relation to passivity. 

In \Cref{thm:Pa_to_KYP} we have assumed that $D(t)+\ct{D(t)}\geq cI_m$ in every compact subinterval, which in turn implies the weaker condition $D+\ct{D}\in L^\infty_\loc(\timeInt,\posSD[m])$, which is well-known to be necessary for the passivity of the system. The next results show that this condition is also necessary for an LTV system to have a nonnegative supply. 
\begin{theorem}\label{thm:nonnegativeimpliesDsemipos}
 If an LTV system of the from \eqref{eq:tv_system} has nonnegative supply \refNN{}, then $D+\ct{D}\in L^\infty_\loc(\timeInt,\posSD[m])$.
\end{theorem}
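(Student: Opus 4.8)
Since $D\in L^\infty_\loc(\timeInt,\C^{m,m})$ by the standing assumptions on \eqref{eq:tv_system}, the map $t\mapsto D(t)+\ct{D(t)}$ automatically belongs to $L^\infty_\loc(\timeInt,\HerMat[m])$, so the whole content of the statement is the pointwise inequality $D(t)+\ct{D(t)}\geq 0$ for a.e.~$t\in\timeInt$. The plan is to deduce this from the Popov-operator characterization of \refNN{} in \Cref{thm:PopovSupply}: the system has nonnegative supply if and only if $\LAMBDA_{t_0,t_1}\geq 0$ for all $t_0,t_1\in\timeInt$ with $t_0\leq t_1$. The strategy is then to test this positivity on inputs that concentrate more and more around a single time instant, so that the pointwise term $(D+\ct D)u$ dominates the two smoothing kernel terms of $\LAMBDA_{t_0,t_1}$.

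Concretely, I would fix a vector $v\in\C^n$, a compact interval $[t_0,t_1]\subseteq\timeInt$, and set $g_v(t)\coloneqq\ct{v}(D(t)+\ct{D(t)})v\in L^\infty_\loc(\timeInt,\R)$. For $\tau\in(t_0,t_1)$ and $h>0$ small enough that $[\tau,\tau+h]\subseteq[t_0,t_1]$, I would test positivity on $u_h\coloneqq v\,\heavi_{[\tau,\tau+h]}\in L^2([t_0,t_1],\C^m)$. Using the explicit form \eqref{def:popov_op} of $\LAMBDA_{t_0,t_1}$ and the fact that $u_h$ is supported in $[\tau,\tau+h]$, one gets
\begin{align*}
0\leq\aset{\LAMBDA_{t_0,t_1}u_h,u_h}_{L^2}
&=\int_\tau^{\tau+h}\!\!\int_\tau^{t}\ct{v}C(t)\stm(t,s)B(s)v\,\td s\,\td t
+\int_\tau^{\tau+h}\!\!\int_t^{\tau+h}\ct{v}\ct{B(t)}\ct{\stm(s,t)}\ct{C(s)}v\,\td s\,\td t \\
&\quad+\int_\tau^{\tau+h}g_v(t)\,\td t.
\end{align*}
The idea is to divide by $h$ and let $h\to 0^+$. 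The last term is standard: at every Lebesgue point $\tau$ of $g_v$ one has $\tfrac1h\int_\tau^{\tau+h}g_v(t)\,\td t\to g_v(\tau)$, and this covers a.e.~$\tau\in(t_0,t_1)$.

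For the two kernel terms I would use that $\stm$ has a continuous, hence locally bounded, representative on $\timeInt\times\timeInt$, so that $M_h\coloneqq\sup_{t,s\in[\tau,\tau+h]}\norm{\stm(t,s)}_2<\infty$ and $M_h\to 1$ as $h\to 0^+$. Combined with the Cauchy--Schwarz inequality this gives
\[
\abs[\Big]{\int_\tau^{\tau+h}\!\!\int_\tau^{t}\ct{v}C(t)\stm(t,s)B(s)v\,\td s\,\td t}
\leq M_h\norm{v}_2^2\,h\,\norm{B}_{L^2([\tau,\tau+h])}\norm{C}_{L^2([\tau,\tau+h])},
\]
and the second kernel term is estimated identically. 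Since $B\in L^2_\loc$ and $C\in L^2_\loc$, both $\norm{B}_{L^2([\tau,\tau+h])}$ and $\norm{C}_{L^2([\tau,\tau+h])}$ tend to $0$ as $h\to 0^+$ by absolute continuity of the integral, so after dividing by $h$ the kernel contributions vanish in the limit. Passing to the limit thus yields $g_v(\tau)\geq 0$ for a.e.~$\tau\in(t_0,t_1)$; covering $\timeInt$ by countably many compact subintervals then gives $g_v\geq 0$ a.e.~on $\timeInt$.

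To conclude, I would fix a countable dense subset $\set{v_k}_{k\in\N}\subseteq\C^m$ and intersect the corresponding full-measure sets to obtain a single null set off which $g_{v_k}(\tau)\geq 0$ for every $k$; by continuity of $v\mapsto\ct{v}(D(\tau)+\ct{D(\tau)})v$ this upgrades to $\ct{v}(D(\tau)+\ct{D(\tau)})v\geq 0$ for all $v\in\C^m$, i.e.~$D(\tau)+\ct{D(\tau)}\geq 0$, for a.e.~$\tau\in\timeInt$, which is the claim. I expect the only delicate point to be the uniform control of the two kernel terms: one must verify that the smoothing contribution of $C\stm B$ is genuinely of lower order in $h$ than the pointwise term, and this is exactly what the $L^2$-regularity of $B$ and $C$ together with the local boundedness of $\stm$ provides.
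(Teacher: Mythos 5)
Your proof is correct, but it takes a genuinely different route from the paper's. You argue directly: test positivity of the Popov operator on indicator inputs $v\,\heavi_{[\tau,\tau+h]}$ (equivalently, the constant input $v$ on the shrinking interval $[\tau,\tau+h]$), divide by $h$, invoke the Lebesgue differentiation theorem at a.e.~$\tau$, and then upgrade from a countable dense set of test vectors to all of $\C^m$ by continuity of the quadratic form $v\mapsto\ct{v}(D(\tau)+\ct{D(\tau)})v$. The paper instead argues by contradiction: if $D+\ct{D}$ were not a.e.~positive semidefinite, \Cref{thm:posDef_equiv} produces a single unit vector $u_0$ and a compact interval on which $\int\ct{u_0}(D+\ct{D})u_0\,\td t=-M<0$; a bisection then yields nested intervals $\timeInt_k$ of length $2^{-k}(t_1-t_0)$ on which this integral is $\leq-2^{-k}M$, while \Cref{cor:compactIntervalL1} forces the kernel contributions to be of order $2^{-k}\varepsilon$, giving the contradiction $0<0$ for $\varepsilon$ small. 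The core estimate is identical in both proofs: the smoothing terms built from $C\stm B$ are controlled by $\norm{\stm}_{L^\infty}$ times the product of the $L^1$ norms of $\norm{B}_2$ and $\norm{C}_2$ on the small interval, hence are of strictly lower order than the pointwise $D$-term, and your Cauchy--Schwarz bound delivers exactly this. What your route buys is a cleaner structure -- no contradiction hypothesis, no bisection bookkeeping -- at the cost of an extra density argument over test vectors, which is genuinely needed in your setup because the exceptional null set of non-Lebesgue points depends on $v$; the paper sidesteps this because its contradiction hypothesis hands it one fixed $u_0$. Two minor points: your initial test vector should be taken in $\C^m$, not $\C^n$ (you correct this implicitly later), and for the kernel terms you only need $M_h$ bounded as $h\to0^+$, which the continuity of $\stm$ guarantees; the refinement $M_h\to1$ is not used.
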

\begin{proof}
    Suppose for the sake of contradiction that $D+\ct{D}\notin L^\infty_\loc(\timeInt,\posSD[m])$.
    By \Cref{thm:posDef_equiv} there exist $t_0, t_1 \in \timeInt$ and $t_0\leq t_1$, and $u_0\in\C^{m}$ with $\norm{u_0}_2=1$ satisfying
    \begin{equation*}
        M \coloneqq -\ct{u_0}\pset*{\int^{t_1}_{t_0} \pset[\big]{D(t) + \ct{D}(t)} \td t} u_0> 0.
    \end{equation*} 
By a bisection procedure we construct  a sequence of intervals $\set{{\timeInt}_k\mid k\in\N}$ such that $\timeInt_0 = [t_0, t_1]$, $\timeInt_{k+1} \subseteq \timeInt_k = [t_0^k,t_1^k]$ and $|\timeInt_k | = 2^{-k}|\timeInt_0 | = 2^{-k} (t_1 - t_0)$ with
    \begin{equation*}
        \int_{\timeInt_k} \ct{u} \pset[\big]{D(t) + \ct{D}(t)} u \td t \leq - 2^{-k} M < 0 .   
    \end{equation*}
Let $\varepsilon>0$ be an arbitrarily small number, that we will select later. By \Cref{cor:compactIntervalL1} the inequalities
\[
        \int_{\timeInt_{k}} \norm{B(t)}_2 \td t < 2^{-\frac{k}{2}}\varepsilon,  \quad  \int_{\timeInt_k} \norm{C(t)}_2 \td t < 2^{-\frac{k}{2}}\varepsilon
\]
then hold for sufficiently large $k$, since $B,C\in L^2_\loc$.
Let now $u\equiv u_0\in L^2_\loc(\timeInt,\C^m)$ be a constant input and let $(x,u,y)$ be the state-input-output solution associated with $x(t_0^k)=0$, for such large $k$. Due to \Cref{thm:PopovSupply} it holds that
    \begin{align*}
        0 &\leq 2\int_{\timeInt_k}\realPart\pset[\big]{\ct{y(t)}u(t)}\td t = \aset{\LAMBDA_{t_0^k,t_1^k}u, u }_{L^2}
        = \int_{\timeInt_k} \ct{u(t)} \Bigg( \int_{t_0^k}^t C(t)\stm(t,s)B(s)u(s)\td s \\
        &\qquad\qquad + \int_{t}^{t_1^k} \ct{B(t)}\ct{\stm(s,t)}\ct{C(s)}u(s)\td s + \pset[\big]{D(t)+\ct{D(t)}}u(t) \Bigg) \td t\\
        &\leq \int_{\timeInt_k}\int_{\timeInt_k} \abs*{\ct{u_0} C(t)X(t) X^{-1}(s)B(s)u_0}\td s\td t \\
        &\qquad\qquad + \int_{\timeInt_k}\int_{\timeInt_k}\abs*{\ct{u_0}\ct{B(t)}\ict{X(t)} \ct{X(s)}\ct{C(s)}u_0}\td s \td t
        + \ct{u_0}\pset*{\int_{\timeInt_k} \pset[\big]{D(t)+\ct{D(t)}} \td t} u_0\\
        &\leq \int_{\timeInt_k}\int_{\timeInt_k} \norm{C(t)}\norm{X(t)}\norm{X(s)^{-1}}\norm{B(s)} \td s \td t \\
        &\qquad\qquad + \int_{\timeInt_k}\int_{\timeInt_k} \norm{B(t)}\norm{X(t)^{-1}}\norm{X(s)}\norm{C(s)} \td s \td t
        -2^{-k} M\\
        &\leq 2\norm{X}_{L^\infty}\norm{X^{-1}}_{L^\infty}\int_{\timeInt_k}\norm{C(t)}\td t\int_{\timeInt_k}\norm{B(s)}\td s - 2^{-k}M \\
        &\leq 2^{1-k}\varepsilon\norm{X}_{L^\infty}\norm{X^{-1}}_{L^\infty} - 2^{-k}M
        = 2^{-k}\pset*{ 2\varepsilon\norm{X}_{L^\infty}\norm{X^{-1}}_{L^\infty} - M },
\end{align*}
where the $L^\infty$ norm of $X$ and $X^{-1}$ is taken with respect to the interval $[t_0,t_1]$.
Then, by selecting
\[
    \varepsilon < \frac{M}{2\norm{X}_{L^\infty}\norm{X^{-1}}_{L^\infty}}
\]
and $k$ sufficiently large, we obtain the contradiction $0<0$ which implies that the  $D+\ct{D}\in L^\infty_\loc(\timeInt,\posSD[m])$ is necessary.
\end{proof}

\noindent Note that, by \Cref{thm:ph2All} and \Cref{thm:nonnegativeimpliesDsemipos}, it follows that $D+D\in L^\infty_\loc(\timeInt,\posSD[m])$ is also a necessary condition for \refPH{}, \refKYP{} and \refPa{} to be fulfilled.

In \cite[Theorem 4.105]{LozBEM00}, it was proven that, for completely reachable LTV systems, $\LAMBDA_{t_0,t_1}$ is nonnegative if and only if the system is passive. In \cite[Theorem 4]{HilM80} it was shown for real-valued LTV systems that a system has nonnegative supply if and only if a storage function can be defined in all reachable states. To be consistent with our setting, we present an alternative proof. For this, we need the following two lemmas.

\begin{lemma}\label{lem:availableStorageIneq}
    Let $t_{-1},t_0,t_1\in\timeInt,\ t_{-1}\leq t_0\leq t_1$ and let $(x,u,y)$ be a state-input-output solution of the LTV system \eqref{eq:tv_system} such that $x(t_{-1})=0$.
    Then the available storage function satisfies
    \begin{equation}\label{eq:availableStorageIneq}
        \avSt\pset[\big]{t_1,x(t_1)} \leq \avSt\pset[\big]{t_0,x(t_0)} + \int_{t_0}^{t_1}\realPart  \pset[\big]{ \ct{y(s)}u(s)}\td s.
    \end{equation}
    In particular, if $\avSt(t_0,x(t_0))$ is finite, then $\avSt(t_1,x(t_1))$ is also finite.
\end{lemma}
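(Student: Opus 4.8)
The plan is to recognise this inequality as the dynamic-programming (Bellman) principle for the available storage $\avSt$, i.e.\ the statement that the minimal storage function obeys the dissipation inequality along every trajectory. I would prove it by concatenating the given trajectory on $[t_0,t_1]$ with an arbitrary future trajectory emanating from the state $x(t_1)$, and then passing to the supremum over all such future trajectories.

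Concretely, I would first fix an arbitrary $\tau\in\timeInt$ with $\tau\geq t_1$ and an arbitrary input $\hat u\in L^2_\loc(\timeInt,\C^m)$, and let $(\hat x,\hat u,\hat y)$ be the unique state-input-output solution of \eqref{eq:tv_system} determined by $\hat x(t_1)=x(t_1)$. I then define the concatenated input
\[
w(s)\coloneqq\begin{cases} u(s) & \text{for }s<t_1,\\ \hat u(s) & \text{for }s\geq t_1,\end{cases}
\]
which again lies in $L^2_\loc(\timeInt,\C^m)$. By the Carath\'eodory existence and uniqueness of solutions, the trajectory driven by $w$ from the initial condition $x(t_0)$ at time $t_0$ coincides with $(x,y)$ on $[t_0,t_1]$ and with $(\hat x,\hat y)$ on $[t_1,\tau]$. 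Hence $w$ is admissible in the supremum defining $\avSt(t_0,x(t_0))$, and splitting the integral at $t_1$ gives
\[
\avSt\pset[\big]{t_0,x(t_0)}\ \geq\ -\int_{t_0}^{t_1}\realPart\pset[\big]{\ct{y(s)}u(s)}\td s-\int_{t_1}^{\tau}\realPart\pset[\big]{\ct{\hat y(s)}\hat u(s)}\td s.
\]

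Next I would take the supremum of the right-hand side over all admissible $\tau\geq t_1$ and $\hat u$: the first integral is independent of these choices, while the supremum of the second term is by definition exactly $\avSt(t_1,x(t_1))$. Rearranging yields the claimed inequality \eqref{eq:availableStorageIneq}. If $\avSt(t_0,x(t_0))=+\infty$ the inequality is trivially true, so no genuine case distinction is needed; and since the supply integral over $[t_0,t_1]$ is finite (because $u,y\in L^2_\loc$), the same estimate immediately shows that finiteness of $\avSt(t_0,x(t_0))$ forces finiteness of $\avSt(t_1,x(t_1))$, which is the ``in particular'' statement.

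The only genuinely delicate points are the rigorous justification of the concatenation---namely that gluing $u$ and $\hat u$ at $t_1$ produces a bona fide $L^2_\loc$ solution whose two pieces are the prescribed trajectories, which rests on the uniqueness part of the Carath\'eodory theory quoted after \eqref{eq:tv_system}---and the handling of the supremum manipulation when infinite values may occur. The hypothesis $x(t_{-1})=0$ is not strictly required for the inequality itself (which is pure dynamic programming), but it records that both $x(t_0)$ and $x(t_1)$ are reachable from the zero state, the setting in which this finiteness conclusion is later combined with nonnegativity of supply to build a storage function on the reachable set.
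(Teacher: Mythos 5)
Your proof is correct and takes essentially the same approach as the paper: both arguments concatenate the given input on $[t_0,t_1]$ with an arbitrary admissible continuation from $x(t_1)$ (justified by Carath\'eodory uniqueness), split the supply integral at $t_1$, and identify the supremum over continuations with $\avSt\pset[\big]{t_1,x(t_1)}$, then settle finiteness from the finiteness of the supply on $[t_0,t_1]$. The only cosmetic difference is that the paper restricts the supremum defining $\avSt\pset[\big]{t_0,x(t_0)}$ to inputs agreeing with $u$ on $[t_0,t_1]$ from the outset rather than fixing one continuation and taking the supremum at the end, and your side remark that the hypothesis $x(t_{-1})=0$ is never used in the inequality is consistent with the paper's proof, which likewise does not invoke it.
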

\begin{proof}
    Let $x_0=x(t_0)$, let $x_1= x(t_1)$, and let us consider the restricted input space
    \[
    \mathcal U = \set{ \wt u\in L^2_\loc(\timeInt,\C^m) \mid \wt u(t)=u(t)\text{ for a.e.~}t\in[t_0,t_1] }.
    \]
    If we denote by $(\wt x,\wt u,\wt y)$ any state-input-output solution with $\wt x(t_0)=x_0$ and $\wt u\in\mathcal U$, then we observe that $(x,u,y)=(\wt x,\wt u,\wt y)$ a.e.~on $[t_0,t_1]$, in particular $(\wt x,\wt y)$ are alternatively determined by $\wt x(t_1)=x_1$.
    Furthermore, for $t>t_1$, the output $\wt y(t)$  depends only on the values of $\wt u(t)$ for $t>t_1$. Then we have that
    \begin{align*}
        V_a\pset[\big]{t_0,x(t_0)} &= \sup_{\avStArg[t_0][t_2][x_0][\wt x][\wt u][\wt y]{}}\pset*{ -\int_{t_0}^{t_2}\realPart(\ct{\wt y(t)}\wt u(t))\td t } \\
        &\geq \sup_{\substack{u\in\mathcal U,\ t_2\geq t_1,\\ \wt x(t_0)=x_0}}\pset*{ -\int_{t_0}^{t_2}\realPart(\ct{\wt y(t)}\wt u(t))\td t } \\
        &= -\int_{t_0}^{t_1}\realPart(\ct{y(t)}u(t))\td t + \sup_{\avStArg[t_1][t_2][x_1][\wt x][\wt u][\wt y]{}}\pset*{ -\int_{t_0}^{t_2}\realPart(\ct{\wt y(t)}\wt u(t))\td t } \\
        &\leq -\int_{t_0}^{t_1}\realPart  \pset[\big]{ \ct{y(s)}u(s)}\td s + \avSt\pset[\big]{t_1,x(t_1)},
    \end{align*}
    which implies \eqref{eq:availableStorageIneq}.
    Suppose now that $V_a(t_0,x(t_0))$ is finite. Since the supply is finite, we immediately deduce from \eqref{eq:availableStorageIneq} that $V_a(t_1,x(t_1))$ is also finite.
\end{proof}

\begin{lemma}\label{lem:NN_implies_zeroAS}
    If the LTV system \eqref{eq:tv_system} has nonnegative supply \refNN{}, then its available storage satisfies $V_a(t,0)=0$ for all $t\in\timeInt$.
\end{lemma}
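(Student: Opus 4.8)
The plan is to read off the statement directly from the definition of available storage in \Cref{def:avstor}, combined with the hypothesis \refNN. The key observation is that $\avSt(t,0)$ is a supremum over precisely the trajectories to which the nonnegative-supply property refers, so the two notions match up without any adjustment.

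First I would record the trivial lower bound $\avSt(t,0)\geq 0$, which holds unconditionally: choosing $t_1=t$ in the supremum \eqref{eq:availableStorage} makes the integral over the degenerate interval $[t,t]$ vanish, so $0$ belongs to the set over which we take the supremum. This is the same remark made immediately after \Cref{def:avstor}, and it does not use \refNN.

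Next I would establish the reverse bound $\avSt(t,0)\leq 0$ using \refNN. Fix $t\in\timeInt$. Each competitor in the supremum defining $\avSt(t,0)$ is a pair $(t_1,u)$ with $t_1\geq t$, together with the state-input-output solution $(x,u,y)$ uniquely determined by $u$ and the initial condition $x(t)=0$. Since this solution satisfies $x(t)=0$, the hypothesis \refNN{} applies verbatim with $t_0=t$, giving $\int_t^{t_1}\realPart\pset[\big]{\ct{y(s)}u(s)}\td s\geq 0$, hence $-\int_t^{t_1}\realPart\pset[\big]{\ct{y(s)}u(s)}\td s\leq 0$. As the value $0$ is an upper bound for every admissible competitor, passing to the supremum yields $\avSt(t,0)\leq 0$.

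Combining the two inequalities gives $\avSt(t,0)=0$ for all $t\in\timeInt$, as claimed. I do not expect any genuine obstacle here, as the argument is essentially a matching of definitions; the only point requiring care is to confirm that the trajectories ranged over in \eqref{eq:availableStorage} with $x_0=0$ are exactly those trajectories emanating from the zero state to which \refNN{} applies, so that the hypothesis can be invoked directly and the supremum inherits the pointwise bound $0$.
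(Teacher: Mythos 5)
Your proof is correct and is essentially identical to the paper's: both arguments combine the unconditional lower bound $\avSt(t,0)\geq 0$ (from the degenerate choice $t_1=t$) with the upper bound $\avSt(t,0)\leq 0$ obtained by applying \refNN{} to every trajectory with $x(t)=0$ appearing in the supremum \eqref{eq:availableStorage}. The paper merely compresses these two bounds into the single chain of inequalities $0\leq\avSt(t_0,0)=-\inf(\cdot)\leq 0$.
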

\begin{proof}
    Since the system has nonnegative supply, we have
    \[
    0 \leq \avSt(t_0,0) = -\inf_{\substack{(t,u)\,:\,t\geq t_0, \\ x(t_0)=0}}\int_{t_0}^{t}\realPart\pset[\big]{\ct{y(s)}u(s)}\td s \leq 0,
    \]
    implying that $\avSt(t_0,0)=0$ for all $t_0\in\timeInt$.
\end{proof}

\noindent We then have the following corollary.
\begin{corollary}\label{cor:NN_to_Pa}
    Consider an LTV system of the form \eqref{eq:tv_system}. If the system has a nonnegative supply \refNN{} and is completely reachable, then the system is passive \refPa{}.
\end{corollary}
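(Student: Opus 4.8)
The plan is to reduce passivity to the finiteness of the available storage function and then to exploit complete reachability together with the two preceding lemmas. By \Cref{thm:availableStorage}, the system is passive \refPa{} if and only if the available storage function $\avSt$ is finite, i.e.\ $\avSt(t_0,x_0)<+\infty$ for all $(t_0,x_0)\in\timeInt\times\C^n$. So the whole task is to establish this finiteness at every point, and the two hypotheses (\refNN{} and complete reachability) are exactly what is needed to do so: nonnegativity pins down a finite anchor value at the zero state, and reachability lets us transport that finiteness to an arbitrary state.

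Concretely, I would fix an arbitrary $(t_0,x_0)\in\timeInt\times\C^n$ and argue that $\avSt(t_0,x_0)<+\infty$. Since the system is completely reachable, in particular it is reachable at time $t_0$, so by \Cref{def:reachable} there exist a state-input-output solution $(x,u,y)$ of \eqref{eq:tv_system} and a time $t_{-1}\in\timeInt$ with $t_{-1}\le t_0$ such that $x(t_{-1})=0$ and $x(t_0)=x_0$. By \Cref{lem:NN_implies_zeroAS}, the \refNN{} assumption gives $\avSt(t_{-1},0)=0$, which is in particular finite.

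I would then apply \Cref{lem:availableStorageIneq} along this trajectory, taking the three times in the lemma to be $t_{-1}\le t_{-1}\le t_0$ (so that the lemma's middle time is $t_{-1}$, where the state is $0$, and its final time is $t_0$). Since $x(t_{-1})=0$ and $\avSt\pset[\big]{t_{-1},x(t_{-1})}=\avSt(t_{-1},0)=0$ is finite, the ``in particular'' clause of \Cref{lem:availableStorageIneq} yields that $\avSt\pset[\big]{t_0,x(t_0)}=\avSt(t_0,x_0)$ is finite as well. As $(t_0,x_0)$ was arbitrary, the available storage is finite everywhere, and passivity follows from the equivalence of items (i) and (iii) in \Cref{thm:availableStorage}.

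I do not expect a genuine obstacle here, since the substantive analytic content has already been isolated in \Cref{lem:availableStorageIneq} and \Cref{lem:NN_implies_zeroAS}; the corollary is essentially a bookkeeping combination of the two. The only point requiring care is the alignment of time indices when invoking \Cref{lem:availableStorageIneq} — one must recognize that complete reachability supplies precisely a trajectory with $x(t_{-1})=0$ reaching the prescribed state $x_0$ at $t_0$, and that the finiteness of $\avSt$ at the zero state propagates forward in time along that trajectory. If one wished, one could even extract a quantitative bound, namely $\avSt(t_0,x_0)\le\int_{t_{-1}}^{t_0}\realPart\pset[\big]{\ct{y(s)}u(s)}\,\td s$, but this is not needed for the statement.
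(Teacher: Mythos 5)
Your proof is correct and follows essentially the same route as the paper: complete reachability supplies a trajectory from $(t_{-1},0)$ to $(t_0,x_0)$, \Cref{lem:NN_implies_zeroAS} gives $\avSt(t_{-1},0)=0$, \Cref{lem:availableStorageIneq} propagates finiteness forward to $\avSt(t_0,x_0)$, and \Cref{thm:availableStorage} converts finiteness of the available storage into passivity. Your explicit alignment of the lemma's three times as $t_{-1}\le t_{-1}\le t_0$ and the quantitative bound $\avSt(t_0,x_0)\le\int_{t_{-1}}^{t_0}\realPart\pset[\big]{\ct{y(s)}u(s)}\td s$ are just slightly more detailed renderings of what the paper states implicitly.
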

\begin{proof}
    Since the system is completely reachable, every $(t_0,x_0)\in\timeInt\times\C^n$ is reachable from some $(t_{-1},0)\in\timeInt\times\C^n$ with $t_{-1}\leq t$. 
    Due to \Cref{lem:NN_implies_zeroAS}, we have $\avSt(t_{-1},0)=0$, thus $\avSt(t_0,x_0)$ is finite, because of \Cref{lem:availableStorageIneq}.
    Then by \Cref{thm:availableStorage} the system is passive.
\end{proof}

\noindent Note that without complete reachability the nonnegativity of the Popov operator is not sufficient for passivity, see \cite[Example 5]{CheGH23} for a linear time-invariant example.

One may wonder whether it was really necessary to restrict the choice of $\timeInt$ to open intervals, since in control one often considers time intervals of the form $[0,T]$ or $[0,+\infty)$. 
While many properties (e.g.~the existence of solutions) still hold when we replace an open interval $\timeInt$ with its closure, some of our results are lost.
The following example presents a passive LTV system defined on an open interval $\timeInt$ that is not passive when extended to $\overline{\timeInt}$.
\begin{example}
    Consider the LTV system
    \[
    \dot x(t) = tu(t), \qquad y(t) = x(t),
    \]
    for $t\in\timeInt=(0,\infty)$ and $x\in W^{1,1}_\loc(\timeInt,\C)$. This system has continuously differentiable coefficients and is \refPH{} with $\mathcal H(t,x)=\frac{1}{2}\ct{x}Q(t)x$ and $Q(t)=\frac{1}{t}$, $Q \in W^{1,1}_\loc(\timeInt,\posDef[1])$. By \Cref{thm:ph2All} we also have properties \refKYP{}, \refPa{} and \refNN{}.
 
Now consider the extended interval $\overline \timeInt =[0, \infty)$.
    Since the coefficients of the original system are continuously differentiable in $\overline\timeInt$, it is clear that the system is well-posed and for every $x_0\in\C$ and any admissible input $u\in L^2(\overline{\timeInt},\C)$, there exists a unique solution $x(t) = x(t_0) + \int_{t_0}^t s u(s) \td s$ such that $x(t_0)=x_0$, $t_0 \in \overline{\timeInt}$.
    Furthermore, the system has nonnegative supply \refNN{}, since 
    \begin{align*}
        &2 \int_{t_0}^{t_1}\realPart\pset[\big]{ \ct{y(t)}u(t) }\td t 
        =2 \int_{t_0}^{t_1}\realPart\pset[\Bigg]{ \ct{\pset[\bigg]{\int_{t_0}^t s u(s) \td s}}u(s) }\td t\\
        &\qquad= \int_{t_0}^{t_1}\int_{t_0}^t s\cc{u(s)}u(t)\td s\td t + \int_{t_0}^{t_1}\int_t^{t_1} t\cc{u(s)}u(t)\td s\td t = \int_{t_0}^{t_1}\int_{t_0}^{t_1}\min(s,t)\cc{u(s)}u(t)\td s\td t \\
        &\qquad= \int_{t_0}^{t_1}\int_{t_0}^{t_1}\int_0^{t_1}\heavi(s-w)\heavi(t-w)\td w\,\cc{u(s)}u(t)\td s\td t \\
        &\qquad= \int_0^{t_1}\cc{\int_{t_0}^{t_1}u(s)\heavi(s-w)\td s}\int_{t_0}^{t_1}u(t)\heavi(t-w)\td t\,\td w = \int_0^{t_1}\abs*{\int_{t_0}^{t_1}u(t)\heavi(t-w)\td t}^2 \geq 0.
\end{align*}
Here we have used that
\begin{equation*}
        \min(s,t) = \int_0^{t_1}\heavi(s-w)\heavi(t-w)\td w, \quad \text{with } \heavi(t) = \begin{cases}
            1 & \text{if } t \geq 0\\ 0 & \text{otherwise.}
        \end{cases} 
\end{equation*}
Moreover, for all $t_0 \in \overline{\timeInt}$  and all initial states $x_0 \in \C$, the input $u(t) = \frac{3t}{t_0-t_{-1}}x_0$ with $x(t_{-1}) = 0$, $t_{-1} \in \overline{\timeInt} $ and $t_{-1} \leq t_0$ drives $(t_{-1},0)$ to $ (t_0,x_0)$. Therefore, this system is completely reachable.
    However, the system is not passive, because the available storage satisfies $\avSt(0,x_0)=+\infty$ for all $x_0\in\C\setminus\{0\}$.
This results from the fact that for every input of the form $u(t)=-\varepsilon^{-\tfrac{3}{2}}x_0\heavi(\varepsilon-t)$ with $\varepsilon\geq 0$ and every $t_1\geq\varepsilon$ it holds that
    \[
    -2\int_{0}^{t_1}\ct{y(t)}u(t)\td t = 2\int_{0}^{\varepsilon}\frac{x_0}{\varepsilon^{\tfrac{3}{2}}}\pset*{\cc x_0-\int_0^t s\frac{\cc x_0}{\varepsilon^{\tfrac{3}{2}}}\td s}\td t = \pset*{\frac{2}{\varepsilon^{\tfrac{1}{2}}} - \frac{1}{3}}\abs{x_0}^2.
    \]
Since for $x_0\neq0$ the supremum for $\varepsilon>0$ of this quantity is $+\infty$, we conclude that $\avSt(0,x_0)=+\infty$ and the system is not passive.
\end{example}

\subsection{The relationship between nonnegative supply and KYP inequality}
Instead of going through \refPa{}, there exists a direct relation between \refNN{} and \refKYP{}.
For $t_0,t_1 \in \timeInt$, $t_0 \leq t_1$ let us consider the map
\[
\Psi_{t_0,t_1} : L^2([t_0, t_1],\C^m) \to W^{1,1}([t_0,t_1],\C^n), \qquad
\Psi_{t_0,t_1} u(t) \coloneqq \int_{t_0}^{t} \Phi(t,s)B(s)u(s)\td s,
\]
which is bounded and linear. By \eqref{eq:state_via_stm} $\Psi$ maps an input signal $u$ to the corresponding solution $x$ with initial condition $x(t_0)=0$, in particular $\dd{}{t}\circ\Psi_{t_0,t_1}=A\circ\Psi_{t_0,t_1}+B$ and $\Psi_{t_0,t_1}(\cdot)(t_0)=0$ hold.
For every $u\in L^2([t_0,t_1],\C^m)$ and $z\in L^2([t_0,t_1],\C^n)$ we have
\begin{align*}
    \aset{\Psi_{t_0,t_1} u,z}_{L^2} &= \int_{t_0}^{t_1} \int_{t_0}^{t} \ct{z(t)}\Phi(t,s)B(s)u(s) \td s\td t\\
    &= \int^{t_1}_{t_0} \int^{t_1}_{s} \ct{\pset*{ \ct{B(s)}\ct{\Phi(t,s)}z(t) }}u(s) \td t\td s\\
    &= \int_{t_0}^{t_1} \ct{\pset*{\ct{B(s)}\int^{t_1}_s \ct{\Phi(t,s)}z(t)\td t}}u(s)\td s.
\end{align*}
Therefore, $\Psi_{t_0,t_1}$ has the adjoint operator
\[
\ct{\Psi}_{t_0,t_1} : L^2([t_0,t_1],\C^n) \to L^2([t_0,t_1],\C^m), \qquad \ct{\Psi}_{t_0,t_1}z(s) \coloneqq \ct{B(s)}\int^{t_1}_{s}\ct{\Phi(t,s)}z(t)\td t
\]
and we have the following lemma.
\begin{lemma}\label{lem:KYP_to_NN}
    Let $t_0,t_1 \in \timeInt$, $t_0 \leq t_1$  and $Q \in W^{1,1}_\loc(\timeInt,\HerMat[n])$. Then it holds that
    \begin{equation}\label{eq:KYP_to_NN}
    \ct{\bmat{\Psi_{t_0,t_1} \\ \mathrm{Id}}}\bmat{-\ct{A}Q-QA-\dot Q & \ct{C}-QB \\ C - \ct{B}Q & D + \ct{D}}\bmat{\Psi_{t_0,t_1} \\ \mathrm{Id}} = \LAMBDA_{t_0,t_1} - \mathbf{R},
    \end{equation}
    where
    \[
    \mathbf R : L^2([t_0,t_1],\C^m) \to L^2([t_0,t_1],\C^m), \qquad \mathbf R u(t) \coloneqq \ct{B(t)}\ct{\Phi(t_1,t)}Q(t_1)\Psi_{t_0,t_1} u(t_1)
    \]
    is a self-adjoint bounded linear operator and $\LAMBDA_{t_0,t_1}$ is as defined in \Cref{def:nonnegativePopov}. If $Q\geq 0$, then also $\mathbf R\geq 0$.
\end{lemma}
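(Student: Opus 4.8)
The plan is to reduce the operator identity \eqref{eq:KYP_to_NN} to a pointwise differential identity along state trajectories and then invoke \Cref{thm:PopovSupply}. Write $T$ for the left-hand operator and set $x\coloneqq\Psi_{t_0,t_1}u$, so that by \eqref{eq:state_via_stm} $x$ is the state trajectory of \eqref{eq:tv_system} with $x(t_0)=0$, with associated output $y=Cx+Du$. I would first establish the pointwise identity
\[
\ct{\bmat{x \\ u}}\bmat{-\ct{A}Q-QA-\dot Q & \ct{C}-QB \\ C-\ct{B}Q & D+\ct{D}}\bmat{x \\ u} = 2\realPart\pset[\big]{\ct{y}u} - \dd{}{t}\pset[\big]{\ct{x}Q x},
\]
valid for a.e.~$t\in[t_0,t_1]$. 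This is a direct computation mirroring the one in the proof of \Cref{thm:KYP_integral}: expanding $\dd{}{t}(\ct{x}Qx)=\ct{x}\dot Q x+2\realPart(\ct{x}Q\dot x)$ and substituting $\dot x=Ax+Bu$ produces the $(1,1)$-block together with the $QB$ cross term, while expanding $2\realPart(\ct{y}u)$ with $y=Cx+Du$ produces the $D+\ct D$ block and the $\ct C$ cross term; collecting terms and using $Q=\ct Q$ yields the claim.

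Integrating over $[t_0,t_1]$ and using $x(t_0)=0$, the boundary contribution of the total-derivative term collapses to $\ct{x(t_1)}Q(t_1)x(t_1)$. Since $\aset{Tu,u}=\aset{\mathcal K[x;u],[x;u]}=\int_{t_0}^{t_1}\ct{[x;u]}\,\mathcal K\,[x;u]\td t$, where $\mathcal K$ denotes the KYP matrix multiplication operator, this gives
\[
\aset{Tu,u} = 2\int_{t_0}^{t_1}\realPart\pset[\big]{\ct{y}u}\td t - \ct{x(t_1)}Q(t_1)x(t_1).
\]
Applying \Cref{thm:PopovSupply} (which is available precisely because $x(t_0)=0$) replaces $2\int_{t_0}^{t_1}\realPart(\ct{y}u)\td t$ by $\aset{\LAMBDA_{t_0,t_1}u,u}$. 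For the remaining term I would identify $\ct{x(t_1)}Q(t_1)x(t_1)=\aset{\mathbf R u,u}$: since $x(t_1)=\Psi_{t_0,t_1}u(t_1)=\int_{t_0}^{t_1}\Phi(t_1,t)B(t)u(t)\td t$, one has $\aset{\mathbf R u,u}=\pset[\big]{\int_{t_0}^{t_1}\ct{u}\ct{B}\ct{\Phi(t_1,\cdot)}\td t}Q(t_1)x(t_1)=\ct{x(t_1)}Q(t_1)x(t_1)$, using that the bracketed integral equals $\ct{x(t_1)}$. This yields $\aset{Tu,u}=\aset{(\LAMBDA_{t_0,t_1}-\mathbf R)u,u}$ for all $u\in L^2([t_0,t_1],\C^m)$, and since both operators are bounded and the scalar field is $\C$, the polarization identity upgrades equality of quadratic forms to the operator identity \eqref{eq:KYP_to_NN}.

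It then remains to record the stated properties of $\mathbf R$. Boundedness (indeed finite rank) follows because $u\mapsto x(t_1)$ is bounded on $L^2$ by Cauchy--Schwarz and $t\mapsto\ct{B(t)}\ct{\Phi(t_1,t)}Q(t_1)$ lies in $L^2([t_0,t_1],\C^{m,n})$. Self-adjointness follows from $\aset{\mathbf R u,v}=\ct{(\Psi_{t_0,t_1}v(t_1))}\,Q(t_1)\,(\Psi_{t_0,t_1}u(t_1))$ together with $Q(t_1)=\ct{Q(t_1)}$, which makes the form symmetric under conjugation. Finally, if $Q\geq 0$ then $Q(t_1)\geq 0$ and $\aset{\mathbf R u,u}=\ct{x(t_1)}Q(t_1)x(t_1)\geq 0$, so $\mathbf R\geq 0$.

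The computations are elementary; the main (though mild) obstacle I anticipate is the regularity justification for applying the fundamental theorem of calculus to $t\mapsto\ct{x(t)}Q(t)x(t)$. Here I would use that $x,Q\in W^{1,1}_\loc$ and $W^{1,1}_\loc\subseteq\mathcal C\subseteq L^\infty_\loc$ on the one-dimensional domain, so that this product again lies in $W^{1,1}_\loc$ with well-defined boundary values, and carefully track the $\realPart$ and the factor $\tfrac12$ versus $2$ so that the matching with \Cref{thm:PopovSupply} is exact.
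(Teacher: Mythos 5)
Your proof is correct, but it takes a genuinely different route from the paper. The paper establishes \eqref{eq:KYP_to_NN} by a direct operator-level computation: it expands the left-hand side into $C\Psi_{t_0,t_1} + \ct{\Psi}_{t_0,t_1}\ct{C} + D + \ct{D}$ (which it identifies with $\LAMBDA_{t_0,t_1}$) minus the remaining $Q$-terms, and then applies integration by parts to $\ct{\Psi}_{t_0,t_1}\dot Q\Psi_{t_0,t_1}$ inside the operator expression to show that those remaining terms collapse exactly to $\mathbf R$. You instead work at the level of quadratic forms along trajectories: the pointwise identity $\ct{[x;u]}\,\mathcal K\,[x;u] = 2\realPart(\ct{y}u) - \dd{}{t}(\ct{x}Qx)$, integration with $x(t_0)=0$, the supply formula of \Cref{thm:PopovSupply}, and the identification $\ct{x(t_1)}Q(t_1)x(t_1)=\aset{\mathbf R u,u}$, followed by polarization over $\C$ to upgrade equality of quadratic forms to the operator identity. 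Both arguments use the same core ingredient (integration by parts, in your case disguised as the fundamental theorem of calculus applied to $\ct{x}Qx$), but your version makes the structural meaning transparent --- the KYP form equals supply minus terminal storage --- and reuses \Cref{thm:PopovSupply} instead of re-deriving the Popov operator from the integral kernels; the price is the final polarization step, which genuinely requires the complex scalar field (or, alternatively, the observation that both sides are self-adjoint, since over $\R$ equality of quadratic forms does not determine a general operator). Two minor points you should make explicit: (i) the composite operator on the left of \eqref{eq:KYP_to_NN} does map into $L^2([t_0,t_1],\C^m)$, since $\ct{\Psi}_{t_0,t_1}$ applied to the $L^1$-valued first block still lands in $L^2$ (the inner integral is bounded because $\stm$ is continuous on the compact square, and $\ct{B}\in L^2$); and (ii) the product rule for $\ct{x}Qx$ with $x,Q\in W^{1,1}_\loc$ holds because both factors are also in $L^\infty_\loc$ on the one-dimensional domain, which you do address correctly.
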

\begin{proof}
    The left-hand side of \eqref{eq:KYP_to_NN} can be written as
    \[
    C\Psi_{t_0,t_1} + \ct{\Psi}_{t_0,t_1}\ct{C} + D + \ct{D} - \ct{\Psi}_{t_0,t_1}\ct{A}Q\Psi_{t_0,t_1} - \ct\Psi_{t_0,t_1} QA\Psi_{t_0,t_1} - \ct\Psi_{t_0,t_1}\dot Q\Psi_{t_0,t_1} - \ct{\Psi}_{t_0,t_1}QB - \ct{B}Q\Psi_{t_0,t_1}.
    \]
    Note that
    \begin{align*}
    & \pset[\big]{ C\Psi_{t_0,t_1} + \ct{\Psi}_{t_0,t_1}\ct{C} + D + \ct{D} }u(t) \\
    &= C(t)\int^t_{t_0}\Phi(t,s)B(s)u(s)\td s + \ct{B(t)}\int^{t_1}_t \ct{\Phi(s,t)}\ct{C(s)}u(s)\td s + \pset[\big]{ D(t) + \ct{D(t)} }u(t) \\
    &= \mathbf Z_{t_0,t_1} u(t)+\ct{\mathbf Z}_{t_0,t_1}u(t) r = \LAMBDA_{t_0,t_1}u(t)
    \end{align*}
    holds for all $u\in L^2([t_0,t_1],\C^m)$ and $t\in[t_0,t_1]$, i.e.,
    $
    C\Psi_{t_0,t_1} + \ct{\Psi}_{t_0,t_1}\ct{C} + D + \ct{D} = \LAMBDA_{t_0,t_1}
    $.
    Furthermore, using integration by parts and \eqref{eq:stateTransMatrix}, i.e. $\dd{}{t}\Phi(t,s)=A(t)\Phi(t,s)$ we obtain
    \begin{align*}
        &\ct{\Psi}_{t_0,t_1}\dot Q\Psi_{t_0,t_1} u(s)
        = \ct{B(s)}\int_s^{t_1}\ct{\Phi(t,s)}\dot Q(t)\Psi_{t_0,t_1} u(t)\td t \\
        &\qquad= \ct{B(s)}\pset*{ \ct{\Phi(t_1,s)}Q(t_1)\Psi_{t_0,t_1} u(t_1) - Q(s)\Psi_{t_0,t_1}u(s)} \\
        &\qquad -\ct{B(s)}\pset*{ \int_s^{t_1}\dd{}{t}\ct{\Phi(t,s)}Q(t)\Psi_{t_0,t_1} u(t)\td t - \int_s^{t_1} \ct{\Phi(t,s)}Q(t)\dd{}{t}\Psi_{t_0,t_1} u(t)\td t}\\
        &\qquad= \mathbf R u(s) - \ct{B}Q\Psi_{t_0,t_1}u(s) - \int_s^{t_1}\ct{\Phi(t,s)}\ct{A(t)}Q(t)\Psi_{t_0,t_1} u(t)\td t - \ct{\Psi}_{t_0,t_1}\big(Q(A\Psi_{t_0,t_1} u+Bu)\big)(s) \\
        &\qquad= \mathbf R u(s) - \ct{B}Q\Psi_{t_0,t_1}u(s) - \ct\Psi_{t_0,t_1}\ct{A}Q\Psi_{t_0,t_1}u(s) - \ct\Psi_{t_0,t_1} QA\Psi_{t_0,t_1}u(s) - \ct\Psi_{t_0,t_1} Q B u(s) \\
        &\qquad= (\mathbf R - \ct{B}Q\Psi_{t_0,t_1} - \ct\Psi_{t_0,t_1} \ct{A}Q\Psi_{t_0,t_1} - \ct\Psi_{t_0,t_1} QA\Psi_{t_0,t_1} - \ct\Psi_{t_0,t_1} QB)u(s)
    \end{align*}
    for all $u\in L^2([t_0,t_1],\C^m)$ and $t\in [t_0, t_1]$, i.e.,
    $
    \ct{\Psi}_{t_0,t_1}\ct{A}Q\Psi_{t_0,t_1} + \ct\Psi_{t_0,t_1} QA\Psi_{t_0,t_1} + \ct\Psi_{t_0,t_1}\dot Q\Psi_{t_0,t_1} + \ct{\Psi}_{t_0,t_1} QB + \ct{B}Q\Psi_{t_0,t_1} = \mathbf R.
    $
    It follows that the identity \eqref{eq:KYP_to_NN} holds.
    Note that the operator $\mathbf R$ is self-adjoint, since
    \begin{align*}
        \aset{\mathbf R u,\wt u}_{L^2}
        &= \int^{t_1}_{t_0} \ct{\wt u(t)}\ct{B(t)}\ct{\Phi(t_1,t)}Q(t_1)\Psi_{t_0,t_1} u (t_1)\td t \\
        &= \ct{\pset*{\int^{t_1}_{t_0} \Phi(t_1,t)B(t)\wt u(t)\td t}}Q(t_1)\Psi_{t_0,t_1} u(t_1) = \ct{\Psi_{t_0,t_1}\wt u(t_1)}Q(t_1)\Psi_{t_0,t_1}u(t_1),
    \end{align*}
    and therefore
    \[
    \aset{u,\mathbf R\wt u}_{L^2}
    = \cc{\aset{\mathbf R\wt u,u}_{L^2}}
    = \cc{\ct{\Psi_{t_0,t_1} u(t_1)}Q(t_1)\Psi_{t_0,t_1} \wt u(t_1)}
    = \ct{\Psi_{t_0,t_1} \wt u(t_1)}Q(t_1)\Psi_{t_0,t_1}u(t_1)
    = \aset{\mathbf R u,\wt u}_{L^2},
    \]
    for all $u,\wt u\in L^2([t_0,t_1],\C^m)$, i.e., $\mathbf R=\ct{\mathbf R}$.
    Finally, if $Q\geq 0$, then in particular
    \[
    \aset{\mathbf R u,u}_{L^2} = \ct{\Psi_{t_0,t_1}u(t_1)}Q(t_1)\Psi_{t_0,t_1}u(t_1) = \lVert\Psi_{t_0,t_1} u(t_1)\rVert_{Q(t_1)} \geq 0
    \]
    holds for all $u\in L^2([t_0,t_1],\C^m)$, i.e., $\mathbf R\geq 0$.
\end{proof}
\noindent By \Cref{thm:ph2All}, we already know that \refKYP{} implies \refNN{}. However,  \Cref{lem:KYP_to_NN} provides an explicit relation between the \refKYP{} and the Popov operator $\LAMBDA_{t_0,t_1}$ and we can directly obtain the following result. 
\begin{corollary}
    Suppose that there exists $Q\in W^{1,1}_\loc(\timeInt,\posSD[n])$ that satisfies the KYP inequality \eqref{eq:KYP}. Then the system has a nonnegative supply \refNN{}.
\end{corollary}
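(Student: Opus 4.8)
The plan is to read the corollary off directly from the operator identity in \Cref{lem:KYP_to_NN} together with the Popov-operator characterization of \refNN{} in \Cref{thm:PopovSupply}. The strategy is to show that the hypothesis forces the Popov operator $\LAMBDA_{t_0,t_1}$ to be positive semi-definite on every compact subinterval, and then invoke \Cref{thm:PopovSupply}.

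First I would fix $t_0,t_1\in\timeInt$ with $t_0\leq t_1$ and introduce the KYP coefficient matrix
\[
M_Q(t) \coloneqq \bmat{-\ct{A}Q-QA-\dot Q & \ct{C}-QB \\ C-\ct{B}Q & D+\ct{D}}(t).
\]
Because $Q$ solves \eqref{eq:KYP}, we have $M_Q(t)\in\posSD[n+m]$ for a.e.~$t\in\timeInt$. The main observation is that the left-hand side of \eqref{eq:KYP_to_NN} is a positive semi-definite operator: for any $u\in L^2([t_0,t_1],\C^m)$, writing $x=\Psi_{t_0,t_1}u$ for the state trajectory with $x(t_0)=0$, one has $\bmat{x \\ u}=\bmat{\Psi_{t_0,t_1} \\ \mathrm{Id}}u$, and hence
\[
\aset*{\ct{\bmat{\Psi_{t_0,t_1} \\ \mathrm{Id}}}M_Q\bmat{\Psi_{t_0,t_1} \\ \mathrm{Id}}u,\,u}_{L^2}
= \int_{t_0}^{t_1}\ct{\bmat{x(t) \\ u(t)}}M_Q(t)\bmat{x(t) \\ u(t)}\td t \geq 0,
\]
since the integrand is pointwise nonnegative.

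Next I would combine this with the decomposition provided by \eqref{eq:KYP_to_NN}, namely $\LAMBDA_{t_0,t_1}=\ct{\bmat{\Psi_{t_0,t_1} \\ \mathrm{Id}}}M_Q\bmat{\Psi_{t_0,t_1} \\ \mathrm{Id}}+\mathbf R$. Since $Q\in\posSD[n]$ pointwise, \Cref{lem:KYP_to_NN} guarantees $\mathbf R\geq 0$. Thus $\LAMBDA_{t_0,t_1}$ is a sum of two positive semi-definite self-adjoint operators, so $\LAMBDA_{t_0,t_1}\geq 0$. As $t_0,t_1$ were arbitrary, the Popov operator is positive semi-definite on every compact subinterval, and \Cref{thm:PopovSupply} then yields that the system has nonnegative supply \refNN{}, completing the argument.

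I do not expect any genuine obstacle here: the entire analytic content has already been absorbed into \Cref{lem:KYP_to_NN} and \Cref{thm:PopovSupply}. The only point requiring a moment of care is the identification of the sandwiched operator as positive semi-definite, which reduces cleanly to the a.e.~pointwise inequality $M_Q(t)\geq 0$ afforded by \eqref{eq:KYP}.
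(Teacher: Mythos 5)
Your proof is correct and follows exactly the paper's route: the paper likewise applies \Cref{lem:KYP_to_NN} to write $\LAMBDA_{t_0,t_1}$ as the operator sandwich of the KYP matrix by $\bmat{\Psi_{t_0,t_1} \\ \mathrm{Id}}$ plus the nonnegative remainder $\mathbf{R}$, concludes $\LAMBDA_{t_0,t_1}\geq 0$, and invokes the Popov-operator characterization of \refNN{}. The only difference is that you spell out the pointwise-to-operator positivity step (identifying the sandwiched quadratic form with $\int_{t_0}^{t_1}\ct{\bmat{x \\ u}}M_Q\bmat{x \\ u}\td t$), which the paper leaves implicit.
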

\begin{proof}
    Because of \Cref{lem:KYP_to_NN}, we have
    \[
    \LAMBDA_{t_0,t_1} = \ct{\bmat{\Psi_{t_0,t_1} \\ \mathrm{Id}}}\bmat{-\ct{A}Q-Q\ct{A}-\dot Q & \ct{C} -QB\\ C - \ct{B}Q & D + \ct{D}}\bmat{\Psi_{t_0,t_1} \\ \mathrm{Id}} + \mathbf R \geq 0. \qedhere
    \]
    By \Cref{thm:PopovSupply}, if $\LAMBDA_{t_0,t_1}$ is positive semidefinite, the system fulfills \refNN{}.
\end{proof}
\begin{remark}
    In this section, it was shown that complete reachability is a sufficient condition to obtain a \refPH{} realization from a given LTV system \eqref{eq:tv_system} with \refNN{} supply and $D+\ct{D}\geq c I_m$. Thus, in order to obtain a \refPH{} realization, one has to extract a completely reachable realization using the global Kalman canonical decomposition \cite{JikH14}. If the extracted realization (using the same approach) is also completely reconstructable, then $Q$ in \refKYP{} and \refPH{} is also invertible.
\end{remark}
\section{Applications}\label{sec:applications}
This section considers examples of physical systems that naturally lead to LTV~systems. Furthermore, it is shown that these systems can be formulated as time-varying \refPH{} systems, and therefore they are also passive \refPa{} and have nonnegative supply \refNN{}.  

\subsection{Motion of time-varying masses}
Motivated by the so-called ``rocket problems'', where the mass of objects in motion decreases with time due to fuel consumption, we revisit an example from \cite{PlaM92}. 

The equations of motion of the time-varying mass can be described by 
\begin{align}
    \label{eq:newton}
\dot v m+v_e\dot m=F_{\mathrm{ext}}
\end{align}
where $m$ and $v$ are the total mass and velocity of the rocket,
$v_e$ is the effective exhaust velocity of the fuel (often assumed to be constant), and $F_{\mathrm{ext}}$ combines the external forces, e.g.\ gravitational force or drag force. 

Assume that the rocket movement is one-dimensional with $z$ denoting the height, then $v=\dot z$ and we consider the momentum $p=m\dot z$ whose evolution is described by
\[
\dot p=\dot m v+m\dot v=\dot m v-v_e\dot m+F_{\mathrm{ext}}=\tfrac{\dot m}{m} p-v_e\dot m+F_{\mathrm{ext}},
\]
where we used \eqref{eq:newton}.
If we assume that the total mass of the rocket $m$ is controlled independently, weakly decreasing in time but always positive, and by using the trivial equation $\dot z=\tfrac{p}{m}$, we obtain the following LTV system.
\begin{align*}
\begin{bmatrix}
    \dot z(t)\\
    \dot p(t)
\end{bmatrix}
=\begin{bmatrix} 0& \tfrac{1}{m(t)}  \\  0& \tfrac{\dot m(t)}{m(t)}
\end{bmatrix}\begin{bmatrix}
    z(t)\\ p(t)
\end{bmatrix} + \begin{bmatrix}
    0&0\\ -\dot m(t)&1
\end{bmatrix} \begin{bmatrix}
    v_e(t)\\ F_{\mathrm{ext}}(t)
\end{bmatrix}.
\end{align*}
Letting the Hamiltonian represent the kinetic energy leads to
\[
\mathcal H(t,z,p)=\frac{p^2}{2m(t)}=\frac12\begin{bmatrix}
    z\\ p
\end{bmatrix}^\top\begin{bmatrix}0 &0\\ 0& \tfrac{1}{m(t)}
\end{bmatrix}\begin{bmatrix}
    z\\ p
\end{bmatrix},\quad Q(t):=\begin{bmatrix}0 &0\\ 0& \tfrac{1}{m(t)}
\end{bmatrix}.
\]
Using the null space decomposition and \Cref{thm:pH_decoupled}, we obtain
\[
K(t)=\frac12 Q^{-1}(t)\dot Q(t)=\frac12\begin{bmatrix}
  0&0\\0&-\tfrac{\dot m(t)}{m(t)}
\end{bmatrix},
\] 
which leads to the pH formulation
\begin{align*}
\begin{split}
\begin{bmatrix}
    \dot z(t)\\ \dot p(t)
\end{bmatrix}&=\Bigg(\Bigg(\underbrace{\begin{bmatrix}
    0&1\\-1&0
\end{bmatrix}}_{=:J(t)}-\underbrace{\tfrac{1}{2}\begin{bmatrix}
    0&0\\ 0 & -\dot m(t)
\end{bmatrix}}_{=:R(t)}\Bigg)Q(t)-K(t)\Bigg)\begin{bmatrix}
    z(t)\\ p(t)
\end{bmatrix} + \underbrace{\begin{bmatrix}
    0&0\\ -\dot m(t) & 1
\end{bmatrix}}_{=:G(t)}\underbrace{\begin{bmatrix}
    v_e(t)\\ F_{\mathrm{ext}}(t)
\end{bmatrix}}_{=:u(t)},\\ y(t)&=G(t)^\top Q(t)\begin{bmatrix}
    z(t)\\ p(t)
\end{bmatrix}=\begin{bmatrix}0&-\tfrac{\dot m(t)}{m(t)}\\ 0&\tfrac{1}{m(t)}\end{bmatrix}\begin{bmatrix}
    z(t)\\ p(t)
\end{bmatrix}=\begin{bmatrix}-\dot m(t)v(t)\\ v(t)\end{bmatrix},
\end{split}
\end{align*}
where $P$, $S$, and $N$ in \eqref{def:pH} are set to zero and the output $y$ is defined in a collocated way. 
In order to integrate gravitational effects into the model, the external force can be substituted by $F_\mathrm{ext} =-mg$. Alternatively, this force could be incorporated into the Hamiltonian by including an additional term $mgz$. However, this modification makes the Hamiltonian a quadratic polynomial unbounded from below, instead of a positive semidefinite quadratic form, rendering the system non-port-Hamiltonian in the classical linear framework. Despite this, such Hamiltonians can be analyzed within the framework of cyclo-passive systems \cite{HilM80}. 

\subsection{District heating networks with stratified storage models}  
Linear time-varying pH systems appear in the context of district heating systems that contain more detailed (stratified) storage tank models~\cite{MacFCS22,RosGMS24}. In this model, water storage is subdivided into a hot and a cold layer having time-varying volumes $V_h$ and $V_c$ and it is assumed that within each of these volumes, the temperature is homogeneous and given by $T_h$ and $T_c$, respectively. Furthermore, mass flows $q_p(t)>0$ and $q_d(t)>0$ are given for all $t\in\timeInt$ on the production and demand side that satisfy $q_p,q_d\in L^1_{\mathrm{loc}}([0,\infty),\R)$. 
The operation of the system can be described in the following way: Heat is supplied from the hot layer through a mass flow $q_d$ with temperature $T_h$ and there is a return mass flow $q_d$ with temperature $T_{\textrm{in},d}$ into the cold layer. Furthermore, there is a mass flow $q_p$ leaving the cold layer with temperature $T_c$ that is heated by a producer, resulting in a temperature $T_{\textrm{out},p}$ that enters the hot layer of the tank. In summary, we obtain the following set of equations 
\begin{align}
\label{heat1}
\tfrac{\rm d}{{\rm d}t}(V_h(t)T_h(t))&=q_{p}(t)T_{\textrm{in},p}(t)-q_{d}(t)T_h(t),\\ \label{heat2}
\tfrac{\rm d}{{\rm d}t}(V_c(t)T_c(t))&=q_{d}(t)T_{\textrm{in},d}(t)-q_{p}(t)T_c(t),\\ \label{heat3}
\tfrac{\rm d}{{\rm d}t}V_h(t)&=q_p(t)-q_d(t).
\end{align}
Since $q_p$ and $q_d$ are fixed, the volume of the hot layer is given by
\[
V_h(t)=V_{h,0}+\int_0^t\pset[\big]{q_{d}(\tau)-q_{s}(\tau)} \mathrm{d}\tau,\quad  V_c(t)=V_s-V_h(t),\quad \mbox{\rm for all }t\in\timeInt,
\]
where $V_s$ is the total mass of the storage, and we assume that $V_h$ and $V_c$ are uniformly positive. Hence, we can neglect \eqref{heat3} and the remaining system \eqref{heat1} and \eqref{heat2} is of the form \eqref{eq:tv_system} with
\[
A(t)=\begin{bmatrix}
    -\frac{q_d(t)}{V_h(t)}&0\\0& -\frac{q_p(t)}{V_c(t)} 
\end{bmatrix},\quad B(t)=\begin{bmatrix}
    q_p(t)&0\\ 0&q_d(t)
\end{bmatrix},\quad x=\begin{bmatrix}
    V_hT_h\\V_cT_c
\end{bmatrix},\quad  u=\begin{bmatrix}
    T_{\textrm{in},p}\\ T_{\textrm{in},d}
\end{bmatrix}.
\]
The energy of the system is described by the following Hamiltonian 
\[
\mathcal{H}(t,T_c,T_h)=\tfrac12\begin{bmatrix}
    T_h\\ T_c
\end{bmatrix}^\top Q(t)\begin{bmatrix}
    T_h\\ T_c
\end{bmatrix},\quad Q(t):=\begin{bmatrix}\frac{1}{V_h(t)} &0\\ 0&\frac{1}{V_c(t)}
\end{bmatrix}.
\]
Furthermore, $Q$ solves the KYP inequality
\begin{align*}
-\ct{A}Q-Q\ct{A}-\dot Q&=2\begin{bmatrix}
    \frac{q_d}{V_h^2}&0\\0&\frac{q_p}{V_c^2}
\end{bmatrix} +\begin{bmatrix}\frac{\dot V_h}{V_h^2}&0\\0&\frac{\dot V_c}{V_c^{2}}\end{bmatrix}\\&=2\begin{bmatrix}
    \frac{q_d}{V_h^{2}}&0\\0& \frac{q_p}{V_c^{2}}
\end{bmatrix} +\begin{bmatrix}\frac{q_p-q_d}{V_h^2}&0\\0&\frac{q_d-q_p}{V_c^2}\end{bmatrix}
\\ &=\begin{bmatrix}
    \frac{q_d+q_c}{V_h^2}&0\\&\frac{q_d+q_c}{V_c^2}
\end{bmatrix}\geq 0.
\end{align*}
Since $Q$ is pointwise invertible, the LTV pH system is given by setting 
\[
K(t)=\tfrac12 Q^{-1}(t)\dot Q(t)=-\tfrac12\begin{bmatrix}
\frac{q_p(t)-q_d(t)}{V_h(t)}&0\\0&\frac{q_d(t)-q_p(t)}{V_c(t)}
\end{bmatrix}.
\]
Then the pH system representation~\eqref{def:pH} is given by 
\begin{align*}
\begin{bmatrix}
    \dot x_1\\
    \dot x_2
\end{bmatrix}&=\Bigg(-\underbrace{\frac{1}{2}\begin{bmatrix}
   q_d+q_p
   &0\\0&q_d+q_p
\end{bmatrix}}_{=:R(t)}\underbrace{\begin{bmatrix}
    \frac{1}{V_h}&0\\0&\frac{1}{V_c}
\end{bmatrix}}_{=Q}-K\Bigg)\begin{bmatrix}
    x_1\\ x_2
\end{bmatrix}+\underbrace{\begin{bmatrix}
    q_p&0\\0& q_d
\end{bmatrix}}_{=G}\underbrace{\begin{bmatrix}
    T_{\textrm{in},p}\\ T_{\textrm{in},d}
\end{bmatrix}}_{=u}\\
y&=\ct{G}Qx=\begin{bmatrix}
    q_pT_h\\q_dT_c
\end{bmatrix},\quad \begin{bmatrix}
    x_1\\ x_2
\end{bmatrix}=\begin{bmatrix}
    V_hT_h\\V_cT_c
\end{bmatrix},
\end{align*}
where the matrices $P$, $J$, $S$, and $N$ in \eqref{def:pH} are set to zero and the output $y$ is defined in the collocated way.

\section{Conclusion}\label{sec:conclusions}
This paper presents a modified definition of linear time-varying (LTV) port-Hamiltonian systems \refPH{} and studies the relation to passivity \refPa{}, the existence of solutions of the Kalman-Yakubovich-Popov inequality \refKYP{} and the available storage to be finite. In addition, the positive real property for time-invariant systems is generalized to time-varying systems by studying the nonnegativity property \refNN{}. The necessary and sufficient conditions obtained between all these properties are summarized in Figure~\ref{fig:overvieweinvertibleThm}.

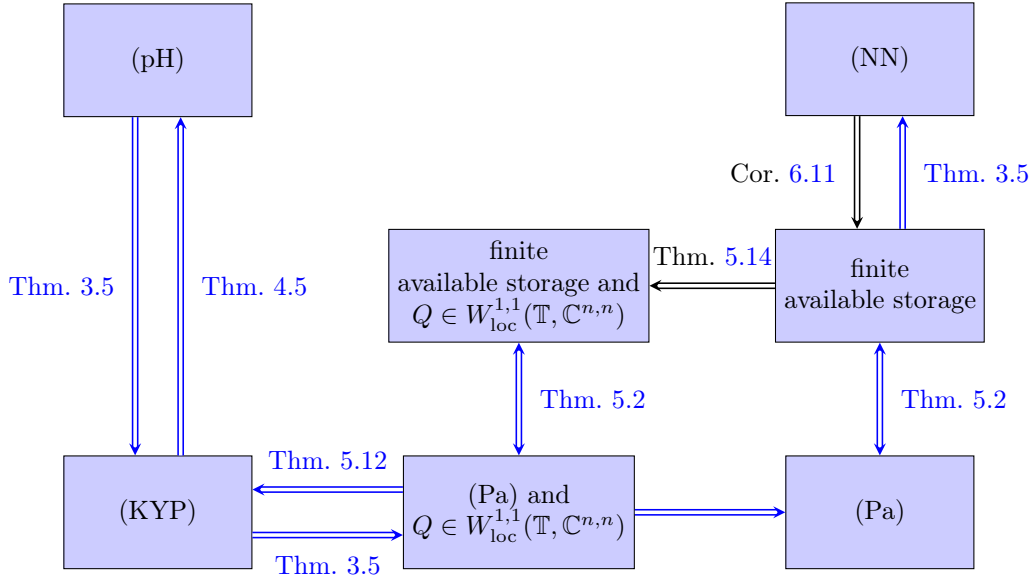
\begin{figure}[htbp!]
    \centering
        \begin{tikzpicture}
        \node[block] (a) {(pH)};
        \node[block, below =4.5cm of a]   (b){(KYP)};
        \node[block, right =2cm of b]   (bc){
        (Pa) and \\$Q\in W^{1,1}_{\mathrm{loc}}(\timeInt,\mathbb C^{n,n})$};
        \node[block, right =2cm of bc]   (c){(Pa)};
        \node[block, above =1.5cm of bc]   (e){finite\\ available storage and \\$Q\in W^{1,1}_{\mathrm{loc}}(\mathbb T,\mathbb C^{n,n})$};
       \node[block, above =1.5 cm of c]   (f){finite\\ available storage};
        \node[block, above =1.5cm of f]   (d){(NN)};
        
        \draw[->,semithick,double,double equal sign distance,>=stealth, color=blue] ([xshift=-2ex]a.south) -- ([xshift=-2ex]b.north) node[midway,left = 1 ex]{ Thm.~\ref{thm:ph2All} };
        \draw[->,semithick,double,double equal sign distance,>=stealth, color=blue] ([xshift=2ex]b.north) -- ([xshift=2ex]a.south) node[midway,right = 1 ex]{Thm.~\ref{thm:KYP_to_PH}};

        \draw[->,semithick,double,double equal sign distance,>=stealth, color=blue] ([yshift=-2ex]b.east) -- ([yshift=-2ex]bc.west) node[midway,below = 1 ex]{Thm.~\ref{thm:ph2All}};
        \draw[->,semithick,double,double equal sign distance,>=stealth, color=blue] ([yshift=2ex]bc.west) -- ([yshift=2ex]b.east) node[midway,above = 1 ex]{Thm.~\ref{thm:AC_Pa_to_KYP}};

        \draw[<->,semithick,double,double equal sign distance,>=stealth, color=blue] ([xshift=0ex]e.south) -- ([xshift=0ex]bc.north) node[midway,right = 1 ex]{Thm.~\ref{thm:availableStorage} };
        
        \draw[->,semithick,double,double equal sign distance,>=stealth, color=blue] ([yshift=0ex]bc.east) -- ([yshift=0ex]c.west) node[midway,above = 1 ex]{};

        \draw[->,semithick,double,double equal sign distance,>=stealth, color=black] ([yshift=0ex]f.west) -- ([yshift=0ex]e.east) node[midway,above = 1 ex]{Thm.~\ref{thm:Pa_to_KYP}};

        \draw[<->,semithick,double,double equal sign distance,>=stealth, color=blue]([xshift=0ex]c.north) -- ([xshift=0ex]f.south) node[midway,right = 1 ex]{Thm.~\ref{thm:availableStorage}};
        
        \draw[->,semithick,double,double equal sign distance,>=stealth, color=blue]([xshift=2ex]f.north) -- ([xshift=2ex]d.south) node[midway,right = 1 ex]{Thm.~\ref{thm:ph2All}};
        \draw[->,semithick,double,double equal sign distance,>=stealth,color=black] ([xshift=-2ex]d.south) -- ([xshift= -2ex]f.north) node[midway,left = 1 ex]{Cor.~\ref{cor:NN_to_Pa} };
          \end{tikzpicture}
    \caption{The relationship between \refPH, \refKYP, \refPa{} and \refNN{} for linear time-varying system \eqref{eq:tv_system}. The blue color marks implications without additional assumptions and the black color sufficient implications with additional assumptions. Counterexamples can be found in \cite{CheGH23}. 
    }
    \label{fig:overvieweinvertibleThm}
\end{figure}  

 Compared to the linear-time invariant (LTI) case, the relation between \refPa{} and \refKYP{} is more involved because \refPa{} does not guarantee the existence of a weakly differentiable solution to \refKYP{}. 
Finally, we presented some examples of systems with time-varying masses that can be formulated as LTV pH systems. The presented linear time-varying pH framework can also be used for structure-preserving local linearization of nonlinear pH systems into linear time-varying pH systems or to formulate switched pH systems. Future work will include the extension to descriptor systems as well as scattering passive systems.

\subsection*{Declaration of competing interest}

The authors declare that they have no known competing financial interests or personal relationships that could have appeared to influence the work reported in this paper. 


\bibliographystyle{abbrv}
\bibliography{sample.bib}

\pagebreak

\appendix

\section{Technical lemmas}

In the paper, we often refer to the following generalization of the Hölder inequality.

\begin{theorem}[Generalized Hölder inequality]\label{thm:genHolder}
    Let $F\in L^p(\timeInt,\C^{l,m})$ and $G\in L^q(\timeInt,\C^{m,n})$, where $1\leq p,q,r\leq\infty$ and $\frac{1}{p}+\frac{1}{q}=\frac{1}{r}$. Then $FG\in L^r(\timeInt,\C^{l,n})$ with
    $\norm{FG}_{L^r} \leq \norm{F}_{L^p}\norm{G}_{L^q}$.
\end{theorem}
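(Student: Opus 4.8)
The plan is to reduce the matrix-valued statement to the classical scalar Hölder inequality by exploiting the pointwise submultiplicativity of the induced $2$-norm. First I would recall the scalar version: for measurable $\phi,\psi:\timeInt\to\C$ with $\phi\in L^p$, $\psi\in L^q$ and $\frac1p+\frac1q=\frac1r$, the product satisfies $\phi\psi\in L^r$ with $\norm{\phi\psi}_{L^r}\leq\norm{\phi}_{L^p}\norm{\psi}_{L^q}$. When $p,q,r<\infty$ this follows by writing $\norm{\phi\psi}_{L^r}^r=\int_\timeInt\abs{\phi}^r\abs{\psi}^r\td t$ and applying the standard two-exponent Hölder inequality with the conjugate exponents $\frac pr$ and $\frac qr$, which are indeed conjugate since $\frac rp+\frac rq=1$.

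The key observation for the matrix case is that the induced $2$-norm is submultiplicative, i.e.\ $\norm{F(t)G(t)}_2\leq\norm{F(t)}_2\norm{G(t)}_2$ holds for a.e.\ $t\in\timeInt$. Setting $\phi(t)\coloneqq\norm{F(t)}_2$ and $\psi(t)\coloneqq\norm{G(t)}_2$, the definition of the $L^p$ norm on matrix-valued functions gives exactly $\phi\in L^p(\timeInt,\R)$ with $\norm{\phi}_{L^p}=\norm{F}_{L^p}$, and similarly $\psi\in L^q(\timeInt,\R)$ with $\norm{\psi}_{L^q}=\norm{G}_{L^q}$. Since $FG$ is measurable as a product of measurable matrix functions and $\norm{FG}_2\leq\phi\psi$ pointwise, monotonicity of the $L^r$ norm on nonnegative functions combined with the scalar inequality yields $\norm{FG}_{L^r}=\norm[\big]{\norm{FG}_2}_{L^r}\leq\norm{\phi\psi}_{L^r}\leq\norm{\phi}_{L^p}\norm{\psi}_{L^q}=\norm{F}_{L^p}\norm{G}_{L^q}$, which is the claim.

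I do not expect a substantial obstacle here; the content is a routine reduction, and the only care needed is in the endpoint cases. If $r=\infty$, which forces $p=q=\infty$, one argues directly with essential suprema, using $\norm{FG}_2\leq\norm{F}_2\norm{G}_2\leq\norm{F}_{L^\infty}\norm{G}_{L^\infty}$ a.e. If exactly one of $p,q$ is infinite, say $q=\infty$ so that $r=p$, then $\norm{FG}_2\leq\norm{F}_2\,\norm{G}_{L^\infty}$ a.e., and taking the $L^p$ norm of both sides gives the bound immediately. Thus the main, and rather minor, bookkeeping is to verify submultiplicativity of $\norm{\cdot}_2$ and to dispatch these infinite-exponent branches separately, after which the finite case above closes the argument.
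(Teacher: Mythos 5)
Your proposal is correct and follows essentially the same route as the paper: reduce to the scalar generalized H\"older inequality by applying it to the nonnegative functions $\norm{F}_2\in L^p(\timeInt,\R)$ and $\norm{G}_2\in L^q(\timeInt,\R)$, using pointwise submultiplicativity of the induced $2$-norm. The paper simply cites the scalar result (Brezis) where you sketch its proof and the endpoint cases explicitly, but the argument is the same.
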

\begin{proof}
    By definition we have
    \[
    \norm{FG}_{L^r} = \norm[\big]{\norm{FG}_2}_{L^r} \leq \norm[\big]{\norm{F}_2\norm{G}_2}_{L^r} \leq \norm[\big]{\norm{F}_2}_{L^p}\norm[\big]{\norm{G}_2}_{L^q} = \norm{F}_{L^p}\norm{G}_{L^q},
    \]
    where we applied the better known scalar generalized Hölder inequality to $\norm{F}_2\in L^p(\timeInt,\R)$ and $\norm{G}_2\in L^q(\timeInt,\R)$, see e.g.~\cite[Remark 2 in Chapter 4]{Bre10}
\end{proof}

\begin{lemma}\label{lem:integralIsAC}
    Let $A\in L^1([t_0,t_1],\C^{n,m})$. Then for every $t_0\in\timeInt$ the map
    \[
    \mathbf{A}_{t_0} : \timeInt \to \C, \qquad t\mapsto \int_{t_0}^t A(s)\td s
    \]
    satisfies $\mathbf{A}_{t_0}\in W^{1,1}_\loc(\timeInt,\C^{n,m})$ and $\dot{\mathbf{A}}_{t_0}=A$.
\end{lemma}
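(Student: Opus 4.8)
The statement is the classical fact that the indefinite Lebesgue integral of a locally integrable function lies in $W^{1,1}_\loc$ with weak derivative equal to the integrand. The plan is to reduce to the scalar case, establish local integrability of $\mathbf{A}_{t_0}$, and then verify directly that $A$ is the weak derivative of $\mathbf{A}_{t_0}$ by testing against smooth compactly supported functions and applying Fubini's theorem.

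First I would note that $W^{1,1}_\loc(\timeInt,\C^{n,m})$ is defined entrywise and that the $(i,j)$ entry of $\mathbf{A}_{t_0}$ is exactly the indefinite integral of the $(i,j)$ entry of $A$; hence it suffices to treat the scalar case $A\in L^1_\loc(\timeInt,\C)$. Local integrability of $\mathbf{A}_{t_0}$ is then immediate: for any compact $[a,b]\subseteq\timeInt$ and $t\in[a,b]$ we have $\abs{\mathbf{A}_{t_0}(t)}\leq\int_{I'}\abs{A(s)}\td s$, where $I'$ is the smallest compact interval containing $\set{t_0,a,b}$, so $\mathbf{A}_{t_0}\in L^\infty_\loc\subseteq L^1_\loc$.

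Next, for a test function $\varphi\in\mathcal C_c^\infty(\timeInt,\C)$ I would write $\mathbf{A}_{t_0}(t)=\int_\timeInt k(t,s)A(s)\td s$ with the kernel $k(t,s)=\mathbf{1}_{\set{t_0\leq s\leq t}}-\mathbf{1}_{\set{t\leq s\leq t_0}}$, and compute $\int_\timeInt\mathbf{A}_{t_0}(t)\dot\varphi(t)\td t$. Applying Fubini's theorem to exchange the order of integration and evaluating the inner integral $\int_\timeInt k(t,s)\dot\varphi(t)\td t$ for fixed $s$, the compact support of $\varphi$ forces this inner integral to equal $-\varphi(s)$ for a.e.~$s$ (distinguishing the cases $s>t_0$ and $s<t_0$, and using the fundamental theorem of calculus for the smooth function $\varphi$, which vanishes near the endpoints of $\timeInt$). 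This yields $\int_\timeInt\mathbf{A}_{t_0}\dot\varphi\td t=-\int_\timeInt A\varphi\td t$, i.e.~$A$ is the weak derivative of $\mathbf{A}_{t_0}$; together with $\mathbf{A}_{t_0},A\in L^1_\loc$ this gives $\mathbf{A}_{t_0}\in W^{1,1}_\loc$ and $\dot{\mathbf{A}}_{t_0}=A$.

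The only step requiring care, and thus the main obstacle, is the justification of Fubini. I would observe that if $K\coloneqq\operatorname{supp}\varphi$ and $I'$ is the compact interval spanned by $K\cup\set{t_0}$, then the integrand $(t,s)\mapsto k(t,s)A(s)\dot\varphi(t)$ vanishes outside $I'\times I'$ and is dominated there by $\norm{\dot\varphi}_\infty\abs{A(s)}\in L^1(I')$, so the double integral is absolutely convergent and Fubini applies. Alternatively, the whole statement follows by invoking the Lebesgue differentiation theorem, which gives local absolute continuity of $\mathbf{A}_{t_0}$ and $\dot{\mathbf{A}}_{t_0}=A$ a.e.~in the classical sense, combined with the standard characterization of $W^{1,1}$ on intervals via absolutely continuous representatives, see e.g.~\cite{Bre10}.
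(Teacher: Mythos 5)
Your proof is correct, but its core argument differs from the paper's. Both proofs begin with the same entrywise reduction to scalar functions, but from there the paper simply restricts to a compact subinterval $[a,b]\supseteq K\cup\set{t_0}$ and cites the standard real-analysis facts (Carothers, Theorem 20.9 and Lemma 20.14) that the indefinite Lebesgue integral is absolutely continuous with a.e.\ derivative equal to the integrand, then adjusts by the constant $\int_a^{t_0}f$ to account for the basepoint $t_0$; this is essentially the second, alternative route you only sketch at the end. Your main argument instead verifies the weak derivative directly: writing $\mathbf{A}_{t_0}(t)=\int_\timeInt k(t,s)A(s)\td s$ with the signed-indicator kernel, testing against $\varphi\in\mathcal C_c^\infty(\timeInt,\C)$, and exchanging integrals via Fubini (correctly justified by compact support and the dominating bound $\norm{\dot\varphi}_\infty\abs{A(s)}$), after which the inner integral collapses to $-\varphi(s)$ by the fundamental theorem of calculus for smooth functions. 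Your computation of the inner integral in both cases $s>t_0$ and $s<t_0$ is right, and the kernel formulation neatly absorbs the basepoint so no constant-shift step is needed. What each approach buys: the paper's proof is shorter but rests on the comparatively deep Lebesgue differentiation machinery and on the (implicit) identification of absolutely continuous functions with $W^{1,1}$ representatives on intervals; yours is fully self-contained, uses only Fubini and calculus for smooth test functions, and produces the weak derivative directly, which is exactly what membership in $W^{1,1}_\loc$ requires.
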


\begin{proof}
    Let $A=[a_{ij}]\in L^1_\loc(\timeInt,\R^{2n,2m})$. Then we can equivalently prove the statement for every entry $a_{ij}\in L^1_\loc(\timeInt,\R)$.
    Let $K\subseteq\timeInt$ be any compact subset and let $[a,b]\subseteq\timeInt$ be any compact subinterval such that $K\cup\set{t_0}\subseteq[a,b]$.
    Then, it is sufficient to show that, given $f\in L^1([a,b],\R)$, the function $F:[a,b]\to\R,\ t\mapsto\int_{t_0}^t f(s)\td s$ satisfies $F\in W^{1,1}([a,b],\R)$ and $\dot F=f$.
    Due to \cite[Theorem 20.9 and Lemma 20.14]{Car00}, we see that the map $\wt F:[a,b]\to\R,\ t\mapsto\int_a^tf(s)\td s$ satisfies $\wt F\in W^{1,1}([a,b],\R)$ and $\dot{\wt F}=f$. Since
    \[
    F(t) = \int_a^t f(s)\td s = \int_{t_0}^t f(s)\td s + \int_a^{t_0} f(s)\td s = \wt F(t) + c
    \]
    for all $t\in[a,b]$, with a constant $c\in\R$  depending only on $a,t_0,f$, we deduce that also $F\in W^{1,1}([a,b],\R)$ and $\dot F=\dot{\wt F}=f$.
\end{proof}

\begin{corollary}\label{cor:compactIntervalL1}
    Let $A\in L^p([t_0,t_1],\C^{n,m})$ with $1\leq p<\infty$. Then for every $\varepsilon>0$ there is $\delta>0$ such that
    \[
    \int_r^s \norm{A(t)}_2\td t < (s-r)^{1-\frac{1}{p}} \varepsilon
    \]
    holds for every $r,s\in[t_0,t_1]$ with $0<s-r<\delta$.
\end{corollary}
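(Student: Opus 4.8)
The plan is to reduce everything to the scalar nonnegative function $f\coloneqq\norm{A(\cdot)}_2$, which by the very definition of the $L^p$-norm on matrix functions satisfies $f\in L^p([t_0,t_1],\R)$, and then to combine Hölder's inequality with the absolute continuity of the Lebesgue integral.

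First I would apply Hölder's inequality on an arbitrary subinterval $[r,s]\subseteq[t_0,t_1]$ to the product $f\cdot 1$, using the conjugate exponents $p$ and $p'$ determined by $\frac{1}{p}+\frac{1}{p'}=1$. This yields
\[
\int_r^s \norm{A(t)}_2\td t
\leq \pset*{\int_r^s \norm{A(t)}_2^p\td t}^{1/p}\pset*{\int_r^s 1\td t}^{1/p'}
= (s-r)^{1-\frac{1}{p}}\pset*{\int_r^s \norm{A(t)}_2^p\td t}^{1/p},
\]
where I used $1/p'=1-1/p$ to rewrite the length factor. Equivalently, this is a direct consequence of the generalized Hölder inequality in \Cref{thm:genHolder} applied to $f$ and the constant function $1$. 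In the boundary case $p=1$ one has $p'=\infty$, the factor $(s-r)^{1-1/p}$ equals $1$, and the estimate degenerates to the trivial identity $\int_r^s f=\int_r^s f$, so the remaining argument still applies.

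Next I would invoke the absolute continuity of the integral: since $g\coloneqq\norm{A(\cdot)}_2^p\in L^1([t_0,t_1],\R)$, for the given $\varepsilon>0$ there exists $\delta>0$ such that $\int_E g\,\td t<\varepsilon^p$ for every measurable $E\subseteq[t_0,t_1]$ with $\abs{E}<\delta$. Applying this to $E=[r,s]$ with $0<s-r<\delta$ gives $\int_r^s\norm{A(t)}_2^p\td t<\varepsilon^p$, hence $\pset*{\int_r^s\norm{A(t)}_2^p\td t}^{1/p}<\varepsilon$. Substituting into the Hölder estimate and using $(s-r)^{1-1/p}>0$ yields
\[
\int_r^s \norm{A(t)}_2\td t < (s-r)^{1-\frac{1}{p}}\varepsilon,
\]
which is the claim.

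There is no genuine obstacle in this argument; the only points requiring care are the bookkeeping of the conjugate exponent (so that the power of $(s-r)$ comes out as $1-1/p$ rather than $1/p$) and the endpoint case $p=1$, where the length factor disappears. The hypothesis $p<\infty$ enters solely to guarantee $g=f^p\in L^1$, which is exactly what makes the absolute continuity of the integral applicable.
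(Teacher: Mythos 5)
Your proof is correct and follows essentially the same route as the paper: Hölder's inequality applied to $\norm{A(\cdot)}_2\cdot 1$ with exponents $p$ and $p'$, combined with the smallness of $\int_r^s\norm{A(t)}_2^p\td t$ on short intervals. The only cosmetic difference is that you invoke the absolute continuity of the Lebesgue integral directly, while the paper obtains the same smallness from the uniform continuity of the $W^{1,1}$ antiderivative $t\mapsto\int_{t_0}^t\norm{A(s)}_2^p\td s$ (via \Cref{lem:integralIsAC}); these are equivalent standard facts.
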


\begin{proof}
    Let $a:[t_0,t_1]\to\R,\ t\mapsto\norm{A(t)}_2^p$. Then, in particular, $a\in L^1([t_0,t_1],\R)$. Let 
    \[
    f : \timeInt \to \R, \qquad t\mapsto \int_{t_0}^t a(s)\td s, 
    \]
    which is in $W^{1,1}([t_0,t_1],\R)$, because of \Cref{lem:integralIsAC}.
    In particular, $f$ is uniformly continuous, so there exists $\delta>0$ such that
    \[
    \int_{r}^{s}\norm{A(t)}^p\td t = \int_{r}^{s}a(t)\td t = f(s) - f(r) < \varepsilon^p
    \]
    By applying the generalized Hölder inequality (\Cref{thm:genHolder}), we obtain %
    \[
    \int_{r}^{s}\norm{A(t)}\td \leq \pset*{\int_{r}^{s}1\,\td t}^{1-\frac{1}{p}} \pset*{\int_{r}^{s}\norm{A(t)}^p\td t}^{\frac{1}{p}} < (s-r)^{1-\frac{1}{p}} \varepsilon. \qedhere
    \]
\end{proof}

\begin{lemma}\label{lem:young}
    Let $f\in L^p(\timeInt,\R)$ with $1\leq p\leq\infty$ and suppose that for every $t\in\timeInt$ we are given a subset $\timeInt_t\subseteq\timeInt$ such that the quantities
    \[
    a \coloneqq \sup_{t\in\timeInt,\,s\in\timeInt_{t}}\abs{t-s} \qquad\text{and}\qquad
    b \coloneqq \inf_{t\in\timeInt}\abs{\timeInt_{t}}
    \]
    are bounded and positive. Suppose additionally that the map $\timeInt\to\R,\ t\mapsto\abs{\timeInt_{t}}$ is measurable.
    Then the map
    \[
    \wt f : \timeInt \to \R, \qquad t \mapsto \frac{1}{\abs{\timeInt_t}} \int_\R f(s)\td s
    \]
    is in $L^p(\R,\R)$, satisfies $\norm{\wt f}_{L^p}\leq\pset{\frac{2a}{b}}^{1/p}\norm{f}_{L^p}$, and can be written equivalently as
    \begin{equation}\label{eq:convolution}
        \wt f(t) = (\varphi_t \star \overline f)(t) = \int_{\R} \varphi_t(t-s)\overline{f}(s)\td s,
    \end{equation}
    where $\star$ denotes the convolution product. Here, the map $\varphi_t$ is defined as
    \[
    \varphi_t : \R \to \R, \qquad s\mapsto \frac{\chi_{\timeInt_t}(t-s)}{\abs{\timeInt_t}}
    \]
    for all $t\in\timeInt$, and $\overline{f}$ denotes the extension of $f$ to $L^p(\R,\R)$, obtained by setting $\overline{f}(s)=0$ for $s\in\R\setminus\timeInt$.
\end{lemma}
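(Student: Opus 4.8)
The plan is to route the whole estimate through the convolution representation \eqref{eq:convolution}, reducing it to Jensen's inequality together with Tonelli's theorem. First I would verify \eqref{eq:convolution} itself, which is just a change of variable: since $\varphi_t(t-s) = \chi_{\timeInt_t}(t-(t-s))/\abs{\timeInt_t} = \chi_{\timeInt_t}(s)/\abs{\timeInt_t}$, the integral $\int_\R \varphi_t(t-s)\overline f(s)\td s$ collapses to $\frac{1}{\abs{\timeInt_t}}\int_{\timeInt_t} f(s)\td s$, which is the defining value of $\wt f(t)$ (the integral $\int_\R f(s)\td s$ in the definition of $\wt f$ being read, consistently with \eqref{eq:convolution}, as $\int_{\timeInt_t} f(s)\td s$). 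The structural fact I want to extract is that, for each fixed $t$, the map $s\mapsto\varphi_t(t-s)$ is a probability density: it is nonnegative, and $\int_\R \varphi_t(t-s)\td s = \frac{1}{\abs{\timeInt_t}}\int_\R \chi_{\timeInt_t}(s)\td s = 1$. Observe also that since $a$ is finite, $s\in\timeInt_t$ forces $\abs{t-s}\le a$; in particular $\timeInt_t\subseteq[t-a,t+a]$ has measure at most $2a$, so $\wt f(t)$ is finite for each $t$.

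For the norm bound, assume first $1\le p<\infty$. Applying Jensen's inequality to the convex function $x\mapsto\abs{x}^p$ against this probability density yields, for every $t\in\timeInt$, the pointwise estimate $\abs{\wt f(t)}^p \le \int_\R \varphi_t(t-s)\abs{\overline f(s)}^p\td s$. Integrating in $t$ and using Tonelli's theorem to exchange the order of integration gives $\norm{\wt f}_{L^p}^p \le \int_\R \abs{\overline f(s)}^p\pset*{\int_\R \varphi_t(t-s)\td t}\td s$. The inner integral equals $\int_\R \chi_{\timeInt_t}(s)/\abs{\timeInt_t}\,\td t$, and I bound it with the two elementary facts provided by the hypotheses: $\abs{\timeInt_t}\ge b$ for all $t$, and $\set{t\in\timeInt : s\in\timeInt_t}\subseteq[s-a,s+a]$ has measure at most $2a$. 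Hence $\int_\R \varphi_t(t-s)\td t \le 2a/b$, so $\norm{\wt f}_{L^p}^p \le \frac{2a}{b}\norm{f}_{L^p}^p$, which is the claim after taking $p$-th roots (and shows $\wt f\in L^p$). The case $p=\infty$ is handled directly: $\abs{\wt f(t)} \le \frac{1}{\abs{\timeInt_t}}\int_{\timeInt_t}\abs{f(s)}\td s \le \norm{f}_{L^\infty}$, consistent with the convention $(2a/b)^{1/\infty}=1$.

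The main obstacle I anticipate is measurability. To regard $\wt f$ as an element of $L^p$ and, above all, to legitimately invoke Tonelli's theorem, I need joint measurability of $(t,s)\mapsto\chi_{\timeInt_t}(s)$ on $\timeInt\times\timeInt$, whereas the hypothesis only supplies measurability of $t\mapsto\abs{\timeInt_t}$. I would address this by noting that in every application of this lemma the sets $\timeInt_t$ arise as translates or rescalings of a fixed interval, so that $\set{(t,s) : s\in\timeInt_t}$ is manifestly Borel, and by recording joint measurability as a standing assumption. With that secured, Fubini makes the numerator $t\mapsto\int_{\timeInt_t} f(s)\td s$ measurable, and dividing by the measurable, strictly positive function $t\mapsto\abs{\timeInt_t}\ge b$ shows $\wt f$ is measurable. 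Once this is in place, every remaining step is routine, and the finiteness of $a$ and positivity of $b$ enter only through the single estimate $\int_\R \varphi_t(t-s)\td t \le 2a/b$.
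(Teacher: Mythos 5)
Your proof is correct and follows essentially the same route as the paper's: both verify the convolution representation, derive the pointwise estimate $\abs{\wt f(t)}^p \le \int_\R \varphi_t(t-s)\abs{\overline f(s)}^p\td s$ (you via Jensen against the probability density $\varphi_t(t-\cdot)$, the paper via the equivalent H\"older splitting $\varphi_t^{1/p'}\varphi_t^{1/p}\abs{\overline f}$ with $\tfrac1p+\tfrac1{p'}=1$), and conclude with Fubini--Tonelli and the geometric bound $\int_\R\varphi_t(t-s)\td t\le \tfrac{2a}{b}$ obtained from $\timeInt_t\subseteq[t-a,t+a]$ and $\abs{\timeInt_t}\ge b$, treating $p=\infty$ separately by the same direct bound. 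The joint-measurability issue you flag is genuine but applies equally to the paper's own proof, whose use of Fubini tacitly requires measurability of $\set{(t,s) : s\in\timeInt_t}$ rather than merely of $t\mapsto\abs{\timeInt_t}$, so your proposal to record that as an explicit hypothesis is a refinement of the argument, not a gap in it.
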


\begin{proof}
    This statement and its proof closely follow Young's theorem (see, e.g.~\cite[Theorem 4.15]{Bre10}), which cannot be applied directly, since $\varphi_t$ is parameterized with $t$.

    By construction, for every $t\in\timeInt$ it holds that $\varphi_t\in L^1\cap L^\infty$ with $\norm{\varphi_t}_{L^1}=\norm{\varphi_t}_{L^\infty}=1$.
    Furthermore, we have that
    \[
    \int_\R \varphi_t(t-s)\overline f(s)\td s
    = \int_\R \frac{\chi_{\timeInt_t}(s)}{\abs{\timeInt_t}}\overline f(s)\td s
    = \frac{1}{\abs{\timeInt_s}}\int_{\timeInt_t}\overline{f}(s)\td s
    \]
    for every $t\in\timeInt$, thus \eqref{eq:convolution} holds.
    
    It remains to show that $\wt f\in L^p$ with the requested bound on its norm.
    Suppose first that $p=\infty$. In this case, it clearly holds that
    \[
    \abs{g(t)} \leq \int_\R\abs{\varphi_t(t-s)}\norm{\overline f}_{L^\infty}\td s = \norm{f}_{L^\infty}
    \]
    for all $t\in\timeInt$, in particular $g\in L^\infty$ and $\norm{g}_{L^\infty}\leq\norm{f}_{L^\infty}$, as requested.

    Suppose now that $1\leq p<\infty$. For a.e.~$s\in\timeInt$ it holds that
    \[
    \int_\R\abs{\varphi_t(t-s)}\abs{\overline f(s)}^p\td t
    \leq \abs{\overline f(s)}^p \int_\R \frac{\chi_{\timeInt_t}(s)}{\abs{\timeInt_t}}\td t
    \leq \abs{\overline f(s)}^p \int_{s-a}^{s+a} \frac{1}{\abs{\timeInt_t}}\td t
    \leq \frac{2a}{b}\abs{\overline f(s)}^p < \infty.
    \]
    In particular, we deduce from Fubini's theorem \cite[Theorem 4.5]{Bre10} that
    \[
    \int_\R\int_\R\abs{\varphi_t(t-s)}\abs{\overline f(s)}^p\td s\td t
    = \int_\R\int_\R\abs{\varphi_t(t-s)}\abs{\overline f(s)}^p\td t\td s
    \leq \frac{2a}{b}\norm{f}_{L^p}^p < \infty.
    \]
    Then, by splitting $\varphi_t(t-s)\abs{\overline f(s)}=\varphi_t(t-s)^{1/p'}\varphi_t(t-s)^{1/p}\abs{\overline f(s)}$ with $\frac{1}{p}+\frac{1}{p'}=1$ and applying Hölder's inequality, we obtain that
    \[
    \abs*{ \int_\R \varphi_{t}(t-s)\abs{\overline f(s)}\td s }^p
    \leq \pset*{\int_\R\varphi_{t}(t-s)\td s}^{\frac{p}{p'}} \pset*{\int_\R\abs{\varphi_t(t-s)}\abs{\overline f(s)}^p\td s}
    \leq \int_\R\abs{\varphi_t(t-s)}\abs{\overline f(s)}^p\td s,
    \]
    and therefore
    \[
    \norm{g}_{L^p}
    = \pset*{\int_\R \abs*{ \int_\R \varphi_{t}(t-s)\abs{\overline f(s)}\td s }^p \td t}^{\frac 1p} 
    \leq \pset*{ \int_\R\int_\R\abs{\varphi_t(t-s)}\abs{\overline f(s)}^p\td s\td t }^{\frac 1p}
    \leq \pset*{\frac{2a}{b}}^{\frac{1}{p}}\norm{f}_{L^p},
    \]
    as asserted.
\end{proof}
\begin{theorem}\label{thm:integralAverage}
    Let $f\in L^p(\timeInt,\R)$ with $1\leq p<\infty$ and suppose that for every $t\in\timeInt$ and $k\in\mathbb N$ we are given a subset $\timeInt_{t,k}\subseteq\timeInt$ such that the quantities
    \[
    a_k \coloneqq \sup_{t\in\timeInt,\,s\in\timeInt_{t,k}}\abs{t-s} \qquad\text{and}\qquad
    b_k \coloneqq \inf_{t\in\timeInt}\abs{\timeInt_{t,k}}
    \]
    are bounded and satisfy $\lim_{k\to\infty}a_k=0$ and $b_k\geq ca_k>0$ for all $k\in\N$, where $c>0$ is a fixed constant.
    Suppose additionally that the map $\timeInt\to\R,\ t\mapsto\abs{\timeInt_{t,k}}$ is measurable for every $k\in\N$.
    Then the sequence of functions $\set{f_k:\timeInt\to\R\mid k\in\N}$ defined as
    \[
    \wt f_k(t) \coloneqq \frac{1}{\abs{\timeInt_{t,k}}}\int_{\timeInt_{t,k}}f(s)\td s
    \]
    satisfies $\wt f_k\in L^p(\timeInt,\R)$ for all $k\in\N$ and converges to $f$ in $L^p$ for $k\to\infty$.
\end{theorem}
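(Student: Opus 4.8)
The plan is to regard each averaging rule as a uniformly bounded linear operator and then run a standard density argument. First I would fix $k$ and invoke \Cref{lem:young} with $\timeInt_t := \timeInt_{t,k}$, $a := a_k$ and $b := b_k$. Its hypotheses hold: $a_k$ is finite, and $b_k \geq c a_k > 0$ forces both $a_k$ and $b_k$ to be positive, while the measurability of $t\mapsto\abs{\timeInt_{t,k}}$ is assumed. The lemma then directly yields $\wt f_k \in L^p(\timeInt,\R)$ together with the estimate
\[
\norm{\wt f_k}_{L^p} \leq \pset*{\tfrac{2a_k}{b_k}}^{1/p}\norm{f}_{L^p} \leq \pset*{\tfrac{2}{c}}^{1/p}\norm{f}_{L^p},
\]
where the last step uses $a_k/b_k \leq 1/c$. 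Writing $T_k f := \wt f_k$, the maps $T_k$ are linear and \emph{uniformly} bounded with operator norm at most $M := (2/c)^{1/p}$, independently of $k$.

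Next I would prove convergence on the dense subset $\mathcal C_c(\timeInt,\R)$ of continuous, compactly supported functions, which is dense in $L^p$ for $1\leq p<\infty$. Fix such a $g$ with $\operatorname{supp}(g)\subseteq[\alpha,\beta]$. Since $T_k g(t)$ can be nonzero only when $\timeInt_{t,k}$ meets $[\alpha,\beta]$, and every point of $\timeInt_{t,k}$ lies within distance $a_k$ of $t$, both $g$ and $T_k g$ are supported in the fixed compact set $K := [\alpha-A,\beta+A]\cap\timeInt$, where $A := \sup_k a_k < \infty$; in particular $\abs{K}<\infty$. Using uniform continuity of $g$, given $\eta>0$ there is $\delta>0$ with $\abs{g(s)-g(t)}<\eta$ whenever $\abs{s-t}<\delta$, and for $k$ large enough that $a_k<\delta$ one estimates, for every $t$,
\[
\abs{T_k g(t) - g(t)} = \abs*{\tfrac{1}{\abs{\timeInt_{t,k}}}\int_{\timeInt_{t,k}}\pset[\big]{g(s)-g(t)}\td s} \leq \eta .
\]
Restricting to $K$ then gives $\norm{T_k g - g}_{L^p} \leq \abs{K}^{1/p}\eta$, so $T_k g \to g$ in $L^p$.

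Finally I would combine these by the usual $\varepsilon/3$ splitting. Given $\varepsilon>0$, choose $g\in\mathcal C_c(\timeInt,\R)$ with $\norm{f-g}_{L^p}<\varepsilon$ and bound
\[
\norm{T_k f - f}_{L^p} \leq \norm{T_k(f-g)}_{L^p} + \norm{T_k g - g}_{L^p} + \norm{g - f}_{L^p} \leq (M+1)\varepsilon + \norm{T_k g - g}_{L^p}.
\]
Letting $k\to\infty$ and using the second step yields $\limsup_k \norm{T_k f - f}_{L^p} \leq (M+1)\varepsilon$; since $\varepsilon>0$ is arbitrary, $\wt f_k = T_k f \to f$ in $L^p$.

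The main obstacle is not any individual estimate but ensuring the convergence on smooth functions is genuinely in $L^p$ rather than merely pointwise or uniform: this is exactly what the compact-support truncation accomplishes, converting the bound $\norm{T_k g - g}_{L^\infty}\leq\eta$ into an $L^p$ bound via the finite measure of $K$. The uniform operator bound from \Cref{lem:young} is what makes the density argument legitimate, so the careful point on the operator side is verifying $a_k/b_k\leq 1/c$ so that $M$ is independent of $k$.
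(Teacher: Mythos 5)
Your proof is correct and follows essentially the same route as the paper: the uniform $L^p$ bound from \Cref{lem:young} (with $a_k/b_k\leq 1/c$), density of compactly supported continuous functions, uniform continuity plus the finite-measure support truncation to pass from an $L^\infty$ bound to an $L^p$ bound, and a three-term triangle-inequality split. Your packaging of the averages as uniformly bounded linear operators $T_k$ is only a cosmetic reformulation of the paper's convolution estimate, and your constant $(2/c)^{1/p}$ is in fact the correct one (the paper's $c^{1/p}$ at that step is a harmless typo).
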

\begin{proof}
    Because of \Cref{lem:young}, we can write equivalently $\wt f_k(t)=(\varphi_{t,k}\star\overline f)(t)$ with
    \[
    \varphi_{t,k}(t) = \frac{\chi_{\timeInt_{t,k}}(t-s)}{\abs{\timeInt_{t,k}}^{-1}},
    \]
    and $\wt f_k\in L^p(\timeInt,\R)$ holds for all $k\in\N$.
    
    Since the continuous functions with compact support are dense in $L^p$, for every $\varepsilon>0$ there exists $f_\varepsilon\in\mathcal C_c(\R,\R)$ such that $\norm{f_\varepsilon-\overline f}_{L^p}<\varepsilon$. Let us define now $\wt f_{\varepsilon,k}:\timeInt\to\R,\ t\mapsto(\varphi_{t,k}\star f_\varepsilon)(t)$ for all $k\in\N$.
    Since $\varphi_{t,k}(t-s)f_\varepsilon(s)=0$ for every $s\in\R\setminus\timeInt$, we have equivalently $\wt f_{\varepsilon,k}(t)=(\varphi_{t,k}\star \overline{f_\varepsilon|_{\timeInt}})(t)$, thus we can apply to this function the results of \Cref{lem:young}.
    Clearly
    \[
    \norm{\wt f_k-f}_{L^p(\timeInt)}
    \leq \norm{\wt f_k-\wt f_{\varepsilon,k}}_{L^p(\timeInt)} + \norm{\wt f_{\varepsilon,k}-f_\varepsilon|_{\timeInt}}_{L^p(\timeInt)} + \norm{f_\varepsilon|_{\timeInt}-f}_{L^p(\timeInt)}
    \]
    holds for all $k\in\N$.
    For every $t\in\timeInt$ it holds that
    \[
    \wt f_k(t)-\wt f_{\varepsilon,k}(t)
    = (\varphi_{t,k}\star\overline f)(t) - (\varphi_{t,k}\star\overline{f_\varepsilon|_{\timeInt}})(t)
    = \pset[\big]{\varphi_{t,k}\star\overline{(f-f_\varepsilon|_{\timeInt})}}(t),
    \]
    in particular,
    \[
    \norm{ \wt f_k - \wt f_{\varepsilon,k} }_{L^p(\timeInt)} \leq \pset*{ \frac{2a_k}{b_k} }^{\frac{1}{p}} \norm{f-f_\varepsilon|_{\timeInt}}_{L^p(\timeInt)} \leq c^{\frac{1}{p}}\varepsilon.
    \]
    Since $f_\varepsilon$ is continuous with compact support, it is in particular uniformly continuous, thus for every $\varepsilon_1>0$ there exists $\delta>0$ such that $\abs{f_\varepsilon(s)-f_\varepsilon(t)}<\varepsilon_1$ for every $s,t\in\timeInt$ with $\abs{t-s}<\delta$.
    Since $\lim_{k\to\infty}a_k=0$, there exists $k_0\in\N$ such that $a_k<\delta$ whenever $k\geq k_0$. In particular, for every $t\in\timeInt$ and $s\in\timeInt_{t,k}$ with $k\geq k_0$ it holds that $\abs{t-s}<\delta$.
    Furthermore, if we denote by $K_\varepsilon$ the support of $f_\varepsilon$, by $B_1\coloneqq\set{s\in\R,\ \abs{s}<1}$ the unit ball, and we assume without loss of generality that $\delta<1$, it follows that $f_\varepsilon(s)=f_\varepsilon(t)=0$ for all $s\in\timeInt_{t,k}$ whenever $t\in\timeInt\setminus(K_\varepsilon+B_1)$ and $k\geq k_0$.
    Note that the set $\timeInt_\varepsilon\coloneqq\timeInt\cap (K_\varepsilon+B_1)$ has finite measure and does not depend on the choice of $\varepsilon_1$.
    Therefore, by exploiting the fact that $\int_\R\varphi_{t,k}(t-s)\td s=1$ for every fixed $t\in\timeInt$, we obtain
    \begin{align*}
        &\int_\timeInt \abs*{ \wt f_{\varepsilon,k}(t) - f_\varepsilon|_{\timeInt}(t) }^p \td t
        = \int_\timeInt \abs*{ \int_\R \varphi_{t,k}(t-s)f_{\varepsilon}(s) \td s - \int_\R \varphi_{t,k}(t-s)f_{\varepsilon}(t) \td s }^p \td t \leq \\
        &\qquad\leq \int_\timeInt \pset*{ \int_\R \varphi_{t,k}(t-s) \abs{ f_{\varepsilon}(s) - f_{\varepsilon}(t) } \td s }^p \td t
        = \int_{\timeInt} \pset*{ \frac{1}{\abs{\timeInt_{t,k}}} \int_{\timeInt_{t,k}} \abs{f_\varepsilon(s)-f_\varepsilon(t)} \td s }^p \td t \leq \\
        &\qquad\leq \int_{\timeInt_\varepsilon} \pset*{ \frac{1}{\abs{\timeInt_{t,k}}}\abs{\timeInt_{t,k}}\varepsilon_1 }^p = \varepsilon_1^p\abs{\timeInt_\varepsilon},
    \end{align*}
    and thus $\norm{\wt f_{\varepsilon,k}-f_\varepsilon|_\timeInt}_{L^p(\timeInt)}\leq\varepsilon_1\abs{\timeInt_\varepsilon}^{1/p}$, for every $k\geq k_0$. We deduce that
    \[
    \norm{\wt f_k-f}_{L^p(\timeInt)}
    \leq \pset[\big]{ c^{\frac{1}{p}} + 1 }\varepsilon + \varepsilon_1\abs{\timeInt_\varepsilon}^{\frac 1p}.
    \]
    It is then clear, by taking $\varepsilon$ arbitrarily small and e.g.~$\varepsilon_1<\varepsilon\abs{\timeInt_\varepsilon}^{-\frac{1}{p}}$, that $\wt f_k$ converges to $f$ in $L^p$.
\end{proof}

\begin{lemma}\label{lem:popovbounded}    For all $t_0,t_1\in\timeInt,\ t_0\leq t_1$ the Popov operator $\LAMBDA_{t_0,t_1}$ as in \eqref{def:popov_op} is a well-defined bounded linear operator.
\end{lemma}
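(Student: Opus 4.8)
The plan is to split $\LAMBDA_{t_0,t_1}$ into its three summands and to treat each as a bounded linear operator on $L^2([t_0,t_1],\C^m)$; since a finite sum of bounded linear operators is bounded and linear, this suffices. Linearity of each summand is immediate from the linearity of the integral and of matrix multiplication, so the only issue is boundedness.

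The multiplication term $u\mapsto(D+\ct{D})u$ is the easiest: since $D\in L^\infty_\loc(\timeInt,\C^{m,m})$ and $[t_0,t_1]\subseteq\timeInt$ is compact, we have $D+\ct{D}\in L^\infty([t_0,t_1],\C^{m,m})$, and pointwise multiplication by an essentially bounded matrix function is bounded on $L^2$ with operator norm at most $\norm{D+\ct{D}}_{L^\infty}$. For the two integral terms I would realize them as Hilbert--Schmidt integral operators. Define
\[
k_1(t,s) \coloneqq \begin{cases} C(t)\stm(t,s)B(s), & t_0\leq s\leq t\leq t_1, \\ 0, & \text{otherwise,}\end{cases}
\qquad
k_2(t,s) \coloneqq \begin{cases} \ct{B(t)}\ct{\stm(s,t)}\ct{C(s)}, & t_0\leq t\leq s\leq t_1, \\ 0, & \text{otherwise,}\end{cases}
\]
so that the first integral is $u\mapsto\int_{t_0}^{t_1}k_1(\cdot,s)u(s)\td s$ and the second is $u\mapsto\int_{t_0}^{t_1}k_2(\cdot,s)u(s)\td s$.

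The key step is to show $k_1,k_2\in L^2([t_0,t_1]\times[t_0,t_1],\C^{m,m})$. Because $\stm\in\mathcal C(\timeInt\times\timeInt,\GL[n])$ by \Cref{thm:stateTransitionMatrix}, it attains a finite bound $M\coloneqq\sup_{(t,s)\in[t_0,t_1]^2}\norm{\stm(t,s)}_2$ on the compact square. Hence $\norm{k_1(t,s)}_2\leq M\norm{C(t)}_2\norm{B(s)}_2$, and Tonelli's theorem yields
\[
\int_{t_0}^{t_1}\int_{t_0}^{t_1}\norm{k_1(t,s)}_2^2\td s\td t \leq M^2\norm{C}_{L^2}^2\norm{B}_{L^2}^2 < \infty,
\]
using $B,C\in L^2_\loc$ and the compactness of $[t_0,t_1]$. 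The same estimate holds for $k_2$; in fact $k_2(t,s)=\ct{k_1(s,t)}$, so the second operator is the adjoint of the first and inherits its boundedness. Standard Hilbert--Schmidt theory then gives that both integral operators map $L^2([t_0,t_1],\C^m)$ into itself and are bounded, with norms controlled by the kernel norms above.

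The one point requiring care is precisely the verification that the kernels lie in $L^2$ of the product domain: this rests on the uniform bound for $\stm$ over the compact square (which is where the continuity of the state transition matrix from \Cref{thm:stateTransitionMatrix} enters), together with the joint measurability of $(t,s)\mapsto C(t)\stm(t,s)B(s)$ and the local square-integrability of $B$ and $C$. Once $k_1,k_2\in L^2$ is established, the remaining Hilbert--Schmidt estimates are routine, and summing the three bounded pieces completes the proof.
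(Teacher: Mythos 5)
Your proof is correct, and for the two integral terms it takes a genuinely different route from the paper's. The paper uses the same three-way splitting $\LAMBDA_{t_0,t_1}=\LAMBDA_1+\LAMBDA_2+\LAMBDA_3$, but bounds $\LAMBDA_1$ by a direct computation: it expands $\norm{\LAMBDA_1 u}_{L^2}^2$ as a triple integral, estimates all factors by their norms, and pulls out $\norm{\stm}_{L^\infty([t_0,t_1]^2)}^2$ to obtain $\norm{\LAMBDA_1 u}_{L^2}\leq\norm{\stm}_{L^\infty}\norm{C}_{L^2}\norm{B}_{L^2}\norm{u}_{L^2}$, with $\LAMBDA_2$ treated analogously. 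You instead package both integral terms as Hilbert--Schmidt operators, verify that the kernels lie in $L^2$ of the product square (resting on exactly the same ingredients: continuity of $\stm$ on the compact square from \Cref{thm:stateTransitionMatrix}, and $B,C\in L^2_\loc$), and invoke the general kernel criterion; moreover your observation $k_2(t,s)=\ct{k_1(s,t)}$ dispatches the second operator as the adjoint of the first, a shortcut the paper does not use. The quantitative bounds coincide, but your route additionally yields compactness of the integral parts for free and anticipates the adjoint structure $\LAMBDA_{t_0,t_1}=\mathbf{Z}_{t_0,t_1}+\ct{\mathbf{Z}}_{t_0,t_1}$ exploited later in \Cref{thm:PopovFromTransfer}. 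One further point in your favor: for the multiplication term the paper's displayed estimate $\norm{(D+\ct{D})u}_{L^2}\leq 2\norm{D}_{L^2}\norm{u}_{L^\infty}$ controls the output only by $\norm{u}_{L^\infty}$, which does not by itself establish boundedness on $L^2$; your bound $\norm{(D+\ct{D})u}_{L^2}\leq\norm{D+\ct{D}}_{L^\infty}\norm{u}_{L^2}$ is the estimate actually needed, so on this sub-point your argument is the more careful one.
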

\begin{proof}
    Let us split $\LAMBDA_{t_0,t_1}=\LAMBDA_1+\LAMBDA_2+\LAMBDA_3$ such that
    \begin{align*}
    \LAMBDA_1u(t) &= \int_{t_0}^t C(t)\stm(t,s)B(s)u(s)\td s, \\
    \LAMBDA_2u(t) &= \int_{t}^{t_1} \ct{B(t)}\ct{\stm(s,t)}\ct{C(s)}u(s)\td s, \\
    \LAMBDA_3u(t) &= \pset[\big]{D(t)+\ct{D(t)}}u(t).
    \end{align*}
    We will show first that $\LAMBDA_1$, $\LAMBDA_2$ and $\LAMBDA_3$ are well defined, linear, and bounded, so that $\LAMBDA_{t_0,t_1}$ inherits the same properties.
    It is clear that, if they are well defined, then they are linear.
    For every $u\in L^2([t_0,t_1],\C^m)$ it holds that
    \begin{align*}
        & \int_{t_0}^{t_1} \norm*{ \int_{t_0}^t C(t)\stm(t,s)B(s)u(s)\td s }^2 \td t = \\
        &\qquad= \int_{t_0}^{t_1}\int_{t_0}^{t}\int_{t_0}^{t} \ct{u(s)}\ct{B(s)}\ct{\stm(t,s)}\ct{C(t)}C(t)\stm(t,r)B(r)u(r)\td r\td s \td t \leq \\
        &\qquad\leq \int_{t_0}^{t_1}\int_{t_0}^{t}\int_{t_0}^{t} \norm{C(t)}^2\norm{\stm(t,r)}\norm{\stm(t,s)}\norm{B(r)}\norm{B(s)}\norm{u(r)}\norm{u(s)} \td r\td s \td t \leq \\
        &\qquad\leq \norm{\stm}_{L^\infty([t_0,t_1])}^2 \int_{t_0}^{t_1}\norm{C(t)}^2\td t \pset*{\int_{t_0}^{t_1}\norm{B(s)}\norm{u(s)}\td s}^2 \leq \\
        &\qquad\leq \norm{\stm}_{L^\infty([t_0,t_1])}^2 \norm{C}_{L^2([t_0,t_1])}^2 \norm{B}_{L^2([t_0,t_1])}^2 \norm{u}_{L^2([t_0,t_1])}^2,
    \end{align*}
    thus $\norm{\LAMBDA_1u}_{L^2}\leq\norm{\stm}_{L^\infty}\norm{C}_{L^2}\norm{B}_{L^2}\norm{u}_{L^2}$ and therefore $\LAMBDA_1$ is well-defined and bounded. The proof for $\LAMBDA_2$ is analogous, while for $\LAMBDA_3$ it immediately follows from
    $
    \norm{(D+\ct{D})u}_{L^2} \leq 2\norm{D}_{L^2}\norm{u}_{L^\infty}.
    $
\end{proof}

\begin{lemma}\label{lem:transferbounded}.
    For every $t_0,t_1\in\timeInt,\ t_0\leq t_1$ the local transfer operator $\mathbf Z_{t_0,t_1}$ as in \eqref{def:transfer_op} is bounded.
\end{lemma}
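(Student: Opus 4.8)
The plan is to split the local transfer operator into its integral part and its pointwise multiplication part, and to bound each separately. Writing $\mathbf Z_{t_0,t_1} = \mathbf Z_1 + \mathbf Z_0$ with
\[
\mathbf Z_1 u(t) = \int_{t_0}^t C(t)\stm(t,s)B(s)u(s)\td s, \qquad \mathbf Z_0 u(t) = D(t)u(t),
\]
it suffices to show that both $\mathbf Z_1$ and $\mathbf Z_0$ are well-defined bounded linear operators on $L^2([t_0,t_1],\C^m)$, since then $\mathbf Z_{t_0,t_1}$ inherits these properties by the triangle inequality. Linearity of each summand is immediate once well-definedness is established.

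First I would observe that $\mathbf Z_1$ is literally the operator $\LAMBDA_1$ appearing in the proof of \Cref{lem:popovbounded}, so the very same computation applies verbatim. The key ingredients are that $\stm\in\mathcal C(\timeInt\times\timeInt,\GL[n])$ is bounded on the compact set $[t_0,t_1]\times[t_0,t_1]$, and that $B,C\in L^2_\loc$ restrict to $L^2([t_0,t_1])$. Expanding $\norm{\mathbf Z_1 u}_{L^2}^2$ as a triple integral and estimating the integrand pointwise via the Cauchy--Schwarz inequality yields
\[
\norm{\mathbf Z_1 u}_{L^2} \leq \norm{\stm}_{L^\infty([t_0,t_1]^2)}\,\norm{C}_{L^2([t_0,t_1])}\,\norm{B}_{L^2([t_0,t_1])}\,\norm{u}_{L^2([t_0,t_1])},
\]
so that $\mathbf Z_1$ is bounded.

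For $\mathbf Z_0$ I would use that $D\in L^\infty_\loc(\timeInt,\C^{m,m})$, hence $D\in L^\infty([t_0,t_1])$ on the compact interval, which immediately gives
\[
\norm{\mathbf Z_0 u}_{L^2} = \norm{Du}_{L^2}\leq\norm{D}_{L^\infty([t_0,t_1])}\,\norm{u}_{L^2}.
\]
Combining the two estimates produces a finite operator norm for $\mathbf Z_{t_0,t_1}$, completing the argument. There is no genuine obstacle here; the only point requiring a little care is the pointwise Cauchy--Schwarz bound followed by the application of Fubini's theorem in the estimate for $\mathbf Z_1$. Since this is exactly the manipulation already carried out for $\LAMBDA_1$ in the proof of \Cref{lem:popovbounded}, it can simply be cited rather than repeated.
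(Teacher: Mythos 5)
Your proof is correct and takes essentially the same route as the paper: the paper bounds $\norm{\mathbf Z_{t_0,t_1}u}_{L^2}$ in a single combined estimate using $\norm{\stm}_{L^\infty([t_0,t_1]^2)}<\infty$, Cauchy--Schwarz, and $D\in L^\infty([t_0,t_1])$, which are exactly the ingredients of your two-term split. Your reuse of the $\LAMBDA_1$ computation from \Cref{lem:popovbounded} is legitimate since $\mathbf Z_1$ coincides with $\LAMBDA_1$ verbatim, and the constant you obtain, $\norm{\stm}_{L^\infty}\norm{B}_{L^2}\norm{C}_{L^2}+\norm{D}_{L^\infty}$, is even marginally sharper than the paper's $\norm{\stm}_{L^\infty}\pset[\big]{\norm{B}_{L^2}\norm{C}_{L^2}+\norm{D}_{L^\infty}}$.
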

\begin{proof} Since $\stm\in L^\infty$, we obtain for every $u\in L^2([t_0,t_1],\C^m)$ that
    \begin{align*}
        \norm{\mathbf Z_{t_0,t_1}u}_{L^2}
        &= \pset*{\int_{t_0}^{t_1}\norm*{\int_{t_0}^t C(t)\stm(t,s)B(s)u(s)\td s + D(t)u(t)}^2\td t}^{\tfrac{1}{2}}\\
        &\leq \norm{\stm}_{L^\infty} \pset*{\int_{t_0}^{t_1}\pset*{\int_{t_0}^{t_1}\norm{C(t)}\norm{B(s)}\norm{u(s)}\td s + \norm{D}_{L^\infty}\norm{u(t)}}^2\td t}^{\tfrac{1}{2}} \\
        &\leq \norm{\stm}_{L^\infty} \pset*{ \int_{t_0}^{t_1}\norm{C(t)}^2\td t\pset*{\int_{t_0}^{t_1}\norm{B(s)}\norm{u(s)}\td s}^2 + \norm{D}_{L^\infty}^2\int_{t_0}^{t_1}\norm{u(t)}^2\td t }^{\tfrac{1}{2}} \\
        &\leq \norm{\stm}_{L^\infty} \pset{\norm{C}_{L^2}^2\norm{B}_{L^2}^2\norm{u}_{L^2}^2 + \norm{D}_{L^\infty}^2\norm{u}_{L^2}^2}^{\tfrac{1}{2}} \\
        &\leq \norm{\stm}_{L^\infty}(\norm{B}_{L^2}\norm{C}_{L^2} + \norm{D}_{L^\infty})\norm{u}_{L^2},
    \end{align*}
    in particular $\norm{\mathbf Z_{t_0,t_1}}\leq\norm{\stm}_{L^\infty}(\norm{B}_{L^2}\norm{C}_{L^2} + \norm{D}_{L^\infty})<\infty$, and hence $\mathbf Z_{t_0,t_1}$ is bounded.
\end{proof}

\begin{lemma}\label{lem:positiveIntegrals}
    Let $f\in L^1_\loc(\timeInt,\R)$ such that $\int_{t_0}^{t_1}f(t)\td t\geq 0$ holds for all $t_0,t_1\in\timeInt,\ t_0\leq t_1$. Then $f(t)\geq 0$ for a.e.~$t\in\timeInt$.
\end{lemma}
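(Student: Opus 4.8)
The plan is to reduce the pointwise sign statement to a statement about the monotonicity of an antiderivative, and then to use the elementary fact that a nondecreasing weakly differentiable function has nonnegative derivative almost everywhere. Concretely, I would fix any reference point $t_0\in\timeInt$ and define the antiderivative
\[
F : \timeInt \to \R, \qquad t \mapsto \int_{t_0}^t f(s)\,\td s.
\]
By \Cref{lem:integralIsAC} applied to the scalar real function $f\in L^1_\loc(\timeInt,\R)$, we have $F\in W^{1,1}_\loc(\timeInt,\R)$ with $\dot F = f$ almost everywhere on $\timeInt$. Thus it suffices to show that $\dot F(t)\geq 0$ for a.e.~$t$.

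The next step is to observe that the hypothesis is exactly the statement that $F$ is monotonically nondecreasing: for any $s,t\in\timeInt$ with $s\leq t$ we have
\[
F(t) - F(s) = \int_s^t f(r)\,\td r \geq 0,
\]
so $F(s)\leq F(t)$. It then remains to prove the real-analysis fact that the weak (equivalently, the a.e.~classical) derivative of a nondecreasing function is nonnegative almost everywhere.

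For this final step I would argue pointwise at points of differentiability. Since $F\in W^{1,1}_\loc(\timeInt,\R)$, its classical derivative $\dot F(t)$ exists and equals the weak derivative for a.e.~$t\in\timeInt$. At any such point $t$, using $F(t+h)\geq F(t)$ for every $h>0$ small enough that $t+h\in\timeInt$, the forward difference quotients satisfy $\frac{F(t+h)-F(t)}{h}\geq 0$, and passing to the limit $h\to 0^+$ yields $\dot F(t)\geq 0$. Combining with $\dot F = f$ a.e.~gives $f(t)\geq 0$ for a.e.~$t\in\timeInt$, as claimed.

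The argument is essentially routine; the only point deserving care is the last step, namely the justification that the derivative of the nondecreasing $F$ is nonnegative wherever it exists. This is where one could instead invoke the Lebesgue differentiation theorem directly---at every Lebesgue point $t$ of $f$ one has $f(t)=\lim_{h\to 0^+}\frac1h\int_t^{t+h} f(r)\,\td r\geq 0$---bypassing the antiderivative entirely. I expect either route to be short, with the main (minor) obstacle being to state cleanly the a.e.~coincidence of weak and classical derivatives for $W^{1,1}_\loc$ functions.
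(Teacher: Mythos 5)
Your proof is correct, but it takes a genuinely different route from the paper. The paper argues directly on the ``bad'' set $E=\set{t\in\timeInt\mid f(t)<0}$: it approximates $E\cap\timeInt_N$ from outside by open sets $\Omega_n$, uses that every open set is a countable disjoint union of intervals (so the hypothesis gives $\int_{\Omega_n}f\geq 0$), and then lets $\abs{\Omega_n\setminus E_N}\to 0$ via dominated convergence to conclude $\int_{E_N}f\geq 0$, which forces $\abs{E_N}=0$. You instead pass to the antiderivative $F(t)=\int_{t_0}^t f(s)\td s$, observe that the hypothesis says exactly that $F$ is nondecreasing, and then use that the classical derivative of $F$ exists a.e.\ and equals $f$ a.e.\ (equivalently, your second variant: at every Lebesgue point $t$ one has $f(t)=\lim_{h\to 0^+}\frac{1}{h}\int_t^{t+h}f\geq 0$). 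Both arguments are sound. The key input you rely on is the Lebesgue differentiation theorem (or the a.e.\ differentiability of locally absolutely continuous functions with derivative equal to the weak one), which is standard and is essentially the same material the paper already cites for \Cref{lem:integralIsAC}, so your appeal to that lemma is legitimate; your route is shorter and arguably more transparent. The paper's route buys something slightly different: it stays at the level of elementary measure theory (outer regularity plus dominated convergence) and never needs pointwise differentiation, only the countable decomposition of open sets into intervals --- a trade-off of self-containedness against brevity. One small point of care in your write-up: when you say the classical derivative of $F\in W^{1,1}_\loc$ exists a.e., this is a statement about the continuous representative, which is unambiguous here precisely because your $F$ is defined as an integral and hence is already locally absolutely continuous; stating this explicitly would close the only loose seam in the argument.
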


\begin{proof}
    Let $\timeInt_N\subseteq\timeInt$ for $N\in\N$ be any sequence of compact intervals such that $\timeInt_N\subseteq\timeInt_{N+1}$ for all $N\in\N$ and $\timeInt=\bigcup_{N\in\N}\timeInt_N$, let $E\coloneqq\set{t\in\timeInt\mid f(t)<0}$ and let $E_N\coloneqq E\cap\timeInt_N$ for all $N\in\N$.

    Fix any $N\in\N$.
    Since $E_N$ is measurable, for every $n\in\N$ there exists an open set $\Omega_n\subseteq\timeInt$ such that $E_N\subseteq\Omega_n$ and $\abs{\Omega_n\setminus E_N}<\frac{1}{n}$. We assume without loss of generality that $\Omega_{n+1}\subseteq\Omega_n$ for all $n\in\N$, up to replacing $\Omega_{n+1}$ with $\Omega_n\cap\Omega_{n+1}$.
    For every $n\in\N$ the set $\Omega_n$ is open, and therefore there are at most countably many disjoint intervals $I_{n,k}$ such that $\Omega_n=\bigcup_k I_{n,k}$. In particular, we have that
    \[
    \int_{\Omega_n}f(t)\td t = \sum_k \int_{I_{n,k}}f(t)\td t \geq 0
    \]
    for all $n\in\N$. It follows that
    \[
    \int_{E_N} f(t)\td t = \int_{\Omega_n} f(t)\td t - \int_{\Omega_n\setminus E_N}f(t)\td t \geq - \int_{\Omega_n\setminus E_N}f(t)\td t,
    \]
    and, therefore,
    \[
    \int_{E_N} f(t)\td t \geq -\lim_{n\to\infty}\int_{\Omega_n\setminus E_N}f(t)\td t = 0,
    \]
    because of the dominated convergence theorem, see e.g. \cite[Theorem 4.2]{Bre10}. Since $f<0$ on $E_N$, we conclude that necessarily $\abs{E_N}=0$.

    Thus, since $E=\bigcup_{N\in\N}E_N$, we deduce that
    \[
    \abs{E} = \abs*{\bigcup_{N\in\N}E_N} \leq \sum_{N\in\N}\abs{E_N} = 0,
    \]
    thus $f\geq 0$ a.e.~on $\timeInt$.
\end{proof}

\begin{lemma}\label{lem:nullIntegrals}
    Let $A\in L^1_\loc(\timeInt,\C^{n,m})$ such that $\int_{t_0}^{t_1}A(t)\td t=0$ for all $t_0,t_1\in\timeInt,\ t_0\leq t_1$. Then $A(t)=0$ for a.e.~$t\in\timeInt$.
\end{lemma}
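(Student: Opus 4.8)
The statement is a direct matrix-valued analogue of \Cref{lem:positiveIntegrals}, so the plan is to reduce the matrix case to the scalar real case already handled there. First I would decompose $A$ into its real and imaginary parts entrywise, writing $A = A_R + \imagUnit A_I$ with $A_R, A_I \in L^1_\loc(\timeInt,\R^{n,m})$. Since $\int_{t_0}^{t_1} A(t)\td t = 0$ for all $t_0 \leq t_1$ and the integral is $\C$-linear, taking real and imaginary parts gives $\int_{t_0}^{t_1} A_R(t)\td t = 0$ and $\int_{t_0}^{t_1} A_I(t)\td t = 0$ separately, for all such $t_0,t_1$. Thus it suffices to treat a real-valued matrix function.

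Next I would argue entrywise. For each pair of indices $(i,j)$, the $(i,j)$ entry $a_{ij} \in L^1_\loc(\timeInt,\R)$ of the real matrix function satisfies $\int_{t_0}^{t_1} a_{ij}(t)\td t = 0$ for all $t_0,t_1 \in \timeInt$ with $t_0 \leq t_1$, because the integral of a matrix function is taken entrywise. To apply \Cref{lem:positiveIntegrals}, which concludes nonnegativity a.e.\ from nonnegativity of all integrals, I would use that the hypothesis $\int_{t_0}^{t_1} a_{ij}\td t = 0$ gives $\int_{t_0}^{t_1} a_{ij}\td t \geq 0$ for all $t_0 \leq t_1$, whence $a_{ij}(t) \geq 0$ a.e.; simultaneously $\int_{t_0}^{t_1}(-a_{ij})\td t \geq 0$ for all $t_0 \leq t_1$, so applying \Cref{lem:positiveIntegrals} to $-a_{ij}$ yields $a_{ij}(t) \leq 0$ a.e. Combining the two gives $a_{ij}(t) = 0$ for a.e.\ $t \in \timeInt$.

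Finally I would assemble the conclusion: the set where some entry of $A$ is nonzero is a finite union (over the $2nm$ real entries) of null sets, hence itself a null set, so $A(t) = 0$ for a.e.\ $t \in \timeInt$. There is no substantive obstacle here; the only point requiring a little care is the bookkeeping that the finitely many ``bad'' sets, one per real entry, all have measure zero and their union is still null, which is immediate since finite unions of null sets are null. The entire argument is a routine reduction to \Cref{lem:positiveIntegrals} applied twice (to $a_{ij}$ and $-a_{ij}$), and no new analytic input beyond that lemma is needed.
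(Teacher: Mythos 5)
Your proof is correct and follows essentially the same route as the paper: reduce to real scalar entries (the paper identifies $\C^{n,m}$ with $\R^{2n,2m}$, which is just your real/imaginary splitting in different packaging) and apply \Cref{lem:positiveIntegrals} to both $a_{ij}$ and $-a_{ij}$. The only cosmetic difference is that you spell out the final null-set bookkeeping, which the paper leaves implicit.
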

\begin{proof}
    If $A=[a_{ij}]\in L^1_\loc(\timeInt,\R^{2n,2m})$, then $a_{ij}\in L^1_\loc(\timeInt,\R)$ satisfies $\int_{t_0}^{t_1}a_{ij}(t)\td t=0$ for all $t_0,t_1\in\timeInt,\ t_0\leq t_1$ and all $i,j$. In particular, applying \Cref{lem:positiveIntegrals} to $a_{ij}$ and $-a_{ij}$, we obtain $a_{ij}(t)=0$ for a.e.~$t\in\timeInt$, for all $i,j$.
    We conclude that also $A(t)=0$ for a.e.~$t\in\timeInt$.
\end{proof}

\begin{lemma}\label{lem:Cholesky}
    Let $Q\in W^{1,p}_\loc(\timeInt,\posDef[n])$ for some $p\in[1,\infty]$.
    Then there exists a lower triangular matrix function $L\in W^{1,p}_\loc(\timeInt,\GL[n])$ with real and positive diagonal entries such that $Q(t)=\ct{L(t)}L(t)$ for all $t\in\timeInt$.
\end{lemma}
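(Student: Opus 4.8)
The plan is to realize $L$ pointwise as the Cholesky factor of $Q$ and then to show that the (inverse) Cholesky map is smooth, so that $L$ inherits the Sobolev regularity of $Q$ through the chain rule. First, fix $t\in\timeInt$. Since $Q(t)\in\posDef[n]$, the classical Cholesky factorization (in the convention $Q=\ct{L}L$) provides a unique lower triangular $L(t)$ with real positive diagonal such that $Q(t)=\ct{L(t)}L(t)$. Uniqueness is standard: if $\ct{L_1}L_1=\ct{L_2}L_2$ with both factors lower triangular with positive diagonal, then $M:=L_1L_2^{-1}$ is lower triangular and satisfies $\ct{M}M=I_n$, so $M$ is unitary and lower triangular, hence diagonal with unimodular entries; since its diagonal $\operatorname{diag}(L_1)\operatorname{diag}(L_2)^{-1}$ is real and positive, $M=I_n$. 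I denote by $\mathcal L\subseteq\C^{n,n}$ the set of lower triangular matrices with real positive diagonal, a real $n^2$-dimensional submanifold, and define $L(t):=\Phi(Q(t))$, where $\Phi:\posDef[n]\to\mathcal L$ is the inverse of the polynomial map
\[
\Psi:\mathcal L\to\posDef[n],\qquad L\mapsto\ct{L}L.
\]

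Second, I would show that $\Phi$ is smooth by means of the inverse function theorem applied to $\Psi$. The map $\Psi$ is real-analytic, with differential $D\Psi(L)\,\delta L=\ct{(\delta L)}L+\ct{L}\,\delta L$. Since $\mathcal L$ and $\HerMat[n]$ both have real dimension $n^2$, it suffices to check that $D\Psi(L)$ is injective. If $\ct{(\delta L)}L+\ct{L}\,\delta L=0$ with $\delta L$ lower triangular and real-diagonal, then, writing $X:=\delta L\,L^{-1}$ (lower triangular) and multiplying the relation on the left by $\ict{L}$ and on the right by $L^{-1}$, one gets $X+\ct{X}=0$. A skew-Hermitian lower triangular matrix has vanishing strictly-lower part and purely imaginary diagonal, while $\operatorname{diag}(X)=\operatorname{diag}(\delta L)\operatorname{diag}(L)^{-1}$ is real; hence $X=0$ and $\delta L=0$. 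Thus $D\Psi(L)$ is an isomorphism for every $L\in\mathcal L$, and since $\Psi$ is a bijection by the pointwise existence and uniqueness established above, its inverse $\Phi$ is of class $\mathcal C^\infty$ (indeed real-analytic).

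Finally, the regularity of $L=\Phi\circ Q$ follows by transferring the smoothness of $\Phi$ through $Q$. Because $\timeInt\subseteq\R$ is one-dimensional, $Q\in W^{1,p}_\loc(\timeInt,\posDef[n])\subseteq\mathcal C(\timeInt,\posDef[n])$, so for every compact $K\subseteq\timeInt$ the image $Q(K)$ is a compact subset of $\posDef[n]$ on which $\Phi$ is $\mathcal C^1$ with bounded differential. Since a $W^{1,p}_\loc$ function of one variable is locally absolutely continuous, the composition $L=\Phi\circ Q$ is locally absolutely continuous, and the chain rule gives $\dot L(t)=D\Phi(Q(t))\,\dot Q(t)$ for a.e.\ $t\in\timeInt$. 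On each compact $K$ the factor $D\Phi(Q(\cdot))$ is continuous, hence bounded, so $\dot L\in L^p(K)$ whenever $\dot Q\in L^p(K)$, while $L\in\mathcal C$ gives $L\in L^p_\loc$; therefore $L\in W^{1,p}_\loc(\timeInt,\mathcal L)\subseteq W^{1,p}_\loc(\timeInt,\GL[n])$, with $Q=\ct{L}L$ and $L$ lower triangular with positive diagonal by construction. The main obstacle is the invertibility of $D\Psi$, that is, the injectivity computation above; once that is secured, the passage to Sobolev regularity is a routine application of the $\mathcal C^1$-composition chain rule for locally absolutely continuous functions.
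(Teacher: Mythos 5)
Your proof is correct, but it takes a genuinely different route from the paper. The paper argues by induction on the dimension $n$, using the explicit Cholesky recursion: writing $Q=\bmat{\alpha & b \\ \ct{b} & Q_1}$, the factor is built as $L=\bmat{\alpha^{1/2} & \alpha^{-1/2}b \\ 0 & L_1}$ with $L_1$ the Cholesky factor of the Schur complement, and the $W^{1,p}_\loc$ regularity follows entrywise because $s\mapsto s^{1/2}$ and $s\mapsto s^{-1}$ are smooth on $(0,\infty)$ and $W^{1,p}_\loc$ is an algebra. You instead establish smoothness of the abstract Cholesky map $\Phi=\Psi^{-1}$ once and for all, via the inverse function theorem applied to $\Psi(L)=\ct{L}L$ on the $n^2$-dimensional real vector space of lower triangular matrices with real diagonal; your injectivity computation for $D\Psi(L)$ (reducing to the observation that a lower triangular skew-Hermitian matrix with real diagonal vanishes) is the key step and is carried out correctly, as is the dimension count making injectivity suffice. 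The final transfer of regularity through the a.e.\ chain rule for a $\mathcal C^1$ map composed with a locally absolutely continuous function of one variable is sound, including the $p=\infty$ case. What each approach buys: the paper's induction is elementary and self-contained, needing only scalar calculus facts already used elsewhere in the paper; your argument is structurally cleaner and more general, since it shows the Cholesky map itself is real-analytic on $\posDef[n]$, so the same one-line composition argument yields the statement for any regularity class stable under composition with smooth maps ($\mathcal C^k$, analytic, etc.), at the cost of invoking the inverse function theorem and a multivariate chain rule rather than purely scalar tools.
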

\begin{proof}
    We prove the statement by induction on the dimension of $Q$, exploiting the explicit construction of the Cholesky decomposition.
    Write
    \[
    Q = \bmat{\alpha & b \\ \ct{b} & Q_1}
    \]
    with $\alpha\in W^{1,p}_\loc(\timeInt,\posR)$, $b\in W^{1,p}_\loc(\timeInt,\mathbb C^{n-1})$ and $Q_1\in W^{1,p}_\loc(\timeInt,\posDef[n-1])$.
    The pointwise Cholesky decomposition of $Q$ is then given by $\ct{L}L$ with
    \[
    L = \bmat{\alpha^{\tfrac{1}{2}} & \alpha^{-\tfrac{1}{2}}b \\ 0 & L_1},
    \]
    where $L_1$ is constructed recursively as the Cholesky factor of $Q_1+\alpha^{-1}\ct{b}b$.
    Let us denote by $f,g\in\mathcal C^\infty(\posR,\posR)$ the functions
    \[
    f:\posR\to\posR,\ s\mapsto s^{\tfrac{1}{2}} \qquad\text{and}\qquad g:\posR\to\posR,\ s\mapsto s^{-1}.
    \]
Then, the functions $\alpha^{\tfrac{1}{2}}=f\circ\alpha$ and $\alpha^{-\tfrac{1}{2}}=g\circ f\circ\alpha$ are in $W^{1,p}_\loc$, and so is $\alpha^{-\tfrac{1}{2}}b$.
    If $n=1$, then only $\alpha^{\tfrac{1}{2}}$ appears and we conclude that $L$ is in $W^{1,p}_\loc$.
    Otherwise, $L_1$ is also in $W^{1,p}_\loc$ by the induction hypothesis, so we have $L$ is in $W^{1,p}_\loc$.
\end{proof}

\section{Solution theory}
In this section, we recall conditions for the existence and uniqueness of solutions of (linear) time-varying equations, and the properties of the associated fundamental solution and state-transition matrices.
We start by recalling that the conditions that we assumed for the coefficient matrices in \eqref{eq:tv_system} guarantee the existence of global solutions and their uniqueness up to fixing one point.

\begin{theorem}\label{thm:solution_L1}
    Let $A\in L^1_\loc(\timeInt,\C^{n,n})$ and $b\in L^1_\loc(\timeInt,\C^n)$. Then for every $(t_0,x_0)\in\timeInt\times\C^n$ the ordinary differential equation
 \begin{equation}\label{eq:solution_L1}
        \dot x = A(t)x + b(t)
    \end{equation}
    has exactly one solution $x\in W^{1,1}_\loc(\timeInt,\C^n)$ such that $x(t_0)=x_0$.
\end{theorem}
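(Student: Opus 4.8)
The plan is to establish existence and uniqueness first on an arbitrary compact subinterval $[a,b]\subseteq\timeInt$ with $t_0\in[a,b]$, and then to patch the local solutions together by uniqueness. On $[a,b]$ I would reformulate \eqref{eq:solution_L1} together with the initial condition as the integral equation
\[
x(t) = x_0 + \int_{t_0}^t\bigl(A(s)x(s)+b(s)\bigr)\td s.
\]
A function $x\in W^{1,1}([a,b],\C^n)$ solves \eqref{eq:solution_L1} for a.e.~$t$ with $x(t_0)=x_0$ if and only if it is a continuous solution of this integral equation. Indeed, $W^{1,1}([a,b],\C^n)\subseteq\mathcal C([a,b],\C^n)$, so any such $x$ is bounded, whence $Ax\in L^1([a,b],\C^n)$ by the generalized Hölder inequality (\Cref{thm:genHolder}) with exponents $1,\infty,1$; the integrand $Ax+b$ then lies in $L^1$, and \Cref{lem:integralIsAC} identifies the weak derivative of the right-hand side with $Ax+b$.

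I would then set up a Banach fixed point argument for the operator
\[
T : \mathcal C([a,b],\C^n)\to\mathcal C([a,b],\C^n), \qquad (Tx)(t)=x_0+\int_{t_0}^t\bigl(A(s)x(s)+b(s)\bigr)\td s,
\]
which is well defined by the same reasoning, its image consisting of (absolutely) continuous functions. Since $A$ is only integrable and not bounded, the supremum norm will in general \emph{not} make $T$ a contraction; the key device is a Bielecki-type weighted norm. Setting $\alpha(t)\coloneqq\bigl|\int_{t_0}^t\norm{A(s)}_2\td s\bigr|$, which is finite and continuous on $[a,b]$, I would equip $\mathcal C([a,b],\C^n)$ with $\norm{x}_*\coloneqq\sup_{t\in[a,b]}e^{-2\alpha(t)}\norm{x(t)}_2$. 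This norm is equivalent to the supremum norm, because $e^{-2\alpha}$ is bounded between positive constants on the compact interval, so the space remains complete.

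A direct estimate then shows that $T$ is a contraction: for $x,y\in\mathcal C([a,b],\C^n)$ and $t\geq t_0$,
\[
\norm{(Tx)(t)-(Ty)(t)}_2 \leq \int_{t_0}^t\norm{A(s)}_2\norm{x(s)-y(s)}_2\td s \leq \norm{x-y}_*\int_{t_0}^t\alpha'(s)e^{2\alpha(s)}\td s = \tfrac12\bigl(e^{2\alpha(t)}-1\bigr)\norm{x-y}_*,
\]
using $\alpha(t_0)=0$ and $\tfrac{\td}{\td s}e^{2\alpha(s)}=2\alpha'(s)e^{2\alpha(s)}$; the symmetric computation handles $t\leq t_0$. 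Multiplying by $e^{-2\alpha(t)}$ gives $\norm{Tx-Ty}_*\leq\tfrac12\norm{x-y}_*$. By the Banach fixed point theorem $T$ has a unique fixed point $x\in\mathcal C([a,b],\C^n)$, which by the first paragraph is the unique $W^{1,1}([a,b],\C^n)$ solution of \eqref{eq:solution_L1} with $x(t_0)=x_0$. Finally I would exhaust $\timeInt$ by an increasing sequence of compact intervals $[a_N,b_N]\ni t_0$; local uniqueness forces the corresponding solutions to agree on overlaps, so they glue to a single $x\in W^{1,1}_\loc(\timeInt,\C^n)$ solving the equation on all of $\timeInt$, and global uniqueness is inherited from the local one.

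I expect the main subtlety to be precisely the non-boundedness of $\norm{A}_2$: a naive Picard or supremum-norm contraction only works on short intervals, and the clean way to obtain a genuine contraction on all of $[a,b]$ at once, without iterating over small subintervals, is to use the integrated weight $\alpha(t)=\int_{t_0}^t\norm{A}_2$ in the Bielecki norm rather than a linear weight $Lt$. Verifying that $T$ actually maps continuous functions into $W^{1,1}$ (so that the fixed point lands in the right regularity class) is the other point requiring the integrability lemma \Cref{lem:integralIsAC} rather than the classical fundamental theorem of calculus.
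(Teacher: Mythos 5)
Your proof is correct, but it takes a genuinely different route from the paper: the paper gives no argument of its own and simply defers to the Carath\'eodory existence--uniqueness theorem of \cite[Theorem 3]{Fil88}, up to identifying $\C$ with $\R^2$. Your route --- rewriting the initial value problem as an integral equation (justified via \Cref{thm:genHolder} with exponents $(1,\infty,1)$ and \Cref{lem:integralIsAC}), running a Banach fixed point argument for the Picard operator $T$ on $\mathcal C([a,b],\C^n)$ equipped with the Bielecki norm $\norm{x}_*=\sup_{t}e^{-2\alpha(t)}\norm{x(t)}_2$ with $\alpha(t)=\abs[\big]{\int_{t_0}^t\norm{A(s)}_2\td s}$, and then gluing along a compact exhaustion of $\timeInt$ --- is sound in every step. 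In particular, the contraction estimate works on both sides of $t_0$ (for $t\leq t_0$ one has $\alpha'=-\norm{A}_2$ a.e., producing the same bound $\tfrac12(e^{2\alpha(t)}-1)$), the identity $\tfrac{\td}{\td s}e^{2\alpha(s)}=2\alpha'(s)e^{2\alpha(s)}$ is legitimate because $\alpha\in W^{1,1}$, and the integrated weight is exactly what absorbs an $L^1$ (rather than $L^\infty$) coefficient: it gives a contraction with constant $\tfrac12$ on the whole compact interval at once, so no subdivision into short intervals, no Gronwall lemma, and no continuation argument is needed. As for what each approach buys: the paper's citation is shorter and covers general nonlinear Carath\'eodory right-hand sides, whereas your argument is self-contained, exploits linearity to get a global-in-time contraction, and uses only tools already in the paper's toolbox (\Cref{thm:genHolder}, \Cref{lem:integralIsAC}, and the Banach fixed point theorem of \cite[Theorem 5.7]{Bre10}, which the paper itself invokes in the proof of \Cref{thm:RDE_localSolutions}).
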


\begin{proof}
    The proof is exactly that of \cite[Theorem 3]{Fil88}, up to identifying $\C$ with $\R^2$.
\end{proof}

\begin{corollary}\label{cor:solution_tv}
    For every initial condition $(t_0,x_0)\in\timeInt\times\C^n$ and input $u\in L^2_\loc(\timeInt,\C^m)$, the LTV system \eqref{eq:tv_system} has exactly one solution $x\in W^{1,1}_\loc(\timeInt,\C^n)$ that satisfies $x(t_0)=x_0$. Furthermore, the corresponding output $y$ is an element of $L^2_\loc(\timeInt,\C^m)$.
\end{corollary}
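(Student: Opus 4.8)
The plan is to reduce the statement entirely to \Cref{thm:solution_L1}, using the generalized Hölder inequality (\Cref{thm:genHolder}) to verify the integrability hypotheses. First I would set $b \coloneqq Bu$ and observe that the state equation in \eqref{eq:tv_system} is precisely the linear ODE $\dot x = A(t)x + b(t)$ treated in \Cref{thm:solution_L1}. The only thing to check before invoking that theorem is that $b \in L^1_\loc(\timeInt,\C^n)$: since $B \in L^2_\loc(\timeInt,\C^{n,m})$ and $u \in L^2_\loc(\timeInt,\C^m)$, applying \Cref{thm:genHolder} with $p = q = 2$ and $r = 1$ on every compact subinterval yields $Bu \in L^1_\loc(\timeInt,\C^n)$. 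With $A \in L^1_\loc(\timeInt,\C^{n,n})$ given, \Cref{thm:solution_L1} then provides exactly one $x \in W^{1,1}_\loc(\timeInt,\C^n)$ with $x(t_0) = x_0$, which settles existence and uniqueness.

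For the output, I would write $y = Cx + Du$ and treat the two summands separately. Recall from the discussion following \eqref{eq:tv_system} that $W^{1,1}_\loc(\timeInt,\C^n) \subseteq \mathcal C(\timeInt,\C^n) \subseteq L^\infty_\loc(\timeInt,\C^n)$, so $x \in L^\infty_\loc$. Applying \Cref{thm:genHolder} once with $p = 2$, $q = \infty$, $r = 2$ to $C \in L^2_\loc$ and $x \in L^\infty_\loc$, and once with $p = \infty$, $q = 2$, $r = 2$ to $D \in L^\infty_\loc$ and $u \in L^2_\loc$, gives $Cx \in L^2_\loc$ and $Du \in L^2_\loc$, hence $y \in L^2_\loc(\timeInt,\C^m)$.

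There is no genuine obstacle here; the content is a bookkeeping exercise in matching the function-space assumptions on the coefficients to the hypotheses of \Cref{thm:solution_L1} and to the exponents in the generalized Hölder inequality. The only point requiring a moment of care is the embedding $W^{1,1}_\loc \subseteq L^\infty_\loc$, which legitimizes pairing the merely $L^2_\loc$ coefficient $C$ against the state $x$ and is available precisely because the time domain is one-dimensional.
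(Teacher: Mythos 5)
Your proposal is correct and follows essentially the same route as the paper: Hölder's inequality gives $Bu\in L^1_\loc$, \Cref{thm:solution_L1} then yields existence and uniqueness of $x\in W^{1,1}_\loc(\timeInt,\C^n)$, and the generalized Hölder inequality applied to the output equation (using $x\in L^\infty_\loc$ via the one-dimensional embedding) gives $y\in L^2_\loc$. Your write-up merely makes explicit the exponent bookkeeping and the embedding $W^{1,1}_\loc\subseteq\mathcal C\subseteq L^\infty_\loc$ that the paper leaves implicit.
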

\begin{proof}
    By H\"older's inequality, we have $b\coloneqq Bu\in L^1_\loc(\timeInt,\C^m)$, and therefore we can apply \Cref{thm:solution_L1} to obtain a unique solution $x\in W^{1,p}_\loc(\timeInt,\C^n)$.
    Applying then the generalized H\"older inequality (\Cref{thm:genHolder}) to the output equation, we get $y=Cx+Du\in L^2_\loc(\timeInt,\C^m)$.
\end{proof}

\noindent Next we  study the  regularity properties of the fundamental solution matrix associated to the homogeneous differential equation $\dot x=A(t)x$.
For that, we first need the following lemma.
\begin{lemma}\label{lem:inverseContinuous}
    Let $X:\timeInt\to\GL[n]$ and $X^{-1}:\timeInt\to\C^{n,n},\ t\mapsto X(t)^{-1}$. Then the following statements hold:
    \begin{enumerate}
  \item\label{it:inverseContinuous:1} If $X\in\mathcal C(\timeInt,\GL[n])$, then  $X^{-1}\in\mathcal C(\timeInt,\GL[n])$.
        \item If $X\in W^{1,1}_\loc(\timeInt,\GL[n])$, then  $X^{-1}\in W^{1,1}_\loc(\timeInt,\GL[n])$.
    \end{enumerate}
\end{lemma}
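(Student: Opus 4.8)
The statement to prove is Lemma~\ref{lem:inverseContinuous}, which asserts two regularity-inheritance properties for the pointwise matrix inverse $X^{-1}$. Let me plan a proof.

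\textbf{Overview of the approach.} The key structural fact is that matrix inversion $\iota:\GL[n]\to\GL[n],\ M\mapsto M^{-1}$ is a smooth map, with explicit formula $\iota(M)=\frac{1}{\det M}\operatorname{adj}(M)$ where $\operatorname{adj}$ is the adjugate. The entries of $\operatorname{adj}(M)$ are polynomials in the entries of $M$, and $\det M$ is a polynomial that is nonzero on $\GL[n]$. Thus $\iota$ is a rational, hence $C^\infty$, map on the open set $\GL[n]$. Both parts of the lemma then follow by composing $X$ with $\iota$ and using standard composition-with-smooth-map results in the respective function spaces.

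\textbf{Part (i), continuity.} The plan is to observe that $X^{-1}=\iota\circ X$ where $\iota$ is continuous on $\GL[n]$ (being rational with nonvanishing denominator). The composition of continuous maps is continuous, so $X^{-1}\in\mathcal C(\timeInt,\C^{n,n})$. Since $X^{-1}(t)=X(t)^{-1}$ is invertible for every $t$ by construction (its inverse is $X(t)$), we in fact have $X^{-1}\in\mathcal C(\timeInt,\GL[n])$, as claimed.

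\textbf{Part (ii), $W^{1,1}_\loc$-regularity.} The plan is to fix any compact interval $[a,b]\subseteq\timeInt$ and work there. First I would use part (i): since $W^{1,1}_\loc(\timeInt,\GL[n])\subseteq\mathcal C(\timeInt,\GL[n])$ in the one-dimensional time domain, $X$ is continuous, hence so is $X^{-1}=\iota\circ X$, and moreover $X$ maps the compact set $[a,b]$ into a compact subset $\mathcal K\subseteq\GL[n]$ on which $\iota$ and its derivative $D\iota$ are bounded (by smoothness of $\iota$ and compactness of $\mathcal K$). This boundedness is what makes the chain rule usable for Sobolev functions. Concretely, for a.e.~$t$ the classical derivative formula
\[
\dd{}{t}\pset[\big]{X(t)^{-1}} = -X(t)^{-1}\dot X(t)X(t)^{-1}
\]
holds, obtained by differentiating the identity $X(t)^{-1}X(t)=I_n$. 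I would then verify that the right-hand side lies in $L^1([a,b],\C^{n,n})$: indeed $X^{-1}$ is continuous, hence in $L^\infty([a,b])$, while $\dot X\in L^1([a,b])$ by hypothesis, so the generalized H\"older inequality (\Cref{thm:genHolder}) gives $-X^{-1}\dot X X^{-1}\in L^1$. Finally I would confirm that this a.e.-derivative is genuinely the weak derivative of $X^{-1}$, so that $X^{-1}\in W^{1,1}([a,b],\C^{n,n})$; since $[a,b]$ was arbitrary and $X^{-1}$ is pointwise invertible, $X^{-1}\in W^{1,1}_\loc(\timeInt,\GL[n])$.

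\textbf{Anticipated main obstacle.} The routine content (the explicit derivative formula and its $L^1$ bound) is easy; the delicate point is justifying that the pointwise a.e.~formula is actually the \emph{weak} derivative, rather than merely a classical derivative existing almost everywhere. The clean way around this is to invoke the chain rule for composition of a $W^{1,1}$ function with a $C^1$ map having bounded derivative on the relevant compact range $\mathcal K$; equivalently, one approximates $X$ by smooth functions converging in $W^{1,1}([a,b])$ and uniformly (possible in one time dimension), applies the classical identity to the approximants where it is elementary, and passes to the limit using the uniform bounds on $\iota$ and $D\iota$ over $\mathcal K$ to control convergence of both $\iota\circ X$ and its derivative in $L^1$. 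This limiting argument, rather than the formula itself, is where the real care is needed.
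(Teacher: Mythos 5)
Your proposal is correct and takes essentially the same route as the paper: part (i) via the rationality of matrix inversion (the adjugate/Cramer formula with nonvanishing determinant), and part (ii) via the derivative identity $\dd{}{t}\pset[\big]{X^{-1}}=-X^{-1}\dot X X^{-1}$ combined with the generalized H\"older inequality (\Cref{thm:genHolder}) to conclude the derivative lies in $L^1_\loc$. The only differences are that you carefully justify that this identity is indeed the \emph{weak} derivative (via composition with a smooth map on the compact range, or by smooth approximation), a step the paper simply asserts ``from matrix differential calculus,'' and that you carry the correct sign, whereas the paper's proof writes $X^{-1}\dot XX^{-1}$ without the minus sign --- a harmless slip, since only the $L^1$ bound is used.
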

\begin{proof}
    Using that the inverse is $X^{-1}=\det(X)^{-1}\operatorname{adj}(X)$, where for all $t\in\timeInt$, $\operatorname{adj}(X(t))\in\GL[n]$ denotes the adjugate matrix of $X(t)$.
    \begin{enumerate}
        \item Suppose that $X$ is continuous. Then $\det(X)$ and $\operatorname{adj}(X)$ are also continuous. Since $\det(X(t))\neq 0$ for all $t\in\timeInt$, we have that also $\det(X(t))^{-1}$ is continuous and thus, $X^{-1}=\det(X)^{-1}\operatorname{adj}(X)\in\mathcal C(\timeInt,\GL[n])$.
        \item Due to \ref{it:inverseContinuous:1}., we have that $X^{-1}\in\mathcal C\subseteq L^\infty_\loc\subseteq L^1_\loc$.
        From matrix differential calculus, we know that $\dd{}{t}(X^{-1})=X^{-1}\dot XX^{-1}$ is the weak derivative of $X^{-1}$.
        By the generalized Hölder inequality (\Cref{thm:genHolder}) it follows that $\dd{}{t}(X^{-1})\in L^1_\loc(\timeInt,\GL[n])$, and therefore $X^{-1}\in W^{1,1}_\loc(\timeInt,\GL[n])$.
        \qedhere
    \end{enumerate}
\end{proof}

\noindent We proceed by studying the properties of the fundamental solution matrix.
\begin{theorem}\label{thm:fundamentalSolution}
    Let $A\in L^1_\loc(\timeInt,\C^{n,n})$. Then the following statements hold:
    \begin{enumerate}
        \item For every $t_0\in\timeInt$ the homogeneous ordinary matrix differential equation $\dot X(t)=A(t)X(t)$ has exactly one solution $X\in W^{1,1}_\loc(\timeInt,\C^{n,n})$ such that $X(t_0)=I_n$.
\item\label{it:fundamentalSolution:2} For every $(t_0,x_0)\in\timeInt\times\C^n$ the unique solution of the homogeneous differential equation $\dot x=A(t)x$ that satisfies the initial condition $x(t_0)=x_0$ can be expressed as $x(t)=X(t)x_0$ for all $t\in\timeInt$.
\item $X(t)$ is invertible for all $t\in\timeInt$. In particular $X^{-1}\in W^{1,1}_\loc(\timeInt,\C^{n,n})$.
\item For every $(t_0,x_0)\in\timeInt\times\C^n$ the unique solution $x\in W^{1,1}_\loc(\timeInt,\C^n)$ of the inhomogeneous differential equation \eqref{eq:solution_L1} that satisfies the initial condition $x(t_0)=x_0$, can be expressed as
\begin{equation}\label{eq:solution_with_fundSol}
            x(t) = X(t)\pset*{ x_0 + \int_{t_0}^t X^{-1}(s)b(s)\td s }
        \end{equation}
        for all $t\in\timeInt$.
    \end{enumerate}
\end{theorem}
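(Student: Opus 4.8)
The plan is to reduce every claim to the vector existence-and-uniqueness result of Theorem~\ref{thm:solution_L1}, together with the regularity of the pointwise inverse from Lemma~\ref{lem:inverseContinuous}.

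For the first claim I would write the matrix equation $\dot X = A(t)X$ columnwise: denoting by $x_j$ the $j$-th column of $X$ and by $e_j$ the $j$-th standard basis vector, the condition $X(t_0)=I_n$ is equivalent to the $n$ decoupled homogeneous problems $\dot x_j = A(t)x_j$, $x_j(t_0)=e_j$. Each has exactly one solution $x_j \in W^{1,1}_\loc(\timeInt,\C^n)$ by Theorem~\ref{thm:solution_L1} (with $b\equiv 0$), and assembling the columns yields the unique $X \in W^{1,1}_\loc(\timeInt,\C^{n,n})$. The second claim is then immediate from linearity: for $x_0 = \sum_j c_j e_j$ the function $t \mapsto X(t)x_0 = \sum_j c_j x_j(t)$ solves $\dot x = A(t)x$ with value $x_0$ at $t_0$, and it is the unique such solution again by Theorem~\ref{thm:solution_L1}.

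For invertibility I would argue by contradiction, thereby avoiding Abel's determinant formula. Suppose $X(t_1)v = 0$ for some $t_1 \in \timeInt$ and $v \neq 0$. Then $x(t) := X(t)v$ solves $\dot x = A(t)x$ with $x(t_1)=0$; but the zero function solves the same initial value problem posed at time $t_1$, so uniqueness in Theorem~\ref{thm:solution_L1} forces $x \equiv 0$, contradicting $x(t_0) = X(t_0)v = v \neq 0$. Hence $X(t) \in \GL[n]$ for every $t$, i.e. $X \in W^{1,1}_\loc(\timeInt,\GL[n])$, and Lemma~\ref{lem:inverseContinuous} gives $X^{-1} \in W^{1,1}_\loc(\timeInt,\GL[n]) \subseteq W^{1,1}_\loc(\timeInt,\C^{n,n})$. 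For the variation-of-constants formula I would set $c(t) := x_0 + \int_{t_0}^t X^{-1}(s)b(s)\td s$ and $x := Xc$. Since $X^{-1} \in \mathcal C \subseteq L^\infty_\loc$ and $b \in L^1_\loc$, the generalized Hölder inequality (Theorem~\ref{thm:genHolder}) gives $X^{-1}b \in L^1_\loc$, so $c \in W^{1,1}_\loc(\timeInt,\C^n)$ with $\dot c = X^{-1}b$ and $c(t_0)=x_0$ by Lemma~\ref{lem:integralIsAC}. Both $X$ and $c$ are continuous, hence in $L^\infty_\loc$, so the Leibniz rule for the product of $W^{1,1}_\loc$ functions applies (each term $\dot X c$ and $X\dot c$ lies in $L^1_\loc$ by Theorem~\ref{thm:genHolder}), giving $x \in W^{1,1}_\loc$ with $\dot x = \dot X c + X\dot c = A(t)Xc + X X^{-1}b = A(t)x + b$ and $x(t_0) = X(t_0)x_0 = x_0$. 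Uniqueness is once more Theorem~\ref{thm:solution_L1}, so \eqref{eq:solution_with_fundSol} is indeed the solution.

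The only genuinely delicate point is the differentiation in the last step: one must confirm that the pointwise product of two $W^{1,1}_\loc$ matrix/vector functions is again in $W^{1,1}_\loc$ with the expected weak derivative. This rests on the one-dimensional embeddings $W^{1,1}_\loc \subseteq \mathcal C \subseteq L^\infty_\loc$ and on the integrability of the two derivative terms via the generalized Hölder inequality; I would state this product rule explicitly rather than treat it as folklore. Everything else is routine bookkeeping on top of Theorem~\ref{thm:solution_L1} and Lemma~\ref{lem:inverseContinuous}.
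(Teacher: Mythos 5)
Your proof is correct and follows essentially the same route as the paper: existence and uniqueness of $X$ via Theorem~\ref{thm:solution_L1} (the paper vectorizes $\dot X=AX$ into the $(I_n\otimes A)$-system, which decouples into exactly your column problems), the identical contradiction argument for pointwise invertibility followed by Lemma~\ref{lem:inverseContinuous}, and the same ingredients (Theorem~\ref{thm:genHolder}, Lemma~\ref{lem:integralIsAC}, the Leibniz rule in $W^{1,1}_\loc$) for the variation-of-constants formula. The only cosmetic difference is in the last part, where you verify directly that $X(t)\bigl(x_0+\int_{t_0}^t X^{-1}(s)b(s)\td s\bigr)$ solves the inhomogeneous problem and invoke uniqueness, whereas the paper subtracts $Xf$ from the unique solution $x$ and shows the difference solves the homogeneous problem; both hinge on the same product-rule justification that the paper compresses into the remark that $W^{1,1}_\loc$ is an algebra, and which you rightly flag as worth stating explicitly.
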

\begin{proof}
\begin{enumerate}
\item The existence and uniqueness of the solution $X\in W^{1,1}_\loc(\timeInt,\C^{n,n})$ follows immediately from \Cref{thm:solution_L1}, reinterpreting $\dot X=AX$ as $\dd{}{t}\mathrm{vec}(X)=(I_n\otimes A(t))\mathrm{vec}(X)$ under vectorization, where $\otimes$ represents the Kronecker product, and applying \Cref{thm:solution_L1}.
\item Let $x(t)\coloneqq X(t)x_0$. Then $\dot x(t)=\dot X(t)x_0=A(t)X(t)x_0=A(t)x(t)$ and $x(t_0)=A(t_0)x_0=x_0$.
\item Suppose for the sake of contradiction that there exists $t_1\in\timeInt$ for which $X(t_1)$ is singular, i.e. there is $x_1\in\C^n\setminus\{0\}$ such that $X(t_1)x_1=0$.
Because of \ref{it:fundamentalSolution:2}., both $x(t)\coloneqq X(t)x_1$ and $\wt x(t)\equiv 0$ are solutions of $\dot x(t)=A(t)x(t)$ satisfying $x(t_1)=\wt x(t_1)=0$.
It follows that $x=\wt x$, and therefore $x_1=X(t_0)x_1=x(t_0)=\wt x(t_0)=0$, in contradiction with $x_1\neq 0$.
 Therefore,  $X(t)$ is invertible for all $t\in\timeInt$, and by \Cref{lem:inverseContinuous} it follows that $X^{-1}\in W^{1,1}_\loc(\timeInt,\GL[n])$.
\item Define
\[
  f:\timeInt\to\C^{n},\qquad t\mapsto\int_{t_0}^{t}X^{-1}(s)b(s)\td s
\]
and $\wt x\coloneqq x-Xf$.
By the generalized Hölder inequality (\Cref{thm:genHolder}), $X^{-1}b\in L^1_\loc(\timeInt,C^n)$. Thus, $f\in W^{1,1}_\loc(\timeInt,\C^n)$ with $\dot{f}=X^{-1}b$, by \Cref{lem:integralIsAC}.
Since $W^{1,1}_\loc(\timeInt,\C)$ is an algebra, we get from $X\in W^{1,1}_\loc(\timeInt,\C^n)$ that $\wt x=x-Xf\in W^{1,1}_\loc(\timeInt,\C^n)$.
Furthermore, we obtain that
\[
    \dot{\wt x} = \dot x - \dot Xf - X\dot f = Ax + b - AXf - XX^{-1}b = A(x-Xf) = A\wt x
\]
and $\wt x(t_0)=x_0$. Thus $\wt x(t)=X(t)x_0$ for all $t\in\timeInt$, because of \eqref{it:fundamentalSolution:2}.
We conclude that
\[
        x(t) = \wt x(t) + X(t)f(t) = X(t)x_0 + X(t)\int_{t_0}^{t}X^{-1}(s)b(s)\td s = X(t)\pset*{ x_0 + \int_{t_0}^{t}X^{-1}(s)b(s)\td s}
\]
holds for all $t\in\timeInt$. \qedhere
    \end{enumerate}
\end{proof}

\noindent Many of the properties stated in \Cref{thm:fundamentalSolution} also apply to the state-transition matrix. We first need the following lemma.
\begin{lemma}\label{lem:Fubini_generalized}
    Let $\timeInt_1,\timeInt_2\subseteq\R$ be measurable sets and $u\in W^{1,1}(\timeInt_1\times\timeInt_2,\R)$ (resp.~$u\in W_\loc^{1,1}(\timeInt_1\times\timeInt_2,\R)$). Then $u_s\coloneqq u(\cdot,s)\in W^{1,1}(\timeInt_1,\R)$ (resp.~$u_s\in W^{1,1}_\loc(\timeInt_1,\R)$) with $\dot u_s=\pd{u}{t}(\cdot,s)$ for a.e.~$s\in\timeInt_2$.
\end{lemma}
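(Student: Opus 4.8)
The plan is to prove this through the defining integration-by-parts identity for weak derivatives, combined with Fubini's theorem and a density argument in the test-function space. Throughout I would treat $\timeInt_1,\timeInt_2$ as open intervals, which is the only case used in the paper (for the state-transition matrix); the general measurable case reduces to this by passing to interiors, and the $W^{1,1}_\loc$ statement reduces to the $W^{1,1}$ statement by a countable exhaustion of $\timeInt_1\times\timeInt_2$ by bounded open rectangles. So I would first focus on $u\in W^{1,1}(\timeInt_1\times\timeInt_2,\R)$ on a bounded rectangle. Setting $g\coloneqq\partial u/\partial t\in L^1(\timeInt_1\times\timeInt_2,\R)$, Fubini–Tonelli immediately gives that both slices $u_s\coloneqq u(\cdot,s)$ and $g_s\coloneqq g(\cdot,s)$ lie in $L^1(\timeInt_1,\R)$ for a.e.~$s\in\timeInt_2$. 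The goal is to upgrade this to $u_s\in W^{1,1}(\timeInt_1)$ with $\dot u_s=g_s$ for a.e.~$s$.

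The core step is to test the weak-derivative identity against product functions. For $\psi\in C_c^\infty(\timeInt_1)$ and $\eta\in C_c^\infty(\timeInt_2)$ I would insert $\varphi(t,s)=\psi(t)\eta(s)$ into the definition $\int u\,\partial_t\varphi=-\int g\,\varphi$ and use Fubini to factor the $t$- and $s$-integrals, obtaining
\[
\int_{\timeInt_2}\eta(s)\Big(\int_{\timeInt_1}u(t,s)\psi'(t)\,dt + \int_{\timeInt_1}g(t,s)\psi(t)\,dt\Big)\,ds = 0.
\]
The inner bracket is an $L^1(\timeInt_2)$ function of $s$, again by Fubini, so since $\eta$ is arbitrary the fundamental lemma of the calculus of variations yields that for a.e.~$s$ — a priori depending on $\psi$ — the bracket vanishes, i.e.~$\int_{\timeInt_1}u_s\psi' = -\int_{\timeInt_1}g_s\psi$.

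To remove the dependence of the null set on $\psi$, I would fix a countable family $\{\psi_k\}\subseteq C_c^\infty(\timeInt_1)$ that is dense in the sense that every $\psi\in C_c^\infty(\timeInt_1)$ is a limit of some subsequence $\psi_{k_j}\to\psi$ with $\psi_{k_j}'\to\psi'$ uniformly and with supports contained in a fixed compact set; such a family exists by separability of $C_c^\infty$. Applying the previous step to each $\psi_k$ gives a single null set $N\subseteq\timeInt_2$ such that for every $s\notin N$ the slice identity holds for all $k$ simultaneously. For such $s$, since $u_s,g_s\in L^1(\timeInt_1)$, dominated convergence lets me pass to the limit along the approximating sequence and conclude the identity for every $\psi\in C_c^\infty(\timeInt_1)$. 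Hence $g_s$ is the weak derivative of $u_s$, so $u_s\in W^{1,1}(\timeInt_1)$ with $\dot u_s=g_s=\partial_t u(\cdot,s)$, as claimed.

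The main obstacle is precisely this uniformity step: the elementary Fubini computation produces, for each individual test function, a null set outside of which the slice identity holds, and the whole content of the lemma is that a \emph{single} null set works for all test functions at once. Making this rigorous requires the separability of the test-function space together with the a.e.~finiteness of $\|u_s\|_{L^1}$ and $\|g_s\|_{L^1}$ (furnished by Fubini) to justify the passage to the limit; everything else is routine. The localized $W^{1,1}_\loc$ conclusion then follows by applying the bounded-rectangle case on each member of a countable open cover of $\timeInt_1\times\timeInt_2$ and taking the union of the finitely-many-per-slice null sets.
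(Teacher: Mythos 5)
Your proposal is correct and follows the same core strategy as the paper's proof: apply Fubini to get the slices $u_s$ and $\partial u/\partial t(\cdot,s)$ in $L^1$ for a.e.\ $s$, then test the weak-derivative identity of $u$ against separated product test functions $\psi(t)\eta(s)$ and factor the integrals. The one genuine difference is your countable-density step. The paper's own proof passes from ``$\int_{\timeInt_2}\eta(s)\bigl(\int_{\timeInt_1}u_s\psi'+\int_{\timeInt_1}g_s\psi\bigr)\td s=0$ for all $\eta$'' directly to the slice identity holding for a.e.\ $s$, without addressing that the exceptional null set produced by the fundamental lemma a priori depends on the chosen $\psi$; since $W^{1,1}(\timeInt_1)$ membership of $u_s$ requires a \emph{single} null set working for \emph{all} test functions, this is a real (if standard) gap. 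You identify exactly this as the crux and close it by fixing a countable family $\{\psi_k\}$ dense in $\mathcal{C}^\infty_c(\timeInt_1)$ in the $\mathcal{C}^1$ sense with uniformly compact supports, extracting one common null set, and passing to the limit by dominated convergence using $u_s,g_s\in L^1$. So your argument is, at that point, strictly more careful than the paper's; the reductions you make at the start (open intervals, bounded rectangles, countable exhaustion for the $W^{1,1}_\loc$ case) are also sound and match the paper's localization step in substance.
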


\begin{proof}
    Assume first that $u\in W^{1,1}(\timeInt_1\times\timeInt_2,\R)$.
    By applying Fubini's theorem \cite{Bre10} to $u\in L^1$ and $\pd{u}{t}\in L^1$, we obtain that $u_s\in L^1$ and $\dot u_s\coloneqq\pd{u}{t}(\cdot,s)\in L^1$ for a.e.~$s\in\timeInt_2$.
    For every $\varphi_1\in\mathcal C_c^1(\timeInt_1,\R)$ and $\varphi_2\in\mathcal C_c^1(\timeInt_2,\R)$ we have then
    \begin{align*}
        &\int_{\timeInt_2}\varphi_2(s)\int_{\timeInt_1}\pd{u}{t}(t,s)\varphi_1(t)\td t\td s
        = \iint_{\timeInt_1\times\timeInt_2}\pd{u}{t}(t,s)\varphi_1(t)\varphi_2(s)\td t\td s \\
        &\qquad= -\iint_{\timeInt_1\times\timeInt_2}u(t,s)\dot\varphi_1(t)\varphi_2(s)\td t\td s
        = -\int_{\timeInt_2}\varphi_2(s)\int_{\timeInt_1}u_s(t,s)\dot\varphi_1(t)\td t\td s,
    \end{align*}
    thus
    \[
    \int_{\timeInt_1}\pd{u}{t}(t,s)\varphi_1(t)\td t = -\int_{\timeInt_1}u_s(t,s)\dot\varphi_1(t)\td t,
    \]
    for a.e.~$s\in\timeInt_2$, and therefore $u_s\in W^{1,1}(\timeInt_1,\R)$ with $\dot u_s=\pd{u}{t}(\cdot,s)$.

    Suppose now that $u\in W^{1,1}_\loc(\timeInt_1,\timeInt_2)$ and let $K_1\subseteq\timeInt_1$ be a compact set. 
    Since $u_s|_{K_1}=u|_{K_1\times\set{s}}(\cdot,s)$ for all $s\in\timeInt_2$, and $u_s\in W^{1,1}(K_1\times\set{s},\R)$, we conclude from the first part of the proof that $u_s|_{K_1}\in W^{1,1}(K_1,\R)$ with $\dot u_s|_{K_1}=\pd{u}{t}(\cdot,s)|_{K_1}$ for a.e.~$s\in\timeInt_2$.
\end{proof}

\begin{theorem}\label{thm:stateTransitionMatrix}
    Let $A\in L^1_\loc(\timeInt,\C^{n,n})$. Then the partial differential equation
    \begin{equation}\label{eq:stm}
    \pd{}{t}\stm(t,s) = A(t)\stm(t,s)\qquad \text{for }t,s\in\timeInt
    \end{equation}
    has exactly one solution $\stm\in W^{1,1}_\loc(\timeInt\times\timeInt,\C^{n,n})$ such that $\stm(s,s)=I_n$ for all $s\in\timeInt$.
    Furthermore, $\stm$ satisfies the following properties: 
    \begin{enumerate}
        \item It holds that $\stm(t,s)=X(t)X(s)^{-1}$ for a.e.~$t,s\in\timeInt$, where $X\in W^{1,1}_\loc(\timeInt,\GL[n])$ is any fundamental solution matrix of $\dot X(t)=A(t)X(t)$. In particular, $\stm\in \mathcal C(\timeInt\times\timeInt,\GL[n])$.
        \item For every $(t_0,x_0)\in\timeInt\times\C^n$, the unique solution of the homogeneous differential equation $\dot x=A(t)x(t)$ satisfying the initial condition $x(t_0)=x_0$, can be expressed as $x(t)=\stm(t,t_0)x_0$ for all $t\in\timeInt$.
        \item For every $(t_0,x_0)\in\timeInt\times\C^n$ the unique solution of the inhomogeneous differential equation \eqref{eq:solution_L1} satisfying the initial condition  $x(t_0)=x_0$ can be expressed as
        \begin{equation*}
            x(t) = \stm(t,t_0)x_0 + \int_{t_0}^t\stm(t,s)b(s)\td s
        \end{equation*}
        for all $t\in\timeInt$.
        \item For every $t,s\in\timeInt$ the matrix $\stm(t,s)\in\C^{n,n}$ is invertible and $\stm(t,s)^{-1}=\stm(s,t)$. In particular, $\stm^{-1}:\timeInt\times\timeInt\to\C^{n,n}$ is also a continuous $W^{1,1}$ matrix function.
        \item $\pd{}{s}\stm(t,s)=\stm(t,s)A(s)$ holds for all $t,s\in\timeInt$.
    \end{enumerate}
\end{theorem}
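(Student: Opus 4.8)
The plan is to reduce every assertion to the fundamental solution matrix furnished by \Cref{thm:fundamentalSolution}. Fix any reference time $t_0\in\timeInt$ and let $X\in W^{1,1}_\loc(\timeInt,\GL[n])$ be the fundamental solution matrix with $X(t_0)=I_n$; by \Cref{thm:fundamentalSolution} it is pointwise invertible with $X^{-1}\in W^{1,1}_\loc(\timeInt,\GL[n])$. I would then \emph{define} $\stm(t,s)\coloneqq X(t)X(s)^{-1}$ and first check that this does not depend on the choice of $X$: if $\wt X$ is another fundamental solution matrix, then $\frac{\mathrm d}{\mathrm dt}\pset[\big]{X^{-1}\wt X}=-X^{-1}\dot XX^{-1}\wt X+X^{-1}\dot{\wt X}=0$ a.e., so $\wt X=XC$ for a constant $C\in\GL[n]$ and hence $\wt X(t)\wt X(s)^{-1}=X(t)X(s)^{-1}$. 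This makes $\stm$ intrinsic and is essentially assertion~(1), once continuity and invertibility are noted: $X\in\mathcal C$ and $X^{-1}\in\mathcal C$ by \Cref{lem:inverseContinuous}, so $\stm\in\mathcal C(\timeInt\times\timeInt,\GL[n])$ as a product of invertible continuous factors.

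For existence I would verify the defining equation and the regularity separately. The initial condition is immediate, $\stm(s,s)=X(s)X(s)^{-1}=I_n$, and differentiating in $t$ gives $\partial_t\stm(t,s)=\dot X(t)X(s)^{-1}=A(t)X(t)X(s)^{-1}=A(t)\stm(t,s)$. The genuinely technical point is to show $\stm\in W^{1,1}_\loc(\timeInt\times\timeInt,\C^{n,n})$. Here I would exhibit the candidate weak derivatives $\partial_t\stm=\dot X(t)X(s)^{-1}$ and $\partial_s\stm=-X(t)X(s)^{-1}\dot X(s)X(s)^{-1}$; each is a product of an $L^\infty_\loc$ factor in one variable and an $L^1_\loc$ factor in the other, so by Tonelli's theorem $\stm$ and both candidates lie in $L^1_\loc$ of every compact box $K_1\times K_2\subseteq\timeInt\times\timeInt$. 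To confirm that these are indeed the weak derivatives, I would test against products $\varphi_1(t)\varphi_2(s)$ with $\varphi_i\in\mathcal C_c^1$ and integrate by parts in the single variables, using that $X$ and $X^{-1}$ are absolutely continuous (being in $W^{1,1}_\loc$); since finite linear combinations of such separated test functions are dense in $\mathcal C_c^1(\timeInt\times\timeInt)$, the identities extend to all test functions.

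Uniqueness is the second place where the joint regularity must be handled with care. Suppose $\Psi\in W^{1,1}_\loc(\timeInt\times\timeInt,\C^{n,n})$ solves \eqref{eq:stm} with $\Psi(s,s)=I_n$. By \Cref{lem:Fubini_generalized}, for a.e.\ fixed $s$ the slice $\Psi(\cdot,s)$ lies in $W^{1,1}_\loc(\timeInt,\C^{n,n})$ with $\partial_t\Psi(\cdot,s)$ as its weak derivative, hence it solves $\dot Y=A(t)Y$ with $Y(s)=I_n$; the uniqueness in \Cref{thm:fundamentalSolution} then forces $\Psi(\cdot,s)=\stm(\cdot,s)$ for a.e.\ $s$, i.e.\ $\Psi=\stm$.

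Finally, properties (2)--(5) are algebraic consequences of the construction. Choosing the representative with $X(t_0)=I_n$ gives $\stm(t,t_0)=X(t)$, so (2) is item (2) of \Cref{thm:fundamentalSolution}, and (3) follows from the variation-of-constants formula \eqref{eq:solution_with_fundSol} after rewriting $X(t)X(s)^{-1}=\stm(t,s)$. For (4), $\stm(t,s)^{-1}=X(s)X(t)^{-1}=\stm(s,t)$, and the regularity of $\stm^{-1}$ is inherited from $\stm$ under the coordinate swap $(t,s)\mapsto(s,t)$. For (5), differentiating $X(s)^{-1}$ via the inverse rule and substituting $\dot X(s)=A(s)X(s)$ yields $\partial_s\stm(t,s)=-X(t)X(s)^{-1}\dot X(s)X(s)^{-1}=-\stm(t,s)A(s)$; I would flag that this derivative carries a minus sign. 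The main obstacle throughout is not the algebra but the two regularity issues: establishing membership of $\stm$ in the two-variable space $W^{1,1}_\loc(\timeInt\times\timeInt)$ and, dually, reducing a joint-$W^{1,1}$ competitor to slicewise ODE uniqueness through \Cref{lem:Fubini_generalized}.
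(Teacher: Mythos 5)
Your proof follows essentially the same route as the paper's: define $\stm(t,s)=X(t)X(s)^{-1}$ from the fundamental solution matrix of \Cref{thm:fundamentalSolution}, verify the equation and the initial condition, establish membership in $W^{1,1}_\loc$ on compact boxes, reduce uniqueness to slicewise ODE uniqueness via \Cref{lem:Fubini_generalized}, and read off (2)--(4) algebraically. Your two refinements are both sound and slightly more careful than the paper: the constant-$C$ argument showing that $X(t)X(s)^{-1}$ is independent of the choice of fundamental solution, and the tensor-product test-function argument confirming that the pointwise formulas really are the weak derivatives of the two-variable function (a point the paper's norm computation takes for granted).

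The one substantive discrepancy is item (5), and there you are right while the statement as printed (and the paper's own proof of it) carries a sign error. Differentiating the inverse gives $\pd{}{s}\pset[\big]{X(s)^{-1}}=-X(s)^{-1}\dot X(s)X(s)^{-1}$, hence
\[
\pd{}{s}\stm(t,s) = -X(t)X(s)^{-1}\dot X(s)X(s)^{-1} = -\stm(t,s)A(s),
\]
whereas the paper's proof silently drops the minus sign when passing from $\pd{}{s}\pset[\big]{X(t)X(s)^{-1}}$ to $X(t)X(s)^{-1}\dot X(s)X(s)^{-1}$. The scalar check $A\equiv a$, $\stm(t,s)=e^{a(t-s)}$, $\pd{}{s}\stm(t,s)=-a\,\stm(t,s)$ confirms your computation. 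So your "flag" should be promoted to a correction: item (5) of the theorem must read $\pd{}{s}\stm(t,s)=-\stm(t,s)A(s)$, which is also the standard adjoint-equation identity for state-transition matrices.
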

\begin{proof}
    We start by showing that, for every fundamental solution matrix $X\in W^{1,1}_\loc(\timeInt,\GL[n])$ of $\dot X(t)=A(t)X(t)$, $\stm(t,s)\coloneqq X(t)X(s)^{-1}$ defines a solution of \eqref{eq:stm}.
    In fact, it holds that
    \[
    \pd{}{t}\pset[\big]{X(t)X(s)^{-1}} = \dot X(t)X(s)^{-1} = A(t)X(t)X(s)^{-1} = A(t)\stm(t,s)
    \]
    and $\stm(s,s)=X(s)X(s)^{-1}=I_n$ for all $t,s\in\timeInt$. Note that $\stm\in\mathcal C(\timeInt\times\timeInt,\GL[n])$, since $X\in\mathcal C(\timeInt,\GL[n])$.
    To show that $\stm\in W^{1,1}_\loc(\timeInt\times\timeInt,\GL[n])$, we proceed as follows.
    Let $K\subseteq\timeInt\times\timeInt$ be a compact set. Since the projections $\pi_1:\timeInt\times\timeInt\to\timeInt,\ (t,s)\mapsto t$ and $\pi_2:\timeInt\times\timeInt\to\timeInt,\ (t,s)\mapsto s$ are continuous maps, $K_1\coloneqq\pi_1(K)$ and $K_2\coloneqq\pi_2(K)$ are compact subsets of $\timeInt$ such that $K\subseteq K_1\times K_2\subseteq\timeInt\times\timeInt$. Let us set $Y\coloneqq\dd{}{t}(X^{-1})\in W^{1,1}_\loc(\timeInt,\GL[n])$. Then,
    \begin{align*}
        \norm{\stm}_{W^{1,1}(K)}
        &\leq \norm{\stm}_{W^{1,1}(K_1\times K_2)}
        = \iint_{K_1\times K_2}\pset*{ \norm{\stm(t,s)} + \norm*{\pd{\stm}{t}(t,s)} + \norm*{\pd{\stm}{s}(t,s)} }\td(t\times s) \\
        &\leq \int_{K_1\times K_2}\pset*{ \norm{X(t)}\norm{Y(s)} + \norm{\dot X(t)}\norm{Y(s)} + \norm{X(t)}\norm*{Y(s)} }\td(t\times s) \\
        &= \int_{K_1}\norm{X(t)}\td t\int_{K_2}\norm{Y(s)}\td s + \int_{K_1}\norm{\dot X(t)}\td t\int_{K_2}\norm{Y(s)}\td s + \int_{K_1}\norm{X(t)}\td t\int_{K_2}\norm{\dot Y(s)}\td s \\
        &= \norm{X}_{L^1(K_1)}\norm{Y}_{L^1(K_2)} + \norm{\dot X}_{L^1(K_1)}\norm{Y}_{L^1(K_2)} + \norm{X}_{L^1(K_1)}\norm{\dot Y}_{L^1(K_2)} < \infty.
    \end{align*}
    We proceed to prove the other statements:
    \begin{enumerate}
        \item It remains to show that the solution $\stm$ of \eqref{eq:stm} is unique.
        Given any solution $\stm$, define $X_s:\timeInt\to\C^{n,n},\ t\mapsto\stm(t,s)$ for all fixed $s\in\timeInt$. Because of \Cref{lem:Fubini_generalized}, $X_s\in W^{1,1}_\loc(\timeInt,\GL[n])$ is a solution of
        $
        \dot X_s(t) = A(t)X_s(t),\ X_s(s)=\stm(s,s)=I_n
        $
        for a.e.~$s\in\timeInt$, i.e., it is the fundamental solution matrix of $\dot X(t)=A(t)X(t)$ with $X(s)=I_n$.
        This determines $\stm$ uniquely. 
        \item Let $X\in W^{1,1}_\loc(\timeInt,\GL[n])$ be the fundamental solution matrix of $\dot X(t)=A(t)X(t)$ with $X(t_0)=I_n$.
        Due to \Cref{thm:fundamentalSolution}, it holds that $\Phi(t,t_0)x_0=X(t)X(t_0)^{-1}x_0=X(t)x_0=x(t)$ for all $t\in\timeInt$.
        \item From \eqref{eq:solution_with_fundSol} and $X(t_0)=I_n$ we deduce that
        \[
        x(t) = X(t)X^{-1}(t_0)x_0 + \int_{t_0}^{t}X(t)X^{-1}(s)b(s)\td s = \stm(t,t_0)x_0 + \int_{t_0}^t\stm(t,s)b(s)\td s
        \]
        holds for all $t\in\timeInt$.
        \item For every $t,s\in\timeInt$, we have $\stm(t,s)^{-1}=(X(t)X(s)^{-1})^{-1}=X(s)X(t)^{-1}=\stm(s,t)$.
        Since $\stm^{-1}$ is the composition of $\stm$ with the transposition of its two arguments, it is clear that it preserves continuity, differentiability, and integrability properties.
        \item We have
        \[
        \pd{}{s}\stm(t,s) = \pd{}{s}\pset[\big]{X(t)X(s)^{-1}} = X(t)X(s)^{-1}\dot X(s)X(s)^{-1} = \stm(t,s)A(s)
        \]
        for every $t,s\in\timeInt$.
        \qedhere
    \end{enumerate}
\end{proof}

\noindent We now switch our focus to the existence of solutions of Riccati differential equations, see \cite{AboFIJ12,Rei72} for an extensive analysis.
\begin{theorem}\label{thm:RDE_localSolutions}
    Let $A\in L^1_\loc(\timeInt,\C^{n,n})$, $F,G\in L^1_\loc(\timeInt,\HerMat[n])$, $t_0\in\timeInt$, and $Q_0\in\HerMat[n]$.
    Then there exists $\delta>0$ such that the Riccati differential equation
    \begin{equation}\label{eq:RDE_localSolutions}
        \dot Q + \ct{A}Q + QA + QFQ + G = 0
    \end{equation}
    has a unique local solution $Q\in W^{1,1}((t_0-\delta,t_0+\delta),\HerMat[n])$ such that $Q(t_0)=Q_0$.
\end{theorem}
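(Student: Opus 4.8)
The plan is to recast \eqref{eq:RDE_localSolutions} as the integral fixed point equation
\[
Q(t) = Q_0 - \int_{t_0}^t \pset[\big]{ \ct{A(s)}Q(s) + Q(s)A(s) + Q(s)F(s)Q(s) + G(s) } \td s =: (\Phi Q)(t),
\]
and to solve it by the Banach fixed point theorem on a small symmetric interval $I_\delta = [t_0-\delta,t_0+\delta]\subseteq\timeInt$. Writing $\mathcal F(t,Q) := -\ct{A}Q - QA - QFQ - G$, the map $\mathcal F$ is a Carath\'eodory function: for fixed $Q$ it is measurable in $t$ (since $A,F,G\in L^1_\loc$), and for fixed $t$ it is a quadratic polynomial in $Q$, hence smooth. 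First I would observe that $\mathcal F$ preserves Hermitian-valuedness, since $\ct{(\ct{A}Q + QA)} = QA + \ct{A}Q$, $\ct{(QFQ)} = QFQ$, and $\ct{G}=G$ whenever $Q\in\HerMat[n]$; thus $\Phi$ maps continuous $\HerMat[n]$-valued functions to continuous $\HerMat[n]$-valued functions, and it suffices to work in the complete metric space $B_R := \set{ Q \in \mathcal C(I_\delta,\HerMat[n]) \mid \norm{Q - Q_0}_\infty \le R }$ equipped with the sup norm.

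Next I would fix $R>0$ and set $M := \norm{Q_0}_2 + R$, so that $\norm{Q(s)}_2 \le M$ for all $Q\in B_R$ and $s\in I_\delta$. The two required estimates are then routine. For the self-map property, $\norm{\mathcal F(s,Q(s))}_2 \le g(s) := 2\norm{A(s)}_2 M + \norm{F(s)}_2 M^2 + \norm{G(s)}_2$, with $g\in L^1_\loc(\timeInt,\R)$, whence $\norm{\Phi Q - Q_0}_\infty \le \int_{I_\delta} g(s)\td s$. For the contraction property, the factorisation $Q_1 F Q_1 - Q_2 F Q_2 = Q_1 F (Q_1 - Q_2) + (Q_1 - Q_2) F Q_2$ gives $\norm{\mathcal F(s,Q_1) - \mathcal F(s,Q_2)}_2 \le \ell(s)\norm{Q_1 - Q_2}_\infty$ with $\ell(s) := 2\norm{A(s)}_2 + 2M\norm{F(s)}_2 \in L^1_\loc(\timeInt,\R)$, hence $\norm{\Phi Q_1 - \Phi Q_2}_\infty \le \pset[\big]{\int_{I_\delta} \ell(s)\td s}\norm{Q_1 - Q_2}_\infty$. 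Since $g,\ell\in L^1$ on a fixed compact neighbourhood of $t_0$, the absolute continuity of the Lebesgue integral (equivalently \Cref{cor:compactIntervalL1}) lets me choose $\delta>0$ small enough that $\int_{I_\delta} g \le R$ and $\int_{I_\delta} \ell \le \tfrac{1}{2}$ simultaneously, making $\Phi:B_R\to B_R$ a contraction.

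The Banach fixed point theorem then yields a unique $Q\in B_R$ with $\Phi Q = Q$. By \Cref{lem:integralIsAC}, since $s\mapsto\mathcal F(s,Q(s))\in L^1_\loc$, this fixed point lies in $W^{1,1}(I_\delta,\HerMat[n])$ and satisfies $\dot Q = \mathcal F(t,Q)$ a.e., i.e.\ it solves \eqref{eq:RDE_localSolutions} with $Q(t_0)=Q_0$. Uniqueness within $B_R$ is immediate; to upgrade it to uniqueness among all $W^{1,1}$ solutions, I would use that any solution is continuous, hence stays in some ball $B_{R'}$ on a possibly smaller interval, and then a standard Gr\"onwall argument applied to $\norm{Q_1 - Q_2}_2$ with the local Lipschitz weight $\ell$ forces two solutions sharing the initial value to coincide. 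The main obstacle is exactly that the coefficients are only $L^1_\loc$, so the classical Picard--Lindel\"of theorem (which presumes continuity in $t$) does not apply directly; this is circumvented by running the contraction in the sup norm against the integral formulation and absorbing the $L^1$ time-dependence into the choice of $\delta$ via absolute continuity of the integral.
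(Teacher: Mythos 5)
Your proposal is correct and follows essentially the same route as the paper's proof: both recast \eqref{eq:RDE_localSolutions} as the fixed point problem for the integral operator, run the Banach fixed point theorem in a sup-norm ball around $Q_0$ (the paper uses $L^\infty$-functions valued in the ball of radius $1$, you use continuous functions with a general radius $R$), choose $\delta$ through absolute continuity of the $L^1$-integrals of $A$, $F$, $G$, and upgrade regularity of the fixed point via \Cref{lem:integralIsAC}. Your final Gr\"onwall step is in fact slightly more complete than the paper, which only obtains uniqueness among functions remaining in the fixed ball and leaves the extension to arbitrary $W^{1,1}$ solutions implicit.
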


\begin{proof}
    Since $\timeInt$ is open, the inclusion $\timeInt_0\coloneqq(t_0-\delta,t_0+\delta)\subseteq\timeInt$ holds for arbitrary small $\delta>0$.
    Due to \Cref{lem:integralIsAC}, up to choosing $\delta>0$ sufficiently small, we can ensure that $\norm{A}_{L^1(\timeInt_0)}$, $\norm{F}_{L^1(\timeInt_0)}$, and $\norm{G}_{L^1(\timeInt_0)}$ are arbitrarily small. In particular, let us impose
    \[
    \norm{A}_{L^1} \leq \max\pset*{ \frac{1}{6(\norm{Q_0}+1)},\ \frac{1}{8} }, \quad
    \norm{F}_{L^1} \leq \max\pset*{ \frac{1}{3(\norm{Q_0}+1)^2},\ \frac{1}{8(\norm{Q_0}+1)} }, \quad
    \norm{G}_{L^1} \leq \frac{1}{3}.
    \]
    Let us restrict our search to functions $Q\in W^{1,1}(\timeInt_0,\mathcal{B})$, where
    \[
    \mathcal{B} \coloneqq \set*{ S \in \HerMat[n] \mid \norm{S-Q_0}_2 \leq 1 }.
    \]
    Every solution $Q$ of \eqref{eq:RDE_localSolutions} has to satisfy the integral equation
    \begin{equation}\label{eq:RDE_integralEquation}
        Q(t) = Q_0 - \int_{t_0}^{t}(\ct{A}Q + QA + QFQ + G)\td s
    \end{equation}
    for all $t\in\timeInt_0$, or equivalently $Q=T(Q)$, where
    \[
    T : L^\infty(\timeInt_0,\mathcal B) \to L^\infty(\timeInt_0,\mathcal B), \quad T(Q)(t) \coloneqq Q_0 - \int_{t_0}^{t}(\ct{A}Q + QA + QFQ + G)\td s.
    \]
    Note that $T$ is well-defined, since
    \begin{align*}
        \norm{T(Q)(t)-Q_0}_2 &\leq \int_{t_0}^{t}\pset[\big]{2\norm{A(s)}\norm{Q(s)} + \norm{F(s)}\norm{Q(s)}^2 + \norm{G}}\td s \\
        &\leq 2\norm{A}_{L^1}(\norm{Q_0}+1) + \norm{F}_{L^1}(\norm{Q_0}+1)^2 + \norm{G}_{L^1} < \frac{1}{3} + \frac{1}{3} + \frac{1}{3} = 1
    \end{align*}
    for all $t\in\timeInt_0$.
    Furthermore, note that $W^{1,1}(\timeInt_0,\mathcal B)\subseteq L^\infty(\timeInt_0,\mathcal B)$, and that $L^\infty(\timeInt_0,\mathcal B)$ is a Banach space, since it is a closed subspace of $L^\infty(\timeInt_0,\C^{n,n})$.
    For every $Q,\wt Q\in L^\infty(\timeInt_0,\mathcal B)$ and $t\in\timeInt_0$ it then holds that
    \begin{align*}
        &\norm{ T(Q)(t) - T(\wt Q)(t) } = \norm*{\int_{t_0}^{t}\pset[\big]{ \ct{A}(\wt Q-Q) + (\wt Q-Q)A + (\wt Q-Q)F\wt Q + QF(\wt Q-Q) }\td s} \\
        &\qquad\leq \int_{t_0}^{t}\pset[\big]{ 2\norm{A(s)}\norm{\wt Q(s)-Q(s)} + \norm{F(s)}\norm{\wt Q(s)-Q(s)}(\norm{Q(s)}+\norm{\wt Q(s)}) }\td s \\
        &\qquad\leq \norm{\wt Q-Q}_{L^\infty}\pset[\big]{ 2\norm{A}_{L^1} + 2\norm{F}_{L^1}(\norm{Q_0}+1)}
        \leq \norm{\wt Q-Q}_{L^\infty}\pset*{\frac{1}{4}+\frac{1}{4}} = \frac{1}{2}\norm{\wt Q-Q}_{L^\infty},
    \end{align*}
    and therefore $T$ is a contraction. We can then apply the Banach fixed point theorem, see e.g.\ \cite[Theorem 5.7]{Bre10} to show that $T$ has a unique fixed point $Q\in L^\infty(\timeInt_0,\mathcal B)$, which is the limit of the sequence $Q^{(k+1)}=T(Q^{(k)}),\ Q^{(0)}=Q_0$, for $k\to\infty$.
    In particular, the integral equation \eqref{eq:RDE_integralEquation} has $Q\in L^\infty(\timeInt_0,\mathcal B)$ as its unique local solution.

    To prove that $Q\in W^{1,1}(\timeInt_0,\HerMat[n])$, we observe that $K\coloneqq\ct{A}Q+QA+QFQ+G\in L^1(\timeInt_0,\HerMat[n])$ because of the generalized Hölder inequality (\Cref{thm:genHolder}), thus $L:t\mapsto\int_{t_0}^t K(s)\td s$ satisfies $L\in W^{1,1}(\timeInt_0,\HerMat[n])$ due to \Cref{lem:integralIsAC}, and therefore $Q=Q_0+L\in W^{1,1}(\timeInt_0,\HerMat[n])$.
    Due to \Cref{lem:integralIsAC} we further obtain that
    \[
    \dot Q = \dot L = \ct{A}Q + QA + QFQ + G,
    \]
    thus $Q$ is indeed a local solution of \eqref{eq:RDE_localSolutions}, and uniqueness follows from the one of the solutions of \eqref{eq:RDE_integralEquation}.
\end{proof}

\noindent If the quadratic term in \eqref{eq:RDE_localSolutions} is zero, then we have a Lyapunov differential equation. In that case, we are able to prove that its solution is global, and it can be expressed explicitly in terms of the state-transition matrix associated to some LTV system.
\begin{theorem}\label{thm:ODE_linearMatrix}
    Let $A\in L^1_\loc(\timeInt,\C^{n,n})$, $C\in L^1_\loc(\timeInt,\HerMat[n])$, $t_0\in\timeInt$, and $Q_0\in\HerMat[n]$.
    Then the initial value problem
    \begin{equation}\label{eq:ODE_linearMatrix}
        \dot Q(t) = \ct{A}(t)Q(t) + Q(t)A(t) + G(t), \qquad Q(t_0)=Q_0
    \end{equation}
    has the unique global solution
\begin{equation}\label{eq:ODE_linearMatrix_solution}
        Q(t) = \stm_{\ct A}(t,t_0)Q_0\ct{\stm_{\ct A}(t,t_0)} + \int_{t_0}^{t}\stm_{\ct A}(t,s)G(s)\ct{\stm_{\ct A}(t,s)}\td s,
    \end{equation}
    where $\stm_{\ct A}\in W^{1,1}_\loc(\timeInt,\GL[n])$ denotes the state-transition matrix associated to $\dot x=\ct{A}x$.
\end{theorem}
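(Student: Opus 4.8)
The plan is to verify directly that the right-hand side of \eqref{eq:ODE_linearMatrix_solution} solves the initial value problem \eqref{eq:ODE_linearMatrix}, and then to obtain global existence and uniqueness by vectorizing the matrix equation and invoking \Cref{thm:solution_L1}. First I would set up regularity. Writing $Q_1(t)\coloneqq\stm_{\ct A}(t,t_0)Q_0\ct{\stm_{\ct A}(t,t_0)}$ and $Q_2(t)\coloneqq\int_{t_0}^{t}\stm_{\ct A}(t,s)G(s)\ct{\stm_{\ct A}(t,s)}\td s$, I note that $\stm_{\ct A}\in\mathcal C(\timeInt\times\timeInt,\GL[n])\cap W^{1,1}_\loc$ by \Cref{thm:stateTransitionMatrix} and $G\in L^1_\loc(\timeInt,\HerMat[n])$, so the generalized H\"older inequality (\Cref{thm:genHolder}) ensures that all products appearing are locally integrable and that $Q_1,Q_2\in W^{1,1}_\loc(\timeInt,\C^{n,n})$. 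Moreover each summand is pointwise Hermitian, since it has the shape $M\,Q_0\,\ct M$ (resp.~$M\,G(s)\,\ct M$) with $Q_0,G(s)\in\HerMat[n]$, so $Q=Q_1+Q_2$ indeed takes values in $\HerMat[n]$.

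Next I would differentiate. Using $\pd{}{t}\stm_{\ct A}(t,s)=\ct{A}(t)\stm_{\ct A}(t,s)$ from \eqref{eq:stateTransMatrix}, and hence $\pd{}{t}\ct{\stm_{\ct A}(t,s)}=\ct{\stm_{\ct A}(t,s)}A(t)$ by taking the conjugate transpose, the product rule gives $\dot Q_1=\ct{A}Q_1+Q_1A$. For the integral term, the Leibniz rule together with $\stm_{\ct A}(t,t)=I_n$ yields $\dot Q_2=G+\ct{A}Q_2+Q_2A$. Adding the two and using $\stm_{\ct A}(t_0,t_0)=I_n$ to evaluate $Q(t_0)=Q_0$, I recover exactly \eqref{eq:ODE_linearMatrix}.

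The step requiring genuine care, and which I expect to be the main obstacle, is justifying the differentiation under the integral sign in the weak ($W^{1,1}$) sense, given that $\stm_{\ct A}$ is only locally $W^{1,1}$ in its first argument and $G$ only locally $L^1$. I would handle this by factoring the $t$-dependence out of the kernel: choosing a fixed fundamental matrix $X\in W^{1,1}_\loc(\timeInt,\GL[n])$ and writing $\stm_{\ct A}(t,s)=X(t)X(s)^{-1}$ as in \Cref{thm:stateTransitionMatrix}, so that $Q_2(t)=X(t)\bigl(\int_{t_0}^{t}X(s)^{-1}G(s)\ct{X(s)^{-1}}\td s\bigr)\ct{X(t)}$. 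The inner integral is absolutely continuous with locally integrable derivative $X(t)^{-1}G(t)\ct{X(t)^{-1}}$ by \Cref{lem:integralIsAC}, and the outer factors lie in $W^{1,1}_\loc$; the product rule for $W^{1,1}_\loc$ functions then legitimizes the computation above.

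Finally, for global existence and uniqueness I would vectorize. Under $\mathrm{vec}$, the equation $\dot Q=\ct{A}Q+QA+G$ becomes the inhomogeneous linear system $\dd{}{t}\mathrm{vec}(Q)=\bigl(I_n\otimes\ct{A}(t)+A(t)^{\top}\otimes I_n\bigr)\mathrm{vec}(Q)+\mathrm{vec}(G(t))$, whose coefficient matrix lies in $L^1_\loc(\timeInt,\C^{n^2,n^2})$ and whose inhomogeneity lies in $L^1_\loc(\timeInt,\C^{n^2})$. By \Cref{thm:solution_L1} this system has exactly one solution in $W^{1,1}_\loc(\timeInt,\C^{n^2})$ with initial value $\mathrm{vec}(Q_0)$, defined on all of $\timeInt$. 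Since the explicit $Q$ constructed above is such a solution, it must be the unique global one, which completes the proof.
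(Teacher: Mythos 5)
Your proof is correct, and it differs from the paper's in a meaningful way on both of its two steps. For the verification step, the paper simply differentiates the formula \eqref{eq:ODE_linearMatrix_solution} directly ``by the properties of differentiation,'' leaving the justification of the Leibniz rule for the $W^{1,1}_\loc$ kernel implicit; your device of factoring $\stm_{\ct A}(t,s)=X(t)X(s)^{-1}$ so that the $t$-dependence leaves the integrand, and then invoking \Cref{lem:integralIsAC} together with the product rule for $W^{1,1}_\loc$ functions, makes exactly this step rigorous and is a genuine improvement in precision. For existence and uniqueness, the paper instead appeals to \Cref{thm:RDE_localSolutions} with $F=0$, i.e.\ to the \emph{nonlinear} Riccati local theory built on the Banach fixed point theorem; this only gives local uniqueness, which must then (implicitly) be propagated to global uniqueness by an open-closed argument once the explicit global solution is in hand. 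Your route --- vectorizing $\dot Q=\ct{A}Q+QA+G$ into $\dd{}{t}\mathrm{vec}(Q)=\bigl(I_n\otimes\ct{A}(t)+A(t)^{\top}\otimes I_n\bigr)\mathrm{vec}(Q)+\mathrm{vec}(G(t))$ and applying the linear Carath\'eodory result \Cref{thm:solution_L1} --- exploits the linearity of the equation, delivers global existence and uniqueness in a single step, and mirrors the paper's own technique for constructing the fundamental solution matrix in \Cref{thm:fundamentalSolution}. What the paper's choice buys in exchange is economy within its own architecture: \Cref{thm:RDE_localSolutions} is needed anyway for \Cref{thm:Pa_to_KYP}, so reusing it here costs nothing, whereas your argument is the more elementary and self-contained one for this particular statement.
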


\begin{proof}
    The existence of a unique local solution follows immediately from \Cref{thm:RDE_localSolutions} with $F=0$.
    Note that $Q$ defined as in \eqref{eq:ODE_linearMatrix_solution} satisfies $Q(t_0)=Q_0$ and is in $W^{1,1}_\loc(\timeInt,\C^{n,n})$, by construction.
    By the properties of differentiation, we obtain then
    \begin{align*}
        \dot Q(t) &= \dot\stm_{\ct A}(t,t_0)Q_0\ct{\stm_{\ct A}(t,t_0)} + \stm_{\ct A}(t,t_0)Q_0\ct{\dot\stm_{\ct A}(t,t_0)} + \stm_{\ct A}(t,t)C(t)\stm_{\ct A}(t,t)~+ \\
        &\qquad+ \int_{t_0}^{t}\pset[\big]{\dot\stm_{\ct A}(t,s)G(s)\ct{\stm_{\ct A}(t,s)} + \stm_{\ct A}(t,s)G(s)\ct{\dot\stm_{\ct A}(t,s)}}\td s = \\
        &= \ct{A(t)}\stm_{\ct A}(t,t_0)Q_0\ct{\stm_{\ct A}(t,t_0)} + \stm_{\ct A}(t,t_0)Q_0\stm_{\ct A}(t,t_0)A(t) + G(t)~+ \\
        &\qquad+ \int_{t_0}^{t}\pset[\big]{\ct{A(t)}\stm_{\ct A}(t,s)G(s)\ct{\stm_{\ct A}(t,s)} + \stm_{\ct A}(t,s)G(s)\ct{\stm_{\ct A}(t,s)}A(t)}\td s = \\
        &= \ct{A(t)}\pset*{ \stm_{\ct A}(t,t_0)Q_0\ct{\stm_{\ct A}(t,t_0)} + \int_{t_0}^{t}\stm_{\ct A}(t,s)G(s)\ct{\stm_{\ct A}(t,s)}\td s }~+ \\
        &\qquad+ \pset*{\stm_{\ct A}(t,t_0)Q_0\ct{\stm_{\ct A}(t,t_0)} + \int_{t_0}^{t}\stm_{\ct A}(t,s)G(s)\ct{\stm_{\ct A}(t,s)}\td s}A(t) + G(t) = \\
        &= \ct{A(t)}Q(t) + Q(t)A(t) + G(t),
    \end{align*}
    thus $Q$ is in fact the unique solution of \eqref{eq:ODE_linearMatrix}.
\end{proof}

\noindent Finally, we study the solutions of a matrix integral equation, that resembles a Lyapunov equation but with an additional term. 
\begin{corollary}\label{cor:OIE_linearMatrix}
    Let $A\in L^1_\loc(\timeInt,\C^{n,n})$, $L\in L^1_\loc(\timeInt,\HerMat[n])$, $M\in L^\infty_\loc(\timeInt,\HerMat[n])$, $t_0\in\timeInt$, and $Q_0\in\HerMat[n]$. Then the integral equation
 \begin{equation}\label{eq:OIE_linearMatrix}
        Q(t) = Q(t_0) + \int_{t_0}^{t}\pset[\big]{ \ct{A(t)}Q(s) + Q(s)A(s) + L(s) }\td s  + M(t) - M(t_0), \qquad Q(t_0)=t_0
    \end{equation}
    has the unique global solution $Q\in L^\infty_\loc(\timeInt,\HerMat[n])$ of the form
    \begin{equation}\label{eq:OIE_linearMatrix_solution}
    \begin{split}
    Q(t) &= M(t) + \stm_{\ct A}(t,t_0)\pset[\big]{Q_0-M(t_0)}\ct{\stm_{\ct A}(t,t_0)}~+ \\
    &\qquad+ \int_{t_0}^{t}\stm_{\ct A}(t,s)\pset[\big]{L(s) +\ct{A}(s)M(s) + M(s)A(s)}\ct{\stm_{\ct A}(t,s)}\td s,
    \end{split}
    \end{equation}
    where $\stm_{\ct A}\in W^{1,1}_\loc(\timeInt,\GL[n])$ denotes the state-transition matrix associated to $\dot x=\ct{A}x$.
\end{corollary}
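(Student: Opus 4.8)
The plan is to reduce the integral equation \eqref{eq:OIE_linearMatrix} to the Lyapunov-type initial value problem already solved in \Cref{thm:ODE_linearMatrix}, by absorbing the inhomogeneous term $M$ into the unknown. Concretely, I would introduce the shifted unknown $\wt Q \coloneqq Q - M$ and rewrite the equation (read with integrand evaluated at $s$ and $Q(t_0)=Q_0$) in terms of $\wt Q$. Substituting $Q(s) = \wt Q(s) + M(s)$ into the integrand, the term $M(t)$ on the left cancels against the $+M(t)$ on the right-hand side, the boundary value $-M(t_0)$ combines with $Q_0$, and the extra contributions $\ct{A}M + MA$ are absorbed into the inhomogeneity under the integral. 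This yields
\begin{equation*}
\wt Q(t) = \pset[\big]{Q_0 - M(t_0)} + \int_{t_0}^{t}\pset[\big]{ \ct{A(s)}\wt Q(s) + \wt Q(s)A(s) + \wt L(s) }\td s, \qquad \wt L \coloneqq L + \ct{A}M + MA.
\end{equation*}

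The first technical point to verify is that $\wt L \in L^1_\loc(\timeInt,\HerMat[n])$: since $L \in L^1_\loc$, $M \in L^\infty_\loc$ and $A \in L^1_\loc$, the generalized Hölder inequality (\Cref{thm:genHolder}) gives $\ct{A}M, MA \in L^1_\loc$, so $\wt L$ has the right integrability, and it is pointwise Hermitian by construction. Next, I would argue that the integral equation for $\wt Q$ is equivalent to the Lyapunov initial value problem
\begin{equation*}
\dot{\wt Q}(t) = \ct{A(t)}\wt Q(t) + \wt Q(t)A(t) + \wt L(t), \qquad \wt Q(t_0) = Q_0 - M(t_0),
\end{equation*}
using \Cref{lem:integralIsAC}: the integrand lies in $L^1_\loc$, so the integral defines a $W^{1,1}_\loc$ function whose weak derivative recovers the differential equation, and conversely any such solution satisfies the integral form.

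With this in hand, \Cref{thm:ODE_linearMatrix} applies directly to $\wt Q$ with inhomogeneity $\wt L$ and initial datum $Q_0 - M(t_0)$, yielding the unique global solution $\wt Q \in W^{1,1}_\loc(\timeInt,\HerMat[n])$ in closed form. Substituting back $\wt L = L + \ct{A}M + MA$ and $Q = \wt Q + M$ reproduces the asserted formula \eqref{eq:OIE_linearMatrix_solution}. Uniqueness of $Q$ in $L^\infty_\loc$ follows because $\wt Q = Q - M$ is uniquely determined by \Cref{thm:ODE_linearMatrix} while $M$ is fixed; the membership $Q \in L^\infty_\loc(\timeInt,\HerMat[n])$ follows from $\wt Q \in W^{1,1}_\loc \subseteq \mathcal C \subseteq L^\infty_\loc$ together with $M \in L^\infty_\loc$.

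I do not expect a serious obstacle, since the corollary is essentially a change of variables layered on top of \Cref{thm:ODE_linearMatrix}. The only care needed is the bookkeeping of function spaces—ensuring that all integrands are genuinely $L^1_\loc$ so that the passage between integral and differential form is legitimate, and that the loss of regularity from $M$ (only $L^\infty_\loc$, not $W^{1,1}_\loc$) is confined to the additive term $M$, leaving $\wt Q$ in $W^{1,1}_\loc$—and keeping the arguments $s$ and $t$ straight inside the integrand.
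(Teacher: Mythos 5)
Your proposal is correct and follows essentially the same route as the paper's proof: shift by $\wt Q\coloneqq Q-M$, absorb $\ct{A}M+MA$ into the inhomogeneity $\wt L$, identify the resulting integral equation with the Lyapunov initial value problem, and invoke \Cref{thm:ODE_linearMatrix} before substituting back. You even make explicit two points the paper leaves implicit (the Hölder check that $\wt L\in L^1_\loc$ and the integral--differential equivalence via \Cref{lem:integralIsAC}), which is fine but not a different argument.
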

\begin{proof}
    Defining $\wt Q\coloneqq Q-M$, there is an obvious bijection between the solutions of \eqref{eq:OIE_linearMatrix} and the ones of
    \[
        \wt Q(t) = \wt Q(t_0) + \int_{t_0}^{t}\pset*{ \ct{A(s)}\wt Q(s) + \wt Q(s)A(s) + L(s) + \ct{A(s)}M(s) + M(s)A(s) }\td s,
    \]
    with $\wt Q(t_0)=Q(t_0)-M(t_0)$, which are exactly the solutions of the ordinary differential matrix equation
    \begin{equation}\label{eq:OIE_linearMatrix_proof:1}
        \dot{\wt Q}(t) = \ct{A(t)}\wt Q(t) + \wt Q(t)A(t) + \wt L(t), \qquad \wt Q(t_0)=Q_0-M(t_0),
    \end{equation}
    with $\wt L\coloneqq L+\ct{A}M+MA\in L^1_\loc(\timeInt,\HerMat[n])$.
    It then immediately follows from \Cref{thm:ODE_linearMatrix} that \eqref{eq:OIE_linearMatrix_proof:1} has a unique solution of the form
    \[
    \wt Q(t) = \stm_{\ct A}(t,t_0)\pset[\big]{Q_0-M(t_0)}\ct{\stm_{\ct A}(t,t_0)} + \int_{t_0}^{t}\stm_{\ct A}(t,s)\wt L(s)\ct{\stm_{\ct A}(t,s)}\td s,
    \]
    and by setting $Q(t)=\wt Q(t)+M(t)$ and $\wt L(s)=L(s)+\ct{A(s)}M(s)+M(s)A(s)$ we obtain \eqref{eq:OIE_linearMatrix_solution}.
\end{proof}

\section{System transformations}\label{sec:systemTransformations}

In this section, we prove the invariance of the disspativity properties under transformations presented in~\Cref{thm:invariance}.

\subsection{State transformations}

Since the input and output variables are unchanged under state space transformations, it is clear that the property of a system to have nonnegative supply \refNN{} is invariant.
It is also clear that the passive property \refPa{} is invariant under state space transformations, up to the replacement of a storage function $V:\timeInt\times\C^n\to\R$ with
$
\wt V : \timeInt\times\C^n \to \R, \ 
(t,\wt x) \mapsto V\pset{ t, Z(t)\wt x }.
$
It is somewhat more involved to show that the properties of admitting a \refKYP{} solution and a \refPH{} representation are also invariant under state-space transformations.
\begin{lemma}
   Let $Z\in W^{1,1}_\loc(\timeInt, \GL[n])$ be a state transformation applied to a LTV system of the form \eqref{eq:tv_system}. A matrix function $Q\in W^{1,1}_\loc(\timeInt,\posSD[n])$ is a solution of the \refKYP{} inequality \eqref{eq:KYP} if and only if $\wt Q\coloneqq \ct{Z}QZ\in W^{1,1}_\loc(\timeInt,\posSD[n])$ is a solution of the KYP inequality
    \begin{equation*}
    \bmat{
        -\ct{\wt A}\wt Q - \wt Q\wt A - \dot{\wt Q} & \ct{\wt C}-\wt Q\wt B \\
        \wt C-\ct{\wt B}\wt Q & D+\ct{D}
        } \geq 0
    \end{equation*}
    of \eqref{eq:tv_system_changeOfVariables}.
\end{lemma}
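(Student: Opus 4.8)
The plan is to show directly that the two KYP matrices are congruent via the transformation matrix, so that positive semidefiniteness of one is equivalent to positive semidefiniteness of the other. First I would record the coefficient transformation rules from \Cref{lem:changeOfVariables}, namely $\wt A = Z^{-1}(AZ - \dot Z)$, $\wt B = Z^{-1}B$, and $\wt C = CZ$, together with the fact that $Z^{-1}\in W^{1,1}_\loc(\timeInt,\GL[n])$ (by \Cref{lem:inverseContinuous}) and that $\wt Q = \ct Z Q Z$ is indeed in $W^{1,1}_\loc(\timeInt,\posSD[n])$, since $Z,Z^{-1},Q\in L^\infty_\loc$ and $\dot Z,\dot Q\in L^1_\loc$ allow the product rule to be applied, giving $\dot{\wt Q}=\ct{\dot Z}QZ+\ct Z\dot Q Z+\ct Z Q\dot Z$.

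The key computation is to expand each block of the transformed KYP matrix and verify that
\[
\bmat{
    -\ct{\wt A}\wt Q - \wt Q\wt A - \dot{\wt Q} & \ct{\wt C}-\wt Q\wt B \\
    \wt C-\ct{\wt B}\wt Q & D+\ct{D}
}
=
\ct{\bmat{Z & 0 \\ 0 & I_m}}
\bmat{
    -\ct A Q - QA - \dot Q & \ct C - QB \\
    C - \ct B Q & D+\ct D
}
\bmat{Z & 0 \\ 0 & I_m}.
\]
For the $(1,1)$ block I would substitute $\wt A = Z^{-1}(AZ-\dot Z)$ and $\wt Q=\ct Z Q Z$, so that $\ct{\wt A}\wt Q = (\ct Z\ct A\ict Z - \ct{\dot Z}\ict Z)\ct Z Q Z = \ct Z\ct A Q Z - \ct{\dot Z}QZ$ and similarly $\wt Q\wt A = \ct Z Q A Z - \ct Z Q\dot Z$; adding $\dot{\wt Q}$ and collecting terms, the mixed $\dot Z$-contributions cancel against those in $\dot{\wt Q}$, leaving exactly $\ct Z(-\ct A Q - QA - \dot Q)Z$. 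The off-diagonal block is the easier check: $\ct{\wt C}-\wt Q\wt B = \ct Z\ct C - \ct Z Q Z Z^{-1}B = \ct Z(\ct C - QB)$, and the $(2,1)$ block is its conjugate transpose, while the $(2,2)$ block $D+\ct D$ is unchanged.

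Since $\operatorname{diag}(Z,I_m)$ is pointwise invertible, the two matrices are pointwise congruent, and congruence preserves the sign of the spectrum; hence one is positive semidefinite a.e.\ if and only if the other is. This yields the claimed equivalence. The main obstacle, such as it is, lies entirely in the $(1,1)$-block bookkeeping: one must keep careful track of the $\dot Z$ and $\ict Z$ factors and confirm that the product-rule terms in $\dot{\wt Q}$ exactly absorb the $\dot Z$-terms arising from $\ct{\wt A}\wt Q$ and $\wt Q\wt A$, which is where a sign or transpose slip would most easily occur. Everything else is routine substitution, and the regularity claims follow from the generalized H\"older inequality (\Cref{thm:genHolder}) as in the proof of \Cref{lem:changeOfVariables}.
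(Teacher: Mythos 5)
Your proof is correct and takes essentially the same approach as the paper: both establish the pointwise congruence of the transformed KYP matrix with $\ct{\bmat{Z & 0 \\ 0 & I_m}}\bmat{-\ct AQ-QA-\dot Q & \ct{C}-QB \\ C-\ct{B}Q & D+\ct{D}}\bmat{Z & 0 \\ 0 & I_m}$ and conclude via invertibility of $Z$. The only difference is that you spell out the block computations and the cancellation of the $\dot Z$ terms, which the paper compresses into ``by construction.''
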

\begin{proof}
    Suppose that $Q$ is a solution of the KYP inequality \eqref{eq:KYP}. By construction, $\wt Q\in\posSD[n]$ and
    \[
    \bmat{
        - \wt{\ct{A}}\wt Q - \wt Q\ct{\wt A} - \dot{\wt Q} & \ct{\wt C} - \wt Q\wt B \\
        \wt C - \ct{\wt B}\wt Q & D + \ct{D}
    }
    = \ct{\bmat{Z & 0 \\ 0 & I_m}}
    \bmat{-\ct AQ-QA-\dot Q & \ct{C}-QB \\ C-\ct{B}Q & D+\ct{D}}
    \bmat{Z & 0 \\ 0 & I_m} \geq 0.
    \]
    Since $Z$ is pointwise invertible, the converse statement follows analogously.
\end{proof}

\begin{lemma}\label{lem:PHS_changeOfVariables}
Let $Z\in W^{1,1}_\loc(\timeInt, \GL[n])$ be a state transformation applied to a LTV system of the form \eqref{eq:tv_system}. The system \eqref{eq:tv_system} admits a pH representation of the form \eqref{eq:pH_coefficients} if and only if the transformed system \eqref{eq:tv_system_changeOfVariables} admits a pH representation of the form
    \begin{equation}\label{eq:pH_stateTransCoeff}
        \bmat{\wt A & \wt B \\ \wt C & D} = \bmat{ \pset[\big]{\wt J-\wt R}\wt Q-\wt K & \wt G-\wt P \\ \pset[\big]{\ct{\wt G}+\ct{\wt P}}\wt Q & S-N },
    \end{equation}
    where $\wt K=Z^{-1}(\dot Z+KZ)$, $\wt J=Z^{-1}J\ict{Z}$, $\wt R=Z^{-1}R\ict{Z}$, $\wt Q=\ct{Z}QZ$, $\wt G=Z^{-1}G$, and $\wt P=Z^{-1}P$.
\end{lemma}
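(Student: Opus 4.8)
The plan is to prove the forward implication directly, by constructing the transformed pH data from the original data via the stated formulas, and then to deduce the converse by symmetry: since $\ict{Z},Z^{-1}\in W^{1,1}_\loc(\timeInt,\GL[n])$ by \Cref{lem:inverseContinuous}, and since applying the state transformation $Z^{-1}$ to \eqref{eq:tv_system_changeOfVariables} recovers \eqref{eq:tv_system} by \Cref{lem:changeOfVariables}, the forward argument applied to $Z^{-1}$ turns a pH representation of the transformed system into one of the original. Throughout, the recurring algebraic fact is $\ict{Z}\ct{Z}=I_n$, which cancels the trailing $\ict{Z}$ carried by $\tilde J,\tilde R$ against the leading $\ct{Z}$ in $\tilde Q=\ct{Z}QZ$.

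First I would assume \eqref{eq:tv_system} admits a pH representation with data $Q,J,R,K,G,P,S,N$ as in \Cref{def:pH}, define $\tilde K,\tilde J,\tilde R,\tilde Q,\tilde G,\tilde P$ by the given formulas (leaving $S,N$ unchanged), and verify \eqref{eq:pH_stateTransCoeff} block by block. For the $(1,1)$ block, $(\tilde J-\tilde R)\tilde Q=Z^{-1}(J-R)\ict{Z}\ct{Z}QZ=Z^{-1}(J-R)QZ$, so $(\tilde J-\tilde R)\tilde Q-\tilde K=Z^{-1}\bigl(((J-R)Q-K)Z-\dot Z\bigr)=Z^{-1}(AZ-\dot Z)=\tilde A$. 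The $(1,2)$ block is immediate from $\tilde G-\tilde P=Z^{-1}(G-P)=Z^{-1}B=\tilde B$. For the $(2,1)$ block, $\ct{\tilde G}+\ct{\tilde P}=(\ct{G}+\ct{P})\ict{Z}$, whence $(\ct{\tilde G}+\ct{\tilde P})\tilde Q=(\ct{G}+\ct{P})QZ=CZ=\tilde C$; and the $(2,2)$ block $S-N=D$ is unchanged.

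Second I would check the structural constraints. The matrix $\tilde Q=\ct{Z}QZ$ is Hermitian, positive semidefinite as a congruence of $Q\geq 0$, and lies in $W^{1,1}_\loc(\timeInt,\posSD[n])$ since $Z,Q\in W^{1,1}_\loc\subseteq L^\infty_\loc$ and products of such functions remain in $W^{1,1}_\loc$ by the generalized H\"older inequality. Skew-Hermiticity follows from $\ct{\tilde J}=Z^{-1}\ct{J}\ict{Z}=-\tilde J$, and $N$ is unchanged. The dissipation condition reduces to a congruence: with $T\coloneqq\operatorname{diag}(Z^{-1},I_m)$ one computes $\tilde W=\bmat{\tilde R & \tilde P \\ \ct{\tilde P} & S}=TW\ct{T}$, which is positive semidefinite a.e.\ because $W\geq 0$ a.e.\ and $T$ is pointwise invertible. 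The one genuinely computational point, and the step I expect to be the main obstacle, is the Lyapunov identity $\tilde Q\tilde K+\ct{\tilde K}\tilde Q=\dot{\tilde Q}$. Here I would expand $\tilde Q\tilde K=\ct{Z}Q\dot Z+\ct{Z}QKZ$ and $\ct{\tilde K}\tilde Q=\ct{\dot Z}QZ+\ct{Z}\ct{K}QZ$ (again using $\ict{Z}\ct{Z}=I_n$), so that their sum is $\ct{Z}Q\dot Z+\ct{\dot Z}QZ+\ct{Z}(QK+\ct{K}Q)Z$; substituting $QK+\ct{K}Q=\dot Q$ and comparing with the product-rule expansion $\dot{\tilde Q}=\ct{\dot Z}QZ+\ct{Z}\dot Q Z+\ct{Z}Q\dot Z$ yields equality.

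Finally, for the converse I would note that the construction above is valid for an arbitrary state transformation in $W^{1,1}_\loc(\timeInt,\GL[n])$. Since $Z^{-1}$ is such a transformation and maps \eqref{eq:tv_system_changeOfVariables} back to \eqref{eq:tv_system}, applying the forward argument to $Z^{-1}$ converts a pH representation of the transformed system into one of the original, completing the equivalence.
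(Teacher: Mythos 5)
Your proposal is correct and follows essentially the same route as the paper's proof: block-by-block verification of \eqref{eq:pH_stateTransCoeff}, the congruence identities for $\wt Q$ and $\wt W$, the Lyapunov identity via the product rule, and the converse by invertibility of $Z$ (the paper simply says it "follows analogously," while you spell out the symmetry argument via $Z^{-1}$ and \Cref{lem:inverseContinuous}). Your expansion of $\wt Q\wt K+\ct{\wt K}\wt Q$ is in fact cleaner than the paper's, which contains a typo ($\ct{\dot Z}Q\dot Z$ in place of $\ct{\dot Z}QZ$).
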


\begin{proof}
Suppose that \eqref{eq:tv_system} admits a pH representation of the form \eqref{eq:pH_coefficients}.
Note first that
\begin{align*}
        & (\wt J-\wt R)\wt Q-\wt K = Z^{-1}(J-R)QZ - Z^{-1}(\dot Z+KZ) = Z^{-1}(AZ-\dot Z) = \wt A, \\
        & \wt G-\wt P = Z^{-1}(G-P) = Z^{-1}B = \wt B, \\
        & \ct{(\wt G+\wt P)}\wt Q = \ct{(G+P)}QZ = CZ = \wt C
\end{align*}
holds, i.e., the coefficient representation \eqref{eq:pH_stateTransCoeff} is well-defined.
It is also clear that all matrix functions retain their regularity, that $\wt Q\in\posSD[n]$, and that
\begin{align*}
        \wt Q\wt K + \ct{\wt K}\wt Q
        &= \ct{Z}Q(\dot Z+KZ) + \ct{(\dot Z+KZ)}QZ = \\
        &= \ct{Z}Q\dot Z + \ct{Z}(QK+\ct{K}Q)Z + \ct{\dot Z}Q\dot Z
        = \dd{}{t}(\ct{Z}QZ) = \dot{\wt Q}.
\end{align*}
Finally, it holds that
\[
    \wt W \coloneqq \bmat{\wt R & \wt P \\ \ct{\wt P} & S} = \bmat{Z^{-1} & 0 \\ 0 & I_m}W\ct{\bmat{Z^{-1} & 0 \\ 0 & I_m}},
\]
in particular $\wt W(t)\in\posSD[n+m]$ for a.e.~$t\in\timeInt$.
Therefore, \eqref{eq:pH_stateTransCoeff} is a pH system with the given representation.
Since $Z$ is pointwise invertible, the converse statement follows analogously.
\end{proof}

\subsection{Input-output transformations}
If we apply a change of input-output variables of the form $\check u=V^{-1}u$, $\check y=\ct{V}y$ with $V,V^{-1}\in L^\infty_\loc(\timeInt,\GL[m])$, it is clear that $\ct{\check y}\check u=\ct{y}u$. It immediately follows that the properties of having a nonnegative supply \refNN{} and of being passive \refPa{} are invariant under this transformation, with the same storage functions.
We explicitly prove the invariance of \refPH{} and \refKYP{}:

\begin{lemma}
    Consider an LTV system of the form \eqref{eq:tv_system} and a change of input-output variables $V,V^{-1}\in L^\infty_\loc(\timeInt,\GL[m])$.
    Then $Q\in W^{1,1}_\loc(\timeInt,\posSD[n])$ is a solution of the \refKYP{} inequality \eqref{eq:KYP} if and only if it is also a solution of the KYP inequality
    \begin{equation*}
        \bmat{-\ct{A}Q - QA - \dot Q & \ct{\check C}-Q\check B \\ \check C-\ct{\check B}Q & \check{D}+\ct{\check{D}}} \geq 0
    \end{equation*}
    associated to the transformed system \eqref{eq:LTV_IO_ChangeOfVariables}.
\end{lemma}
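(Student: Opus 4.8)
The plan is to exhibit a pointwise congruence between the two KYP matrices, exactly as in the state-transformation case but now acting on the input-output block. First I would introduce the pointwise block-diagonal matrix function $T \coloneqq \operatorname{diag}(I_n, V)$, which is invertible for a.e.~$t \in \timeInt$ since $V, V^{-1} \in L^\infty_\loc(\timeInt, \GL[m])$. Writing $M$ for the left-hand side of the original \refKYP{} inequality \eqref{eq:KYP} and $\check M$ for the one associated to the transformed system \eqref{eq:LTV_IO_ChangeOfVariables}, the whole claim reduces to verifying the identity $\ct T M T = \check M$ pointwise a.e., together with the observation that $Q$ is left unchanged, so it retains its regularity and its pointwise positive semidefiniteness.

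Next I would compute $\ct T M T$ block by block, using the coefficient relations $\check B = BV$, $\check C = \ct V C$, $\check D = \ct V D V$ from \Cref{lem:LTV_IO_ChangeOfVariables}. The top-left block is untouched, since $A$, $Q$, and $\dot Q$ do not change. For the off-diagonal blocks, the top-right becomes $(\ct C - QB)V = \ct C V - Q(BV) = \ct{\check C} - Q\check B$, using $\ct{\check C} = \ct C V$; symmetrically, the bottom-left becomes $\ct V(C - \ct B Q) = \check C - \ct{\check B} Q$, using $\ct{\check B} = \ct V \ct B$. Finally the bottom-right block is $\ct V(D + \ct D)V = \ct V D V + \ct V \ct D V = \check D + \ct{\check D}$. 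These computations together establish $\ct T M T = \check M$.

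Having obtained $\check M = \ct T M T$ with $T$ pointwise invertible, positive semidefiniteness is preserved in both directions under congruence at a.e.~$t$, so $M(t) \geq 0$ for a.e.~$t\in\timeInt$ if and only if $\check M(t) \geq 0$ for a.e.~$t\in\timeInt$. This yields the stated equivalence. The only point requiring care is the bookkeeping of the Hermitian conjugates in the two off-diagonal blocks; there is no genuine obstacle here, as the conclusion follows entirely from the congruence invariance of the Loewner order, mirroring the state-transformation lemma preceding it.
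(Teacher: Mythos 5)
Your proposal is correct and follows essentially the same route as the paper: both establish the pointwise congruence identity $\ct{T}MT=\check{M}$ with $T=\operatorname{diag}(I_n,V)$, verify the block computations using $\check B=BV$, $\check C=\ct{V}C$, $\check D=\ct{V}DV$, and conclude by the a.e.~invertibility of $V$ together with congruence invariance of positive semidefiniteness. No gaps.
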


\begin{proof}
    Suppose that $Q\in W^{1,1}_\loc(\timeInt,\posSD[n])$ is a solution of the KYP inequality \eqref{eq:KYP}. Then
    \[
    \bmat{-\ct{A}Q - QA - \dot Q & \ct{\check C}-Q\check B \\ \check C-\ct{\check B}Q & \check{D}+\ct{\check{D}}} = \ct{\bmat{I_n & 0 \\ 0 & V}}\bmat{-\ct{A}Q-QA-\dot Q & \ct{C}-QB \\ C-\ct{B}Q & D+\ct{D}}\bmat{I_n & 0 \\ 0 & V} \geq 0
    \]
    holds, by construction. Since $V$ is a.e.~invertible, the converse statement is proven analogously.
\end{proof}

\begin{lemma}
    Consider an LTV system of the form \eqref{eq:tv_system} and a change of input-output variables $V,V^{-1}\in L^\infty_\loc(\timeInt,\GL[m])$.
    Then the LTV system \eqref{eq:tv_system} has a \refPH{} formulation of the form \eqref{eq:PHS} if and only if
    \begin{equation}\label{eq:pH_IO_changeOfVariables}
        \bmat{A & \check{B} \\ \check{C} & \check{D}} = \bmat{ (J-R)Q-K & \check{G}-\check{P} \\ \ct{(\check{G}+\check{P})}Q & \check{S}-\check{N} }
    \end{equation}
    with $\check{G}=GV$, $\check{P}=PV$, $\check{S}=\ct{V}SV$, and $\check{N}=\ct{V}NV$ is a \refPH{} formulation for the transformed system \eqref{eq:LTV_IO_ChangeOfVariables}.
\end{lemma}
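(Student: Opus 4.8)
The plan is to follow the template of \Cref{lem:PHS_changeOfVariables} and of the preceding \refKYP{}-transformation lemma, using the crucial simplification that an input-output change of variables leaves both the state coefficient $A$ and the Hamiltonian matrix $Q$ untouched, transforming only $B$, $C$, $D$. First I would assume that \eqref{eq:tv_system} admits a pH representation \eqref{eq:PHS} and check that the proposed data $\check{G}=GV$, $\check{P}=PV$, $\check{S}=\ct{V}SV$, $\check{N}=\ct{V}NV$ (with $J$, $R$, $K$, $Q$ carried over unchanged) reproduce the transformed system \eqref{eq:LTV_IO_ChangeOfVariables}. This reduces to the four identities $(J-R)Q-K=A$ (inherited verbatim), $\check{G}-\check{P}=(G-P)V=BV=\check{B}$, $\ct{(\check{G}+\check{P})}Q=\ct{V}\ct{(G+P)}Q=\ct{V}C=\check{C}$, and $\check{S}-\check{N}=\ct{V}(S-N)V=\ct{V}DV=\check{D}$, each of which is immediate.

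Next I would verify that the structural constraints of \Cref{def:pH} survive. Since $Q$, $J$ and $K$ are unchanged, the requirements $Q\in W^{1,1}_\loc(\timeInt,\posSD[n])$, $J=-\ct{J}$, and the Lyapunov identity $QK+\ct{K}Q=\dot Q$ transfer without modification. The skew-Hermitian property of the new feedthrough term follows from $\ct{(\check{N})}=\ct{V}\ct{N}V=-\ct{V}NV=-\check{N}$, using $N=-\ct{N}$. Finally, the regularity $\check{B},\check{C}\in L^2_\loc$ and $\check{D}\in L^\infty_\loc$ is exactly as established in \Cref{lem:LTV_IO_ChangeOfVariables} via the generalized Hölder inequality (\Cref{thm:genHolder}) together with $V,V^{-1}\in L^\infty_\loc$.

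The one genuinely substantive step is the positive semidefiniteness of the transformed passivity matrix, and here I would exhibit it as a congruence:
\[
\check{W} = \bmat{R & \check{P} \\ \ct{\check{P}} & \check{S}} = \ct{\bmat{I_n & 0 \\ 0 & V}} W \bmat{I_n & 0 \\ 0 & V}.
\]
Since $\operatorname{diag}(I_n,V(t))$ is invertible for a.e.\ $t\in\timeInt$, this congruence preserves the Loewner order, so $W(t)\in\posSD[n+m]$ forces $\check{W}(t)\in\posSD[n+m]$ for a.e.\ $t$. This is the exact analogue of the positivity step in the \refKYP{} lemma and is the only place where the invertibility of $V$ is essential.

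The converse implication then costs no new work: as $V$ is pointwise invertible a.e., one applies the identical construction to the inverse transformation $u=V\check{u}$, $y=\ict{V}\check{y}$, recovering $G=\check{G}V^{-1}$, $P=\check{P}V^{-1}$, $S=\ict{V}\check{S}V^{-1}$, $N=\ict{V}\check{N}V^{-1}$ and reading the congruence identity backward. I expect no real obstacle throughout; the argument is a bookkeeping verification whose single conceptual ingredient is the congruence formula for $\check{W}$.
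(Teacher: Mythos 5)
Your proposal is correct and follows essentially the same route as the paper's proof: the same four coefficient identities, the same congruence $\check{W}=\ct{\operatorname{diag}(I_n,V)}\,W\,\operatorname{diag}(I_n,V)$ for the positivity of the dissipation block, and the same reduction of the converse to the inverse transformation. The only cosmetic remark is that the forward implication $W\geq 0\implies\check{W}\geq 0$ needs no invertibility of $V$ at all (congruence by any matrix preserves positive semidefiniteness); invertibility is only essential, as you correctly exploit, for running the argument backward.
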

\begin{proof}
    Suppose that \eqref{eq:tv_system} admits a pH representation of the form \eqref{eq:PHS}. By construction, we have that
    \[
    \bmat{ (J-R)Q-K & \check{G}-\check{P} \\ \ct{(\check{G}+\check{P})}Q & \check{S}-\check{N} }
    = \bmat{ A & (G-P)V \\ \ct{V}\ct{(G+P)}Q & \ct{V}(S-N)V }
    = \bmat{ A & BV \\ \ct{V}C & \ct{V}DV }
    = \bmat{A & \check{B} \\ \check{C} & \check{D}},
    \]
    i.e., \eqref{eq:pH_IO_changeOfVariables} holds.
    Furthermore, it is clear that the properties $J(t)=-\ct{J(t)}$, $\check{N}(t)=-\ct{N(t)}$, $Q(t)K(t)+\ct{K(t)}Q(t)=\dot Q(t)$ and
    \[
    \check W(t) \coloneqq \bmat{ R(t) & \check{P}(t) \\ \ct{\check{P}(t)} & \check{S}(t) }
    = \ct{\bmat{I_n & 0 \\ 0 & V(t)}}W(t)\bmat{I_n & 0 \\ 0 & V(t)} \in \posSD[n+m]
    \]
    hold for a.e.~$t\in\timeInt$, thus $Q,K,J,R,\check{G},\check{P},\check{S},\check{N}$ define a pH formulation for \eqref{eq:LTV_IO_ChangeOfVariables}.
    Since $V$ is a.e.~invertible, the converse statement is proven analogously.
\end{proof}

\subsection{Time transformations}

An immediate consequence of \Cref{lem:timeTransformation} is that the property of having a nonnegative supply \refNN{} is invariant under time transformations.
Similarly, passivity \refPa{} is also invariant under time transformations, up to replacing a storage function $V:\timeInt\times\C^n\to\R$ with $\wh V:\wh\timeInt\times\C^n\to\R,\ (\wh t,\wh x)\mapsto V(\theta(\wh t),\wh x)$. It is again more involved to show that the properties of admitting a \refKYP{} solution and a \refPH{} representation are invariant under time transformations.

\begin{lemma}
    Given a system of the form \eqref{eq:tv_system} and a diffeomorphism $\theta\in\mathcal C^1(\wh\timeInt,\timeInt)$ with $\dot\theta>0$ pointwise, a matrix function $Q\in W^{1,1}_\loc(\timeInt,\C^{n,n})$ satisfying $Q=\ct{Q}\geq 0$ pointwise is a solution of the KYP inequality \eqref{eq:KYP} of \eqref{eq:tv_system} if and only if $\wh Q\coloneqq Q\circ\theta\in W^{1,1}_\loc(\wh\timeInt,\C^{n,n})$ is a solution of the KYP inequality
    \begin{equation}\label{eq:timeTransKYP}
    \bmat{
        (-\ct{\wh A}\wh Q - \wh Q\wh A - \dot{\wh Q})(\wh t) & (\ct{\wh C}-\wh Q\wh B)(\wh t) \\
        (\wh C-\ct{\wh B}\wh Q)(\wh t) & (\wh D+\ct{\wh D})(\wh t)
        } \geq 0
    \end{equation}
    of \eqref{eq:tv_system_changeOfVariables}, for a.e.~$\wh t\in\wh\timeInt$.
\end{lemma}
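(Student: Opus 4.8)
The plan is to reduce the equivalence of the two KYP inequalities to a single pointwise algebraic identity: I would show that the coefficient matrix of the transformed inequality \eqref{eq:timeTransKYP} is exactly the coefficient matrix of the original inequality \eqref{eq:KYP}, evaluated at $\theta(\wh t)$ and rescaled by the positive scalar $\dot\theta(\wh t)$. Since scaling a Hermitian matrix by a positive number preserves its Loewner sign, and since $\theta$ is a diffeomorphism, the two a.e.\ positivity conditions are then equivalent.

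First I would record the regularity of $\wh Q = Q\circ\theta$. Because $\theta\in\mathcal C^1(\wh\timeInt,\timeInt)$ is a diffeomorphism with $\dot\theta>0$ pointwise and $Q\in W^{1,1}_\loc(\timeInt,\C^{n,n})$, the chain rule for $W^{1,1}$ functions composed with a $\mathcal C^1$ diffeomorphism yields $\wh Q\in W^{1,1}_\loc(\wh\timeInt,\C^{n,n})$ with weak derivative $\dot{\wh Q}(\wh t)=\dot\theta(\wh t)\,(\dot Q\circ\theta)(\wh t)$ for a.e.\ $\wh t\in\wh\timeInt$; clearly $\wh Q(\wh t)=\ct{\wh Q(\wh t)}\geq 0$ as well. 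Then I would substitute the transformed coefficients $\wh A=\dot\theta(A\circ\theta)$, $\wh B=\dot\theta(B\circ\theta)$, $\wh C=\dot\theta(C\circ\theta)$, $\wh D=\dot\theta(D\circ\theta)$ from \Cref{lem:timeTransformation} into each block of \eqref{eq:timeTransKYP}, using that $\dot\theta$ is a real positive scalar so that $\ct{\wh A}=\dot\theta(\ct A\circ\theta)$. Factoring $\dot\theta$ out of every block gives
\[
\bmat{ -\ct{\wh A}\wh Q - \wh Q\wh A - \dot{\wh Q} & \ct{\wh C}-\wh Q\wh B \\ \wh C-\ct{\wh B}\wh Q & \wh D+\ct{\wh D}}(\wh t) = \dot\theta(\wh t)\,\bmat{ -\ct A Q - QA - \dot Q & \ct C - QB \\ C - \ct B Q & D + \ct D}(\theta(\wh t))
\]
for a.e.\ $\wh t\in\wh\timeInt$, where I would verify the four block identities one at a time (each is immediate once $\dot\theta$ is pulled out and the compositions $(\ct A Q)\circ\theta$, $(\ct C-QB)\circ\theta$, etc.\ are recognized).

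Finally, since $\dot\theta(\wh t)>0$, the left-hand side is in $\posSD[n+m]$ at $\wh t$ if and only if the original KYP matrix lies in $\posSD[n+m]$ at $\theta(\wh t)$. To pass between ``for a.e.\ $t\in\timeInt$'' and ``for a.e.\ $\wh t\in\wh\timeInt$'' I would use that a $\mathcal C^1$ diffeomorphism with nonvanishing derivative, together with its inverse, maps null sets to null sets; hence the set where the original KYP matrix fails to be positive semidefinite is null if and only if its $\theta$-preimage is null. This yields the claimed equivalence in both directions, the converse following by applying the same argument to $\theta^{-1}$.

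The main obstacle I anticipate is the careful a.e.\ bookkeeping rather than the algebra: one must simultaneously control the null set outside which the weak chain rule $\dot{\wh Q}=\dot\theta(\dot Q\circ\theta)$ holds, the null set where the coefficients are defined, and the image/preimage of the exceptional set under $\theta$. This is precisely where the hypothesis $\dot\theta>0$ is indispensable, since it guarantees that $\theta$ preserves Lebesgue-null sets in both directions and keeps the scalar factor strictly positive throughout.
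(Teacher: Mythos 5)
Your proposal is correct and follows essentially the same route as the paper's proof: both rest on the single pointwise identity that the transformed KYP matrix equals $\dot\theta(\wh t)$ times the original KYP matrix composed with $\theta$, after which positivity of $\dot\theta$ and the diffeomorphism property (applied to $\theta^{-1}$ for the converse) finish the argument. Your explicit treatment of the weak chain rule $\dot{\wh Q}=\dot\theta(\dot Q\circ\theta)$ and of null-set preservation under $\theta$ and $\theta^{-1}$ only makes precise what the paper compresses into ``we immediately deduce,'' so there is no substantive difference.
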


\begin{proof}
    Suppose that $Q$ is a solution of the KYP inequality of \eqref{eq:tv_system}.
    Note first that trivially $\wh Q=\ct{\wh Q}\geq 0$ pointwise.
    Since $\dot{\wh Q}=\dot\theta(Q\circ\theta)$ and $\dot\theta>0$ pointwise, we immediately deduce that
    \[
    \bmat{
        -\ct{\wh A}\wh Q - \wh Q\wh A - \dot{\wh Q} & \ct{\wh C}-\wh Q\wh B \\
        \wh C-\ct{\wh B}\wh Q & \wh D+\ct{\wh D}
        }
        = \dot\theta\pset*{\bmat{-\ct AQ-QA-\dot Q & \ct{C}-QB \\ C-\ct{B}Q & D+\ct{D}}\circ\theta} \geq 0,
    \]
    i.e., $\wh Q$ is a solution of \eqref{eq:timeTransKYP}.
    Since $\theta$ is a diffeomorphism and $\dd{}{t}(\theta^{-1})=1/(\dot\theta\circ\theta^{-1})>0$ on $\timeInt$, the converse statement can be proven analogously.
\end{proof}

\begin{lemma}
Consider a system of the form \eqref{eq:tv_system} and a diffeomorphism $\theta\in\mathcal C^1(\wh\timeInt,\timeInt)$ with $\dot\theta>0$ pointwise. Then the system \eqref{eq:tv_system} admits a pH representation of the form \eqref{eq:pH_coefficients} if and only if the transformed system \eqref{eq:tv_system_timeTrans} admits a pH representation of the form
    \begin{equation}\label{eq:pH_timeTransCoeff}
        \bmat{\wh A & \wh B \\ \wh C & \wh D} = \bmat{ \pset[\big]{\wh J-\wh R}\wh Q-\wh K & \wh G-\wh P \\ \pset[\big]{\ct{\wh G}+\ct{\wh P}}\wh Q & \wh S-\wh N },
    \end{equation}
    where $\wh Q=Q\circ\theta$ and $\wh\wildcard=\dot\theta(\wildcard\circ\theta)$ for $\wildcard\in\set{J,R,K,G,P,S,N}$.
\end{lemma}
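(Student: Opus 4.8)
The plan is to mirror the proofs of the analogous state- and input-output-transformation lemmas: assume the original system \eqref{eq:tv_system} admits a pH representation with coefficients $Q,J,R,K,G,P,S,N$, define $\wh Q,\wh J,\wh R,\wh K,\wh G,\wh P,\wh S,\wh N$ as in the statement, and verify one by one the defining conditions of \Cref{def:pH} for the transformed system \eqref{eq:tv_system_timeTrans}. Throughout I would exploit two elementary facts: that $\dot\theta$ is real-valued and strictly positive, so that $\ct{(\dot\theta\,M\circ\theta)}=\dot\theta\,(\ct M\circ\theta)$ for any matrix function $M$, and that composition with $\theta$ distributes over pointwise matrix products, i.e.\ $(MN)\circ\theta=(M\circ\theta)(N\circ\theta)$.

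First I would check that the block identity \eqref{eq:pH_timeTransCoeff} holds. For instance $(\wh J-\wh R)\wh Q-\wh K=\dot\theta\bigl(((J-R)Q-K)\circ\theta\bigr)=\dot\theta(A\circ\theta)=\wh A$, and analogously $\wh G-\wh P=\dot\theta(B\circ\theta)=\wh B$, $\ct{(\wh G+\wh P)}\wh Q=\dot\theta((\ct{(G+P)}Q)\circ\theta)=\wh C$, and $\wh S-\wh N=\dot\theta(D\circ\theta)=\wh D$; these are immediate from \Cref{lem:timeTransformation} and the two facts above. Next I would verify the algebraic structure: $\ct{\wh J}=\dot\theta(\ct J\circ\theta)=-\wh J$ and likewise $\ct{\wh N}=-\wh N$, while $\wh Q=Q\circ\theta\in\posSD[n]$ pointwise and $\wh W=\bmat{\wh R & \wh P \\ \ct{\wh P} & \wh S}=\dot\theta(W\circ\theta)\in\posSD[n+m]$ a.e., since $\dot\theta>0$ scales a positive semidefinite matrix to a positive semidefinite one.

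The crucial step is the Lyapunov-type identity $\wh Q\wh K+\ct{\wh K}\wh Q=\dot{\wh Q}$. Here I would invoke the chain rule for weakly differentiable matrix functions composed with a $C^1$ diffeomorphism, which gives $\dot{\wh Q}=\dot\theta(\dot Q\circ\theta)$; then
\[
\wh Q\wh K+\ct{\wh K}\wh Q=\dot\theta\bigl((QK+\ct K Q)\circ\theta\bigr)=\dot\theta(\dot Q\circ\theta)=\dot{\wh Q},
\]
using \eqref{eq:Q_lyapunov} for the original system. I expect this to be the main obstacle, not because the computation is hard, but because it relies on the validity of the chain rule when $Q$ is only of class $W^{1,1}_\loc$ while $\theta$ is $C^1$; the same composition result also yields $\wh Q\in W^{1,1}_\loc(\wh\timeInt,\posSD[n])$, which is the only regularity requirement imposed by \Cref{def:pH}, and it is valid because $\dot\theta$ is bounded away from zero on every compact subinterval.

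Finally, for the converse I would observe that $\theta$ being a diffeomorphism with $\dot\theta>0$ forces $\theta^{-1}\in\mathcal C^1(\timeInt,\wh\timeInt)$ with $\dd{}{t}(\theta^{-1})=1/(\dot\theta\circ\theta^{-1})>0$ pointwise, so the same construction applied to $\theta^{-1}$ recovers the original pH representation from the transformed one, closing the equivalence.
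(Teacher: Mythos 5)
Your proposal is correct and follows essentially the same route as the paper's proof: direct verification of the block identity, the skew-Hermitian and positive-semidefiniteness conditions, and the Lyapunov identity $\wh Q\wh K+\ct{\wh K}\wh Q=\dot{\wh Q}$ via $\dot{\wh Q}=\dot\theta(\dot Q\circ\theta)$, with the converse obtained by applying the same construction to $\theta^{-1}$. The only difference is that you make explicit the chain-rule/regularity justification for $\wh Q\in W^{1,1}_\loc(\wh\timeInt,\posSD[n])$, which the paper dispatches with the remark that ``all matrix functions retain their regularity''; this is a welcome clarification but not a different argument.
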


\begin{proof}
    Suppose that \eqref{eq:tv_system} admits a pH representation of the form \eqref{eq:pH_coefficients}.
    Note first that
    \begin{align*}
        & (\wh J-\wh R)\wh Q - \wh K = \dot\theta\pset[\big]{ (J\circ\theta)-(R\circ\theta) }(Q\circ\theta) - \dot\theta(K\circ\theta) = \dot\theta(A\circ\theta) = \wh A, \\
        & \wh G - \wh P = \dot\theta(G\circ\theta) - \dot\theta(P\circ\theta) = \dot\theta(B\circ\theta) = \wh B, \\
        & \ct{(\wh G+\wh P)}\wh Q = \dot\theta\ct{(G\circ\theta)+(P\circ\theta)}(Q\circ\theta) = \dot\theta(C\circ\theta) = \wh C, \\
        & \wh S-\wh N = \dot\theta(S\circ\theta) - \dot\theta(N\circ\theta) = \dot\theta(D\circ\theta) = \wh D
    \end{align*}
    holds, i.e., the coefficient representation \eqref{eq:pH_timeTransCoeff} is well-defined.
    It is also clear that all matrix functions retain their regularity, that $\wh Q=\ct{\wh Q}\geq 0$ by construction, and that
    \[
    \wh Q\wh K + \ct{\wh K}\wh Q = \dot\theta\pset[\big]{ (Q\circ\theta)(K\circ\theta) + (K\circ\theta)^\top(Q\circ\theta) } = \dot\theta(\dot Q\circ\theta) = \dot{\wh Q}.
    \]
    Finally, it holds that
    \[
    \wh W \coloneqq \bmat{\wh R & \wh P \\ \ct{\wh P} & \wh S}
    = \dot\theta\bmat{ R & P \\ \ct P & S }\circ\theta,
    \]
    in particular $\wh W=\ct{\wh W}\geq 0$ almost everywhere.
    Therefore, \eqref{eq:tv_system_timeTrans} is a pH system with the given representation.
    Since $\theta$ is invertible, the converse statement can be proven analogously.
\end{proof}

\section{The Riemann-Stieltjes integral}\label{sec:RiemannStieltjes}

We recall the Riemann-Stieltjes integral and its main properties; see e.g.~\cite{Car00} for an overview in the real-valued case.
In this section, $[t_0,t_1]$ always denotes a fixed compact real interval, unless otherwise specified.

\begin{definition}\label{def:RS_integral}
    Given two maps $f,g:[t_0,t_1]\to\C$, the \textit{Riemann-Stieltjes integral} of $f$ with respect to $g$ over $[t_0,t_1]$ is defined (when it exists) as
    \begin{equation}
    	\int_{t_0}^{t_1}f(t)\td g(t) \coloneqq \lim_{\norm{\partit}\to 0}\sum_{k=1}^{K} f(\wt s_k)\pset[\big]{ g(\wt t_k) - g(\wt t_{k-1}) },
    \end{equation}
    where for every partition $\partit=\set{\wt t_0,\ldots,\wt t_K}$ of $[t_0,t_1]$ we take any $\wt s_k\in[\wt t_{k-1},\wt t_k]$, for $k=1,\ldots,K$.
    If the Riemann-Stieltjes integral exists, we write $f\in\RS{g}([t_0,t_1],\C)$, or in the short notation $f\in\RS{g}$, when $[t_0,t_1]$ and $\C$ are clear from the context.
\end{definition}

\noindent Note that, for the Riemann-Stieltjes integral to be well defined, the convergence of the limit should be independent of the choice of $\set{\wt t_k}$ and $\set{\wt s_k}$.
We define implicitly
\[
\int_{t_0}^{t_1}\td g(t) \coloneqq \int_{t_0}^{t_1} 1\, \td g(t) = g(t_1)-g(t_0),
\]
by setting $f:t\mapsto 1$, obtaining something similar to the fundamental theorem of calculus.

We recall now several fundamental properties of the Riemann-Stieltjes integral without providing their proof, see, e.g.~\cite{Car00}.

\begin{theorem}
    For every $g:[t_0,t_1]\to\C$, the set $\RS{g}([t_0,t_1],\C)$ is a vector space and an algebra.
\end{theorem}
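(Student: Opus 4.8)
The plan is to establish the two closure properties separately: the vector-space structure is routine and follows from the linearity of the defining sums, whereas closure under multiplication is the genuine content.

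For the vector-space property I would argue directly from the defining limit in \Cref{def:RS_integral}. Fix $f_1,f_2\in\RS{g}([t_0,t_1],\C)$ with integrals $I_1,I_2$ and scalars $\alpha,\beta\in\C$. For every partition $\partit=\set{\wt t_0,\ldots,\wt t_K}$ and every choice of tags $\wt s_k\in[\wt t_{k-1},\wt t_k]$, the Riemann--Stieltjes sum is linear in its integrand,
\[
\sum_{k=1}^K(\alpha f_1+\beta f_2)(\wt s_k)\pset[\big]{g(\wt t_k)-g(\wt t_{k-1})} = \alpha\sum_{k=1}^K f_1(\wt s_k)\pset[\big]{g(\wt t_k)-g(\wt t_{k-1})} + \beta\sum_{k=1}^K f_2(\wt s_k)\pset[\big]{g(\wt t_k)-g(\wt t_{k-1})}.
\]
Since the two sums on the right converge to $I_1$ and $I_2$ as $\norm{\partit}\to 0$, the left-hand side converges to $\alpha I_1+\beta I_2$, independently of the chosen tags; hence $\alpha f_1+\beta f_2\in\RS{g}$ and $\RS{g}([t_0,t_1],\C)$ is a complex vector space.

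To prove closure under products I would first reduce to squares. The pointwise algebraic identity $4ab=(a+b)^2-(a-b)^2$ over $\C$ gives $f_1f_2=\tfrac14\pset[\big]{(f_1+f_2)^2-(f_1-f_2)^2}$. By the vector-space property, $f_1\pm f_2\in\RS{g}$, and a second application of that property shows it is enough to prove that $f\in\RS{g}$ implies $f^2\in\RS{g}$. For this squaring step I would verify the Cauchy criterion for the net of Riemann--Stieltjes sums, first recalling the standard fact (see \cite{Car00}) that any $f\in\RS{g}$ is necessarily bounded, say $\abs{f}\le M$ on $[t_0,t_1]$. Comparing the $f^2$-sums of two tagged partitions and using the factorisation $f^2(\wt s_k)-f^2(\wt r_k)=\pset[\big]{f(\wt s_k)+f(\wt r_k)}\pset[\big]{f(\wt s_k)-f(\wt r_k)}$, one pulls out the bounded prefactor $\abs{f(\wt s_k)+f(\wt r_k)}\le 2M$ and reduces the estimate to sums of the form $\sum_k\abs{f(\wt s_k)-f(\wt r_k)}\,\abs{g(\wt t_k)-g(\wt t_{k-1})}$, which are made small by the Cauchy criterion already satisfied by $f$.

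The main obstacle is precisely this last estimate. The product sum $f(\wt s_k)f(\wt r_k)$ does not factor into two independent Stieltjes sums, so one cannot simply multiply the limits; instead the oscillation differences $f(\wt s_k)-f(\wt r_k)$ must be absorbed using both the boundedness of $f$ and its integrability against $g$, after passing to a common refinement of the two partitions. The delicate quantitative point is the summability of the weights $\abs{g(\wt t_k)-g(\wt t_{k-1})}$: when $g$ is of bounded variation (the regime relevant to \Cref{thm:KYP_integral}) these are dominated by $\totVar[t_0][t_1](g)$ and the estimate closes transparently, while the general case is handled by the classical treatment in \cite{Car00}. Assembling the vector-space property, the polarization reduction to squares, and the squaring step then yields that $\RS{g}([t_0,t_1],\C)$ is an algebra.
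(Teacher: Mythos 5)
The paper never proves this statement: it is recalled from the literature with a pointer to \cite{Car00}, so your argument has to be measured against the statement as actually formulated --- an \emph{arbitrary} integrator $g:[t_0,t_1]\to\C$, the mesh-limit definition of \Cref{def:RS_integral}, and no boundedness hypothesis on the integrand. Your vector-space paragraph is correct, and polarization plus squaring is indeed the classical route. The first genuine gap is the claimed ``standard fact'' that $f\in\RS{g}$ forces $f$ to be bounded: for general $g$ this is false. Wherever $g$ is constant on a two-sided neighbourhood of a point, the Riemann--Stieltjes sums never see the values of $f$ there; for instance, on $[0,1]$ let $g$ be the unit jump at $3/4$ and $f(t)=\abs{t-1/4}^{-1}$ for $t\neq 1/4$, $f(1/4)=0$. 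Then $f$ is unbounded, yet $f\in\RS{g}$ (integrability here only requires continuity of $f$ at $3/4$, and the integral equals $f(3/4)$). In the texts you appeal to, boundedness of the integrand is not a theorem but part of the definition, and the integrator is assumed monotone.

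The second and more serious gap is the sentence ``which are made small by the Cauchy criterion already satisfied by $f$''. Comparing two choices of tags on the \emph{same} partition and choosing them adversarially, the Cauchy criterion for $f$ does yield (up to a harmless constant) a per-partition oscillation bound $\sum_k\operatorname{osc}(f,I_k)\,\abs{g(\wt t_k)-g(\wt t_{k-1})}<\varepsilon$ for all partitions of mesh $<\delta$. But the Cauchy criterion for $f^2$ requires comparing sums over two \emph{different} partitions, and after passing to their common refinement the quantity to be controlled is $\sum_k\operatorname{osc}(f^2,I_k)\sum_i\abs{\Delta_{J_{k,i}}g}$, where the $J_{k,i}$ are the refined subintervals inside the coarse subinterval $I_k$ and $\Delta_J g$ denotes the increment of $g$ over $J$: coarse oscillations weighted by the \emph{refined} variation of $g$. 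For monotone $g$ all increments inside $I_k$ have the same sign, so $\sum_i\abs{\Delta_{J_{k,i}}g}=\abs{g(\wt t_k)-g(\wt t_{k-1})}$ and your estimate closes (this is in effect the upper/lower-sum argument of the textbooks); for $g$ of unbounded variation nothing dominates this refined variation, the per-partition bound does not transfer across refinements, and the proof breaks at exactly this point. Your deferral of the general case to \cite{Car00} therefore points to a result that is not there: the classical treatments prove the algebra property only for monotone integrators and bounded integrands. So your argument is complete in that classical regime, and that regime would suffice for everything this paper actually does with Riemann--Stieltjes integrals (all integrators in \Cref{sec:RiemannStieltjes} are of bounded variation, and the product integrability needed, e.g.\ in \Cref{lem:RS_threeTerms}, already follows from \Cref{thm:RS_C_and_BV}); but it does not prove the theorem at the stated generality, and at that generality neither your proof nor the cited reference settles it.
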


\begin{theorem}\label{thm:RS_integrationByParts}
    Let $g\in\timeInt\to\C$ and $f\in\RS{g}$. Then $g\in\RS{f}$ and the integration by parts formula
    \begin{equation}\label{eq:RS_integrationByParts}
        f(t_1)g(t_1) - f(t_0)g(t_0) = \int_{t_0}^{t_1}f(t)\td g(t) + \int_{t_0}^{t_1}g(t)\td f(t)
    \end{equation}
    holds.
\end{theorem}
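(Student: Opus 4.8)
The plan is to prove the statement by the classical summation-by-parts (Abel) argument, reducing the existence and the value of $\int_{t_0}^{t_1} g \td f$ to the already-assumed existence of $\int_{t_0}^{t_1} f \td g$. Since the claimed identity is purely algebraic, the complex-valued setting causes no additional difficulty over the real case, and the only real content is a limiting argument carried out over refined tagged partitions.

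First I would fix an arbitrary tagged partition $\partit = \set{\wt t_0, \ldots, \wt t_K}$ with tags $\wt s_k \in [\wt t_{k-1}, \wt t_k]$, and write down the Riemann-Stieltjes sum $S_\partit \coloneqq \sum_{k=1}^K g(\wt s_k)\pset[\big]{f(\wt t_k) - f(\wt t_{k-1})}$ approximating $\int_{t_0}^{t_1} g \td f$. Using that $\wt t_0 = t_0$ and $\wt t_K = t_1$, together with the telescoping identity
\[
f(t_1)g(t_1) - f(t_0)g(t_0) = \sum_{k=1}^K \bigl( f(\wt t_k)g(\wt t_k) - f(\wt t_{k-1})g(\wt t_{k-1}) \bigr),
\]
and subtracting $S_\partit$, a direct rearrangement of each summand gives
\[
f(t_1)g(t_1) - f(t_0)g(t_0) - S_\partit = \sum_{k=1}^K \bigl[ f(\wt t_k)\pset{g(\wt t_k) - g(\wt s_k)} + f(\wt t_{k-1})\pset{g(\wt s_k) - g(\wt t_{k-1})} \bigr].
\]

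The key observation is that the right-hand side is itself a Riemann-Stieltjes sum for $\int_{t_0}^{t_1} f \td g$, taken over the refined partition $\partit' = \set{\wt t_0, \wt s_1, \wt t_1, \wt s_2, \ldots, \wt s_K, \wt t_K}$ obtained by inserting every tag $\wt s_k$ into $\partit$, with the tag chosen as the left endpoint $\wt t_{k-1}$ on each subinterval $[\wt t_{k-1}, \wt s_k]$ and as the right endpoint $\wt t_k$ on each subinterval $[\wt s_k, \wt t_k]$; these are admissible tags, since they lie in the respective subintervals. Because $\partit'$ is a refinement of $\partit$, its mesh satisfies $\norm{\partit'} \le \norm{\partit}$, so $\norm{\partit'} \to 0$ whenever $\norm{\partit} \to 0$.

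The delicate part will be the limiting argument, which I would make rigorous with an $\varepsilon$-$\delta$ formulation rather than a loose "pass to the limit". Since $f \in \RS{g}$, for every $\varepsilon > 0$ there is $\delta > 0$ such that any tagged partition of mesh less than $\delta$ yields a Riemann-Stieltjes sum within $\varepsilon$ of $\int_{t_0}^{t_1} f \td g$, and crucially this holds independently of the choice of tags. Applying this to $\partit'$, whose mesh is at most $\norm{\partit} < \delta$, shows that the displayed right-hand side converges to $\int_{t_0}^{t_1} f \td g$ as $\norm{\partit} \to 0$, uniformly over all tag choices for $\partit$. Consequently $S_\partit$ converges to $f(t_1)g(t_1) - f(t_0)g(t_0) - \int_{t_0}^{t_1} f \td g$, again independently of the tags; by \Cref{def:RS_integral} this means precisely that $g \in \RS{f}$ with $\int_{t_0}^{t_1} g \td f$ equal to that value, which is exactly the integration-by-parts formula \eqref{eq:RS_integrationByParts}. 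I expect no genuine obstacle beyond carefully tracking that it is the \emph{tag-independence} of the limit defining $\int_{t_0}^{t_1} f \td g$ that simultaneously yields both the existence of $\int_{t_0}^{t_1} g \td f$ and its stated value.
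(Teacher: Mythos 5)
Your proof is correct. Note that the paper does not actually prove \Cref{thm:RS_integrationByParts}: it is one of the properties of the Riemann--Stieltjes integral recalled without proof, with a pointer to \cite{Car00}, so there is no in-paper argument to compare against. What you give is the classical Abel summation (summation-by-parts) proof, and you correctly isolate the point that makes it work: because the mesh-$\delta$ criterion for $f\in\RS{g}$ is uniform over the choice of tags, applying it to the refined tagged partition $\partit'$, whose mesh is at most $\norm{\partit}$, shows that $S_\partit$ converges --- independently of the tags of $\partit$ --- to $f(t_1)g(t_1)-f(t_0)g(t_0)-\int_{t_0}^{t_1}f(t)\td g(t)$, and by \Cref{def:RS_integral} this is precisely the assertion $g\in\RS{f}$ together with \eqref{eq:RS_integrationByParts}.

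One small point you should patch in a polished write-up: if some tag satisfies $\wt s_k=\wt t_{k-1}$ or $\wt s_k=\wt t_k$, the point set $\set{\wt t_0,\wt s_1,\wt t_1,\ldots,\wt s_K,\wt t_K}$ contains repeated nodes and is therefore not a partition in the sense of $\partInt{[t_0,t_1]}$, whose nodes are required to be strictly increasing. This is harmless: the corresponding increment $g(\wt s_k)-g(\wt t_{k-1})$ or $g(\wt t_k)-g(\wt s_k)$ is then multiplied into a term that vanishes, so deleting the duplicated nodes changes neither the value of the sum nor the admissibility of the remaining tags, and the identification of your right-hand side as a Riemann--Stieltjes sum for $\int_{t_0}^{t_1}f(t)\td g(t)$ over an admissible tagged partition of mesh at most $\norm{\partit}$ goes through. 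With that one-sentence remark added, the argument is complete.
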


\begin{theorem}\label{thm:RS_C_and_BV}
    If $f\in\mathcal C([t_0,t_1],\C)$ and $g\in\BV([t_0,t_1],\C)$, then $f\in\RS{g}$, $g\in\RS{f}$, and
    \[
    \abs*{ \int_{t_0}^{t_1} f(t)\td g(t) } \leq \norm{f}_{L^\infty([t_0,t_1])}\totVar[t_0][t_1](g).
    \]
\end{theorem}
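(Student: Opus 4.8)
The plan is to establish integrability of $f$ with respect to $g$ through a Cauchy criterion applied to the net of Riemann-Stieltjes sums indexed by tagged partitions and ordered by mesh. The two hypotheses feed in cleanly: continuity of $f$ on the compact interval $[t_0,t_1]$ upgrades to \emph{uniform} continuity, while the bounded variation of $g$ supplies a finite constant $\totVar[t_0][t_1](g)$ that absorbs all the $g$-increments. Since $\C$ is complete, showing that the sums form a Cauchy net as $\norm{\partit}\to 0$ yields existence of the limit, i.e.\ $f\in\RS{g}$. The norm bound then comes essentially for free by passing to the limit in the elementary estimate on a single sum, and $g\in\RS{f}$ is immediate from the integration-by-parts formula in \Cref{thm:RS_integrationByParts}.

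For the Cauchy estimate I would fix $\varepsilon>0$ and choose $\delta>0$ from uniform continuity so that $\abs{f(s)-f(t)}<\varepsilon$ whenever $\abs{s-t}<\delta$, then argue through two comparisons. \textbf{Same partition, different tags:} for a partition $\partit=\set{\tilde t_0,\ldots,\tilde t_K}$ with $\norm{\partit}<\delta$ and two tag choices $\set{\tilde s_k}$, $\set{\tilde r_k}$, the difference of sums is $\sum_{k}(f(\tilde s_k)-f(\tilde r_k))(g(\tilde t_k)-g(\tilde t_{k-1}))$, whose modulus is at most $\varepsilon\totVar[t_0][t_1](g)$ since each $\abs{f(\tilde s_k)-f(\tilde r_k)}<\varepsilon$. \textbf{Partition versus refinement:} if $\partit^\ast\supseteq\partit$, then on each subinterval $[\tilde t_{k-1},\tilde t_k]$ the coarse increment telescopes over the refining points, so the contribution of $\partit^\ast$ minus that of $\partit$ is again a sum of terms of the form $(f(\tilde s_k)-f(\eta))(g(\cdot)-g(\cdot))$ with all tags lying in an interval of length $<\delta$; this too is bounded by $\varepsilon\totVar[t_0][t_1](g)$, independently of the tags chosen on $\partit^\ast$.

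To conclude the Cauchy property for two arbitrary tagged partitions $\partit_1,\partit_2$ with $\norm{\partit_1},\norm{\partit_2}<\delta$, I would pass to the common refinement $\partit^\ast=\partit_1\cup\partit_2$ (still of mesh $<\delta$), fix any tags on it, and combine the refinement comparison applied to each of $\partit_1$ and $\partit_2$ to bound their difference by $2\varepsilon\totVar[t_0][t_1](g)$. Letting $\varepsilon\to 0$ shows the net is Cauchy and hence convergent, giving $f\in\RS{g}$. For the inequality, every Riemann-Stieltjes sum satisfies $\abs{\sum_k f(\tilde s_k)(g(\tilde t_k)-g(\tilde t_{k-1}))}\le\norm{f}_{L^\infty([t_0,t_1])}\totVar[t_0][t_1](g)$ by the triangle inequality and the definition of total variation; taking the limit $\norm{\partit}\to0$ transfers this bound to the integral. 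Finally, since $f\in\RS{g}$, \Cref{thm:RS_integrationByParts} yields $g\in\RS{f}$ at once.

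I expect the main obstacle to be the \emph{refinement comparison}: one must organize the bookkeeping so that the $\partit^\ast$-sum restricted to each coarse subinterval is matched against the single coarse term via the telescoping identity $g(\tilde t_k)-g(\tilde t_{k-1})=\sum_j\pset[\big]{g(y_{k,j})-g(y_{k,j-1})}$, and then argue that \emph{all} the relevant tags sit within a length-$\delta$ window so that uniform continuity applies uniformly, while the variation accumulated over the finest subdivision remains bounded by $\totVar[t_0][t_1](g)$. Once this comparison is set up, everything else is routine.
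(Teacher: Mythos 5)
Your proposal is correct, but note that the paper itself does not prove this statement at all: \Cref{thm:RS_C_and_BV} is one of the ``fundamental properties of the Riemann-Stieltjes integral'' that the paper explicitly recalls \emph{without proof}, deferring to the literature (the reference [Car00]). So there is no in-paper argument to compare against; what you have written is essentially the standard textbook proof that the citation points to. Your three-step structure is sound: uniform continuity of $f$ on the compact interval gives the modulus $\varepsilon$; the refinement comparison works exactly as you describe, since the telescoping identity $g(\tilde t_k)-g(\tilde t_{k-1})=\sum_j\pset[\big]{g(y_{k,j})-g(y_{k,j-1})}$ places every refined tag and the coarse tag $\tilde s_k$ in the same subinterval of length $<\delta$, so each block is bounded by $\varepsilon\totVar[\tilde t_{k-1}][\tilde t_k](g)$ and summing gives $\varepsilon\totVar[t_0][t_1](g)$; and passing to the common refinement yields the $2\varepsilon\totVar[t_0][t_1](g)$ Cauchy bound for arbitrary fine tagged partitions. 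Two cosmetic remarks: the ``same partition, different tags'' comparison is subsumed by the refinement comparison and could be dropped; and rather than invoking convergence of Cauchy nets, it is cleaner (and matches \Cref{def:RS_integral} directly) to take any sequence of tagged partitions with mesh tending to zero, extract the limit $I$ of the resulting Cauchy sequence, and then observe that every tagged partition of mesh $<\delta$ has sum within $2\varepsilon\totVar[t_0][t_1](g)$ of $I$. The norm bound by passing to the limit in the elementary sum estimate, and the deduction of $g\in\RS{f}$ from \Cref{thm:RS_integrationByParts}, are both exactly right.
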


\begin{theorem}\label{thm:RS_C_and_AC}
    If $f\in\mathcal C([t_0,t_1],\C)$ and $g\in W^{1,1}([t_0,t_1],\C)$, then
    \[
    \int_{t_0}^{t_1} f(t)\td g(t) = \int_{t_0}^{t_1} f(t)\dot g(t)\td t.
    \]
\end{theorem}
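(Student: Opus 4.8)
The plan is to evaluate the Riemann-Stieltjes integral directly from its definition as a limit of tagged sums, using the fact that a $W^{1,1}$ function is absolutely continuous, so that each of its increments equals the Lebesgue integral of its weak derivative over the corresponding subinterval. First I would record that the left-hand side is well defined: since $g\in W^{1,1}([t_0,t_1],\C)$ is absolutely continuous it is in particular of bounded variation, so \Cref{thm:RS_C_and_BV} applies to the continuous integrand $f$ and the $\BV$ integrator $g$, guaranteeing $f\in\RS{g}$ and the existence of $\int_{t_0}^{t_1} f\,\td g$. Moreover, the characterization of $W^{1,1}$ functions in one dimension as absolutely continuous functions (the same fact underlying \Cref{lem:integralIsAC}) gives the increment identity $g(s)-g(r)=\int_r^s\dot g(t)\,\td t$ for all $r,s\in[t_0,t_1]$ with $r\leq s$.

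Next I would fix an arbitrary partition $\partit=\set{\wt t_0,\ldots,\wt t_K}$ with tags $\wt s_k\in[\wt t_{k-1},\wt t_k]$ and rewrite the corresponding Riemann-Stieltjes sum. Substituting the increment identity yields
\[
\sum_{k=1}^K f(\wt s_k)\pset[\big]{ g(\wt t_k) - g(\wt t_{k-1}) }
= \sum_{k=1}^K f(\wt s_k)\int_{\wt t_{k-1}}^{\wt t_k}\dot g(t)\,\td t
= \int_{t_0}^{t_1} f_\partit(t)\,\dot g(t)\,\td t,
\]
where $f_\partit:[t_0,t_1]\to\C$ denotes the step function equal to $f(\wt s_k)$ on each subinterval determined by $\partit$. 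Thus every tagged Riemann-Stieltjes sum is, exactly, the Lebesgue integral of $f_\partit\,\dot g$.

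Finally I would pass to the limit $\norm{\partit}\to 0$. Since $f$ is continuous on the compact interval $[t_0,t_1]$, it is uniformly continuous, whence $\norm{f_\partit-f}_{L^\infty([t_0,t_1])}\leq\omega_f(\norm{\partit})$ for the modulus of continuity $\omega_f$ of $f$, with $\omega_f(\norm{\partit})\to 0$. Because $\dot g\in L^1([t_0,t_1],\C)$, the estimate
\[
\abs*{ \int_{t_0}^{t_1}\pset[\big]{ f_\partit(t) - f(t) }\dot g(t)\,\td t }
\leq \omega_f(\norm{\partit})\int_{t_0}^{t_1}\abs{\dot g(t)}\,\td t
\longrightarrow 0
\]
shows that the Riemann-Stieltjes sums converge to $\int_{t_0}^{t_1} f(t)\dot g(t)\,\td t$, and by uniqueness of the limit this must coincide with $\int_{t_0}^{t_1} f\,\td g$.

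This argument is essentially routine, and there is no serious obstacle; the only point demanding care is that convergence of the sums must be independent of the choice of partition \emph{and} of the tags $\wt s_k$, as required by \Cref{def:RS_integral}. This is handled precisely by the bound $\omega_f(\norm{\partit})$, which is uniform in the tags. The mild subtlety is that $\dot g$ is only $L^1$ and not bounded, so the finiteness of $\int_{t_0}^{t_1}\abs{\dot g}\,\td t$, guaranteed by $g\in W^{1,1}$, is exactly what makes the error estimate vanish in the limit.
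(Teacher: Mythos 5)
Your proof is correct. There is nothing in the paper to compare it against: the paper states \Cref{thm:RS_C_and_AC} without proof, recalling it from the literature (see~\cite{Car00}), so your argument fills a gap the authors deliberately left to a reference. Your route is the standard one: fix the absolutely continuous representative of $g\in W^{1,1}$, write each increment $g(\wt t_k)-g(\wt t_{k-1})$ as $\int_{\wt t_{k-1}}^{\wt t_k}\dot g(t)\,\td t$, recognize every tagged Riemann--Stieltjes sum as $\int_{t_0}^{t_1} f_\partit\,\dot g\,\td t$ for the tag-induced step function $f_\partit$, and use uniform continuity of $f$ plus $\dot g\in L^1$ to get the error bound $\omega_f(\norm{\partit})\,\norm{\dot g}_{L^1}$, which is uniform in the tags. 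One small streamlining: your opening appeal to \Cref{thm:RS_C_and_BV} for existence of the left-hand side is redundant, because your final estimate already shows that the tagged sums converge to $\int_{t_0}^{t_1} f(t)\dot g(t)\,\td t$ as $\norm{\partit}\to 0$, uniformly in the choice of tags; that is exactly the existence of the Riemann--Stieltjes integral in the sense of \Cref{def:RS_integral}, together with its value, so no separate existence step is needed. The one genuine care point --- that the Riemann--Stieltjes sums and the increment identity require the (absolutely) continuous representative of $g$, consistent with the paper's convention $W^{1,1}\subseteq\mathcal C$ in one dimension --- is something you acknowledge and handle correctly.
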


\noindent Before focusing on the matrix-valued case, we show that under some additional regularity assumptions, \eqref{eq:RS_integrationByParts} can be extended to the product of three functions.
\begin{lemma}\label{lem:RS_threeTerms}
    Let $f,g,h\in\BV([t_0,t_1],\C)$ and suppose that any two among $f,g,h$ are continuous. Then $fg\in\RS{h}$, $fh\in\RS{g}$, $gh\in\RS{f}$ and the formula
    \begin{equation}\label{eq:RS_threeTerms}
        f(t_1)g(t_1)h(t_1) - f(t_0)g(t_0)h(t_0) = \int_{t_0}^{t_1}f(t)g(t)\td h(t) + \int_{t_0}^{t_1}f(t)h(t)\td g(t) + \int_{t_0}^{t_1}g(t)h(t)\td f(t)
    \end{equation}
    holds.
\end{lemma}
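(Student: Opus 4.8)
The plan is to derive \eqref{eq:RS_threeTerms} from a single application of the two-function integration by parts formula (\Cref{thm:RS_integrationByParts}) together with one genuinely new ingredient, a product rule for the Riemann--Stieltjes differential,
\[
\int_{t_0}^{t_1} h\,\td(fg) = \int_{t_0}^{t_1} fh\,\td g + \int_{t_0}^{t_1} gh\,\td f .
\]
All assertions of the lemma --- the three integrability claims and the identity \eqref{eq:RS_threeTerms} --- are invariant under permutations of the triple $(f,g,h)$, and the hypothesis that two of them are continuous is likewise symmetric. I would therefore first reduce, without loss of generality, to the case where $f$ and $g$ are the two continuous functions and $h$ is merely of bounded variation.

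Next I would settle existence. Since $\BV([t_0,t_1],\C)$ is an algebra (via the elementary bound $\totVar[t_0][t_1](fg)\le\norm{f}_{L^\infty}\totVar[t_0][t_1](g)+\norm{g}_{L^\infty}\totVar[t_0][t_1](f)$), the products $fg,fh,gh$ all lie in $\BV([t_0,t_1],\C)$, and $fg$ is moreover continuous. Then \Cref{thm:RS_C_and_BV} yields $fg\in\RS{h}$ (continuous integrand against a BV integrator), while the symmetric half of the same theorem gives $fh\in\RS{g}$ and $gh\in\RS{f}$ (BV integrand against a continuous integrator). This establishes the three integrability claims.

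For the core computation I would apply \Cref{thm:RS_integrationByParts} to the pair $(fg,h)$, which is legitimate because $fg\in\RS{h}$, obtaining
\[
f(t_1)g(t_1)h(t_1)-f(t_0)g(t_0)h(t_0)=\int_{t_0}^{t_1} fg\,\td h+\int_{t_0}^{t_1} h\,\td(fg),
\]
so that \eqref{eq:RS_threeTerms} follows once the product rule is proved. To prove it I would return to \Cref{def:RS_integral}: for a partition $\partit=\set{\wt t_0,\ldots,\wt t_K}$ with tags $\wt s_k\in[\wt t_{k-1},\wt t_k]$, I split each increment by the telescoping identity $(fg)(\wt t_k)-(fg)(\wt t_{k-1})=f(\wt t_k)\bigl(g(\wt t_k)-g(\wt t_{k-1})\bigr)+g(\wt t_{k-1})\bigl(f(\wt t_k)-f(\wt t_{k-1})\bigr)$. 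Multiplying by $h(\wt s_k)$ and summing produces two sums that are Riemann--Stieltjes sums for $\int fh\,\td g$ and $\int gh\,\td f$, except that $f$ is evaluated at the node $\wt t_k$ (respectively $g$ at $\wt t_{k-1}$) rather than at the tag $\wt s_k$.

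The main obstacle is precisely correcting this tag mismatch. Because $f$ is uniformly continuous on the compact interval, $\max_k\abs{f(\wt t_k)-f(\wt s_k)}\to 0$ as $\norm{\partit}\to 0$; since $h$ is bounded and $\sum_k\abs{g(\wt t_k)-g(\wt t_{k-1})}\le\totVar[t_0][t_1](g)<\infty$, the error incurred by replacing $f(\wt t_k)$ with $f(\wt s_k)$ is bounded by $\norm{h}_{L^\infty}\,\totVar[t_0][t_1](g)\,\max_k\abs{f(\wt t_k)-f(\wt s_k)}$ and hence vanishes in the limit, so the first sum converges to $\int fh\,\td g$ (which exists). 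The second sum is handled identically, using continuity of $g$ to correct $g(\wt t_{k-1})$ to $g(\wt s_k)$ and $\totVar[t_0][t_1](f)<\infty$ to bound the $f$-increments, giving $\int gh\,\td f$. This is exactly the step that consumes both continuity hypotheses (one for each tag correction) and both bounded-variation bounds (one for each increment sum), which is why precisely two of the three factors must be continuous. Substituting the resulting product rule into the integration-by-parts identity above then delivers \eqref{eq:RS_threeTerms}.
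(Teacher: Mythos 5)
Your proof is correct, but it takes a genuinely different route from the paper's. You factor the identity through \Cref{thm:RS_integrationByParts} applied to the pair $(fg,h)$, reducing everything to the Leibniz rule
\[
\int_{t_0}^{t_1} h(t)\,\td\pset[\big]{f(t)g(t)} = \int_{t_0}^{t_1} f(t)h(t)\,\td g(t) + \int_{t_0}^{t_1} g(t)h(t)\,\td f(t),
\]
which you then prove by telescoping the increments of $fg$ and correcting two tag mismatches: one correction uses uniform continuity of $f$ against $\totVar[t_0][t_1](g)\,\norm{h}_{L^\infty}$, the other uniform continuity of $g$ against $\totVar[t_0][t_1](f)\,\norm{h}_{L^\infty}$. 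The paper instead proves the three-term identity in a single direct discrete computation: it sums the three Riemann--Stieltjes sums with judiciously chosen tags (right endpoints for $\int fg\,\td h$, left endpoints for the other two), telescopes, and is left with one second-order error term $\sum_{k}\bigl(f(\wt t_k)-f(\wt t_{k-1})\bigr)\bigl(g(\wt t_k)-g(\wt t_{k-1})\bigr)h(\wt t_{k-1})$, bounded by $\varepsilon\,\totVar[t_0][t_1](g)\,\norm{h}_{L^\infty}$ using uniform continuity of $f$ alone (continuity of $g$ enters only through the existence of $\int fg\,\td h$). Your modular route buys a product rule of independent interest and makes maximally transparent why exactly two of the three factors must be continuous, since each tag correction consumes one continuity hypothesis and one variation bound; the paper's route is self-contained (it never invokes the two-function integration-by-parts theorem) and needs only a single error estimate. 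The symmetry reduction and the existence argument via \Cref{thm:RS_C_and_BV} (together with the fact that products of $\BV$ functions are $\BV$) are essentially identical in both proofs.
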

\begin{proof}
    Due to the obvious symmetry of the statement, it is sufficient to prove it for $f,g\in\mathcal C([t_0,t_1],\C)$.
    Since $fg\in\mathcal C$ and $h\in\BV$, $g\in\mathcal C$ and $fh\in\BV$, and $f\in\mathcal C$ and $gh\in\BV$, we deduce from \Cref{thm:RS_C_and_BV} that $fg\in\RS{h}$, $fh\in\RS{g}$ and $gh\in\RS{f}$.
    
    For $\varepsilon>0$ arbitrarily small, let $\delta>0$ be such that, for all partitions $\partit=\set{\wt t_0,\ldots,\wt t_K}\in\partInt{[t_0,t_1]}$ with $\norm{\partit}<\delta$ and every choice of $\wt s_k\in[\wt t_{k-1},\wt t_k]$ for $k=1,\ldots,K$, it holds that $\abs{D_1},\abs{D_2},\abs{D_3}<\varepsilon$, where
    \begin{align*}
        D_1 & \coloneqq \int_{t_0}^{t_1}f(t)g(t)\td h(t) - \sum_{k=1}^K f(\wt s_k)g(\wt s_k)\pset[\big]{(h(\wt t_k)-h(\wt t_{k-1})}, \\
        D_2 & \coloneqq \int_{t_0}^{t_1}f(t)h(t)\td g(t) - \sum_{k=1}^K f(\wt s_k)h(\wt s_k)\pset[\big]{(g(\wt t_k)-g(\wt t_{k-1})}, \\
        D_3 & \coloneqq \int_{t_0}^{t_1}g(t)h(t)\td f(t) - \sum_{k=1}^K g(\wt s_k)h(\wt s_k)\pset[\big]{(f(\wt t_k)-f(\wt t_{k-1})}.
    \end{align*}
    Up to selecting a smaller $\delta$, we can also suppose that $\abs{f(t)-f(s)}<\varepsilon$ for every $s,t\in[t_0,t_1]$ with $\abs{t-s}<\delta$, since $f$ is uniformly continuous.
    Let us now denote $f_k\coloneqq f(\wt t_k)$, $g_k\coloneqq g(\wt t_k)$ and $h_k\coloneqq h(\wt t_k)$ for $k=0,\ldots,K$. Then we have
    \begin{align*}
        & \sum_{k=1}^K f_kg_k(h_k-h_{k-1}) + \sum_{k=1}^K f_{k-1}h_{k-1}(g_k-g_{k-1}) + \sum_{k=1}^K g_{k-1}h_{k-1}(f_k-f_{k-1}) \\
        &= f_Kg_Kh_K + \sum_{k=1}^{K-1}f_kg_kh_k - \sum_{k=1}^{K}f_kg_kh_{k-1} + \sum_{k=1}^K f_{k-1}g_kh_{k-1} \\
        &\qquad- f_0g_0h_0 - \sum_{k=2}^K f_{k-1}g_{k-1}h_{k-1} + \sum_{k=1}^K f_kg_{k-1}h_{k-1} - \sum_{k=1}^K f_{k-1}g_{k-1}h_{k-1} \\
        &= f_Kg_Kh_K - f_0g_0h_0 - \sum_{k=1}^K(f_kg_k - f_{k-1}g_k - f_kg_{k-1} + f_{k-1}g_{k-1})h_{k-1} \\
        &= f(t_1)g(t_1)h(t_1) - f(t_0)g(t_0)h(t_0) - \sum_{k=1}^K(f_k-f_{k-1})(g_k-g_{k-1})h_{k-1},
    \end{align*}
    and therefore
    \begin{align*}
        & \int_{t_0}^{t_1}f(t)g(t)\td h(t) + \int_{t_0}^{t_1}f(t)h(t)\td g(t) + \int_{t_0}^{t_1}g(t)h(t)\td f(t) \\
        &= \sum_{k=1}^K f_kg_k(h_k-h_{k-1}) + \sum_{k=1}^K f_{k-1}h_{k-1}(g_k-g_{k-1}) + \sum_{k=1}^K g_{k-1}h_{k-1}(f_k-f_{k-1}) + D_1 + D_2 + D_3 \\
        &= f(t_1)g(t_1)h(t_1) - f(t_0)g(t_0)h(t_0) - \sum_{k=1}^K(f_k-f_{k-1})(g_k-g_{k-1})h_{k-1} + D_1 + D_2 + D_3.
    \end{align*}
    Since $t_k-t_{k-1}\leq\norm{\partit}<\delta$, we have
    \[
    \abs*{\sum_{k=1}^K(f_k-f_{k-1})(g_k-g_{k-1})h_{k-1}} \leq \sum_{k=1}^K\varepsilon\abs{g_k-g_{k-1}}\norm{h}_{L^\infty} \leq \varepsilon\totVar[t_0][t_1]\norm{h}_{L^\infty},
    \]
    and therefore
    \begin{align*}
        & \abs*{\int_{t_0}^{t_1}f(t)g(t)\td h(t) + \int_{t_0}^{t_1}f(t)h(t)\td g(t) + \int_{t_0}^{t_1}g(t)h(t)\td f(t) - \pset[\big]{f(t_1)g(t_1)h(t_1) - f(t_0)g(t_0)h(t_0)}} \\
        &\leq \abs*{\sum_{k=1}^K(f_k-f_{k-1})(g_k-g_{k-1})h_{k-1}} + \abs{D_1} + \abs{D_2} + \abs{D_3} < \varepsilon(\totVar[t_0][t_1]\norm{h}_{L^\infty}+3).
    \end{align*}
    Since $\varepsilon$ can be chosen arbitrarily small, we conclude that \eqref{eq:RS_threeTerms} holds.
\end{proof}

\subsection{The matrix Riemann-Stieltjes integral}

We extend the Riemann-Stieltjes integral to matrix-valued functions in the following way.
\begin{definition}\label{def:RSM_integral}
    Given three matrix functions $F=[f_{ij}]:[t_0,t_1]\to\C^{m,n}$, $G=[g_{jk}]:[t_0,t_1]\to\C^{n,p}$ and $Z=[z_{kl}]:[t_0,t_1]\to\C^{p,q}$, the \emph{Riemann-Stieltjes integral} of $F$ and $Z$ with respect to $G$ is defined (when it exists) as
    \begin{equation}
    	\int_{t_0}^{t_1}\pset[\big]{F(t)\td G(t)Z(t)} \coloneqq
        \bset*{ \sum_{j,k}\int_{t_0}^{t_1}f_{ij}(t)z_{kl}(t)\td g_{jk}(t) }_{i,l} \in \C^{m,q},
    \end{equation}
    consistently with matrix multiplication. If the Riemann-Stieltjes integral exists, we write $(F,Z)\in\RS{G}([t_0,t_1],\C^{m,q})$, or, in short, $(F,Z)\in\RS{G}$, when $[t_0,t_1]$ and $\C^{m,q}$ are clear from the context.
\end{definition}

\noindent It is also useful to consider the following equivalent definition of the matrix Riemann-Stieltjes integral.
\begin{lemma}\label{lem:RSM_altDef}
    Let $F=[f_{ij}]:[t_0,t_1]\to\C^{m,n}$, $G=[g_{jk}]:[t_0,t_1]\to\C^{n,p}$ and $Z=[z_{kl}]:[t_0,t_1]\to\C^{p,q}$ such that $(F,Z)\in\RS{G}$. Then
    \begin{equation}\label{eq:RSM_altDef}
    \int_{t_0}^{t_1}\pset[\big]{F(t)\td G(t)Z(t)} = \lim_{\norm{\partit}\to 0} \sum_{k=1}^K F(\wt s_k)\pset[\big]{G(\wt t_k)-G(\wt t_{k-1})}Z(\wt s_k),
    \end{equation}
    where for every partition $\partit=\set{\wt t_0,\ldots,\wt t_K}\in\partInt{[t_0,t_1]}$ we take any $\wt s_k\in[\wt t_{k-1},\wt t_k]$, for $k=1,\ldots,K$.
\end{lemma}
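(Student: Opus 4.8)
The plan is to prove the identity entrywise, reducing the matrix statement to the scalar Riemann--Stieltjes convergence that is already encoded in the hypothesis $(F,Z)\in\RS{G}$.

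First I would fix indices $i\in\{1,\dots,m\}$ and $l\in\{1,\dots,q\}$ and expand the $(i,l)$ entry of a single matrix Riemann--Stieltjes sum. Writing $\kappa$ for the partition index to avoid clashing with the matrix index $k$, the rules of matrix multiplication give
\begin{equation*}
\bset*{ \sum_{\kappa=1}^K F(\wt s_\kappa)\pset[\big]{G(\wt t_\kappa)-G(\wt t_{\kappa-1})}Z(\wt s_\kappa) }_{i,l}
= \sum_{\kappa=1}^K \sum_{j=1}^n \sum_{k=1}^p f_{ij}(\wt s_\kappa)\pset[\big]{g_{jk}(\wt t_\kappa)-g_{jk}(\wt t_{\kappa-1})}z_{kl}(\wt s_\kappa).
\end{equation*}
Since the sums over $j$ and $k$ are finite, I can interchange them with the sum over $\kappa$ to obtain
\begin{equation*}
\sum_{j=1}^n\sum_{k=1}^p \pset[\Big]{ \sum_{\kappa=1}^K f_{ij}(\wt s_\kappa)z_{kl}(\wt s_\kappa)\pset[\big]{g_{jk}(\wt t_\kappa)-g_{jk}(\wt t_{\kappa-1})} }.
\end{equation*}

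Next I would observe that the bracketed inner sum is precisely a scalar Riemann--Stieltjes sum for $\int_{t_0}^{t_1} f_{ij}(t)z_{kl}(t)\td g_{jk}(t)$, with the product $f_{ij}z_{kl}$ acting as the integrand evaluated at the tags $\wt s_\kappa$. By \Cref{def:RSM_integral} the hypothesis $(F,Z)\in\RS{G}$ means exactly that each of these scalar integrals exists, i.e.\ $f_{ij}z_{kl}\in\RS{g_{jk}}$; hence by \Cref{def:RS_integral} each inner sum converges as $\norm{\partit}\to 0$, and independently of the choice of tags, to $\int_{t_0}^{t_1} f_{ij}(t)z_{kl}(t)\td g_{jk}(t)$.

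Finally, since the outer sum over $j$ and $k$ has only finitely many terms, the limit of the matrix Riemann--Stieltjes sum exists (again independently of the tags) and equals the finite sum of the scalar limits,
\begin{equation*}
\sum_{j=1}^n\sum_{k=1}^p \int_{t_0}^{t_1} f_{ij}(t)z_{kl}(t)\td g_{jk}(t),
\end{equation*}
which is exactly the $(i,l)$ entry of $\int_{t_0}^{t_1}\pset[\big]{F(t)\td G(t)Z(t)}$ as defined in \Cref{def:RSM_integral}. Matching all entries yields \eqref{eq:RSM_altDef}. The only point requiring care --- and the main, if modest, obstacle --- is justifying that the limit of the matrix sums is well defined as $\norm{\partit}\to0$ uniformly over the tag choices; this is not an extra assumption but follows from the existence of each scalar integral together with the interchangeability of a finite summation with the limit.
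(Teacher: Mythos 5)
Your proof is correct and follows essentially the same route as the paper's: expand the matrix Riemann--Stieltjes sums entrywise via matrix multiplication, invoke the hypothesis that each scalar integral $\int_{t_0}^{t_1} f_{ij}z_{kl}\td g_{jk}$ exists, and interchange the finite sum over $j,k$ with the limit $\norm{\partit}\to 0$. The paper simply writes the same chain of equalities starting from the integral rather than from the sums, so the two arguments coincide step for step.
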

\begin{proof}
    From the definition of the scalar Riemann-Stieltjes integral and of matrix multiplication we obtain that
    \begin{align*}
        \int_{t_0}^{t_1}\pset[\big]{F(t)\td G(t)Z(t)}
        &= \bset*{ \sum_{j,k}\int_{t_0}^{t_1}f_{ij}(t)z_{kl}(t)\td g_{jk}(t) }_{i,l} \\
        &= \bset*{ \sum_{j,k}\lim_{\norm{\partit}\to 0} \sum_{m=1}^K f_{ij}(\wt s_m)z_{kl}(\wt s_m)\pset[\big]{g_{jk}(\wt t_m)-g_{jk}(\wt t_{m-1})} }_{i,l} \\
        &= \lim_{\norm{\partit}\to 0} \sum_{m=1}^K \bset*{ \sum_{j,k} f_{ij}(\wt s_m)z_{kl}(\wt s_m)\pset[\big]{g_{jk}(\wt t_m)-g_{jk}(\wt t_{m-1})} }_{i,l} \\
        &= \lim_{\norm{\partit}\to 0} \sum_{m=1}^K F(\wt s_m)\pset[\big]{G(\wt t_m)-G(\wt t_{m-1})}Z(\wt s_m). \qedhere
    \end{align*}
\end{proof}

\noindent We define implicitly
\[
\int_{t_0}^{t_1} (F\td G) \coloneqq \int_{t_0}^{t_1} \pset[\big]{F(\td G)I_p}, \qquad
\int_{t_0}^{t_1} \pset[\big]{(\td G)Z} \coloneqq \int_{t_0}^{t_1} \pset[\big]{I_n(\td G)Z},
\]
and
\[
\int_{t_0}^{t_1} \td G \coloneqq \int_{t_0}^{t_1} \pset[\big]{I_n(\td G)I_p} = G(t_1) - G(t_0),
\]
which follows immediately from \Cref{lem:RSM_altDef}.
We proceed by extending to the matrix-valued case several statements that are true for the scalar Riemann-Stieltjes integral. 
\begin{lemma}
    Let $F\in\mathcal C([t_0,t_1],\C^{m,n})$, $G\in\BV([t_0,t_1],\C^{n,p})$ and $Z\in\mathcal C([t_0,t_1],\C^{p,q})$. Then $(F,Z)\in\RS{G}$ and
    \begin{equation}
        \norm*{ \int_{t_0}^{t_1}\pset[\big]{F(t)\td G(t)Z(t)} }_2 \leq \norm{F}_{L^\infty}\norm{Z}_{L^\infty}\totVar[t_0][t_1](G),
    \end{equation}
    where $\totVar[t_0][t_1](G)$ denotes the total variation of $G$.
\end{lemma}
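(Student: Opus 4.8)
The plan is to reduce the matrix-valued statement to the scalar Riemann-Stieltjes results already recalled in \Cref{thm:RS_C_and_BV}, since the matrix integral is defined entrywise in \Cref{def:RSM_integral}. First I would observe that existence of the integral follows immediately: each scalar integrand $f_{ij}z_{kl}$ is continuous (as a product of continuous functions) and each $g_{jk}\in\BV([t_0,t_1],\C)$, so by \Cref{thm:RS_C_and_BV} every scalar Riemann-Stieltjes integral $\int_{t_0}^{t_1}f_{ij}(t)z_{kl}(t)\td g_{jk}(t)$ exists, hence $(F,Z)\in\RS{G}$.

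For the norm bound I would use the characterization in \Cref{lem:RSM_altDef}, writing the matrix integral as the limit of Riemann-Stieltjes sums
\[
\int_{t_0}^{t_1}\pset[\big]{F(t)\td G(t)Z(t)} = \lim_{\norm{\partit}\to 0} \sum_{k=1}^K F(\wt s_k)\pset[\big]{G(\wt t_k)-G(\wt t_{k-1})}Z(\wt s_k).
\]
By submultiplicativity of the induced $2$-norm and the triangle inequality, each partial sum satisfies
\[
\norm*{ \sum_{k=1}^K F(\wt s_k)\pset[\big]{G(\wt t_k)-G(\wt t_{k-1})}Z(\wt s_k) }_2
\leq \norm{F}_{L^\infty}\norm{Z}_{L^\infty} \sum_{k=1}^K \norm{G(\wt t_k)-G(\wt t_{k-1})}_2,
\]
and the final sum is bounded by $\totVar[t_0][t_1](G)$ by the very definition of the total variation of a matrix function given in \Cref{sec:null}. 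Passing to the limit $\norm{\partit}\to 0$, the norm is continuous so the bound is preserved, yielding the claim.

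The main obstacle, and the only genuinely delicate point, is justifying that the norm bound survives the limit: the estimate above holds uniformly for \emph{every} partition and every choice of intermediate points $\wt s_k$, and since the limit defining the integral exists and equals a fixed matrix, I can simply take the limit of the inequality using continuity of $\norm{\,\cdot\,}_2$. I should take care that the supremum defining $\totVar[t_0][t_1](G)$ dominates each particular partition sum, which is immediate, so no refinement argument beyond the definition is needed. The whole proof is therefore short and I would present it in essentially this order: existence entrywise, then the uniform bound on Riemann-Stieltjes sums, then passage to the limit.

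\begin{proof}
    Since each $g_{jk}\in\BV([t_0,t_1],\C)$ and each product $f_{ij}z_{kl}$ is continuous, \Cref{thm:RS_C_and_BV} guarantees that every scalar integral $\int_{t_0}^{t_1}f_{ij}(t)z_{kl}(t)\td g_{jk}(t)$ exists, and therefore $(F,Z)\in\RS{G}$ by \Cref{def:RSM_integral}.

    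To prove the bound, we use the representation from \Cref{lem:RSM_altDef}. For every partition $\partit=\set{\wt t_0,\ldots,\wt t_K}\in\partInt{[t_0,t_1]}$ and every choice of $\wt s_k\in[\wt t_{k-1},\wt t_k]$, the submultiplicativity of the induced $2$-norm gives
    \[
        \norm*{ \sum_{k=1}^K F(\wt s_k)\pset[\big]{G(\wt t_k)-G(\wt t_{k-1})}Z(\wt s_k) }_2
        \leq \norm{F}_{L^\infty}\norm{Z}_{L^\infty} \sum_{k=1}^K \norm{ G(\wt t_k)-G(\wt t_{k-1}) }_2
        \leq \norm{F}_{L^\infty}\norm{Z}_{L^\infty} \totVar[t_0][t_1](G),
    \]
    where the last inequality holds because $\totVar[t_0][t_1](G)$ is the supremum of such sums over all partitions.
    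Since this estimate is uniform in $\partit$ and in the $\wt s_k$, and the matrix Riemann-Stieltjes integral is by \Cref{lem:RSM_altDef} the limit of these sums as $\norm{\partit}\to 0$, continuity of $\norm{\,\cdot\,}_2$ yields
    \[
        \norm*{ \int_{t_0}^{t_1}\pset[\big]{F(t)\td G(t)Z(t)} }_2 \leq \norm{F}_{L^\infty}\norm{Z}_{L^\infty}\totVar[t_0][t_1](G),
    \]
    as claimed.
\end{proof}
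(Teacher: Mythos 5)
Your proof is correct and follows essentially the same route as the paper's: existence is reduced entrywise to \Cref{thm:RS_C_and_BV}, and the bound is obtained by estimating the Riemann-Stieltjes sums from \Cref{lem:RSM_altDef} via submultiplicativity and the definition of the matrix total variation, then passing to the limit. No gaps.
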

\begin{proof}
    Let $F=[f_{ij}]$, $G=[g_{jk}]$ and $Z=[z_{kl}]$ denote the entries of the matrix functions.
    Since $F$ and $Z$ are continuous and $G$ is of bounded variation, then  $f_{ij},z_{kl}\in\mathcal{C}([t_0,t_1],\C)$ and $g_{jk}\in\BV([t_0,t_1],\C)$ for all $i,j,k,l$,
    and therefore the Riemann-Stieltjes integral $\int_{t_0}^{t_1}f_{ij}(t)z_{kl}(t)\td g_{jk}(t)$ exists for all $i,j,k,l$, because of \Cref{thm:RS_C_and_BV}.
    We conclude that the matrix Riemann-Stieltjes integral is well-defined.

    To obtain the inequality, we observe with the alternative definition \eqref{eq:RSM_altDef} that
    \begin{align*}
    \norm*{ \sum_{k=1}^K F(\wt s_k)\pset[\big]{G(\wt t_k)-G(\wt t_{k-1})}Z(\wt s_k) }_2
    &\leq \sum_{k=1}^K \norm{F}_{L^\infty}\norm{Z}_{L^\infty}\norm{G(\wt t_k)-G(\wt t_{k-1})} \\ &\leq \norm{F}_{L^\infty}\norm{Z}_{L^\infty}\totVar[t_0][t_1](G)
    \end{align*}
    holds for every partition $\partit=\set{\wt t_k}\in\partInt{[t_0,t_1]}$ and corresponding choice of $\wt s_k$, and therefore
    \[
    \norm*{ \int_{t_0}^{t_1}\pset[\big]{F(t)\td G(t)Z(t)} }_2 \leq \norm{F}_{L^\infty}\norm{Z}_{L^\infty}\totVar[t_0][t_1](G),
    \]
    by passing to the limit.
\end{proof}

\begin{lemma}\label{lem:RiemannStieltjes_matrixIntegrationByParts}
    Let $F\in\mathcal C\cap\BV([t_0,t_1],\C^{m,n})$, $G\in\BV([t_0,t_1],\C^{n,p})$ and $Z\in\mathcal C\cap\BV([t_0,t_1],\C^{p,q})$. Then $(AG,I)\in\RS{B}$, $(A,B)\in\RS{G}$ and $(I,GB)\in\RS{A}$.
	Furthermore, the integration by parts formula
	\begin{equation}
		F(t_1)G(t_1)Z(t_1) - F(t_0)G(t_0)Z(t_0) = \int_{t_0}^{t_1}\pset[\big]{F G(\td Z)} + \int_{t_0}^{t_1}\pset[\big]{F(\td G)Z} + \int_{t_0}^{t_1}\pset[\big]{(\td F)GZ}
	\end{equation}
	holds.
\end{lemma}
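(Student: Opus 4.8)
The plan is to reduce the matrix identity to the scalar three-term integration by parts formula of \Cref{lem:RS_threeTerms}, applied entrywise, and to assemble the result using the entrywise \Cref{def:RSM_integral}. Write $F=[f_{ij}]$, $G=[g_{jk}]$ and $Z=[z_{kl}]$ for the entries. Since $F,Z\in\mathcal C\cap\BV$ and $G\in\BV$, every entry $f_{ij}$ and $z_{kl}$ is continuous and of bounded variation, while every $g_{jk}$ is of bounded variation; moreover products of bounded-variation functions are again of bounded variation, so $f_{ij}g_{jk}$, $f_{ij}z_{kl}$ and $g_{jk}z_{kl}$ all lie in $\BV$.

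First I would establish that the three matrix Riemann--Stieltjes integrals $\int_{t_0}^{t_1}(FG\,\td Z)$, $\int_{t_0}^{t_1}(F(\td G)Z)$ and $\int_{t_0}^{t_1}((\td F)GZ)$ are well defined. By \Cref{def:RSM_integral} this amounts to the existence of the scalar integrals $\int_{t_0}^{t_1}f_{ij}g_{jk}\,\td z_{kl}$, $\int_{t_0}^{t_1}f_{ij}z_{kl}\,\td g_{jk}$ and $\int_{t_0}^{t_1}g_{jk}z_{kl}\,\td f_{ij}$ for all indices. For the first, the integrator $z_{kl}$ is continuous and the integrand $f_{ij}g_{jk}$ is of bounded variation, so \Cref{thm:RS_C_and_BV} (with the roles of integrand and integrator exchanged) yields $f_{ij}g_{jk}\in\RS{z_{kl}}$; the second follows directly from \Cref{thm:RS_C_and_BV} since $f_{ij}z_{kl}$ is continuous and $g_{jk}$ is of bounded variation; and the third follows again from the exchanged form of \Cref{thm:RS_C_and_BV}, since $f_{ij}$ is continuous and $g_{jk}z_{kl}$ is of bounded variation. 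This gives exactly the three membership claims of the statement.

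Next I would prove the formula. Fixing indices $i,l$, the $(i,l)$ entry of the left-hand side is $\sum_{j,k}\bigl(f_{ij}(t_1)g_{jk}(t_1)z_{kl}(t_1)-f_{ij}(t_0)g_{jk}(t_0)z_{kl}(t_0)\bigr)$, and by \Cref{def:RSM_integral} the $(i,l)$ entries of the three terms on the right are the sums over $j,k$ of the three scalar integrals above. Hence, by linearity of the sum over $j,k$, it suffices to verify for each fixed quadruple $(i,j,k,l)$ the scalar identity
\[
f(t_1)g(t_1)h(t_1) - f(t_0)g(t_0)h(t_0) = \int_{t_0}^{t_1} fg\,\td h + \int_{t_0}^{t_1} fh\,\td g + \int_{t_0}^{t_1} gh\,\td f,
\]
with $f\coloneqq f_{ij}$, $g\coloneqq g_{jk}$ and $h\coloneqq z_{kl}$. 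Here $f,g,h\in\BV$, and exactly two of them, namely $f$ and $h$, are continuous, so the hypotheses of \Cref{lem:RS_threeTerms} are met and the scalar identity holds. Summing over $j,k$ recovers the entrywise identity, and collecting the entries into matrices yields the asserted integration by parts formula.

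The only delicate point is the existence bookkeeping: unlike the preceding matrix bound, the integrands $FG$ and $GZ$ need not be continuous because $G$ is merely of bounded variation, so I cannot invoke a matrix existence lemma requiring a continuous integrand. The fix, carried out above, is to pass to scalar entries and apply the symmetric form of \Cref{thm:RS_C_and_BV}, exploiting that in each problematic term the integrator ($z_{kl}$ or $f_{ij}$) is continuous while the integrand is of bounded variation. Once existence is secured in this way, the computation is a routine reduction to \Cref{lem:RS_threeTerms}.
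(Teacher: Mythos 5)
Your proof is correct and follows essentially the same route as the paper: reduce the matrix identity entrywise to the scalar three-term formula of \Cref{lem:RS_threeTerms} and reassemble via \Cref{def:RSM_integral}. Your extra existence bookkeeping through \Cref{thm:RS_C_and_BV} is valid but redundant, since the conclusion of \Cref{lem:RS_threeTerms} already asserts $fg\in\RS{h}$, $fh\in\RS{g}$, $gh\in\RS{f}$ under exactly the hypotheses you verify.
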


\begin{proof}
	By applying \Cref{lem:RS_threeTerms} entrywise, we obtain
    \begin{align*}
        & F(t_1)G(t_1)Z(t_1) - F(t_0)G(t_0)Z(t_0) = \bset*{ \sum_{j,k} \pset[\big]{ f_{ij}(t_1)g_{jk}(t_1)z_{kl}(t_1) - f_{ij}(t_0)g_{jk}(t_0)z_{kl}(t_0)} }_{i,l} \\
        &= \bset*{ \sum_{j,k}\pset*{\int_{t_0}^{t_1} f_{ij}g_{jk}\td z_{kl} + \int_{t_0}^{t_1} f_{ij}z_{jk}\td g_{kl} + \int_{t_0}^{t_1} g_{ij}z_{jk}\td f_{kl}}}_{i,l} \\
        &= \bset*{ \sum_{j,k}\int_{t_0}^{t_1} f_{ij}g_{jk}\td z_{kl} }_{i,l} + \bset*{\sum_{j,k}\int_{t_0}^{t_1} f_{ij}z_{jk}\td g_{kl}}_{i,l} + \bset*{\sum_{j,k}\int_{t_0}^{t_1} g_{ij}z_{jk}\td f_{kl} }_{i,l} \\
        &= \int_{t_0}^{t_1}\pset[\big]{ F(t)G(t)\td Z(t) } + \int_{t_0}^{t_1}\pset[\big]{ F(t)\td G(t)Z(t) } + \int_{t_0}^{t_1}\pset[\big]{ \td F(t)G(t)Z(t) }. \qedhere
    \end{align*}
\end{proof}

\begin{lemma}
	If $F\in\mathcal C([t_0,t_1],\C^{m,n})$, $G\in W^{1,1}([t_0,t_1],\C^{n,p})$ and $Z\in\mathcal C([t_0,t_1],\C^{p,q})$, then
	\[
	\int_{t_0}^{t_1} \pset[\big]{F(t)\td G(t)Z(t)} = \int_{t_0}^{t_1} F(t)\dot G(t)Z(t)\td t.
	\]
\end{lemma}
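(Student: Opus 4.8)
The plan is to reduce the matrix-valued identity to the scalar Riemann--Stieltjes statement in \Cref{thm:RS_C_and_AC}, which we are allowed to invoke. The strategy mirrors the proof of \Cref{lem:RiemannStieltjes_matrixIntegrationByParts}: write everything entrywise, apply the known scalar result to each entry, and then reassemble the sums into the matrix products dictated by \Cref{def:RSM_integral}. First I would denote the entries $F=[f_{ij}]$, $G=[g_{jk}]$ and $Z=[z_{kl}]$, noting that the regularity hypotheses pass to the entries: $f_{ij},z_{kl}\in\mathcal C([t_0,t_1],\C)$ and $g_{jk}\in W^{1,1}([t_0,t_1],\C)$ for all indices. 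Since $W^{1,1}([t_0,t_1],\C)\subseteq\BV([t_0,t_1],\C)$, the existence of each scalar integral $\int_{t_0}^{t_1}f_{ij}(t)z_{kl}(t)\td g_{jk}(t)$ is already guaranteed (as in the previous lemmas), so the matrix integral on the left is well-defined.

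The core computation is then a direct chain of equalities. By \Cref{def:RSM_integral}, the $(i,l)$ entry of the left-hand side is $\sum_{j,k}\int_{t_0}^{t_1}f_{ij}(t)z_{kl}(t)\td g_{jk}(t)$. For each fixed triple of indices, the product $f_{ij}z_{kl}$ is continuous and $g_{jk}$ is weakly differentiable, so \Cref{thm:RS_C_and_AC} applies with the continuous integrand $f_{ij}z_{kl}$ and the $W^{1,1}$ integrator $g_{jk}$, yielding
\[
\int_{t_0}^{t_1}f_{ij}(t)z_{kl}(t)\td g_{jk}(t) = \int_{t_0}^{t_1}f_{ij}(t)z_{kl}(t)\dot g_{jk}(t)\td t.
\]
Summing over $j,k$ and recognizing that $\sum_{j,k}f_{ij}(t)\dot g_{jk}(t)z_{kl}(t)$ is exactly the $(i,l)$ entry of the ordinary matrix product $F(t)\dot G(t)Z(t)$, I obtain that the $(i,l)$ entry of the left-hand side equals $\int_{t_0}^{t_1}\bset[\big]{F(t)\dot G(t)Z(t)}_{i,l}\td t$. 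Since this holds for every $(i,l)$, the matrix identity follows.

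The only minor subtlety, and the step I would be most careful about, is justifying that $\dot G=[\dot g_{jk}]$ entrywise, i.e.\ that the weak derivative of the matrix function is computed componentwise; this is immediate from the definition of $W^{1,1}([t_0,t_1],\C^{n,p})$ as the space of matrix functions whose entries lie in $W^{1,1}$, together with the fact that $\dot G\in L^1$ ensures $F\dot G Z\in L^1$ by the generalized Hölder inequality (\Cref{thm:genHolder}), so the right-hand Lebesgue integral is finite and well-defined. There is no genuine analytical obstacle here: the content is entirely in the scalar lemma, and the matrix version is a bookkeeping reassembly of index sums, exactly parallel to \Cref{lem:RiemannStieltjes_matrixIntegrationByParts}.
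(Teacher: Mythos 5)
Your proof is correct and takes essentially the same route as the paper: both apply the scalar result \Cref{thm:RS_C_and_AC} entrywise with integrand $f_{ij}z_{kl}$ and integrator $g_{jk}$, then reassemble the index sums into the matrix product via \Cref{def:RSM_integral}. Your additional remarks on well-definedness and the componentwise weak derivative are harmless elaborations of what the paper leaves implicit.
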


\begin{proof}
	By applying \Cref{thm:RS_C_and_AC} entrywise we obtain
	\[
		\int_{t_0}^{t_1}\pset[\big]{F(\td G)Z}
		= \bset*{ \sum_{j,k}\int_{t_0}^{t_1}f_{ij}z_{kl}\td g_{jk} }_{i,l}
		= \bset*{ \sum_{j,k}\int_{t_0}^{t_1}f_{ij}\dot g_{jk}z_{kl} \td t}_{i,l}
		= \int_{t_0}^{t_1}F\dot GZ\,\td t. \qedhere
	\]
\end{proof}

\noindent We next show that the matrix Riemann-Stieltjes integral satisfies something similar to the fundamental theorem of calculus.
\begin{lemma}\label{lem:RiemannStieltjes_oneTermApprox}
    Let $F\in\mathcal C([t_0,t_1],\C^{m,n})$, $G\in\BV([t_0,t_1],\C^{n,p})$, and $Z\in\mathcal C([t_0,t_1],\C^{p,q})$.
    Then for every $\varepsilon>0$ there is $\delta>0$ such that, for every $h\in[0,\delta]$, $t\in[t_0,t_1-\delta]$ and $s\in[t,t+h]$ it holds that
    \[
    \norm*{ \int_{t}^{t+h}\pset[\big]{F(\td G)Z} - F(s)\pset[\big]{ G(t+h) - G(t) }Z(s) }_2 \leq \varepsilon \totVar[t][t+h](G).
    \]
\end{lemma}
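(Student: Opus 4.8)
The plan is to prove this uniform continuity-type estimate by reducing the matrix Riemann-Stieltjes integral to its definition via Riemann-Stieltjes sums, and then exploiting the uniform continuity of the continuous factors $F$ and $Z$ on the compact interval $[t_0,t_1]$. The core idea is that on a small subinterval $[t,t+h]$, the matrix $F(\tau)$ and $Z(\tau)$ barely move, so approximating $\int_t^{t+h}(F\,\td G\,Z)$ by the single sample $F(s)(G(t+h)-G(t))Z(s)$ incurs an error controlled by the oscillation of $F$ and $Z$, times the total variation of $G$ over $[t,t+h]$.

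First I would invoke uniform continuity: since $F\in\mathcal C([t_0,t_1],\C^{m,n})$ and $Z\in\mathcal C([t_0,t_1],\C^{p,q})$ on a compact interval, for any $\varepsilon_0>0$ there is $\delta>0$ such that $\norm{F(\tau)-F(\tau')}_2<\varepsilon_0$ and $\norm{Z(\tau)-Z(\tau')}_2<\varepsilon_0$ whenever $\abs{\tau-\tau'}<\delta$. Fix $h\in[0,\delta]$, $t\in[t_0,t_1-\delta]$, and $s\in[t,t+h]$. The Riemann-Stieltjes integral $\int_t^{t+h}(F\,\td G\,Z)$ exists because $F,Z$ are continuous and $G\in\BV$, so it is the limit (over partitions $\partit=\set{\wt t_0,\ldots,\wt t_K}$ of $[t,t+h]$ with $\norm{\partit}\to 0$) of the sums $\sum_{k=1}^K F(\wt s_k)(G(\wt t_k)-G(\wt t_{k-1}))Z(\wt s_k)$, by \Cref{lem:RSM_altDef}. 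For each such partition sum I would compare it term-by-term with the corresponding decomposition of the single-sample term, writing
\[
F(s)\pset[\big]{G(t+h)-G(t)}Z(s) = \sum_{k=1}^K F(s)\pset[\big]{G(\wt t_k)-G(\wt t_{k-1})}Z(s),
\]
using telescoping of the increments of $G$.

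The main estimate then proceeds by bounding each summand's difference. Since $\wt s_k\in[\wt t_{k-1},\wt t_k]\subseteq[t,t+h]$ and $s\in[t,t+h]$, we have $\abs{\wt s_k - s}\leq h<\delta$, so $\norm{F(\wt s_k)-F(s)}_2<\varepsilon_0$ and $\norm{Z(\wt s_k)-Z(s)}_2<\varepsilon_0$. A standard product-splitting
\[
F(\wt s_k)\Delta_k G\,Z(\wt s_k) - F(s)\Delta_k G\,Z(s) = \pset[\big]{F(\wt s_k)-F(s)}\Delta_k G\,Z(\wt s_k) + F(s)\Delta_k G\pset[\big]{Z(\wt s_k)-Z(s)},
\]
where $\Delta_k G\coloneqq G(\wt t_k)-G(\wt t_{k-1})$, together with submultiplicativity of the $2$-norm, gives a bound of $\pset{\varepsilon_0\norm{Z}_{L^\infty}+\norm{F}_{L^\infty}\varepsilon_0}\norm{\Delta_k G}_2$ for each $k$. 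Summing over $k$ and using $\sum_{k=1}^K\norm{\Delta_k G}_2\leq\totVar[t][t+h](G)$ yields
\[
\norm*{ \sum_{k=1}^K F(\wt s_k)\Delta_k G\,Z(\wt s_k) - F(s)\pset[\big]{G(t+h)-G(t)}Z(s) }_2 \leq \varepsilon_0\pset[\big]{\norm{F}_{L^\infty}+\norm{Z}_{L^\infty}}\totVar[t][t+h](G).
\]
Passing to the limit $\norm{\partit}\to 0$ preserves the inequality, and choosing $\varepsilon_0\coloneqq\varepsilon/\pset{\norm{F}_{L^\infty}+\norm{Z}_{L^\infty}}$ (or $\varepsilon_0\coloneqq\varepsilon$ if both factors vanish trivially) delivers the claimed bound.

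The only genuine subtlety — and the step I would be most careful about — is ensuring that the error bound, which is stated in terms of the constant $\varepsilon$ on the right-hand side multiplying $\totVar[t][t+h](G)$, is achieved uniformly in $t,h,s$; this is exactly why uniform (rather than merely pointwise) continuity of $F$ and $Z$ on the compact interval is essential, and why the single $\delta$ must be chosen before fixing $t,h,s$. One should also absorb the constant $\norm{F}_{L^\infty}+\norm{Z}_{L^\infty}$ into the choice of $\varepsilon_0$ at the outset rather than leaving it dangling, so that the final right-hand side reads precisely $\varepsilon\,\totVar[t][t+h](G)$ as in the statement. No deeper difficulty arises, since existence of the integral and the total-variation bound on $G$-increments are already guaranteed by the continuity of $F,Z$ and the bounded-variation hypothesis on $G$.
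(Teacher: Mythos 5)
Your proof is correct and takes essentially the same approach as the paper's: uniform continuity of $F$ and $Z$ on the compact interval, telescoping $F(s)\pset[\big]{G(t+h)-G(t)}Z(s)$ over the partition increments, the two-term splitting $\pset[\big]{F(\wt s_k)-F(s)}\Delta_k G\,Z(\wt s_k)+F(s)\Delta_k G\pset[\big]{Z(\wt s_k)-Z(s)}$, and summing against $\totVar[t][t+h](G)$. The only (harmless) structural difference is that the paper fixes one sufficiently fine partition and carries an extra approximation term $S_1\leq\varepsilon_1\totVar[t][t+h](G)$, whereas you bound the discrepancy uniformly over \emph{all} partitions and then pass to the limit $\norm{\partit}\to 0$, which avoids that extra term and yields the slightly cleaner constant $\norm{F}_{L^\infty}+\norm{Z}_{L^\infty}$ in place of $1+\norm{F}_{L^\infty}+\norm{Z}_{L^\infty}$.
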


\begin{proof}
    Let $\varepsilon_1>0$ be an arbitrarily small positive constant that we are going to select later.
    Since $F$ and $Z$ are continuous matrix functions on a compact interval, there is $\delta>0$ such that $\norm{F(s_1)-F(s_0)}<\varepsilon_1$ and $\norm{Z(s_1)-Z(s_0)}<\varepsilon_1$ for every $s_0,s_1\in[t_0,t_1]$ such that $\abs{s_1-s_0}<\delta$.
    For every $h\in[0,\delta]$ and $t\in[t_0,t_1-h]$ let then $\partit=\set{\wt t_0,\ldots,\wt t_K}\in\partInt{[t,t+h]}$ be a partition such that
    \[
    S_1 \coloneqq \norm*{ \int_{t}^{t+h}\pset[\big]{F(\td G)Z} - \sum_{k=1}^K F(\wt t_k)\pset[\big]{G(\wt t_k)-G(\wt t_{k-1})}Z(\wt t_k) } \leq \varepsilon_1 \totVar[t][t+h](G).
    \]
    For every $s\in[t,t+h]$ it holds that
    \begin{align*}
        S_2 &\coloneqq \norm*{ \sum_{k=1}^K F(\wt t_k)\pset[\big]{G(\wt t_k)-G(\wt t_{k-1})}Z(\wt t_k) - F(s)\pset[\big]{G(t+h)-G(t)}Z(s)} \\
        &= \norm*{ \sum_{k=1}^K\pset[\Big]{F(\wt t_k)\pset[\big]{G(\wt t_k)-G(\wt t_{k-1})}Z(\wt t_k) - F(s)\pset[\big]{G(\wt t_k)-G(\wt t_{k-1})}Z(s)} } \\
        &\leq \sum_{k=1}^K \pset*{ \norm*{ F(\wt t_k) - F(s) }\norm[\big]{G(\wt t_k)-G(\wt t_{k-1})}\norm{Z(\wt t_k)} + \norm{F(s)}\norm[\big]{G(\wt t_k)-G(\wt t_{k-1})}\norm*{ Z(\wt t_k) - Z(s) } } \\
        &\leq \varepsilon_1\sum_{k=1}^K (\norm{F(s)}+\norm{Z(\wt t_k)}) \norm[\big]{G(\wt t_k)-G(\wt t_{k-1})} \leq \varepsilon_1(\norm{F}_{L^\infty}+\norm{Z}_{L^\infty}) \totVar[t][t+h](G),
    \end{align*}
    and therefore
    \begin{align*}
        & \norm*{ \int_{t}^{t+h}\pset[\big]{F(\td G)Z} - F(s)\pset[\big]{ G(t+h) - G(t) }Z(s) } \\
        &\qquad\leq S_1 + S_2 \leq \varepsilon_1\pset{ 1 + \norm{F}_{L^\infty}+\norm{Z}_{L^\infty}} \totVar[t][t+h](G) \leq \varepsilon \totVar[t][t+h](G),
    \end{align*}
    by choosing $\varepsilon_1$ appropriately small.
\end{proof}

\noindent We then have the following result.
\begin{theorem}\label{thm:RS_fundCalc}
    Let $F\in\mathcal C([t_0,t_1],\C^{m,n})$, $G\in \BV_\loc([t_0,t_1],\C^{n,p})$, and $Z\in\mathcal C([t_0,t_1],\C^{p,q})$, and define
    \[
    H : [t_0,t_1] \to \C^{m,q}, \qquad t \mapsto \int_{t_\textnormal{ref}}^t \pset[\big]{F(t)\td G(t)Z(t)},
    \]
    for a fixed $t_\textnormal{ref}\in[t_0,t_1]$.
    Then $H\in\BV([t_0,t_1],\C^{m,q})$ and $\td H=F(\td G)Z$, in the sense that
    \begin{equation}\label{eq:RS_fundCalc}
        \int_{t_0}^{t_1} \pset[\big]{L(t)\td H(t)M(t)} = \int_{t_0}^{t_1} \pset[\big]{L(t)F(t)\td G(t)Z(t)M(t)}
    \end{equation}
    holds for all $L\in\mathcal C([t_0,t_1],\C^{l,m})$ and $M\in\mathcal C([t_0,t_1],\C^{q,r})$.
\end{theorem}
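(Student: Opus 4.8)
The plan is to prove the two assertions of \Cref{thm:RS_fundCalc} separately: first that $H\in\BV([t_0,t_1],\C^{m,q})$, and then the integration formula \eqref{eq:RS_fundCalc}. For the bounded variation claim, I would take any partition $\partit=\set{\wt t_0,\ldots,\wt t_K}$ of $[t_0,t_1]$ and estimate $\sum_k\norm{H(\wt t_k)-H(\wt t_{k-1})}_2$. Since $H(\wt t_k)-H(\wt t_{k-1})=\int_{\wt t_{k-1}}^{\wt t_k}(F\td G\,Z)$, the preceding norm bound gives $\norm{H(\wt t_k)-H(\wt t_{k-1})}_2\leq\norm{F}_{L^\infty}\norm{Z}_{L^\infty}\totVar[\wt t_{k-1}][\wt t_k](G)$. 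Summing over $k$ and using the additivity of total variation over adjacent subintervals, $\sum_k\totVar[\wt t_{k-1}][\wt t_k](G)=\totVar[t_0][t_1](G)<\infty$, yields $\totVar[t_0][t_1](H)\leq\norm{F}_{L^\infty}\norm{Z}_{L^\infty}\totVar[t_0][t_1](G)$, so $H\in\BV$.

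Next, to establish \eqref{eq:RS_fundCalc}, I would work directly from the definition of the matrix Riemann-Stieltjes integral via the limit in \Cref{lem:RSM_altDef}. Since $L\in\mathcal C$, $H\in\BV$ and $M\in\mathcal C$, the left-hand integral $\int_{t_0}^{t_1}(L\td H\,M)$ exists by the norm-bound lemma, and is the limit over partitions $\partit$ of the Riemann-Stieltjes sums $\sum_k L(\wt s_k)\pset[\big]{H(\wt t_k)-H(\wt t_{k-1})}M(\wt s_k)$ for any choice of tags $\wt s_k\in[\wt t_{k-1},\wt t_k]$. The key is to replace the increment $H(\wt t_k)-H(\wt t_{k-1})=\int_{\wt t_{k-1}}^{\wt t_k}(F\td G\,Z)$ by the ``one-term'' approximation $F(\wt s_k)\pset[\big]{G(\wt t_k)-G(\wt t_{k-1})}Z(\wt s_k)$, which is exactly the content of \Cref{lem:RiemannStieltjes_oneTermApprox}. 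Controlling the accumulated error will require a standard $\varepsilon$-$\delta$ argument: given $\varepsilon>0$, pick $\delta>0$ as in that lemma, so that on every subinterval of length at most $\delta$ the discrepancy is bounded by $\varepsilon\totVar[\wt t_{k-1}][\wt t_k](G)$; after multiplying by the bounded continuous factors $L(\wt s_k)$ and $M(\wt s_k)$ and summing, the total error is at most $\varepsilon\norm{L}_{L^\infty}\norm{M}_{L^\infty}\totVar[t_0][t_1](G)$, which is uniformly small.

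With this substitution, the remaining Riemann-Stieltjes sums take the form $\sum_k L(\wt s_k)F(\wt s_k)\pset[\big]{G(\wt t_k)-G(\wt t_{k-1})}Z(\wt s_k)M(\wt s_k)$, which converge, as $\norm{\partit}\to 0$, precisely to the right-hand side $\int_{t_0}^{t_1}(LF\td G\,ZM)$ by \Cref{lem:RSM_altDef} applied to the continuous factors $LF$ and $ZM$ and the $\BV$ integrator $G$. Passing to the limit on both sides then gives \eqref{eq:RS_fundCalc}. I expect the main obstacle to be the careful bookkeeping in the error estimate: one must verify that the two sources of approximation error -- the difference between the left-hand integral and its Riemann-Stieltjes sum, and the per-subinterval discrepancy from \Cref{lem:RiemannStieltjes_oneTermApprox} -- can be made simultaneously small by a single choice of partition, and that multiplying the increments by the continuous matrix factors $L(\wt s_k)$ and $M(\wt s_k)$ does not disturb the uniform bound. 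Since $L$ and $M$ are continuous on a compact interval, hence bounded, and the total variation of $G$ is finite, these estimates telescope cleanly and the limit argument goes through.
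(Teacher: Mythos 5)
Your proposal is correct and follows essentially the same route as the paper's proof: the bounded variation claim via the per-subinterval norm bound and additivity of total variation, and the identity \eqref{eq:RS_fundCalc} by replacing each increment $H(\wt t_k)-H(\wt t_{k-1})$ with the one-term approximation from \Cref{lem:RiemannStieltjes_oneTermApprox}, accumulating an error of at most $\varepsilon\norm{L}_{L^\infty}\norm{M}_{L^\infty}\totVar[t_0][t_1](G)$, and passing to the limit over partitions. The only cosmetic difference is that you allow arbitrary tags $\wt s_k$ while the paper fixes $\wt s_k=\wt t_k$; both are covered by that lemma.
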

\begin{proof}
    For every partition $\partit=\set{\wt t_0,\ldots,\wt t_K}$ of $[t_0,t_1]$ we have
    \begin{align*}
        &\sum_{k=1}^K \norm{ H(\wt t_k) - H(\wt t_{k-1}) }
        = \sum_{k=1}^K \norm*{ \int_{\wt t_k}^{\wt t_{k-1}} \pset[\big]{F(t)\td G(t)Z(t)} } \\
        &\qquad\leq \sum_{k=1}^{K}\norm{F}_{L^\infty}\norm{Z}_{L^\infty}\totVar[\wt t_{k-1}][\wt t_k](G)
        \leq \norm{F}_{L^\infty}\norm{Z}_{L^\infty}\totVar[t_0][t_1](G) < \infty,
    \end{align*}
    thus $H\in\BV([t_0,t_1],\C^{m,q})$ and all Riemann-Stieltjes integrals are well-defined.

    For every $\varepsilon>0$, let $\delta>0$ be as in the statement of \Cref{lem:RiemannStieltjes_oneTermApprox}.
    For every partition $\partit=\set{\wt t_0,\ldots,\wt t_K}$ of $[t_0,t_1]$ with $\norm{\partit}<\delta$ we have then
    \begin{align*}
        & \norm*{\sum_{k=1}^K L(\wt t_k)\pset[\big]{H(\wt t_k)-H(\wt t_{k-1})}M(\wt t_k) - \sum_{k=1}^K L(\wt t_k)F(\wt t_k)\pset[\big]{G(\wt t_k)-G(\wt t_{k-1})}Z(\wt t_k)M(\wt t_k)} \\
        &\qquad\leq \norm{L}_{L^\infty}\norm{M}_{L^\infty} \sum_{k=1}^K\norm*{ \int_{\wt t_{k-1}}^{\wt t_{k}}\pset[\big]{F(t)\td G(t)Z(t)} - F(\wt t_k)\pset[\big]{G(\wt t_k)-G(\wt t_{k-1})}Z(\wt t_k) } \\
        &\qquad\leq \varepsilon\norm{L}_{L^\infty}\norm{M}_{L^\infty} \sum_{k=1}^K \totVar[\wt t_{k-1}][\wt t_k](G)
        \leq \varepsilon\norm{L}_{L^\infty}\norm{M}_{L^\infty} \totVar[t_0][t_1](G).
    \end{align*}
    Therefore, by taking $\varepsilon$ and $\norm{\partit}$ arbitrarily small, we conclude that \eqref{eq:RS_fundCalc} holds.
\end{proof}

\begin{remark}
    While up to now we have always worked on a compact interval $[t_0,t_1]\subseteq\R$, the definitions and properties of the Riemann-Stieltjes integral extend straightforwardly to open time intervals $\timeInt\subseteq\R$ and local function spaces like $\BV_\loc(\timeInt,\C^{m,n})$ and $W^{1,1}_\loc(\timeInt,\C^{m,n})$.
    However, the Riemann-Stieltjes integrals are still only taken over compact intervals $[t_0,t_1]\subseteq\timeInt$.
\end{remark}

\noindent Going back to \Cref{cor:OIE_linearMatrix}, we now show that, when $M\in\BV_\loc(\timeInt,\HerMat[n])$, the solution of \eqref{eq:OIE_linearMatrix} can be expressed in a more compact way by using the matrix Riemann-Stieltjes integral.

\begin{theorem}\label{thm:OIE_linearMatrix_RS}
    Let $A\in L^1_\loc(\timeInt,\C^{n,n})$, $L\in L^1_\loc(\timeInt,\HerMat)$, $M\in\BV_\loc(\timeInt,\HerMat[n])$, $t_0\in\timeInt$, and $Q_0\in\posSD[n]$. Then the integral equation
    \begin{equation}\label{eq:OIE_linearMatrix_RS}
        Q(t) = Q(t_0) + \int_{t_0}^{t}\pset[\big]{ \ct{A(s)}Q(s) + Q(s)A(s) + L(s) }\td s  + M(t) - M(t_0), \qquad Q(t_0)=Q_0
    \end{equation}
    has a unique global solution $Q\in\BV_\loc(\timeInt,\HerMat[n])$ of the form
    \begin{equation}\label{eq:OIE_linearMatrix_RS_solution}
        Q(t) = \stm_{\ct A}(t,t_0)Q_0\ct{\stm_{\ct A}(t,t_0)}
        + \int_{t_0}^{t}\stm_{\ct A}(t,s)\pset[\big]{L(s)\td s + \td M(s)}\ct{\stm_{\ct A}(t,s)}\td s,
    \end{equation}
    where $\stm_{\ct A}\in W^{1,1}_\loc(\timeInt,\GL[n])$ denotes the state-transition matrix associated to $\dot x=\ct{A}x$.
\end{theorem}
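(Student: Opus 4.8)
The plan is to reduce Theorem~\ref{thm:OIE_linearMatrix_RS} to \Cref{cor:OIE_linearMatrix} by absorbing the Riemann--Stieltjes term $\td M$ into an equivalent Lyapunov-type equation, using the substitution $\wt Q\coloneqq Q-M$ exactly as in the proof of \Cref{cor:OIE_linearMatrix}, and then re-expressing the solution in compact Riemann--Stieltjes form via \Cref{thm:RS_fundCalc}. First I would observe that \eqref{eq:OIE_linearMatrix_RS} is formally the same integral equation as \eqref{eq:OIE_linearMatrix}, with the only difference being that the perturbation $M$ is now merely of locally bounded variation rather than in $L^\infty_\loc$; in particular, since $M\in\BV_\loc(\timeInt,\HerMat[n])\subseteq L^\infty_\loc(\timeInt,\HerMat[n])$, the existence and uniqueness of a solution $Q\in L^\infty_\loc(\timeInt,\HerMat[n])$ is already guaranteed by \Cref{cor:OIE_linearMatrix}. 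Moreover, since $Q$ equals $Q(t_0)$ plus an absolutely continuous term plus $M(t)-M(t_0)$, and the absolutely continuous term is in $W^{1,1}_\loc\subseteq\BV_\loc$ by \Cref{lem:integralIsAC}, we immediately get $Q\in\BV_\loc(\timeInt,\HerMat[n])$, upgrading the regularity statement.

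The core task is then to verify that the explicit formula \eqref{eq:OIE_linearMatrix_RS_solution} coincides with the solution \eqref{eq:OIE_linearMatrix_solution} from \Cref{cor:OIE_linearMatrix} when $M\in\BV_\loc$. The two formulas differ only in how the contribution of $M$ is written: \eqref{eq:OIE_linearMatrix_solution} contains the absolutely continuous integrand $\ct{A}M+MA$ together with the separate appearance of $M(t)$ and $M(t_0)$, whereas \eqref{eq:OIE_linearMatrix_RS_solution} packages everything involving $M$ into the single Riemann--Stieltjes term $\int_{t_0}^t\stm_{\ct A}(t,s)(\td M(s))\ct{\stm_{\ct A}(t,s)}$. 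To connect them, I would apply the matrix integration-by-parts formula \Cref{lem:RiemannStieltjes_matrixIntegrationByParts} with the continuous $\BV$ factors $F(s)\coloneqq\stm_{\ct A}(t,s)$ and $Z(s)\coloneqq\ct{\stm_{\ct A}(t,s)}$ (continuity and bounded variation on compact subintervals follow from $\stm_{\ct A}\in W^{1,1}_\loc\cap\mathcal C$, cf.~\Cref{thm:stateTransitionMatrix}) and $G\coloneqq M$. Using $\pd{}{s}\stm_{\ct A}(t,s)=\stm_{\ct A}(t,s)(-\ct A(s))$ from \Cref{thm:stateTransitionMatrix}(5), the two boundary terms reproduce $M(t)-\stm_{\ct A}(t,t_0)M(t_0)\ct{\stm_{\ct A}(t,t_0)}$, while the two $\td F$ and $\td Z$ integrals reproduce exactly the $\ct A M+M A$ integrand that appears in \eqref{eq:OIE_linearMatrix_solution}. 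Rearranging this identity shows that $\int_{t_0}^t\stm_{\ct A}(t,s)(\td M)\ct{\stm_{\ct A}(t,s)}$ equals $M(t)-\stm_{\ct A}(t,t_0)M(t_0)\ct{\stm_{\ct A}(t,t_0)}-\int_{t_0}^t\stm_{\ct A}(t,s)(\ct A M+MA)\ct{\stm_{\ct A}(t,s)}\td s$, which upon substitution turns \eqref{eq:OIE_linearMatrix_RS_solution} into \eqref{eq:OIE_linearMatrix_solution}.

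The main obstacle will be the careful bookkeeping in the integration-by-parts step: one must correctly compute $\td F$ and $\td Z$ as Riemann--Stieltjes differentials (via \Cref{thm:RS_fundCalc}, identifying $\td\stm_{\ct A}(t,\cdot)$ with $-\stm_{\ct A}(t,\cdot)\ct A\,\td s$ since $\stm_{\ct A}(t,\cdot)\in W^{1,1}_\loc$ in its second argument), and track the sign conventions coming from differentiating with respect to the \emph{second} variable of $\stm_{\ct A}$. A secondary subtlety is ensuring that the hypotheses of \Cref{lem:RiemannStieltjes_matrixIntegrationByParts} are met --- namely that the two matrix factors $\stm_{\ct A}(t,\cdot)$ and $\ct{\stm_{\ct A}(t,\cdot)}$ are simultaneously continuous and of bounded variation on each compact $[t_0,t]$, which is exactly what \Cref{thm:stateTransitionMatrix} provides. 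Once this identity is established, uniqueness is inherited verbatim from \Cref{cor:OIE_linearMatrix}, and the proof is complete.
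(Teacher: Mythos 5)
Your proposal is correct and follows essentially the same route as the paper's proof: reduce to \Cref{cor:OIE_linearMatrix} via $\BV_\loc\subseteq L^\infty_\loc$, upgrade regularity to $\BV_\loc$ by splitting off the absolutely continuous part, and then use the Riemann--Stieltjes integration by parts of \Cref{lem:RiemannStieltjes_matrixIntegrationByParts} to show that \eqref{eq:OIE_linearMatrix_RS_solution} and \eqref{eq:OIE_linearMatrix_solution} are the same expression, inheriting uniqueness. Your explicit attention to the sign of $\pd{}{s}\stm_{\ct A}(t,s)$ is well placed (the correct differential is $-\stm_{\ct A}(t,s)\ct{A}(s)\td s$, which is what the paper's computation implicitly uses).
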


\begin{proof}
    Since $\BV_\loc\subseteq L^\infty_\loc$,  because of \Cref{cor:OIE_linearMatrix} we know that \eqref{eq:OIE_linearMatrix_RS} has a unique solution $Q\in L^\infty_\loc(\timeInt,\C^{n,n})$ of the form \eqref{eq:OIE_linearMatrix_solution}. Furthermore, since $Q=\wt Q+M$ with $\wt Q\in W^{1,1}_\loc\subseteq\BV_\loc$ and $M\in\BV_\loc$, we deduce that $Q\in\BV_\loc$.
    By applying the properties of the Riemann-Stieltjes integral, we then obtain that
    \begin{align*}
        & \int_{t_0}^{t} \pset*{\stm_{\ct{A}}(t,s)\td M(s)\ct{\stm_{\ct{A}}(t,s)}} = \stm_{\ct{A}}(t,t)M(t)\ct{\stm_{\ct{A}}(t,t)} - \stm_{\ct{A}}(t,t_0)M(t_0)\ct{\stm_{\ct{A}}(t,t_0)} \\
        &\qquad- \int_{t_0}^{t}\pset*{\pd{\stm_{\ct{A}}}{s}(t,s)M(s)\stm_{\ct{A}}(t,s) + \stm_{\ct{A}}(t,s)M(s)\pd{\stm_{\ct{A}}}{s}(t,s)}\td s \\
        &\qquad= M(t) - \stm_{\ct{A}}(t,t_0)M(t_0)\ct{\stm_{\ct{A}}(t,t_0)} + \int_{t_0}^t\stm_{\ct{A}}(t,s)\pset[\big]{\ct{A(s)}M(s) + M(s)A(s)}\ct{\stm_{\ct{A}}(t,s)}\td s,
    \end{align*}
    from which it follows that \eqref{eq:OIE_linearMatrix_solution} and \eqref{eq:OIE_linearMatrix_RS_solution} are in fact the same expression.
\end{proof}

\begin{theorem}\label{thm:RiemannStieltjesIntegralInequality}
    Let $A_1\in\BV_\loc(\timeInt,\C^{p,p})$, $A_0\in L^1_\loc(\timeInt,\C^{n,n})$, $Z\in L^2_\loc(\timeInt,\C^{n,m})$, $M\in L^\infty_\loc(\timeInt,\C^{m,m})$, and $V\in\mathcal C\cap\BV_\loc(\timeInt,\C^{p,n})$ be such that $A_1(t)$, $A_0(t)$ and $M(t)$ are self-adjoint for a.e.~$t\in\timeInt$.
    Then the following statements are equivalent:
    \begin{enumerate}[label=\rm(\roman*)]
        \item\label{it:RSII:1} The Riemann-Stieltjes integral matrix inequality
        \begin{equation}\label{eq:RiemannStieltjesIntegralInequality:1}
            \int_{t_0}^{t_1} \bmat{\ct{V(t)}\td A_1(t)V(t) + A_0(t)\td t & Z(t)\td t \\ \ct{Z(t)}\td t & M(t)\td t} \geq 0
        \end{equation}
        holds for all $t_0,t_1\in\timeInt,\ t_0\leq t_1$.
        \item\label{it:RSII:2} The Riemann-Stieltjes integral scalar inequality
        \begin{equation}\label{eq:RiemannStieltjesIntegralInequality:2}
            \int_{t_0}^{t_1} \ct{\bmat{x(t) \\ u(t)}}\bmat{\ct{V(t)}\td A_1(t)V(t) + A_0(t)\td t & Z(t)\td t \\ \ct{Z(t)}\td t & M(t)\td t}\bmat{x(t) \\ u(t)} \geq 0
        \end{equation}
        holds for all $x\in\mathcal C\cap\BV_\loc(\timeInt,\C^n)$, $u\in L^2_\loc(\timeInt,\C^m)$, and $t_0,t_1\in\timeInt,\ t_0\leq t_1$.
        \item\label{it:RSII:3} The Riemann-Stieltjes integral matrix inequality
        \begin{equation}\label{eq:RiemannStieltjesIntegralInequality:3}
            \int_{t_0}^{t_1} \ct{\bmat{X(t) \\ U(t)}}\bmat{\ct{V(t)}\td A_1(t)V(t) + A_0(t)\td t & Z(t)\td t \\ \ct{Z(t)}\td t & M(t)\td t}\bmat{X(t) \\ U(t)} \geq 0
        \end{equation}
        holds for all $X\in\mathcal C\cap\BV_\loc(\timeInt,\C^{n,q})$, $U\in L^2_\loc(\timeInt,\C^{m,q})$, $t_0,t_1\in\timeInt,\ t_0\leq t_1$, and $q\in\N$.
    \end{enumerate}
\end{theorem}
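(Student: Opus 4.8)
The plan is to establish the cycle of implications \ref{it:RSII:3} $\Rightarrow$ \ref{it:RSII:1} $\Rightarrow$ \ref{it:RSII:2} $\Rightarrow$ \ref{it:RSII:3}. Throughout, write $d\Omega$ for the matrix-valued Riemann--Stieltjes ``density'' appearing in \eqref{eq:RiemannStieltjesIntegralInequality:1}, so that $\int_{t_0}^{t_1}d\Omega$ is Hermitian for all $t_0\le t_1$, because $A_0,A_1,M$ are pointwise self-adjoint and the off-diagonal blocks are adjoint to one another. The two easy implications are purely algebraic. For \ref{it:RSII:3} $\Rightarrow$ \ref{it:RSII:1}, take $q=n+m$ and the constant matrices $X=\bmat{I_n & 0}$, $U=\bmat{0 & I_m}$, so that $\bmat{X\\U}=I_{n+m}$ and \eqref{eq:RiemannStieltjesIntegralInequality:3} reduces to $\int_{t_0}^{t_1}d\Omega\ge0$. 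For \ref{it:RSII:2} $\Leftrightarrow$ \ref{it:RSII:3}, observe that for any $v\in\C^q$ the functions $Xv\in\mathcal C\cap\BV_\loc(\timeInt,\C^n)$ and $Uv\in L^2_\loc(\timeInt,\C^m)$ are admissible test functions in \ref{it:RSII:2}, and $\ct v\pset[\big]{\int_{t_0}^{t_1}\ct{\bmat{X\\U}}d\Omega\bmat{X\\U}}v$ equals the left-hand side of \eqref{eq:RiemannStieltjesIntegralInequality:2} with $x=Xv$, $u=Uv$; since the matrix in \eqref{eq:RiemannStieltjesIntegralInequality:3} is Hermitian, its positive semi-definiteness is equivalent to the nonnegativity of all such scalars.

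The substance is the implication \ref{it:RSII:1} $\Rightarrow$ \ref{it:RSII:2}, which I would prove by a Riemann-sum argument followed by a density argument. Fix $t_0\le t_1$ and first assume that $u$ is continuous, so that $w\coloneqq\bmat{x\\u}$ is continuous on $[t_0,t_1]$. For a partition $\partit=\set{s_0,\dots,s_K}$ of $[t_0,t_1]$ with sample points $\sigma_k\in[s_{k-1},s_k]$, consider the sum $S_\partit\coloneqq\sum_{k=1}^K\ct{w(\sigma_k)}\pset[\big]{\int_{s_{k-1}}^{s_k}d\Omega}w(\sigma_k)$. By \ref{it:RSII:1} each matrix $\int_{s_{k-1}}^{s_k}d\Omega$ is positive semi-definite, hence $S_\partit\ge0$ for every partition. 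I would then show $S_\partit\to\int_{t_0}^{t_1}\ct w\,d\Omega\,w$ as $\norm{\partit}\to0$, whence the limit is nonnegative, which is exactly \eqref{eq:RiemannStieltjesIntegralInequality:2}. The convergence splits according to the blocks of $d\Omega$. The only genuinely Riemann--Stieltjes contribution is $\sum_k\ct{x(\sigma_k)}\pset[\big]{\int_{s_{k-1}}^{s_k}\ct V\td A_1V}x(\sigma_k)$; since $x(\sigma_k)$ is constant, each summand equals $\int_{s_{k-1}}^{s_k}\pset[\big]{\ct{x(\sigma_k)}\ct V\td A_1Vx(\sigma_k)}$, and applying \Cref{lem:RiemannStieltjes_oneTermApprox} (once with $F=\ct{x(\sigma_k)}\ct V$ and right factor $Vx(\sigma_k)$, once with $F=\ct x\ct V$ and right factor $Vx$, both with $G=A_1$) shows that this quantity and $\int_{s_{k-1}}^{s_k}\pset[\big]{\ct x\ct V\td A_1Vx}$ each differ from the common sampled value $\ct{x(\sigma_k)}\ct{V(\sigma_k)}\pset[\big]{A_1(s_k)-A_1(s_{k-1})}V(\sigma_k)x(\sigma_k)$ by at most $\varepsilon\,\totVar[s_{k-1}][s_k](A_1)$; summing over $k$ bounds the total discrepancy by $2\varepsilon\,\totVar[t_0][t_1](A_1)\to0$. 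The remaining, absolutely continuous blocks $A_0\td t$, $Z\td t$, $\ct Z\td t$, $M\td t$ contribute ordinary Lebesgue integrals with densities in $L^1([t_0,t_1])$, and for those the difference between $S_\partit$ and the integral is controlled by the uniform continuity of $w$ times the $L^1$-norms of $A_0$, $Z$ and $M$, again tending to $0$. Hence \eqref{eq:RiemannStieltjesIntegralInequality:2} holds whenever $u$ is continuous.

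The final step, and the main obstacle, is to pass from continuous $u$ to a general $u\in L^2_\loc(\timeInt,\C^m)$: since $u$ has no well-defined pointwise values, the sampling $u(\sigma_k)$ above cannot be applied to it directly. I would resolve this by approximation: choose continuous $u_j\to u$ in $L^2([t_0,t_1],\C^m)$ (continuous functions being dense in $L^2$), apply the previous step to each $(x,u_j)$, and let $j\to\infty$. The Riemann--Stieltjes block $\int_{t_0}^{t_1}\ct x\ct V\td A_1Vx$ does not involve $u$ at all, so it is unaffected; the Lebesgue blocks depend on $u_j$ only through the bounded bilinear expressions $\int_{t_0}^{t_1}\ct xZu_j\td t$, its adjoint, and $\int_{t_0}^{t_1}\ct{u_j}Mu_j\td t$, each continuous in the $L^2$-topology because $\ct xZ\in L^2([t_0,t_1],\C^m)$ (by the generalized H\"older inequality, \Cref{thm:genHolder}, since $x\in L^\infty$ and $Z\in L^2$) and $M\in L^\infty([t_0,t_1],\C^{m,m})$. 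Thus, writing $w_j=\bmat{x\\u_j}$, we obtain $\int_{t_0}^{t_1}\ct w\,d\Omega\,w=\lim_{j\to\infty}\int_{t_0}^{t_1}\ct{w_j}\,d\Omega\,w_j\ge0$, completing \ref{it:RSII:1} $\Rightarrow$ \ref{it:RSII:2} and hence the equivalence.
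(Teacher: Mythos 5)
Your proposal is correct, and its core implication \ref{it:RSII:1}$\implies$\ref{it:RSII:2} follows a genuinely different route from the paper's. The paper handles a general $u\in L^2_\loc$ in a single pass: it samples $x$ at the left endpoints but replaces $u$ on each subinterval by its \emph{integral average} $\frac{1}{h_N}\int_{\wt t_{k-1}}^{\wt t_k}u(s)\td s$, so that pointwise evaluation of $u$ is never needed; nonnegativity of each term again comes from \ref{it:RSII:1}, the Riemann--Stieltjes block converges via \Cref{thm:RS_fundCalc}, and the Lebesgue blocks converge because the piecewise-averaged functions tend to $u$ in $L^2$ (\Cref{thm:integralAverage}, which rests on a Young-type convolution estimate) combined with the generalized H\"older inequality. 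You instead split the work: genuine Riemann sums with point evaluation for continuous $u$ (controlling the Riemann--Stieltjes block with \Cref{lem:RiemannStieltjes_oneTermApprox} and the Lebesgue blocks with uniform continuity of $w$ against $L^1$ densities), followed by density of continuous functions in $L^2$ and $L^2$-continuity of the three $u$-dependent bilinear terms. Your route is more modular and avoids the averaging machinery of \Cref{thm:integralAverage} entirely, at the cost of the extra limiting step; the paper's route is one pass and never leaves the space $L^2_\loc$. The remaining parts (\ref{it:RSII:2}$\iff$\ref{it:RSII:3} by testing with $Xv$, $Uv$, and \ref{it:RSII:3}$\implies$\ref{it:RSII:1} by constant stacked identities) coincide with the paper's.

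One small technical point deserves attention: in your first application of \Cref{lem:RiemannStieltjes_oneTermApprox} you take $F=\ct{x(\sigma_k)}\ct{V}$ and $Z=Vx(\sigma_k)$, which \emph{depend on the partition}, whereas the lemma fixes $F,G,Z$ before producing $\delta$; as stated this risks a circularity ($\delta$ depending on data that depends on the mesh). This is not a genuine gap, but the clean fix is to apply the lemma once, with the fixed functions $F=\ct{V}$ and $Z=V$, obtaining
\[
\norm*{\int_{s_{k-1}}^{s_k}\pset[\big]{\ct{V}\td A_1V} - \ct{V(\sigma_k)}\pset[\big]{A_1(s_k)-A_1(s_{k-1})}V(\sigma_k)}_2 \leq \varepsilon\totVar[s_{k-1}][s_k](A_1),
\]
and then to sandwich this estimate between $\ct{x(\sigma_k)}$ and $x(\sigma_k)$, which only introduces the uniform factor $\norm{x}_{L^\infty([t_0,t_1])}^2$; your second application (with $F=\ct{x}\ct{V}$, $Z=Vx$ fixed) is already fine. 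With that adjustment the total discrepancy is bounded by $\varepsilon\pset[\big]{1+\norm{x}_{L^\infty}^2}\totVar[t_0][t_1](A_1)$ and your argument goes through as written.
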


\begin{proof}
    \begin{description}
        \item[\ref{it:RSII:1}$\implies$\ref{it:RSII:2}] Suppose first that \eqref{eq:RiemannStieltjesIntegralInequality:1} holds.
        Let $x\in\mathcal C\cap\BV_\loc(\timeInt,\C^n)$, $u\in L^2_\loc(\timeInt,\C^m)$, and $t_0,t_1\in\timeInt,\ t_0\leq t_1$.
        For every $N\in\N$, let us split $[t_0,t_1]$ into $N$ equal subintervals of length $h_N$, i.e., $\wt t_{N,k}\coloneqq t_0+\frac{k}{N}(t_1-t_0)$ for $k=1,\ldots,N$.
        We then define $\wt x_{N,k}\coloneqq x(\wt t_{N,k-1})$ and
        \[
        \wt u_{N,k} \coloneqq \frac{1}{h_N}\int_{\wt t_{N,k-1}}^{\wt t_{N,k}}u(s)\td s
        \]
        for all $N\in\N$ and $k=1,\ldots,N$.
        For simplicity, let us fix $N\in\N$ and remove it from the notation, e.g.~$\wt t_k\coloneqq\wt t_{N,k}$.
        Then for $k=1,\ldots,N$ it holds that
        \[
        \int_{\wt t_{k-1}}^{\wt t_k}\bmat{\ct{V}\td A_1V+A_0\td t & Z\td t \\ \ct{Z}\td t & M\td t} \geq 0.
        \]
        Let us now define $\timeInt_{t,N}\coloneqq[\wt t_{k-1},\wt t_k]$ for all $t\in[t_0,t_1]$ and $N\in\N$, where $k=k(t)$ is the minimum index such that $t\in[\wt t_{k-1},\wt t_k]$.
        We define then
        \[
        \wt x_N(t) \coloneqq x(\min(\timeInt_{t,N})), \qquad
        \wt u_N(t) \coloneqq \frac{1}{h_N}\int_{\timeInt_{t,N}}u(s)\td s,
        \]
        for all $t\in[t_0,t_1]$ and $N\in\N$.
        Since $x$ is continuous and $[t_0,t_1]$ is compact, clearly $\wt x_N\to x|_{[t_0,t_1]}$ uniformly.
        Furthermore, because of \Cref{thm:integralAverage}, $\wt u_N\to u$ in $L^2([t_0,t_1],\C^m)$.
        Let us then consider the combination
        \[
        \sum_{k=1}^N \ct{\bmat{\wt x_k \\ \wt u_k}}\int_{\wt t_{k-1}}^{\wt t_k}\bmat{\ct{V}\td A_1V+A_0\td t & Z\td t \\ \ct{Z}\td t & M\td t} \bmat{\wt x_k \\ \wt u_k} \geq 0.
        \]
        This combination naturally splits into $S_1+S_0+2\realPart(S_Z)+S_M$, where
        \begin{alignat*}{2}
            S_1 &\coloneqq \sum_{k=1}^N\ct{\wt x_k}\pset*{\int_{\wt t_{k-1}}^{\wt t_k}\ct{V(t)}\td A_1(t)V(t)}\wt x_k, &\qquad
            S_0 &\coloneqq \sum_{k=1}^N\int_{\wt t_{k-1}}^{\wt t_k}\ct{\wt x_k}A_0(t)\wt x_k\td t, \\
            S_Z &\coloneqq \sum_{k=1}^N\int_{\wt t_{k-1}}^{\wt t_k}\ct{\wt x_k}Z(t)\wt u_k\td t, &\qquad
            S_M &\coloneqq \sum_{k=1}^N\int_{\wt t_{k-1}}^{\wt t_k}\ct{\wt u_k}M(t)\wt u_k\td t.
        \end{alignat*}
        Because of \Cref{thm:RS_fundCalc}, it holds that
        \[
        \lim_{N\to\infty} S_1 = \int_{t_0}^{t_1}\ct{x(t)}\td H(t)x(t) = \int_{t_0}^{t_1}\ct{x(t)}\ct{V(t)}\td A_1(t)V(t)x(t)\td t,
        \]
        with $H:t\mapsto\int_{t_0}^{t}\ct{V(s)}\td A_1(s)V(s)\td s$.
        Furthermore, since we can write equivalently
        \begin{align*}
        S_0 &= \int_{t_0}^{t_1}\ct{\wt x_N(t)}A_0(t)\wt x_N(t)\td t, \\
        S_Z &= \int_{t_0}^{t_1}\ct{\wt x_N(t)}Z(t)\wt u_N(t)\td t, \\
        S_M &= \int_{t_0}^{t_1}\ct{\wt u_N(t)}M(t)\wt u_N(t)\td t,
        \end{align*}
        with the generalized H\"older inequality (\Cref{thm:genHolder}) we deduce that
        \begin{align*}
            \lim_{N\to\infty} S_0 &= \int_{t_0}^{t_1}\ct{x}(t)A_0(t)x(t)\td t, \\
            \lim_{N\to\infty} S_Z &= \int_{t_0}^{t_1}\ct{x}(t)Z(t)u(t)\td t, \\
            \lim_{N\to\infty} S_M &= \int_{t_0}^{t_1}\ct{u}(t)M(t)u(t)\td t.
        \end{align*}
        Thus, in the limit, we obtain exactly \eqref{eq:RiemannStieltjesIntegralInequality:2}.
        \item[\ref{it:RSII:2}$\implies$\ref{it:RSII:3}] Suppose now that \eqref{eq:RiemannStieltjesIntegralInequality:2} holds.
        Let $X\in\mathcal C\cap BV_\loc(\timeInt,\C^{n,q})$, $U\in L^2_\loc(\timeInt,\C^{m,q})$, $t_0,t_1\in\timeInt,\ t_0\leq t_1$, and $q\in\N$.
        For every $v\in\C^{n+m}$ it holds that
        \begin{align*}
            & \ct{v} \pset*{\int_{t_0}^{t_1} \ct{\bmat{X(t) \\ U(t)}}\bmat{\ct{V(t)}\td A_1(t)V(t) + A_0(t)\td t & Z(t)\td t \\ \ct{Z(t)}\td t & M(t)\td t}\bmat{X(t) \\ U(t)}}v = \\
            &\qquad= \int_{t_0}^{t_1} \ct{\bmat{X(t)v \\ U(t)v}}\bmat{\ct{V(t)}\td A_1(t)V(t) + A_0(t)\td t & Z(t)\td t \\ \ct{Z(t)}\td t & M(t)\td t}\bmat{X(t)v \\ U(t)v} \geq 0,
        \end{align*}
        since $x\coloneqq Xv\in\mathcal C\cap BV_\loc(\timeInt,\C^n)$ and $u\coloneqq Uv\in L^2_\loc(\timeInt,\C^m)$. We deduce that \eqref{eq:RiemannStieltjesIntegralInequality:3} is satisfied.
        \item[\ref{it:RSII:3}$\implies$\ref{it:RSII:1}] This follows immediately by choosing for $X$ and $U$ constant matrices such that
        \[
        \bmat{X(t) \\ U(t)} = \bmat{I_n & 0 \\ 0 & I_m},
        \]
        for all $t\in\timeInt$. \qedhere
    \end{description}
    
\end{proof}

\begin{corollary}\label{lem:RiemannStieltjes_increasing}
    Let $G\in\BV_\loc(\timeInt,\HerMat[n])$.
    Then $G$ is weakly increasing if and only if $\td G\geq 0$, in the sense that
    \[
    \int_{t_0}^{t_1}\ct{X(t)}\td G(t)X(t) \geq 0
    \]
    holds for all $X\in\mathcal C\cap\BV_\loc(\timeInt,\C^{n,q})$. Similarly, $G$ weakly decreases if and only if $\td G\leq 0$.
\end{corollary}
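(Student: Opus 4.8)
The plan is to deduce this corollary as a direct special case of the equivalence $\ref{it:RSII:1}\Leftrightarrow\ref{it:RSII:3}$ in \Cref{thm:RiemannStieltjesIntegralInequality}, applied to a trivial choice of the auxiliary data. Concretely, I would set $p=n$, take $V\coloneqq I_n\in\mathcal C\cap\BV_\loc(\timeInt,\C^{n,n})$, let $A_1\coloneqq G\in\BV_\loc(\timeInt,\HerMat[n])$, and choose $A_0$, $Z$, and $M$ all identically zero (and drop the input block entirely by taking $m=0$, or equivalently by only testing with $U=0$). With these choices the integrand in \eqref{eq:RiemannStieltjesIntegralInequality:3} collapses to $\ct{X(t)}\td G(t)X(t)$, so condition \ref{it:RSII:3} reads exactly $\int_{t_0}^{t_1}\ct{X(t)}\td G(t)X(t)\geq 0$ for all $X\in\mathcal C\cap\BV_\loc(\timeInt,\C^{n,q})$ and all $t_0\leq t_1$, which is the definition of $\td G\geq 0$ used in the statement.

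The remaining task is then to show that this integral condition is equivalent to $G$ being weakly increasing. First I would observe that condition \ref{it:RSII:1}, under the same reductions, becomes $\int_{t_0}^{t_1}\td G(t)=G(t_1)-G(t_0)\geq 0$ for all $t_0\leq t_1$, using the implicit identity $\int_{t_0}^{t_1}\td G=G(t_1)-G(t_0)$ established after \Cref{lem:RSM_altDef}. But $G(t_1)-G(t_0)\geq 0$ for all $t_0\leq t_1$ is precisely the definition of $G$ being weakly monotonically increasing. Since \Cref{thm:RiemannStieltjesIntegralInequality} gives $\ref{it:RSII:1}\Leftrightarrow\ref{it:RSII:3}$, the two conditions $\td G\geq 0$ and $G$ weakly increasing coincide.

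For the weakly decreasing case, I would simply apply the increasing case to $-G\in\BV_\loc(\timeInt,\HerMat[n])$: we have $G$ weakly decreasing if and only if $-G$ is weakly increasing, which by the first part holds if and only if $\td(-G)=-\td G\geq 0$, i.e.\ $\td G\leq 0$. The linearity of the Riemann-Stieltjes integral in $G$, which follows from \Cref{lem:RSM_altDef}, justifies the identity $\int_{t_0}^{t_1}\ct{X}\,\td(-G)\,X=-\int_{t_0}^{t_1}\ct{X}\,\td G\,X$ needed here.

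I expect no serious obstacle; the only point requiring a little care is the verification that the hypotheses of \Cref{thm:RiemannStieltjesIntegralInequality} are genuinely met by the reduced data, in particular that $V=I_n$ lies in $\mathcal C\cap\BV_\loc$ and that $A_1=G$ is pointwise self-adjoint (both immediate), and that setting $m=0$ (or $Z=0$, $M=0$) does not violate any standing assumption. Once this bookkeeping is in place, the corollary is an immediate corollary rather than an independent argument, so the proof should be short.
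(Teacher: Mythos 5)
Your proposal is correct and follows essentially the same route as the paper's own proof: both apply \Cref{thm:RiemannStieltjesIntegralInequality} with $m=0$, $V=I_n$, $A_1=G$ (and vanishing remaining data), read off the equivalence of \eqref{eq:RiemannStieltjesIntegralInequality:1} with \eqref{eq:RiemannStieltjesIntegralInequality:3} as the equivalence of weak monotonicity with $\td G\geq 0$, and handle the decreasing case by passing to $-G$.
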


\begin{proof}
    By applying \Cref{thm:RiemannStieltjesIntegralInequality} with $m=0$ and $V=I_n$, we obtain that the inequalities
    \[
    G(t_1) - G(t_0) = \int_{t_0}^{t_1}\td G(t) \geq 0, \qquad\text{for all }t_0,t_1\in\timeInt,\ t_0\leq t_1
    \]
    and
    \[
    \int_{t_0}^{t_1}\ct{X(t)}\td G(t)X(t) \geq 0, \qquad\text{for all $X\in\mathcal C\cap\BV_\loc(\timeInt,\C^{n,q})$ and }t_0,t_1\in\timeInt,\ t_0\leq t_1
    \]
    are equivalent. In particular, $G$ increases monotonically if and only if $\td G\geq 0$.
    To show the second part of the statement, it suffices to apply the first part to $-G$ and $\td(-G)=-\td G$.
\end{proof}

\end{document}